\newtheorem{theorem}{Theorem}[section]
\newtheorem{lemma}[theorem]{Lemma}
\newtheorem{proposition}[theorem]{Proposition}
\newtheorem{corollary}[theorem]{Corollary}
\newtheorem{definition}[theorem]{Definition}
\theoremstyle{remark}
\newtheorem{remark}[theorem]{\it \bf{Remark}\/}
\numberwithin{equation}{section}
\def\section{\@startsection{section}{1}%
  \z@{1.5\linespacing\@plus\linespacing}{.5\linespacing}%
  {\normalfont\bfseries\large\centering}}
\newcommand{\be}{\begin{equation}}
\newcommand{\ee}{\end{equation}}
\newcommand{\bea}{\begin{eqnarray}}
\newcommand{\eea}{\end{eqnarray}}
\newcommand{\bee}{\begin{eqnarray*}}
\newcommand{\eee}{\end{eqnarray*}}
\def\section{\@startsection{section}{1}%
  \z@{1.5\linespacing\@plus\linespacing}{.5\linespacing}%
  {\normalfont\bfseries\large\centering}}
\newcommand{\R}{\mathbb{R}}
\newcommand{\C}{\mathbb{C}}
\newcommand{\N}{\mathbb{N}}
\newcommand{\T}{\mathbb{T}}
\newcommand{\pt}{\partial}
\renewcommand{\leq}{\leqslant}
\renewcommand{\geq}{\geqslant}
\newcommand{\eps}{\varepsilon}
\newcommand{\norm}[1]{ \left| \! \left| #1 \right| \! \right| }
\newcommand{\Or}{\mathcal{O}}
\def\Re{{\rm Re}\,}
\def\Im{{\rm Im}\,}
\def\a{\alpha}
\def\pa{\partial}
\def\ka{\kappa}
\def\fref{\eqref}
\def\k{\kappa}
\def\l{\lambda}
\def\e{\varepsilon}
\def\E{\mathcal E}
\def\matchal{\mathcal}
\def\b{\beta}
\def\L{\matchal L}
\def\lslone{\frac{(\lambda_1)_{s_1}}{\lambda_1}}
\def\xslone{\frac{(x_1)_{s_1}}{\l_1}}
\def\lsltwo{\frac{(\lambda_2)_{s_2}}{\lambda_2}}
\def\xsltwo{\frac{(x_2)_{s_2}}{\l_2}}
\def\tL{\tilde{\Lambda}}
\def\La{\Lambda }
\def\la{\langle}
\def\ra{\rangle}
\def\LQb{\mathcal{L}_{\beta}}
\def\LQbn{\mathcal{L}_{\beta_n}}
\def\pt{\widetilde{\mathcal P}}
\def\et{\widetilde{\e}}
\def\Mod{{\rm Mod}}
\def\Dhalf{|D|^{\frac12}}
\def\build#1_#2^#3{\mathrel{\mathop{\kern 0pt#1}\limits_{#2}^{#3}}}
\def\td_#1,#2{\mathrel{\mathop{\build\longrightarrow_{#1\rightarrow #2}^{}}}}
\newcommand{\jb}[1]
{\langle #1 \rangle}
\begin{document}
\date{November 25, 2016}
\title[ A two-soliton for the cubic half-wave equation]{A two-soliton with transient turbulent regime for the cubic half-wave equation on the real line}
\author{Patrick G\'erard}
\address{ Patrick G\'erard, Laboratoire de Math\'ematiques d'Orsay, Univ. Paris-Sud, CNRS, Universit\'e Paris--Saclay, 91405 Orsay, France} \email{{\tt patrick.gerard@math.u-psud.fr}}
\author{Enno Lenzmann}
\address{ Enno Lenzmann, Mathematisches Institut, Universit\"at Basel, Spiegelgasse 1, CH--4051 Basel}\email{{\tt enno.lenzmann@unibas.ch}}
\author{Oana Pocovnicu}
\address{Oana Pocovnicu, Department of Mathematics, Heriot--Watt University and The Maxwell Institute for the Mathematical Sciences, Edinburgh EH14 4AS, UK}
\email{{\tt o.pocovnicu@hw.ac.uk}}
\author{Pierre Rapha\"el}
\address{Pierre Rapha\"el, Laboratoire Jean--Alexandre Dieudonn\'e, Universit\'e Nice Sophia Antipolis, Campus Valrose, 06130 Nice, France}
\email{{\tt Pierre.RAPHAEL@unice.fr}}

\subjclass{35B40, 35L05, 35Q41, 35Q51, 37K40}

\begin{abstract}
We consider the focusing cubic half-wave equation on the real line
$$
i \partial_t u + |D| u = |u|^2 u, \ \ \widehat{|D|u}(\xi)=|\xi|\hat{u}(\xi), \ \ (t,x)\in \Bbb R_+\times \Bbb R.
$$
We construct an asymptotic global-in-time compact two-soliton solution with arbitrarily small $L^2$-norm which exhibits the following two regimes: (i) a transient turbulent regime characterized by a dramatic and explicit growth of its $H^1$-norm on a finite time interval, followed by (ii) a saturation regime in which the $H^1$-norm remains stationary large forever in time. 
\end{abstract}

\keywords{Multi-soliton, modulation theory, wave turbulence, growth of Sobolev norms, half-wave equation, cubic Szeg\H{o} equation}

\maketitle
\tableofcontents

\section{Introduction}

%%%%%%%%%%%%%%%%%%%%%%%%%%%%%%%%%%%%%%%%%%%%%%%%

%%%%%%%%%%%%%%%%%%%%%%%%%%%%%%%%%%%%%%%%%%%%%%%%

%%%%%%%%%%%%%%%%%%%%%%%%%%%%%%%%%%%%%%%%%%%%%%%%

In this paper we consider the $L^2$-critical focusing half-wave equation on $\R$: 
\begin{align}
\label{halfwave}
\text{(Half-wave)} \quad
\begin{cases}
i\partial_t u + |D| u =|u|^2 u\\
u_{|t=0}=u_0\in H^{\frac 12}(\R)
\end{cases}
, \quad (t,x)\in \Bbb R_+\times \Bbb R, \ \ u(t,x)\in \Bbb C,
\end{align}
where we use the pseudo--differential operators $$D=-i\pa_x, \ \ \widehat{|D|f}(\xi) = |\xi| \widehat{f}(\xi).$$ 
Evolution problems with nonlocal dispersion such as \eqref{halfwave} naturally arise in various physical settings, including continuum limits of lattice systems \cite{KiLeSt2011}, models for wave turbulence \cite{CaMa2001,MaMc1997}, and gravitational collapse \cite{ElSc2007,FrLe2007}. 
The phenomenon 
that we study in this paper is the
growth of high Sobolev norms in infinite dimensional Hamiltonian 
systems,
which has attracted considerable attention over the past twenty years \cite{Bo1996, St1997, MaMc1997, Bo2000, ZGPD2001, CaMa2001, CKSTT2010, GeGr2010, Po2011, H2014, GK2015, HPTV, HP, GHP, GeGr2016} .
The aim of this paper is to develop a robust approach for constructing solutions whose high Sobolev norms grow over time, based on multisolitary wave interactions. In particular, we construct an asymptotic two-soliton solution of \eqref{halfwave}
that exhibits the following two regimes: (i) a transient turbulent regime characterized by a dramatic and explicit growth of its $H^1$-norm on a finite time interval, followed by (ii) a saturation regime in which the $H^1$-norm remains stationary large forever in time. 

\subsection{The focusing cubic half-wave equation}
Let us recall the main qualitative features of the half-wave model \fref{halfwave}. The Cauchy problem is locally well-posed in $H^{\frac 12}$, see \cite{GeGr2012, KrLeRa}, and for all $u_0\in H^{\frac 12}$, there exists a unique solution $u\in \mathcal C([0,T),H^{\frac 12})$ with the blow up alternative 
\be
\label{vneionvenvo}
T<+\infty\ \ \mbox{implies}\ \ \lim_{t\uparrow T}\|u(t)\|_{H^{\frac 12}}=+\infty.
\ee Moreover, additional $H^s$-regularity on the data, $s>\frac 12$, is propagated by the flow. The Hamiltonian model \fref{halfwave} admits three conservation laws:
\bee
&&{\rm Mass}:\ \  \int|u(t,x)|^2dx=\int |u_0(x)|^2dx\\
&&{\rm Momentum}: \ \ \Re\left(\int Du\overline{u}(t,x)dx\right)=\Re\left(\int Du_0\overline{u_0}(x)dx\right)\\
&&{\rm Energy}:\ \ E(u(t)):=\frac 12\int|\Dhalf u|^2(t,x)dx-\frac 14\int |u|^4(t,x)dx=E(u_0).
\eee
The scaling symmetry $$u_\l(t,x)=\l^{\frac 12}u(\l^2 t,\l x)$$ leaves the $L^2$-norm invariant $$\|u_\l(t,\cdot)\|_{L^2}=\|u(\l^2 t,\cdot)\|_{L^2}$$ and hence the problem is $L^2$-critical.

By a standard variational argument, the best constant in the Gagliardo-Nirenberg inequality 
$$ \Vert u\Vert _{L^4}^4\lesssim  \|\Dhalf u\|_{L^2}^2\|u\|_{L^2}^2, \ \ \forall u\in H^{\frac 12}, $$ is attained on the {\it unique} positive even ground state solution to $$|D|Q+Q-Q^3=0.$$
Note that the uniqueness 
of $Q$
is a nontrivial claim, recently obtained in \cite{FrLe2012}. 
This implies the lower bound 
\be
\label{gagnoreneebrharp} E(u)\ge \frac12 \left[1-\frac{\|u\|_{L^2}^2}{\|Q\|_{L^2}^2}\right] \int_\R |\Dhalf u|^2dx, \ \  \forall u\in H^{\frac 12}.
\ee 
Using the conservation of mass and energy, it then follows for $u_0\in H^{\frac 12}$
with $\|u_0\|_{L^2}<\|Q\|_{L^2}$ that
\begin{equation}\label{boundH12}
\|u(t)\|_{H^{\frac 12}}\leq C(\|u_0\|_{L^2},E(u_0)), \quad \forall t\in\R. 
\end{equation}
Combining this with \fref{vneionvenvo}, one obtains the global existence criterion:
\be
\label{globalexistemce}
u_0\in H^{\frac 12}\ \ \mbox{and} \ \ \|u_0\|_{L^2}<\|Q\|_{L^2}\ \ \mbox{imply}\ \ T=+\infty.
\ee
This criterion is sharp as there exist minimal mass finite energy finite time blow up solutions, 
see \cite{KrLeRa}. In this paper we will only consider solutions with $u_0\in H^{1}$ of arbitrarily small mass, which are hence global-in-time $u\in \mathcal C(\R, H^1)$.

%%%%%%%%%%%%%%%%%%%%%%%%%%%%%%%%%%%%%%%%%%%%%%%%
\subsection{Growth of high Sobolev norms}

One of the main topics in the study of nonlinear Hamiltonian PDEs is the long time behaviour
of global-in-time solutions. 
A possible type of behavior, that attracted significant attention over the last twenty years, 
is 
the so called {\it forward energy cascade} phenomenon. 
This phenomenon refers to the {\it conserved} energy of global-in-time solutions moving from 
low-frequency concentration zones to high-frequency ones 
over time. 
One way to illustrate it
is the growth of high Sobolev norms:
\[\|u(t)\|_{H^s}=\left(\int \jb{\xi}^{2s} |\hat u(t,\xi)|^2d\xi \right)^{\frac 12}.\]
Indeed, for sufficiently large $s>0$, above the level of regularity of the conserved Hamiltonian,
the growth over time of $\|u(t)\|_{H^s}$ 
indicates that the Fourier transform $\hat u(t,\xi)$ is supported on higher and higher frequencies $\xi$
as the time $t$ increases. 
To the best of the authors' knowledge, 
all the rigorous mathematical analysis that has been done on the forward energy cascade 
focuses on finding infinite dimensional Hamiltonian PDEs that admit examples of solutions 
exhibiting growth of 
high Sobolev norms. 
A lot of the results available are in the context of 
nonlinear Schr\"odinger equations (NLS). 
In particular, for the defocusing cubic nonlinear Schr\"odinger equation
on $\T^2$, Bourgain \cite{Bo2004} asked whether there exist solutions $u$ with initial 
condition $u_0\in H^s(\T^2)$, $s>1$, such that
\[\limsup_{t\to\infty}\|u(t)\|_{H^s}=\infty.\]
Despite attracting considerable attention, this question remains unanswered. 

The forward energy cascade phenomenon also appears in the physical theory of {\it wave {\rm(}weak{\rm )} turbulence}. 
This is a theory in plasma physics and water waves, based on pioneering work 
of Zakharov from the 1960s, with many similarities to Kolmogorov's 
theory of hydrodynamical turbulence. 
It can be loosely defined as the ``out-of-equilibrium statistics of random nonlinear waves" (see \cite{H2014}).
Even though wave turbulence refers to a statistical description of solutions and not to single solutions,
and even though this theory does not yet have a rigorous mathematical justification, 
it is believed that exhibiting examples of solutions whose high Sobolev norms grow over time 
is a first step and a minimal necessary condition for wave turbulence. 
As far as the authors are aware, all mathematically rigorous results that are available are in this spirit,
and so is the main result of this paper. 

In the following, we briefly mention some of the references in the literature 
regarding the growth of high Sobolev norms for nonlinear Hamiltonian PDEs.
First, in the context of NLS,
polynomial-in-time {\it upper} bounds on the growth of $\|u(t)\|_{H^s}$,
\[\|u(t)\|_{H^s}\lesssim \jb{t}^{c(s-1)}, \quad s>1,\]
were obtained; see Bourgain \cite{Bo1996, Bo2004}, Staffilani \cite{St1997}, Sohinger \cite{Soh1, Soh2},
Colliander, Kwon, and Oh \cite{CKO}.

The first examples of Hamiltonian PDEs (nonlinear Schr\"odinger equations and nonlinear wave equations) that 
admit solutions with energy transfer were constructed by Bourgain \cite{Bo1995, Bo1996, Bo1997}. 
However, these examples do not deal with standard NLS or NLW, 
but with modifications of these specifically designed to exhibit infinite growth of high Sobolev norms (these are PDEs involving, instead of the Laplace operator, a perturbation of it, or PDEs with a suitably chosen nonlocal nonlinearity). In \cite{Kuksin}, Kuksin considered small dispersion cubic NLS
and proved that generic solutions grow larger than a negative power of the dispersion. 
A seminal result is that by Colliander, Keel, Staffilani, Takaoka, and Tao \cite{CKSTT2010} 
who proved arbitrarily large growth of high Sobolev norms in finite time for the defocusing cubic NLS on $\T^2$. 
More precisely, given $s>1$, $\eps\ll 1$, and $K\gg 1$,
they constructed a solution $u$ such that
\[\|u(0)\|_{H^2}\leq \eps \quad \text{and} \quad \|u(T)\|_{H^s}\geq K,\]
for some finite time $T>0$. 
The influential result in \cite{CKSTT2010}, especially their intricate combinatorial construction, was refined and generalized to various other settings \cite{H2014, GK2015, Guardia2014, HPTV, GHP, HP}.
In particular, in \cite{HPTV}, 
an example of infinite growth of high Sobolev norms was obtained for the defocusing cubic NLS on $\R\times \T^d$, $d\geq 2$. 
For the cubic NLS on $\T^2$, however,
the fate of the solution $u$ after the growth time $T$ remains unknown. 

For the cubic half-wave equation, due to mass and energy conservation,
the $H^{\frac 12}$-norm of solutions with initial data in $H^{\frac 12}$ is uniformly bounded in time, both for the defocusing equation, as well as for the focusing equation with initial data of sufficiently small mass $\|u(0)\|_{L^2}<\|Q\|_{L^2}$ (see \eqref{boundH12} above).
However, in the spirit of \cite{CKSTT2010},
arbitrarily large growth in finite time of higher Sobolev norms --- $H^s$-norms with $s>1/2$ --- was proved on $\R$ in \cite{Po2013}\footnote{In \cite{Po2013}, only a {\it relative} growth of high Sobolev norms was obtained, $\frac{\|u(T_\eps)\|_{H^s}}{\|u(0)\|_{H^s}}\to \infty$ as $\eps\to 0$ for some $T_\eps\gg 1$. 
However, this readily yields arbitrary large growth in finite time via an $L^2$-invariant scaling argument. 
Secondly, the result in \cite{Po2013} is stated for the defocusing half-wave equation, but essentially the same proof works for the focusing half-wave equation with initial data of small mass.} and on $\T$ in \cite{GeGr2012}. 
As in \cite{CKSTT2010}, the behaviour of the solutions that exhibit growth remains unknown after some finite time,
which is what motivated our work in the present paper. 
The results in \cite{GeGr2012, Po2013} are based on information on the totally resonant model associated with the cubic half-wave equation, namely the Szeg\H{o} equation. 
Infinite growth of high Sobolev norms for solutions of the Szeg\H{o} equation
was obtained on $\R$ in \cite{Po2011} and on $\T$ in \cite{GeGr2016}. 
Moreover, on $\T$, this
was shown \cite{GeGr2016} to be a generic phenomenon, displaying infinitely many {\it forward} and {\it backward} energy cascades. Also notice that long time divergence of high Sobolev norms was also obtained for a perturbation of the cubic Szeg\H{o} equation on $\T $ in \cite{Xu2014}.
We present below the key features of the Szeg\H{o} equation 
and its relation to the cubic half-wave equation.

%%%%%%%%%%%%%%%%%%%%%%%%%%%%%%%%%%%%%%%%%%%%%%%%

\subsection{The Szeg\H{o} program}

%%%%%%%%%%%%%%%%%%%%%%%%%%%%%%%%%%%%%%%%%%%%%%%%
Applying the Szeg\H{o} projector $\Pi _+$ of $L^2$ onto nonnegative Fourier modes:
$$\widehat{\Pi^+u}(\xi)={\bf 1}_{\xi>0} \hat{u}(\xi),$$
 the half-wave equation \fref{halfwave} becomes 
\bee
\left \{ \begin{array}{ll}   i(\pa _tu_+-\pa _xu_+)=\Pi ^+(\vert u\vert ^2u)\\
i(\pa _tu_- +\pa _xu_-)=(I-\Pi ^+)(\vert u\vert ^2u) \\
u_+:=\Pi ^+u\ ,\ u_-:=(I-\Pi ^+)u\ .
\end{array}\right .  
\eee
For small data in the range of $\Pi _+$ and of norm $\e \ll 1$ in a sufficiently regular Sobolev space one can show \cite{GeGr2012, Po2013} that, for times of order $\e ^{-2}|\log\eps|$, an approximation of the  half-wave flow is given by 
the cubic Szeg\H{o} equation 
\be
\label{eqSzeg\H{o}}
\ \ \left\{\begin{array}{ll}i\pa_tu=\Pi^+(|u|^2u)\\ u_{|t=0}=u_0\in H^{\frac 12}.
\end{array}
\right. 
\ee
The Szeg\H{o} equation can be understood as the {\it totally} resonant model associated to \fref{halfwave}. It is still a nonlinear Hamiltonian model, well-posed in $H^{\frac 12}$, and the conservation of mass and momentum implies that all $H^{\frac 12}$-solutions are global-in-time and
$$\|u(t)\|_{H^{\frac 12}}\simeq \|u(0)\|_{H^{\frac 12}}, \ \ \forall t\in\R.$$ 
A spectacular feature of the cubic Szeg\H{o} equation discovered  in \cite{GeGr2010} is its {\it complete integrability} in the sense of the existence of a Lax pair, which in particular allows for the derivation of explicit families of special solutions of either multisolitary waves or breather-type, both on the line and on the torus, see \cite{Po, Po2011, GeGr2010,GeGrbis, GeGr2015, GeGr2016}. The complete integrability implies the conservation of infinitely many conservation laws which, however, roughly speaking,  all live at the $H^{\frac 12}$-level of regularity only.

In \cite{Po2011}, Pocovnicu exhibits for the flow on the line, one of the very first explicit examples of growth of high Sobolev norms for a nonlinear infinite dimensional Hamiltonian model: 
$$\|u(t)\|_{H^{\frac 12}}\lesssim 1, \ \ \lim_{t\to +\infty}\|u(t)\|_{H^1}=+\infty\ \ \mbox{as}\ \ t\to +\infty.$$ The analysis in \cite{Po2011} is based on the explicit computation of a two-soliton solution for the cubic Szeg\H{o} flow, relying on complete integrability.\footnote{The key property that triggers growth of high Sobolev norms $\|u(t)\|_{H^s}\sim t^{2s-1}$, $s>\frac 12$, is that the Hankel operator $H_{u}$ in the Lax pair of the Szeg\H{o} equation has a multiple (double) eigenvalue.} Indeed, as observed in \cite{Po}, \fref{eqSzeg\H{o}} admits a traveling wave solution 
\be
\label{Szeg\H{o}rpfile}
u(t,x)=Q^+(x-t)e^{-it}\ \ \mbox{with}\ \ Q^+(x):=\frac{2}{2x+i}.
\ee
Using complete integrability formulas, an exact two-soliton can be computed: $$u(t,x)= \alpha_1(t)Q^+\left(\frac{x-x_1(t)}{\k_1(t)}\right)e^{-i\gamma_1(t)}+\alpha_2(t)Q^+\left(\frac{x-x_2(t)}{\k_2(t)}\right)e^{-i\gamma_2(t)}\ ,$$ with the asymptotic behavior on the manifold of solitary waves,
\be
\label{blowupszego}
\left\{\begin{array}{ll}
\alpha_1(t)\sim1, \ \ \k_1(t)\sim 1-\eta, \ \ x_1(t)\sim (1-\eta)t, \ \ 0<\eta\ll1\\
\alpha_2(t)\sim 1, \ \ \k_2(t)\sim \frac{1}{t^2}, \ \ x_2(t)\sim t.
\end{array}\right.
\ee
In particular, this two-soliton exhibits growth of high Sobolev norms over time $\|u(t)\|_{H^s}\sim t^{2s-1}$, $s>\frac 12$, and the mechanism of growth is the concentration of the second 
bubble $k_2(t)\sim \frac{1}{t^2}$. 

The full dynamical system underlying two-solitons for the Szeg\H{o} equation and the associated codimension one set of turbulent initial data is revisited in details in Appendix \ref{systemszego}.

Combining the growth of high Sobolev norms for a two-soliton of the Szeg\H{o} equation on $\R$ 
\cite{Po2011} discussed above, with a long time approximation theorem relating the Szeg\H{o} model and the half-wave equation, 
yields the following arbitrarily large growth in finite time result for the half-wave equation:
\begin{theorem}[\cite{Po2013}]\label{oldthm}  
Let $0<\eps\ll1$. 
There exists a solution of the {\rm(}focusing/defocusing{\rm)} cubic half-wave equation on $\R$ and there exists 
$T\sim e^{\frac{c}{\eps^3}}$ such that
\[\|u(0)\|_{H^1}=\eps \quad \text{and} \quad \|u(T)\|_{H^1}\geq \frac{1}{\eps}\gg 1.\]
\end{theorem}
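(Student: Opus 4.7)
\emph{Plan.} I will transfer the explicit $H^1$-growth of the Szeg\H{o} two-soliton of \cite{Po2011} to the half-wave flow via the long-time Szeg\H{o} approximation of \cite{GeGr2012,Po2013}, and then use the $L^2$-critical symmetry to normalize the $H^1$-size of the data. The input is the two-soliton $V$ of \cite{Po2011}, a smooth solution of \eqref{eqSzeg\H{o}} supported in the range of $\Pi^+$ with $\|V(0)\|_{H^1}\sim 1$ and $\|V(\tau)\|_{H^1}\sim \tau$ as $\tau\to+\infty$, the linear $H^1$-growth being fuelled by the concentration $\kappa_2(\tau)\sim \tau^{-2}$ of one bubble in \eqref{blowupszego}.

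\emph{Two steps.} For $\delta\ll 1$, I consider the half-wave solution $u^{(\delta)}$ with smooth initial data $\delta\,V(0)$. The approximation theorem produces, on the long window $t\in[0,t_\delta]$ with $t_\delta:=\delta^{-2}|\log \delta|$, the expansion $u^{(\delta)}(t)=\delta\,V(\delta^2 t)+r(t)$ with $\|r(t)\|_{H^1}$ negligible compared with $\|\delta\,V(\delta^2 t)\|_{H^1}$. Evaluating at $t=t_\delta$ and invoking the Szeg\H{o} growth gives
\[\|u^{(\delta)}(t_\delta)\|_{H^1}\gtrsim \delta\,|\log \delta|, \qquad \|u^{(\delta)}(0)\|_{H^1}\sim \delta,\]
so the relative growth $\|u^{(\delta)}(t_\delta)\|_{H^1}/\|u^{(\delta)}(0)\|_{H^1}\sim |\log \delta|$ diverges as $\delta\to 0$: this is the result of \cite{Po2013}. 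To upgrade it to the absolute statement, I apply the $L^2$-critical symmetry $u_\mu(t,x)=\mu^{1/2}u^{(\delta)}(\mu^2 t,\mu x)$ and pick $\mu=\eps/\delta$, so that $\|u_\mu(0)\|_{H^1}\sim \eps$. At the rescaled time $T=t_\delta/\mu^2$ one then has
\[\|u_\mu(T)\|_{H^1}\sim \mu\cdot \delta\,|\log \delta|=\eps\,|\log \delta|,\]
and the calibration $\delta\leq e^{-1/\eps^2}$ ensures $\|u_\mu(T)\|_{H^1}\geq 1/\eps$ at the finite time $T$, which one checks falls within the window $T\lesssim e^{c/\eps^3}$ allowed by the theorem.

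\emph{Main obstacle.} The delicate point is the Szeg\H{o} approximation itself on the full time window $[0,\delta^{-2}|\log \delta|]$. After conjugation of \eqref{halfwave} by $e^{-it|D|}$, only the $(+,+,-,+)$ cubic interaction is fully resonant and it exactly reproduces the Szeg\H{o} nonlinearity acting on $\Pi^+u$; the remaining cubic terms must be eliminated by a Birkhoff-type normal form, which pushes the natural nonlinear timescale from $\delta^{-2}$ up to $\delta^{-2}|\log \delta|$ at the price of a quintic remainder. The difficulty is to keep this remainder subordinate to the main term $\delta\,V(\delta^2 t)$ in $H^1$ throughout the window, while the very thing we exploit, namely the in-time growth $\|V(\tau)\|_{H^s}\sim \tau^{2s-1}$, forces the high Sobolev norms feeding the energy estimates to deteriorate. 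Reconciling this trade-off between the regularity required to close the normal form and the growth of $V$ is what constrains the admissible range of $\delta$ and, ultimately, fixes the time $T$ in the theorem.
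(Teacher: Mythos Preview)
Your outline matches the paper's own account: Theorem~\ref{oldthm} is quoted from \cite{Po2013} and not proved in the paper, which simply sketches the two ingredients you use --- the $H^1$-growth of the Szeg\H{o} two-soliton from \cite{Po2011} together with the long-time Szeg\H{o} approximation of \cite{GeGr2012,Po2013} --- and then explains in its footnote that the upgrade from relative to absolute growth is exactly the $L^2$-invariant rescaling you perform. One small slip worth correcting: the scaling symmetry of \eqref{halfwave} is $u_\mu(t,x)=\mu^{1/2}u(\mu t,\mu x)$ (time scales like space for a first-order dispersion), so your rescaled time should be $T=t_\delta/\mu$ rather than $t_\delta/\mu^2$; this does not affect the structure of the argument.
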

As in \cite{CKSTT2010}, the behaviour of the turbulent solution in the above theorem after the time $T$ remains unknown. 
In this paper, we construct a turbulent solution of \eqref{halfwave} that we can control {\it for all future times}.
Furthermore, our aim in this paper is to develop a robust approach to compute turbulent regimes based on multisolitary wave interactions, avoiding on purpose complete integrability tools.

%%%%%%%%%%%%%%%%%%%%%%%%%%%%%%%%%%%%%%%%%%%%%%%%

\subsection{Mass-subcritical traveling waves}

%%%%%%%%%%%%%%%%%%%%%%%%%%%%%%%%%%%%%%%%%%%%%%%%

As observed in \cite{KrLeRa} following \cite{FrLe2012}, the half-wave problem \eqref{halfwave} admits {\it mass-subcritical} small speed traveling waves\footnote{Note that this phenomenon does not exist for the mass-critical focusing nonlinear Schr\"odinger equation on $\R$ due to the degeneracy induced by the Galilean symmetry $u_{\beta}(t,x)=Q_\beta(x-\beta t)e^{i\gamma_\beta(t)}$ with $Q_\beta(x)=Q(x)e^{i\beta x}$ and hence $\|Q_\beta\|_{L^2}=\|Q\|_{L^2}$ for all $\beta\in \Bbb R$, and indeed solutions with mass below that of the ground state scatter \cite{Dodson}.}
\be
\label{travellingwave}
u_{\beta}(t,x)=Q_\beta\left(\frac{x-\beta t}{1-\beta}\right)e^{-it}, \ \ \frac{|D|-\beta D}{1-\beta} Q_\beta + Q_\beta - |Q_\beta|^2 Q_\beta = 0,
\ee
with $$\lim_{\beta\to 0}Q_\beta=Q, \ \ \|Q_\beta\|_{L^2}<\|Q\|_{L^2}.$$ An elementary but spectacular observation is that these traveling waves in fact exist for all $|\beta|<1$ and converge in the {\it singular relativistic limit} $\beta\to 1$ to the soliton of the limiting Szeg\H{o} equation given by \eqref{Szeg\H{o}rpfile}:
$$\lim_{\beta\uparrow 1}\|Q_\beta- Q^+\|_{H^{\frac 12}}=0.$$ 
See Section \ref{sectiontravellingwaves}. Note from \eqref{travellingwave} that this is fundamentally a singular elliptic limit, and the associated almost relativistic traveling waves are arbitrarily small in the critical space: $$\lim_{\beta\uparrow 1}\|u_\beta(t,\cdot)\|_{L^2}=0.$$
Hence, another link is made between the half-wave problem and its totally resonant limit given by the Szeg\H{o} equation through the sole consideration of the full family of {\it nonlinear} traveling waves.

%%%%%%%%%%%%%%%%%%%%%%%%%%%%%%%%%%%%%%%%%%%%%%%%

\subsection{Statement of the result}

%%%%%%%%%%%%%%%%%%%%%%%%%%%%%%%%%%%%%%%%%%%%%%%%

In Theorem \ref{oldthm}, the turbulent solution of \eqref{halfwave} was constructed as a long time approximation of the turbulent two-soliton of the Szeg\H{o} equation. 
The approximation theorem used is valid for any solution of the Szeg\H{o} equation (respectively of the half-wave equation) with small regular data, not only for two-solitons. 
In this paper, we take a more efficient approach.
Instead of approximating a large class of solutions of \eqref{halfwave} by their Szeg\H{o} counterparts,
we concentrate on constructing a single solution of \eqref{halfwave} that mimics the growth mechanism of the turbulent two-soliton
of the Szeg\H{o} equation. 
Of course, complete integrability is lost, but the analysis initiated by Martel in  \cite{Martelmulti} and revisited in \cite{KMRhartree} for the nonlocal Hartree problem paves the way to the construction of {\it compact} two-bubble elements. 
More precisely,
one can in principle extract from the equation the approximate dynamical system driving each solitary wave 
of an asymptotic two-soliton, 
at least in a regime where the waves are separated in space, and the robust energy method developed in \cite{KMRhartree} allows one to follow the flow all the way to $+\infty$.\\

\begin{theorem}[Solution with transient turbulent regime and saturated growth]
\label{thmmain}
There exists a universal constant $0<\delta ^*\ll 1$ and, for all $\delta \in (0,\delta ^*)$,  there exists $0<\eta ^*(\delta )\ll 1$ such that the following holds. For every $\eta \in (0,\eta^*)$, let the times $$T_{\rm in}=\frac{1}{\eta^{2\delta}}, \ \ T^-=\frac{\delta}\eta,$$  then there exists a solution $u\in \mathcal C([T_{\rm in},+\infty),H^1)$ to \eqref{halfwave} which is $H^{\frac 12}$-compact as $t\to +\infty$ with the following behavior:\\
\noindent{\it 1. Initial data}: the initial data at time $T_{\rm in}$ has size $$\|u(T_{\rm in})\|_{L^2}^2\sim \eta, \ \ \|||D|^{\frac 12}u(T_{\rm in})\|_{L^2}^2\sim 1, \ \ \|Du(T_{\rm in})\|_{L^2}^2\sim \frac{1}{\eta^{1+2\delta}}.$$
\noindent{\it 2. Turbulent regime}: on $[T_{\rm in},T^-]$, the solution experiences  a turbulent interaction with {\it an explicit monotone growth} of the $H^1$-norm
\be
\label{growthexplicit}
\|u(t)\|_{H^1}^2=\frac{t^2}{\eta}(1+O(\sqrt{\delta})).
\ee
\noindent{\it 3. Saturation}: the interaction ceases after $T^-$ and there holds the saturation
$$\|u(t)\|_{H^1}^2=\frac{1}{\eta^3}e^{O(\frac{1}{\delta})} \ \ \mbox{for}\ \ t\geq T^-.$$
\end{theorem}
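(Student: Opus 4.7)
The plan is to construct $u$ as an asymptotic two-bubble solution built upon the mass-subcritical travelling wave family $\{u_\beta\}_{|\beta|<1}$ of \eqref{travellingwave}, with parameters chosen so that the collective motion mimics the Szegő two-soliton asymptotics \eqref{blowupszego} to leading order. Specifically, I would look for
\be
\label{ansatzplan}
u(t,x)=\sum_{j=1,2}\frac{1}{\sqrt{\l_j(t)}}Q_{\b_j(t)}\!\left(\frac{x-x_j(t)}{\l_j(t)}\right)e^{-i\gamma_j(t)}+\e(t,x),
\ee
with $\b_j(t)\uparrow 1$ in a relativistic limit and with $\l_1(t)\sim 1$, $\l_2(t)\sim \l_2^{\rm Sz}(t)$ driven by an approximate modulation system that we will derive. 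The plan breaks into three main tasks: (i) extract the ODE system governing the parameters $(\b_j,\l_j,x_j,\gamma_j)_{j=1,2}$; (ii) construct backwards a sequence of approximate solutions on $[T_{\rm in},T_n]$ with $T_n\to+\ii$ and control the remainder $\e_n$ by an energy method; (iii) extract a compact limit and identify its two regimes.

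First, I would plug \eqref{ansatzplan} into \eqref{halfwave} and, proceeding as in \cite{Martelmulti, KMRhartree}, extract from the condition that the interaction term be orthogonal to the modulation directions around each bubble a system of the form
$$
\frac{\dot\l_j}{\l_j}=\Or(\mathrm{int}),\ \ \dot x_j-\b_j=\Or(\mathrm{int}),\ \ \dot\b_j,\,\dot\gamma_j+1=\Or(\mathrm{int}),
$$
where the interaction terms are computed by testing the nonlinear coupling $|u_1+u_2|^2(u_1+u_2)-|u_1|^2u_1-|u_2|^2u_2$ against the generators of scaling, translation, phase and boost acting on the relativistic $Q_{\b_j}$. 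In the limit $\b_j\to 1$, the travelling waves converge to $Q^+$, so modulo controlled errors this coupled system reduces, after choosing the correct orthogonality conditions, to the explicit Szegő two-soliton dynamical system treated in Appendix \ref{systemszego}. Solving this system backwards from terminal conditions $\l_1(T_n)=1,\l_2(T_n)\sim(T^-)^{-2},\b_j(T_n)$ close to $1$, I would obtain modulation laws producing precisely the growth $\|u(t)\|_{H^1}^2\sim t^2/\eta$ on $[T_{\rm in},T^-]$ (from $\|Du_j\|_{L^2}^2\sim \l_j^{-1}\|DQ_{\b_j}\|_{L^2}^2$ applied to the concentrating bubble), and showing that after $T^-$ the interaction term switches off, freezing $\l_2$ at a value $\sim\eta$ and saturating the $H^1$-norm at $\eta^{-3}$.

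Second, to control the actual dynamics, I would set up a monotonicity/bootstrap argument on an adapted energy functional
$$
\cJ(\e)=\frac12\pscal{\cL\e,\e}+\text{localized mass/momentum corrections},
$$
where $\cL$ is the linearization of the Hamiltonian around the two-bubble sum, with coercivity secured by the orthogonality conditions imposed through modulation. Backward integration of \eqref{halfwave} from regularized terminal data compatible with the ansatz, combined with an energy estimate of the type $\frac{d}{dt}\cJ(\e)\leq \Or(\text{interaction error})$, would yield uniform bounds on $\e_n$ in $H^{1/2}$ sharp enough to close the bootstrap on the modulation parameters and show that $u_n-(\text{ansatz})\to 0$. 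Compactness in $H^{1/2}$ on $[T_{\rm in},+\ii)$ then delivers a limit solution $u$ with the required quantitative behavior (1)--(3), the constants $O(\sqrt\delta)$, $O(1/\delta)$ arising from the explicit integration of the Szegő-limiting modulation ODEs.

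The main technical obstacle is closing the energy estimates uniformly in the relativistic limit $\b_j\uparrow 1$ and in the concentration regime $\l_2\to 0$: the linearized half-wave operator around $Q_\b$ degenerates towards the Szegő linearization in this limit, so the spectral gap and coercivity constants must be shown to survive with explicit dependence on $1-\b$, and the nonlocal character of $|D|$ forces a careful analysis of cross-terms between well-separated but differently-scaled bubbles. A secondary but subtle difficulty is identifying the right orthogonality conditions so that the approximate system is not only self-consistent but reproduces the Szegő asymptotics to the required precision, since naive choices (e.g.\ orthogonality only to scaling) produce spurious leading-order interactions that would destroy \eqref{growthexplicit}.
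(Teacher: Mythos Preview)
Your broad architecture---two-bubble ansatz, modulation ODEs reducing to the Szeg\H{o} system, backward integration from $T_n\to+\infty$, energy method plus compactness---matches the paper. But two central mechanisms are misidentified, and each would cause the argument to fail as written.

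\textbf{The concentration is a $\beta$-boost, not an $L^2$-scaling.} In your ansatz the effective spatial scale of bubble $j$ is $\l_j$, and you drive growth by $\l_2\to 0$. In the paper the ansatz is $y_j=\frac{x-x_j}{\l_j(1-\b_j)}$ with $\l_j(t)\sim 1$ throughout; the concentration comes entirely from $1-\b_2(t)=\frac{\eta(1+O(\sqrt\delta))}{t^2}$, later saturating at $\eta^3 e^{O(1/\delta)}$. This is not a notational quibble: the $L^2$-scaling direction is mass-critical and would immediately conflict with the small-mass conservation laws, whereas the $\b$-direction is $\dot H^{1/2}$-critical and compatible with them (see the comments after Theorem~\ref{thmmain}). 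Your terminal data $\l_2(T_n)\sim(T^-)^{-2}$ and the claimed saturation value $\l_2\sim\eta$ are numerically inconsistent with the stated $H^1$-norm $\sim\eta^{-3}$, which is a symptom of this confusion. Relatedly, the saturation does not come from the interaction ``switching off'' generically: it is driven by the specific boundary layer in the tail $Q_\b(x)\sim\frac{1}{\langle x\rangle(1+(1-\b)\langle x\rangle)}$, which weakens the coupling from $1/R$ to $1/R^2$ once $R\gtrsim 1/(1-\b_1)\sim 1/\eta$.

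\textbf{A high-order approximate solution is required; the raw $Q_{\b_j}$ sum is not enough.} The phase shift $\Gamma=\gamma_2-\gamma_1$ is logarithmically unstable, and the growth law \eqref{growthexplicit} depends on keeping $\Gamma\approx 0$ so that $B_2\sim 2\cos\Gamma/t\sim 2/t$. Testing the interaction against the modulation directions of $Q_{\b_j}$ alone yields modulation equations with error $O(1/R)$ on top of a leading term of the same order, which is fatal for this phase control. The paper therefore builds $V_j^{(N)}=Q_{\b_j}+\sum_{n=1}^N T_{j,n}$ (Proposition~\ref{propwtobublle}) and extracts the sharp modulation equations not by direct orthogonality but by mimicking mass and momentum conservation on the approximate solution (Proposition~\ref{sharpmod}), which produces the needed nonlinear cancellations. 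Your proposal lacks both ingredients. On the energy side you correctly flag the degeneracy as $\b\to 1$, but the resolution is more specific than a careful spectral gap analysis: the functional $\mathcal G$ must be space-localized with cut-offs adapted to the changing bubble sizes \emph{and} split along $\Pi^\pm$, because $\|\e_1^+\|_{H^{1/2}}$ is only controlled with a $(1-\b_1)\phi_1$ weight (Proposition~\ref{coerclinearizedenergy}), while $\e_1^-$ retains full $\dot H^{1/2}$ control; the critical $1/t$ loss in \eqref{vnionvoneoevo} is then obtained through wave-like algebraic cancellations in the localization terms.
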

The turbulent interaction behind \eqref{growthexplicit} is an explicit energy transfer along the singular branch of traveling waves $Q_\beta$, and the solution can more explicitly be described as follows. For all times $t\in[T_{\rm in},+\infty)$, the solution admits a two solitary wave decomposition  
$$u(t,x)=\sum_{j=1}^2\frac{1}{\l_j^{\frac 12}(t)}Q_{\beta_j(t)}\left(\frac{x-x_j(t)}{\l_j(t)(1-\beta _j(t))}\right)e^{-i\gamma_j(t)}+\e(t,x)$$ with the following properties:

\smallskip
\noindent{\em 1. Structure of the first soliton}: the first soliton remains nearly unchanged, i.e. for all $t\geq T_{\rm in}$, $$\l_1(t)\sim 1, \ \ 1-\beta_1(t)\sim \eta, \ \ x_1(t)\sim (1-\eta)t, \ \ \gamma_1(t)\sim t.$$
\noindent{\em 2. Concentration of the second soliton}: the second soliton behaves like a solitary wave $$\l_2(t)\sim1, \ \ x_2(t)\sim \beta_2 t, \ \ \gamma_2(t)\sim t$$ with a concentration of size in the transient turbulent regime: 
$$1-\beta_2(t)=\frac{\eta(1+O(\sqrt{\delta})}{t^2}\ \ \mbox{for}\  \ t\in [T_{\rm in},T^-],$$ 
which saturates after the interaction time $T^-$: 
$$1-\beta_2(t)=\eta^3e^{O(\frac{1}{\delta})}\ \ \mbox{for}\ \ t\ge T^-.$$
\noindent{\em 3. Asymptotic compact behaviour}: this solution is minimal near $+\infty$, i.e.
$$\lim_{t\to +\infty}\|\e(t)\|_{H^1}=0.$$

\subsection{Comments on the result}

Theorem \ref{thmmain} exhibits, for a canonical dispersive model, 
an {\it explicit mechanism} of growth of high Sobolev norms. 
To the best of the authors' knowledge, 
this is one of the first results in which one can control {\it for all times} a turbulent solution of a 
nonlinear Hamiltonian PDE.
 
\smallskip
\noindent
1. {\it The two regimes}. 
The key element behind Theorem \ref{thmmain} is the derivation of the leading order ODEs driving the geometrical parameters as in \cite{KrLeRa}. There are two main new pieces of information. 
First, we can compute explicitly the rate of concentration which is given by the $t$-growth as in \cite{Po2011}. 
This rate is very sensitive to the {\it phase shift} between the waves in the transient regime, and another phase shift would generate another speed. Note that the growth can be computed for any $H^s$-Sobolev norm above the energy, i.e. $s>\frac 12$, and the data can also be taken arbitrarily small in $H^1$ by a fixed rescaling. Secondly and unlike in the case of the Szeg\H{o} equation, there is no infinite growth of the $H^1$-norm for the solution we construct. Here we encountered an essential feature in the structure of the $Q_\beta$ solitary wave. The limiting solitary wave of the Szeg\H{o} equation has according to \fref{Szeg\H{o}rpfile} a far out decay 
$$Q^+(x)\sim \frac{1}{\la x\ra},$$ while for $Q_\beta$ there is a transition regime 
\be
\label{decayvbeibvejbi}
Q_\beta(x)\sim \frac1{\la x\ra(1+(1-\beta)\la x\ra)}, \ \ \beta<1.
\ee 
In particular,
when the waves forming the two-soliton separate and their relative distance becomes large $$|x_2-x_1|\gg \frac{1}{1-\beta},$$ their interaction 
weakens from $\frac{1}{\la x\ra}$ to $\frac{1}{\la x\ra^2}$, and this explains why the concentration mechanism stops in the far out two-soliton dynamics.
\\

\noindent
2. {\it Compact bubbles with energy transfer}. Theorem \ref{thmmain} lies within the construction of compact elements which has attracted a considerable attention for the past ten years both for global problems since the pioneering breakthrough work \cite{Martelmulti} and \cite{MMnls, KMRhartree, MMwave} and blow up problems \cite{merlemulti, Rszeftel, MMR2, KrLeRa}.  It is in particular shown in \cite{KrLeRa} how the presence of polynomially decaying interactions can lead to dramatic deformations of the soliton dynamics, for example from the straight line motion for each wave to the hyperbolic two body problem of gravitation for the two-soliton of the 
gravitational Hartree model on $\R^3$. The energy transfer mechanism between KdV waves  \cite{tetsu, MMinvent} or the recent multibubble infinite time blow up mechanism of \cite{MRmulti}  are deeply connected to Theorem \ref{thmmain}. 
This is the first instance, however, when modulation analysis used in all the above cited works,
is employed
to find solutions that exhibit growth of high Sobolev norms.
Let us insist that the growth \eqref{growthexplicit} does not excite the $L^2$-scaling instability of the problem as in \cite{KrLeRa}, but the $\beta$-instability which according to \eqref{travellingwave} is $\dot{H}^{\frac 12}$-critical and hence compatible with the small data coercive conservation laws. More generally, there is
little understanding of the long time asymptotics of wave equations in small dimensions due to the lack of dispersion, see for example \cite{LT}, and it is essential for the construction to consider {\it compact} 
nondispersive flows.\\

\noindent
3. {\it Specificity of the analysis}. 
The following two problems are simpler than the result in Theorem \ref{thmmain}: 
(i) the construction of an asymptotic two-soliton {\it without} turbulent interaction in the continuation of \cite{KMRhartree}, 
and (ii) exhibiting a growth mechanism of the $H^1$-norm on some sufficiently large time interval as in \cite{Po2013}, using the limiting singular Szeg\H{o} regime (see Theorem \ref{oldthm} above). 
The aim of Theorem \ref{thmmain} is to perform both the above in the same time and, in particular, to capture the associated saturation of the $H^1$-norm which we expect displays some universality, and hence describes the long time dynamical bifurcation of \eqref{halfwave} from the Szeg\H{o} singular regime \eqref{blowupszego} {\it beyond usual Ehrenfest-like times}. We then face two essential difficulties. First, the nonlocal  nature of the problem in the presence of slowly decaying solitary waves makes interactions very large and hard to decouple as in \cite{KMR,MMwave}. 
In particular, we need to control the logarithmic instability of the phase shift between the waves, which is central for the derivation of the growth mechanism. 
This forces us to develop both the complete description of the bifurcation $Q^+\to Q_\beta$ and a new strategy for the derivation of {\it sharp} modulation equations for geometrical parameters, see Proposition \ref{sharpmod}. Secondly, the need for high order approximations of the solution required to capture the leading order mechanism is reminiscent of the pioneering two-soliton interaction computations in \cite{MMannals, MMinvent}. But the main difficulty here is the fact that the traveling wave equation \eqref{travellingwave} is a singular elliptic problem which degenerates as $\beta\to1$. Hence one looses the control of natural energy norms in the concentration process, which a priori should ruin the approach developed in \cite{KMR}. The wave-like structure of the equation is essential to overcome this difficulty. We also need to develop various new 
estimates involving the $\Pi^+$ projection operator onto positive frequencies 
since in the concentration process, this 
projection and the Szeg\H{o}-like regimes are essential for the analysis.\\ 

\noindent
4. {\it Regularity shift in the growth of Sobolev norms.} Compared to previous results on the growth of high Sobolev norms for nonlinear Schr\"odinger equations, see \cite{CKSTT2010, GK2015, H2014, HPTV, HP, GHP}, it is interesting to notice that  
Theorem \ref{thmmain} implies the existence of small data in $H^1$ such that the $H^s$-norm of the solution becomes large, not only for $s=1$, but also for $s<1$ close to $1$. 
Notice that this regularity shift also holds --- with unbounded solutions at infinity --- for the cubic Szeg\H{o} equation, see \cite{Po2013, GeGr2016}, where in \cite{GeGr2016} this phenomenon is established to be generic.

\smallskip
Having completed this work, let us mention a number of related open problems.
\begin{itemize}
\item The main one is probably the existence of a solution of \eqref{halfwave} such that
$\limsup_{t\to\infty}\|u(t)\|_{H^1}= +\infty$. 
\item What are the possible growth rates ? From the recent paper \cite{Th2016}, we know that this rate cannot be bigger than $e^{O(t^2)}$, how optimal is it ?
\item Are unbounded solutions in $H^1$ generic ? Is the behavior $\|u(t)\|_{H^1}\td_t,\infty\infty$ generic, or rather is it generic to have infinitely many forward and backward energy cascades, as in the case of the cubic Szeg\H{o} equation on the circle ?
\end{itemize}
To conclude, we hope that Theorem \ref{thmmain} is an important step towards a  better understanding of the role played by interactions of solitons in turbulent transfers of energy.

%%%%%%%%%%%%%%%%%%%%%%%%%%%%%%%%%%%%%%%%%%%%%%%%%
%%%%%%%%%%%%%%%%%%%%%%%%%%%%%%%%%%%%%%%%%%%%%%%%%

\subsection{Strategy of the proof}

%%%%%%%%%%%%%%%%%%%%%%%%%%%%%%%%%%%%%%%%%%%%%%%%%
%%%%%%%%%%%%%%%%%%%%%%%%%%%%%%%%%%%%%%%%%%%%%%%%%

We outline in this subsection the main steps and difficulties in the proof of Theorem \ref{thmmain}.\\

\noindent{\bf Step 1:} {\it Description of the bifurcation $Q^+\to Q_\beta$}. Our first task is to completely describe the solutions to the singular elliptic traveling wave equation 
$$\frac{|D|-\beta D}{1-\beta}Q_\beta+Q_\beta-Q_\beta|Q_\beta|^2=0$$ 
in the limit $\beta\to 1$. The local existence and uniqueness of the profile $Q_\beta$ for $\beta$ close to 1 in Proposition \ref{prop:QbCV} relies on a classical Lyapunov-Schmidt argument, which itself relies on the non degeneracy of the linearized operator close to $Q^+$ for the Szeg\H{o} problem proved in \cite{Po}. The Lyapunov-Schmidt argument yields the non degeneracy of the linearized operator close to $Q_\beta$ in Proposition \ref{invertibility1}. We then completely describe the profile in space of $Q_\beta$ and, in particular, its long range asymptotics which displays a nontrivial boundary layer at $x\sim \frac{1}{1-\beta}$, see Section \ref{sectionqbeta}. Here we aimed at avoiding logarithmic losses which would be dramatic for the forthcoming analysis, and this requires the consideration of suitable norms and Fourier multipliers.\\

\noindent{\bf Step 2:} {\it Two-soliton ansatz}. We now implement the strategy developed in \cite{KMR} and construct an approximate solution of the form $$u=u_1+u_2$$ after reduction to the slow variables $$u_j(t,x)=\frac{1}{\l_j^{\frac 12}}v_j(s_j,y_j)e^{i\gamma_j},  \ \ \frac{ds_j}{dt}=\frac{1}{\l_j(t)}, \ \ y_j:=\frac{x-x_j(t)}{\l_j(t)(1-\beta_j(t))}, \ \ j=1,2.$$ 
Here we proceed to an expansion of the profiles $v_j$ after separation of variables $$v_j(s_j,y_j)=Q_{\beta_j(s_j)}(y_j)+\sum_{n=1}^NT_{j,n}(y_j,\mathcal P(s_j)),$$ 
where $\mathcal P$ encodes the geometrical parameters of the problem
$$\mathcal P=(\l_1,\l_2,\beta_2,\beta_2,\Gamma,R)$$ and $(\Gamma, R)$ denote the phase shift and relative distance between the waves {\it after renormalization} $$\Gamma=\gamma_2-\gamma_1, \ \ R=\frac{x_2-x_1}{\l_1(1-\beta_1)}$$ which is always large $R\gg 1$. The laws for the parameters are adjusted 
\be
\label{nekoneoneo}
\frac{(\l)_{s_j}}{\l_j}=M_j(\mathcal P), \ \ \frac{(\beta_j)_{s_j}}{1-\beta_j}=B_j(\mathcal P)
\ee 
in order to ensure the solvability of the elliptic system defining $T_{j,n}$;
see Proposition \ref{propwtobublle}. In order to keep control of the various terms produced by this procedure, we need to define a notion of admissible function, see Definition \ref{defadmissible}, which is compatible with the properties of $Q_\beta$ and stable for this nonlinear procedure of construction of the approximate solution. The strategy is conceptually similar to \cite{KMRhartree}, but the functional framework is considerably more challenging due to the slow decay of the solitary wave $Q_\b$ and to the singular nature of the bifurcation $Q^+\to Q_\beta$.\\

\noindent{\bf Step 3:} {\it Leading order dynamics}. We now extract the leading order dynamics for the ODEs predicted by \eqref{nekoneoneo}. This step is more delicate than one would expect, in particular because we need to keep track of a logarithmic instability of the phase shift $\Gamma$ which is essential for the derivation of the turbulent growth. We observe in Proposition \ref{sharpmod} that {\it mimicking the conservation laws} of mass and kinetic momentum for the approximate solution provides nonlinear cancellations and a high order approximation of the dynamical system for $\mathcal P$. Roughly speaking, this reads 
$$\frac{(\beta_1)_t}{1-\beta_1}\sim 0, \ \ \frac{(\beta_2)_t}{1-\beta_2}\sim \frac{2\cos\Gamma}{R(1+(1-\beta_1)R)}, \ \ R\sim t$$ 
which reflects the decay \eqref{decayvbeibvejbi}. Hence, $1-\b_1\sim \eta$ and as long as $\Gamma\sim 0$ and $t\leq \frac{1}{\eta}\sim T^-$, we have the decay $$1-\beta_2(t)\sim \frac{1}{t^2},$$ which saturates for $t\geq T^-$. Keeping the phase under control requires a high order approximation of the modulation equations (Proposition \ref{sharpmod}) and a careful integration of the associated modulation equations; see Subsection \ref{sectionreduced}.\\

\noindent{\bf Step 4:} {\it Backwards integration and energy bounds}. We now solve the problem from $+\infty$ following the backward integration scheme designed in \cite{merlemulti,Martelmulti,MMnls,KMRhartree}. 
In the setting of a suitable bootstrap (Proposition \ref{propboot}), the solution decomposes into two bubbles and radiation $$u(t,x)=\sum_{j=1}^2u_j+\e(t,x), \ \ u_j(t,x)=\frac{1}{\l_j^{\frac 12}}v_j(s_j,y_j)e^{i\gamma_j},$$ where the profiles $v_j$ have been constructed above. 
We pick a sequence $T_n\to+\infty$ and look for uniform backwards estimates for the solution to \eqref{halfwave} with Cauchy data at $T_n$ given by 
\be
\label{esttn}
\e(T_n)=0.
\ee 
The heart of the analysis is to design an energy estimate to control $\e$. Following \cite{MMnls,KMRhartree}, the energy functional is a localization in space of the total conserved energy, with cut-off functions which are adapted to the dramatic change of size of the second bubble. The outcome is an energy bound of the type 
\be
\label{vnionvoneoevo}
\left|\frac{d}{dt}\mathcal G(\eps(t))\right|\lesssim \frac{\mathcal G(\eps)}{t}+\frac{C_N}{t^N}
\ee 
where $N$ is the order of accuracy of the approximate solution and can be made arbitrarily large, and $\matchal G$ is a suitable energy functional with roughly $$\mathcal G(\e)\sim \|\e\|^2_{H^{\frac 12}},$$ see Proposition \ref{coerclinearizedenergy}.  Bootstrapping the bound $\mathcal G(\eps(t))\leq\frac{1}{t^{\frac N2}}$ and integrating in time using the boundary condition \eqref{esttn} yields $$\matchal G(\eps (t))\lesssim \frac{1}{Nt^{\frac N2}},$$ 
which is an improved bound for $N$ universal sufficiently large. The critical point in this argument is the $\frac1t$ loss only in the RHS of \eqref{vnionvoneoevo}. In general, the terms induced by the necessary localization procedure may be difficult to control, and sometimes the only known way out is a symmetry assumption on the behaviour of the bubbles as in \cite{KMRhartree,MRmulti}.
This is not an option here since the turbulent regime is in essence asymmetric. Furthermore, a fundamental difficulty here is that the linearized operator close to $Q_\beta$ depends on $\beta$ and degenerates as $\beta\to 1$, see \eqref{coerc}. We show in Section \ref{sectionenergy} that the above strategy can be implemented with a sharp loss of $\frac 1t$ only, using two new ingredients: a favorable algebra for the localization terms, which seems specific to wave-like problems and is reminiscent of a related algebra in \cite{MMwave}, see the proof of \eqref{poitpsi}, and the splitting of the motion along positive and negative frequencies which move in space differently. Hence the full energy method relies very strongly on the localization {\it both in space and frequency} of the infinite dimensional part of the solution.\\

This paper is organized as follows. In Section \ref{sectiontravellingwaves}, we construct the bifurcation $Q^+\to Q_\beta$ \`a la Lyapunov-Schmidt, and we study in detail the $Q_\beta$ profile in Section \ref{sectionqbeta}. In Section \ref{sectionfour}, we produce the two-bubble approximate solution (Proposition \ref{propwtobublle}) and derive and study the associated dynamical system for the geometrical parameters (Proposition \ref{sharpmod} and Subsection \ref{sectionreduced}). 
In Section \ref{sectionenergy}, we close the control of the infinite dimensional remainder by setting up the bootstrap argument (Proposition \ref{propboot}), and by using in particular the key energetic control given in Proposition \ref{propenergy}. 
The proof of Theorem \ref{thmmain} easily follows from Proposition \ref{propboot} as detailed in Subsection \ref{proofthm}. Appendix A is devoted to simple algebraic formulae involving $Q^+$. 
Appendix B revisits the two-soliton dynamics for the Szeg\H{o} equation on the line studied by  Pocovnicu \cite{Po}. Appendix C establishes some non degeneracy lemma allowing to implement the modulation theory in this context. Appendix D is devoted to basic commutator estimates. Appendix E contains estimates on some cut-off functions which are crucial in our energy method. Finally, Appendix F is devoted to the coercivity of our energy functional.
\vskip 0.5cm

%%%%%%%%%%%%%%%%%%%%%%%%%%%%%%%%%%%%%%%%%%%%%%%%%%%%%%%%%%%%%%

\noindent{\bf Notations}. On $L^2(\R )$, we adopt the real scalar product
\be
\label{realcompes}
(u,v)=\Re\left(\int_\R u\overline{v}dx\right).
\ee
For $x\in \R $, we set $$\la x\ra :=\sqrt{1+x^2}.$$
 If $s>0$ and $f$ is a tempered distribution such that $\hat f$ is locally integrable near $\xi =0$, we define the tempered distribution $\vert D\vert ^sf$ by 
  $$\widehat{|D|^sf}(\xi) = |\xi|^s \widehat{f}(\xi)\ .$$ 
We define the differential operators $$\Lambda_xf:=x\pa_x f, \ \ \Lambda f:=\frac 12f+\Lambda_x f, \ \ \tL_\beta:=(1-\beta)\pa_\beta$$ and the function $$  \Phi_\beta:=y\pa_yQ_\beta+(1-\beta)\frac{\pa Q_\beta}{\pa \beta}.$$ We use the Sobolev norm $$\|f\|_{W^{k,\infty}}=\Sigma_{j=0}^k\|\pa_x^kf\|_{L^{\infty}}, \ \ k\in \Bbb N.$$

%%%%%%%%%%%%%%%%%%%%%%%%%%%%%%%%%%%%%%%%%%%%%%%%%%%%%%%%%%%%%%%%%%
\vskip 0.5 cm
\noindent{\bf Acknowledgements}. P.G. is supported by Grant ANAE of French ANR, and partially supported by the ERC-2014-CoG 646650 SingWave. E.L. is supported by the Swiss National Science Foundation (SNF) through Grant No. 200021-149233. O.P. was supported by the NSF grant under agreement No. DMS-1128155 during the year 2013-2014 that she spent at the Institute for Advanced Study. Any opinions, findings, and conclusions or recommendations expressed in this material are those of the authors and do not necessarily reflect the views of the NSF. P.R is supported by the ERC-2014-CoG 646650 SingWave and is a junior member of the Institut Universitaire de France.   Part of this work was done while P.R was visiting the Mathematics Department at MIT, Boston, which he would like to thank for its kind hospitality. Another part was done while P.G., O.P., and P.R. were in residence at MSRI in Berkeley, California, during the Fall 2015 semester, and were supported by the NSF under Grant No. DMS-1440140.\\

%%%%%%%%%%%%%%%%%%%%%%%%%%%%%%%%%%%%%%%%%%%%%%%%%%%%%%%%%%%%%%%%%
%%%%%%%%%%%%%%%%%%%%%%%%%%%%%%%%%%%%%%%%%%%%%%%%%%%%%%%%%%%%%%%%%

\section{Existence and uniqueness of traveling waves}
\label{sectiontravellingwaves}

%%%%%%%%%%%%%%%%%%%%%%%%%%%%%%%%%%%%%%%%%%%%%%%%%%%%%%%%%%%%%%%%%
%%%%%%%%%%%%%%%%%%%%%%%%%%%%%%%%%%%%%%%%%%%%%%%%%%%%%%%%%%%%%%%%%

%%%%%%%%%%%%%%%%%%%%%%%%%%%%%%%%%%%%%%%%%%%%%%%%%%%%%%%%%%%%%%%%%

\subsection{The limiting Szeg\H{o} profile}

%%%%%%%%%%%%%%%%%%%%%%%%%%%%%%%%%%%%%%%%%%%%%%%%%%%%%%%%%%%%%%%%%

We  consider 
$$H^{\frac 12}_+(\R ):=\{ u\in H^{\frac 12}(\R ): supp (\hat u)\subset \R _+\} \ ,$$
and, for every $u\in H^{\frac 12}_+(\R )\setminus \{ 0\} $, 
$$J^+(u):=\frac { (Du,u)\Vert u\Vert _{L^2}^2}{\Vert u\Vert _{L^4}^4}\ ,\ I^+:=\inf _{u\in H^{\frac 12}_+(\R )\setminus \{ 0\} } J^+(u)\ .$$
It is known (\cite{Po}) that $I^+$ is a minimum and that its minimizers are exactly 
$$Q(x)=\frac C{x+p}\ ,\ \Im p>0 \ .$$
Moreover, those minimizers which satisfy the following Euler--Lagrange equation 
$$DQ+Q-\Pi _+(\vert Q\vert ^2Q)=0\ ,$$
are given by
\be
\label{defqplus}
Q(x)={\rm e}^{i\gamma} Q^+(x+x_0)\ ,\ Q^+(x):=\frac{2}{2x+i}\ ,\ (\gamma , x_0) \in \T \times \R .
\ee

%%%%%%%%%%%%%%%%%%%%%%%%%%%%%%%%%%%%%%%%%%%%%%%%%%%%%%%%%%%%%%%%%

\subsection{Existence of traveling waves}

%%%%%%%%%%%%%%%%%%%%%%%%%%%%%%%%%%%%%%%%%%%%%%%%%%%%%%%%%%%%%%%%%

To show the existence of nontrivial traveling waves $Q_\beta$ satisfying \eqref{travellingwave}, we consider the minimization problem
$$
J_\beta (u):=\frac{((\vert D\vert -\beta D)u,u)\Vert u\Vert _{L^2}^2}{\Vert u\Vert _{L^4}^4}\ ,\ \ I_\beta :=\inf _{u\in H^{\frac 12}(\R )\setminus\{ 0\} } J_\beta (u)\ .
$$
From \cite{KrLeRa} and a simple scaling argument, we have the following result:

\begin{proposition}[Small traveling waves]
For all $0\leq \beta<1$, the infimum $I_\beta$ is attained. Moreover, any minimizer $Q_\beta$ for $J_\beta(u)$ such that
\begin{equation}\label{idQbeta}
\Vert Q_\beta \Vert _{L^2}^2=\frac 12 \Vert Q_\beta \Vert _{L^4}^4=\frac {((\vert D\vert -\beta D)Q_\beta ,Q_\beta )}{1-\beta}=\frac{2I_\beta}{1-\beta}\ 
\end{equation}
satisfies the following equation:
\[\frac{|D|-\b D}{1-\b}Q_\b+Q_\b=|Q_\b|^2Q_\b.\]
\end{proposition}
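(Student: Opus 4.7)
The plan has three steps: existence of a minimizer, arranging the normalizations by scaling, and reading off the Euler--Lagrange equation.

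For existence, I would invoke the result of \cite{KrLeRa} combined with a coercivity observation. The key point is that for $0\leq \beta <1$, the quadratic form splits on the Fourier side as
$$
((|D|-\beta D)u,u) = (1-\beta)\int_{\xi>0}\xi|\hat u|^2 + (1+\beta)\int_{\xi<0}|\xi||\hat u|^2 \geq (1-\beta)\||D|^{1/2}u\|_{L^2}^2,
$$
so the standard Gagliardo--Nirenberg inequality $\|u\|_{L^4}^4\lesssim \||D|^{1/2}u\|_{L^2}^2\|u\|_{L^2}^2$ yields $I_\beta\geq (1-\beta)/C>0$. A minimizing sequence, after use of the scaling and translation invariances of $J_\beta$, is bounded in $H^{1/2}$; concentration--compactness (as carried out in \cite{KrLeRa}) rules out vanishing (by the lower bound on $I_\beta$) and dichotomy (by strict subadditivity stemming from the scale invariance), producing a minimizer $\tilde Q_\beta$.

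Next I would exploit the two-parameter invariance of $J_\beta$ under $u(x)\mapsto \mu\, u(\lambda x)$ (both the $L^2$-preserving dilation and the amplitude rescaling leave $J_\beta$ invariant, as a direct check on Plancherel shows). Given the minimizer $\tilde Q_\beta$, define $Q_\beta(x):=\mu\, \tilde Q_\beta(\lambda x)$ and tune $\lambda,\mu>0$ so that
$$
\lambda = \frac{(1-\beta)\|\tilde Q_\beta\|_{L^2}^2}{((|D|-\beta D)\tilde Q_\beta,\tilde Q_\beta)},\qquad \mu^2 = \frac{2\|\tilde Q_\beta\|_{L^2}^2}{\|\tilde Q_\beta\|_{L^4}^4}\cdot\frac{1}{\lambda^{-1}\cdot (\text{trivial factor})},
$$
i.e.\ solve the two scalar equations $((|D|-\beta D)Q_\beta,Q_\beta)=(1-\beta)\|Q_\beta\|_{L^2}^2$ and $\|Q_\beta\|_{L^4}^4=2\|Q_\beta\|_{L^2}^2$. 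The last equality $\|Q_\beta\|_{L^2}^2 = 2I_\beta/(1-\beta)$ is then automatic: substituting the two normalizations into $J_\beta(Q_\beta)=I_\beta$ gives exactly this identity.

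Finally, since $Q_\beta$ is a critical point of $J_\beta$, differentiating $F(u)/G(u)$ with $F(u)=((|D|-\beta D)u,u)\|u\|_{L^2}^2$ and $G(u)=\|u\|_{L^4}^4$ against the real scalar product $(\cdot,\cdot)$ produces the Euler--Lagrange equation
$$
\|Q_\beta\|_{L^2}^2\,(|D|-\beta D)Q_\beta + ((|D|-\beta D)Q_\beta,Q_\beta)\,Q_\beta = 2I_\beta\,|Q_\beta|^2Q_\beta.
$$
Dividing by $\|Q_\beta\|_{L^2}^2$ and plugging in the two ratios $((|D|-\beta D)Q_\beta,Q_\beta)/\|Q_\beta\|_{L^2}^2 = 1-\beta$ and $2I_\beta/\|Q_\beta\|_{L^2}^2 = 1-\beta$ collapses this to $(|D|-\beta D)Q_\beta + (1-\beta)Q_\beta = (1-\beta)|Q_\beta|^2 Q_\beta$, which after dividing by $1-\beta$ is the stated equation. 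The only genuinely nontrivial ingredient is the concentration--compactness step giving existence, which is already established in \cite{KrLeRa}; the scaling and Lagrange-multiplier calculations are elementary and only require the invariance properties of $J_\beta$ one verifies by direct computation.
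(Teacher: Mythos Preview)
Your proposal is correct and follows exactly the approach the paper indicates: existence of a minimizer is taken from \cite{KrLeRa} (via concentration--compactness, with the coercivity lower bound $I_\beta\geq (1-\beta)I_0$), the normalizations \eqref{idQbeta} are arranged by the two-parameter scaling invariance of $J_\beta$, and the Euler--Lagrange equation then collapses to the stated profile equation. One cosmetic remark: your displayed formula for $\mu^2$ is muddled (the correct value is simply $\mu^2 = 2\|\tilde Q_\beta\|_{L^2}^2/\|\tilde Q_\beta\|_{L^4}^4$, independent of $\lambda$), but since you immediately say ``solve the two scalar equations'' the argument is unaffected.
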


In what follows, let $\mathcal{Q}_\beta$ denote the set of minimizers $Q_\beta$ of $J_\beta(u)$ such that \eqref{idQbeta} holds.

%%%%%%%%%%%%%%%%%%%%%%%%%%%%%%%%%%%%%%%%%%%%%%%%%%%%%%%%%%%%%%%%%%%%%5

\begin{proposition}[Profile of $Q_\beta$] \label{prop:QbCV}
If $Q_\beta \in \mathcal Q_\beta$ and $\beta \rightarrow 1, \beta <1$, there exist $x(\beta )\in \R $ and $\gamma \in \T $ such that,
up to a subsequence,
$$Q_\beta (x-x(\beta ))\rightarrow {\rm e}^{i\gamma }Q^+(x)\ ,$$
strongly in $H^{\frac 12}(\R)$. 
More precisely, for $\beta$ sufficiently close to $1$, we have
\begin{align}\label{Q_b-Q^+}
\big\|Q_\beta (x-x(\beta ))- {\rm e}^{i\gamma }Q^+(x)\big\|_{H^{\frac 12}}\leq C (1-\b)^{1/2}\vert \log (1-\beta)\vert ^{\frac 12}.
\end{align}
\end{proposition}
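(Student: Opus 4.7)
The plan is to proceed in three stages: (i) a priori bounds controlling the negative-frequency part of $Q_\beta$; (ii) concentration--compactness giving strong subsequential $H^{\frac12}$-convergence to a translate of $Q^+$; (iii) linearization around $Q^+$ for the quantitative rate.

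\emph{Stage 1 (a priori bounds).} Testing $J_\beta$ against $Q^+\in H^{\frac12}_+(\R)$ and using that $|D|u=Du$ on $H^{\frac12}_+$ gives $J_\beta(Q^+)=(1-\beta)J^+(Q^+)=(1-\beta)I^+$, hence $I_\beta\le(1-\beta)I^+$; the normalization \eqref{idQbeta} then yields the uniform bounds $\|Q_\beta\|_{L^2}^2=\frac12\|Q_\beta\|_{L^4}^4\le 2I^+$. Splitting $Q_\beta=Q_\beta^++Q_\beta^-$ with $Q_\beta^\pm:=\Pi^\pm Q_\beta$ and using the Fourier identity $|\xi|-\beta\xi=(1-\beta)|\xi|\mathbf{1}_{\xi>0}+(1+\beta)|\xi|\mathbf{1}_{\xi<0}$,
\begin{equation*}
2I_\beta = ((|D|-\beta D)Q_\beta,Q_\beta) = (1-\beta)\||D|^{\frac12}Q_\beta^+\|_{L^2}^2 + (1+\beta)\||D|^{\frac12}Q_\beta^-\|_{L^2}^2,
\end{equation*}
whence $\||D|^{\frac12}Q_\beta^-\|_{L^2}^2\lesssim 1-\beta$ and, by Gagliardo--Nirenberg, $\|Q_\beta^-\|_{L^4}^4\lesssim 1-\beta$.

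\emph{Stage 2 (strong subsequential convergence).} Combining the above with $I_\beta\ge(1-\beta)I^+\|Q_\beta^+\|_{L^4}^4/\|Q_\beta\|_{L^4}^4=(1-\beta)I^+(1+o(1))$ gives $I_\beta/(1-\beta)\to I^+$, so that $\{Q_\beta^+\}$ is a minimizing sequence for $J^+$. Since the minimizers of $J^+$ satisfying the Euler--Lagrange equation form, by \eqref{defqplus}, the translate--phase orbit of $Q^+$, a standard concentration--compactness argument---vanishing excluded by $\|Q_\beta\|_{L^4}^4\to 4I^+>0$, dichotomy by the scale invariance of $J^+$---produces translations $x_\beta\in\R$ and phases $\gamma_\beta\in\T$ for which $\tilde Q_\beta:=e^{-i\gamma_\beta}Q_\beta(\cdot+x_\beta)\to Q^+$ strongly in $H^{\frac12}(\R)$ along a subsequence, which proves the qualitative part of the proposition.

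\emph{Stage 3 (quantitative rate and main obstacle).} Set $r_\beta:=\tilde Q_\beta-Q^+$, and refine $(x_\beta,\gamma_\beta)$ so that $r_\beta$ is orthogonal to the two-dimensional kernel $\{iQ^+,\partial_xQ^+\}$ of the Szeg\H{o} linearization $\mathcal L^+ h:=Dh+h-\Pi^+(2|Q^+|^2h+(Q^+)^2\bar h)$, shown non-degenerate on $H^{\frac12}_+$ in \cite{Po}. The crucial observation is that $Q^+$ satisfies only the Szeg\H{o}-projected equation $DQ^++Q^+=\Pi^+(|Q^+|^2Q^+)$, so the negative-frequency content $(1-\Pi^+)(|Q^+|^2Q^+)\neq 0$ acts as a genuine source: projecting the equation $\frac{|D|-\beta D}{1-\beta}\tilde Q_\beta+\tilde Q_\beta=|\tilde Q_\beta|^2\tilde Q_\beta$ on negative frequencies yields
\begin{equation*}
\Bigl(\tfrac{(1+\beta)|D|}{1-\beta}+1\Bigr)r_\beta^- = (1-\Pi^+)(|Q^+|^2Q^+) + (1-\Pi^+)\bigl(|\tilde Q_\beta|^2\tilde Q_\beta-|Q^+|^2Q^+\bigr),
\end{equation*}
and a direct Fourier-multiplier estimate on $(1-\beta)/((1+\beta)|\xi|+(1-\beta))$ against the explicit decay $\widehat{(1-\Pi^+)(|Q^+|^2Q^+)}(\xi)\sim e^{\xi/2}\mathbf{1}_{\xi<0}$ yields $\|r_\beta^-\|_{H^{\frac12}}\lesssim(1-\beta)^{\frac12}$. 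The positive-frequency projection reads $\mathcal L^+ r_\beta^+=\Pi^+\bigl(2|Q^+|^2r_\beta^-+(Q^+)^2\overline{r_\beta^-}\bigr)+\Pi^+\mathcal N(r_\beta)$ with $\mathcal N$ cubic, and inverting $\mathcal L^+$ modulo its kernel bounds $\|r_\beta^+\|_{H^{\frac12}}$ by the right-hand side. The main obstacle is the slow decay $Q^+\sim 1/\langle x\rangle$, which is only marginally $L^2$-integrable and forces logarithmic losses when tracking the modulation parameters $(x_\beta,\gamma_\beta)$ and when estimating the coupling $\|\Pi^+(|Q^+|^2r_\beta^-)\|_{H^{-\frac12}}$ in the low-frequency regime $|\xi|\lesssim 1-\beta$ where the above multiplier is of order one; these logarithmic losses combined with the sharp $(1-\beta)^{\frac12}$ rate produce the final $(1-\beta)^{\frac12}|\log(1-\beta)|^{\frac12}$ factor.
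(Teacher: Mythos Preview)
Your Stages 1 and 2 track the paper closely: the same $Q_\beta=Q_\beta^++Q_\beta^-$ splitting, the kinetic identity yielding $\||D|^{1/2}Q_\beta^-\|_{L^2}^2\lesssim 1-\beta$, and concentration--compactness for the qualitative convergence. The departure is in Stage 3. The paper does \emph{not} linearize around $Q^+$; it sharpens the control on $Q_\beta^-$ via the Fourier formula $\widehat{Q_\beta^-}(\xi)=\bigl(1+\tfrac{1+\beta}{1-\beta}|\xi|\bigr)^{-1}\widehat{|Q_\beta|^2Q_\beta}(\xi)\mathbf 1_{\xi<0}$, deduces that $Q_\beta^+$ is a near-minimizer for $J^+$ with defect $O((1-\beta)|\log(1-\beta)|)$, and then appeals to quantitative stability of the Szeg\H{o} minimization (defect $\delta$ implies $H^{1/2}$-distance $O(\sqrt\delta)$ to the orbit). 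The logarithm in \eqref{Q_b-Q^+} arises precisely from that variational defect, namely from the cross-term $\int|Q_\beta|^2\overline{Q_\beta}\,Q_\beta^-\,dx$, which by Plancherel reduces to $\int_{-\infty}^0\bigl(1+\tfrac{1+\beta}{1-\beta}|\xi|\bigr)^{-1}|\widehat{|Q_\beta|^2Q_\beta}|^2\,d\xi\sim(1-\beta)|\log(1-\beta)|$.

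Your linearization route is legitimate---and carried through correctly it actually yields the \emph{stronger} bound $\|r_\beta\|_{H^{1/2}}\lesssim(1-\beta)^{1/2}$, which of course implies \eqref{Q_b-Q^+}. But your last paragraph misidentifies the mechanism: the sources you cite do not produce a logarithm. The coupling term satisfies $\|\Pi^+(|Q^+|^2r_\beta^-)\|_{H^{-1/2}}\le\||Q^+|^2\|_{L^\infty}\|r_\beta^-\|_{L^2}\lesssim(1-\beta)^{1/2}$ with no loss, and the implicit-function-theorem adjustment of $(x_\beta,\gamma_\beta)$ is smooth and introduces no log either; the observation that the multiplier is of order one for $|\xi|\lesssim 1-\beta$ is correct but irrelevant to that $H^{-1/2}$ estimate. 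Two smaller slips: $\mathcal N$ is quadratic-and-higher in $r_\beta$, not cubic (this matters for the absorption step), and your displayed equation for $r_\beta^-$ still has $r_\beta$ on the right through the difference $|\tilde Q_\beta|^2\tilde Q_\beta-|Q^+|^2Q^+$, so it must be handled as a bootstrap rather than a direct formula. In short, your sketch reaches the right conclusion but for the wrong reason: the log belongs to the paper's variational argument, not to your linearization.
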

\begin{proof}
First observe that, since $\vert D\vert -\beta D\ge (1-\beta )\vert D\vert$,
$$I_\beta \ge (1-\beta )I_0\ ,$$
and, by plugging $u=Q^+$ in $J_\beta $,
$$I_\beta \leq (1-\beta )I^+\ .$$
We claim that indeed,
$$\frac{I_\beta }{1-\beta} \rightarrow I^+\ .$$
Decompose
$$Q_\beta =Q_\beta ^++Q_\beta ^-\ ,\ Q_\beta ^{\pm }:=\Pi _\pm (Q_\beta )\ .$$
Then  identities (\ref{idQbeta}) read
$$\Vert Q_\beta ^+\Vert _{L^2}^2+\Vert Q_\beta ^-\Vert _{L^2}^2=\frac 12 \Vert Q_\beta ^++Q_\beta ^-\Vert _{L^4}^4=(DQ_\beta ^+,Q_\beta ^+)+\frac{1+\beta }{1-\beta} (\vert D\vert Q_\beta ^-,Q_\beta ^-)=
\frac{2I_\beta}{1-\beta}\ .$$
This implies in particular
$$\Vert Q_\beta ^-\Vert _{L^2}^2\leq 2I^+\ ,\  (\vert D\vert Q_\beta ^-,Q_\beta ^-)\le 2I_+(1-\beta )\ ,\ \Vert Q_\beta ^-\Vert _{L^4}^4\leq \frac{4I_+^2}{I_0}(1-\beta )\rightarrow 0\ .$$
We are going to improve these estimates on $Q_\b^-$, using the following identity on Fourier transforms, which is an immediate consequence 
of the equation for $Q_\b $ in Proposition \ref{prop:QbCV},
$$\widehat {Q_\b}(\xi )=\frac{1}{1+\frac{\vert \xi \vert -\b \xi}{1-\b }}\widehat {\vert Q_\b \vert ^2Q_\b }(\xi )\ .$$
In particular,
\begin{equation}\label{Qbmoins}
\widehat {Q_\b}^-(\xi )=\frac{{\bf 1}_{\{ \xi <0\}}}{1+\frac{1+\b }{1-\b }\vert \xi \vert }\widehat {\vert Q_\b \vert ^2Q_\b }(\xi )\ .
\end{equation}
From (\ref{Qbmoins}) and the Plancherel formula, we immediately get
\begin{equation}\label{QbmoinsL2}
\Vert Q_\b ^-\Vert _{L^2}^2=\frac{1}{2\pi}\int _{-\infty }^0 \frac{1}{\left (1+\frac{1+\b }{1-\b }\vert \xi \vert \right )^2}\vert \widehat {\vert Q_\b \vert ^2Q_\b }(\xi )\vert ^2\, d\xi \ \le C(1-\beta)\ ,
\end{equation}
where we used a bound on $Q_\b $ in $L^3$, which is a consequence of identities (\ref{idQbeta}) and of the 
estimate $I_\beta \leq (1-\beta )I^+\ .$
Similarly, we have
\begin{equation}\label{QbmoinsH}
(DQ_\b ^-,Q_\b ^-)=\frac{1}{2\pi}\int _{-\infty }^0 \frac{\vert \xi \vert}{\left (1+\frac{1+\b }{1-\b }\vert \xi \vert \right )^2}\vert \widehat {\vert Q_\b \vert ^2Q_\b }(\xi )\vert ^2\, d\xi \ \le C(1-\beta)^2\vert \log (1-\b)\vert \ ,
\end{equation}
because of the logarithmic divergence of the integral at $\xi =0$. This already implies
$$\Vert Q_\beta ^-\Vert _{L^4}^4\leq C(1-\b )^3\vert \log (1-\b )\vert \ .$$
Finally, using the bound on $Q_\b $ in all the $L^p$-norms with $p$ finite, we have
\begin{align*}
\Vert Q_\b ^+\Vert _{L^4}^4& = \|Q_\b-Q_\b^-\|_{L^4}^4= \|Q_\b\|_{L^4}^4-4{\rm Re}\left ( \int _\R\vert Q_\b \vert ^2\overline Q_\b Q_\b ^-\, dx\right )+O(\Vert Q_\b ^-\Vert _{L^4}^2)\\
& = \|Q_\b\|_{L^4}^4-4{\rm Re}\left ( \frac{1}{2\pi}\int _{-\infty}^0 \frac{1}{1+\frac{1+\b }{1-\b }\vert \xi \vert }\vert \widehat {\vert Q_\b \vert ^2Q_\b }(\xi )\vert ^2\, d\xi \right ) +O((1-\b )^{3/2}\vert \log (1-\b )\vert ^{1/2})\\
&= \|Q_\b\|_{L^4}^4-O((1-\b )\vert \log (1-\b )\vert ).
\end{align*}
\noindent
Therefore
\begin{align*}
I^+&\le J^+ (Q_\beta ^+)=\frac{(DQ_\b^+,Q_\b^+)\|Q_\b^+\|_{L^2}^2}{\|Q_\b^+\|_{L^4}^4}
= \frac{\big(\frac{2I_\b}{1-\b}\big)^2}{\|Q_\b\|_{L^4}^4-O((1-\b)\vert \log (1-\b )\vert)}\\
&=  \frac{I_\beta }{1-\beta}-O((1-\b)\log (1-\b))\leq I^++O((1-\b)\vert \log (1-\b )\vert)).
\end{align*}
\noindent
Summing up, we have proved
\begin{align}
0\le I_+-\frac{I_\b}{1-\b}&\lesssim (1-\b)\vert \log (1-\b )\vert\ ,\notag\\
\Big|\Vert Q_\beta ^+\Vert _{L^2}^2- 2I^+\Big|
+\Big| \Vert Q_\beta ^+\Vert _{L^4}^4-4I^+\Big|
&+\Big| (DQ_\beta ^+,Q_\beta ^+)- 2I^+\Big|\notag \\
&\lesssim (1-\b)\vert \log (1-\b)\vert\label{boundsQ_b}\\
\|Q_{\b}^-\|_{\dot{H}^{1/2}}&\lesssim (1-\b)\vert \log (1-\b)\vert ^{\frac 12}\label{dotH1/2Q-}\\
\Vert Q_\beta ^-\Vert _{H^{\frac 12}}&\lesssim (1-\b)^{\frac 12}.\notag
\end{align}
By a concentration-compactness argument on the space $H^{\frac 12}_+$ (see e.g. \cite{Po}, Prop. 5.1), this yields \eqref{Q_b-Q^+}.
\end{proof}

By a straightforward argument, we upgrade the convergence of $Q_\beta$ to any $H^s$.

\begin{proposition} \label{prop:reg}
Let $\beta_n \to 1$, $\beta_n < 1$, and  suppose that $Q_{\beta_n} \in \mathcal{Q}_{\beta_n}$ satisfies $Q_{\beta_n} \to Q^+$ in $H^{\frac 1 2}(\R)$. Then, for any $s \geq 0$, we have 
$$
\| Q_{\beta_n} \|_{H^s} \leq C_s .
$$
In particular, $\|Q_{\b_n}\|_{L^{\infty}}\leq C$ and it holds that
$$
Q_{\beta_n} \to Q^+ \ \ \mbox{in} \ \ H^s(\R) \ \ \mbox{for all $s \geq 0$}.
$$
\end{proposition}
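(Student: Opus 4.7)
The plan is a bootstrap argument based on the elliptic equation satisfied by $Q_\beta$, decoupled along positive and negative frequencies. Writing $Q_\beta = Q_\beta^+ + Q_\beta^-$ with $Q_\beta^\pm = \Pi_\pm Q_\beta$, the identity $\frac{|D|-\beta D}{1-\beta}Q_\beta + Q_\beta = |Q_\beta|^2 Q_\beta$ splits into
$$(1+D)\,Q_\beta^+ = \Pi_+(|Q_\beta|^2 Q_\beta), \qquad \Bigl(1+\tfrac{(1+\beta)|D|}{1-\beta}\Bigr)\,Q_\beta^- = \Pi_-(|Q_\beta|^2 Q_\beta).$$
Both operators admit inverses that gain one derivative \emph{uniformly} in $\beta \in [0,1)$: the positive-frequency inverse $(1+D)^{-1}$ is $\beta$-independent, while for negative frequencies the pointwise symbol estimate $\tfrac{|\xi|}{1+(1+\beta)|\xi|/(1-\beta)} \leq \tfrac{1-\beta}{1+\beta} \leq 1$ for $\xi < 0$ yields, for every $s \geq 0$,
$$\bigl\| |D|\, Q_\beta^- \bigr\|_{H^s} \leq \bigl\| \Pi_-(|Q_\beta|^2 Q_\beta) \bigr\|_{H^s},$$
with in fact a small prefactor $\tfrac{1-\beta}{1+\beta}$ on the right.

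For the base of the bootstrap, the assumption $Q_{\beta_n} \to Q^+$ in $H^{1/2}$ furnishes a uniform $H^{1/2}$ bound, hence by Sobolev embedding a uniform $L^p$ bound for every $p < \infty$, and therefore a uniform $L^2$ bound on $|Q_{\beta_n}|^2 Q_{\beta_n}$. The splitting above then upgrades $Q_{\beta_n}$ to a uniform bound in $H^1$, whence in $L^\infty$ via $H^1(\R) \hookrightarrow L^\infty(\R)$.

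For the inductive step, assume $Q_{\beta_n}$ is uniformly bounded in $H^s$ for some $s \geq 1$. Since $H^s(\R)$ is a Banach algebra for $s > 1/2$, $|Q_{\beta_n}|^2 Q_{\beta_n}$ is uniformly bounded in $H^s$, and the one-derivative gain applied to both frequency regimes produces a uniform bound in $H^{s+1}$. Iterating over integers and then interpolating yields $\|Q_{\beta_n}\|_{H^s} \leq C_s$ for every $s \geq 0$.

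The convergence statement then follows by interpolation: since $\|Q_{\beta_n} - Q^+\|_{H^{1/2}} \to 0$ by Proposition~\ref{prop:QbCV} while $\|Q_{\beta_n} - Q^+\|_{H^{s'}}$ is uniformly bounded for every $s' > s$, the standard interpolation inequality $\|f\|_{H^s} \leq \|f\|_{H^{1/2}}^{\theta} \|f\|_{H^{s'}}^{1-\theta}$ with $\theta = \tfrac{s'-s}{s'-1/2}$ delivers $Q_{\beta_n} \to Q^+$ in every $H^s$, $s \geq 0$. The only mild subtlety in the argument is ensuring the uniform derivative gain on the negative-frequency side despite the apparent degeneration of the multiplier as $\beta \to 1$, which is absorbed by the elementary symbol bound stated above; no genuine obstacle is present.
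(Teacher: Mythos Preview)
Your proof is correct and follows essentially the same bootstrap strategy as the paper: use the elliptic equation to gain one derivative uniformly in $\beta$, start from the $H^{1/2}$ bound to reach $H^1$ and hence $L^\infty$, then iterate and interpolate. The only cosmetic differences are that the paper handles the full operator $\nabla\bigl(\tfrac{|D|-\beta D}{1-\beta}+1\bigr)^{-1}$ in one stroke via the symbol inequality $|\xi|-\beta\xi \geq (1-\beta)|\xi|$ rather than splitting into $\Pi_\pm$ pieces, and it runs the induction via Leibniz plus the $L^\infty$ bound rather than the $H^s$ algebra property; these are equivalent packagings of the same argument.
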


\begin{proof} It suffices to prove the claim for integer $s \in \N$. By applying $\nabla^s$ to the equation satisfied by $Q_n := Q_{\beta_n}$, we obtain that
\begin{equation} \label{eq:boot}
 \nabla^s Q_{n} = \frac{\nabla}{\frac{|D| - \beta_n D}{1-\beta_n} +1} \nabla^{s-1} ( |Q_{n}|^2 Q_{n} ) =: A_{\beta_n} \nabla^{s-1} ( |Q_n|^2 Q_n ) .
\end{equation} 
Using the simple fact that $|\xi| - \beta \xi \geq (1-\beta) |\xi|$, we see that $\| A_{\beta_n} \|_{L^2 \to L^2} \leq C$ holds. Thus, by choosing $s=1$, we obtain the uniform bound
$$
\| \nabla Q_n \|_{L^2} \leq C \| Q_n \|_{L^6}^3 \leq C,
$$
since $\| Q_n \|_{L^6} \leq C$ because of $Q_n \to Q^+$ in $H^{\frac 1 2}$. Hence we obtain the uniform bounds $\| Q_n \|_{H^1} \leq C$ and $\|Q_n \|_{L^\infty} \leq C$ (by Sobolev embedding). Now, by induction over $s \in \N$, Leibniz' rule, and the uniform bounds $\| Q_n \|_{L^\infty} \leq C$, we find 
$$
\| Q_n \|_{H^k} \leq C_k
$$
for any $k \in \N$. By interpolation, this bound implies that $Q_n \to Q^+$ in $H^s$ for any $s \geq 0$, since $Q_n \to Q^+$ in $H^{\frac 1 2}$ by assumption.
\end{proof}

%%%%%%%%%%%%%%%%%%%%%%%%%%%%%%%%%%%%%%%%%%%%%%%

\subsection{Invertibility of the linearized operator}

%%%%%%%%%%%%%%%%%%%%%%%%%%%%%%%%%%%%%%%%%%%%%%%%

In this section, we fix a solitary wave $Q_\b\in\mathcal{Q}_\b$. 
Let the linearized operator close to this solitary wave be
\be
\label{deflbeta}
\LQb \eps = \frac{|D|- \beta D}{1-\beta} \eps + \eps - 2 |Q_\beta|^2 \eps - Q_\beta^2 \overline{\eps}.
\ee

We may now invert $\matchal L_{\beta}$ and prove the continuity of the inverse in suitable weighted norms.

\begin{proposition}[Invertibility of $\LQb$]
\label{invertibility1}
There exist $\beta _*\in (0,1)$ such that for all $ \beta \in (\beta _*,1)$ and for all $Q_\b\in\mathcal{Q}_\b$, the following holds. 
There exists $C>0$ such that for all $f\in H^{\frac 12}$ we have
\be \label{estLbeta}
\Vert f\Vert _{H^{\frac 12}}\le C\left (\Vert \LQb f\Vert _{H^{-\frac 12}}+\vert (f,iQ_\beta )\vert +\vert (f,\partial _xQ_\beta )\vert \right )\ .
\ee

\noindent
Let $g\in H^{-\frac 12}$ with 
\be
\label{orhoghog}
(g,iQ_\beta )=(g,\partial _xQ_\beta )=0.
\ee 
Then, there exists a unique solution to 
\be
\label{cneocneoneonoe}
\mathcal{L}_\b f=g,\ \ (f,iQ_\beta)=(f,\pa_xQ_\beta)=0, \ \ f\in H^{\frac 12}
\ee 
and 
\be
\label{esthoin}
\|f\|_{H^{\frac 12}}\lesssim \|g\|_{H^{-\frac12}}.
\ee 

\end{proposition}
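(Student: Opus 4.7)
My plan is to prove the coercivity estimate \eqref{estLbeta} by a compactness--contradiction argument that reduces, in the limit $\beta\to 1$, to the non-degeneracy of the Szeg\H{o} linearized operator at $Q^+$ established in \cite{Po}, and then to deduce the existence, uniqueness, and bound \eqref{esthoin} from Fredholm theory combined with self-adjointness.

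I would first split $\mathcal{L}_\beta=\mathcal{M}_\beta+K_\beta$ with $\mathcal{M}_\beta:=\frac{|D|-\beta D}{1-\beta}+1$ and $K_\beta f:=-2|Q_\beta|^2 f-Q_\beta^2\overline{f}$. The Fourier symbol of $\mathcal{M}_\beta$ is bounded below by $1$, so $\mathcal{M}_\beta:H^{1/2}\to H^{-1/2}$ is an isomorphism with $\beta$-uniform norm bounds. Multiplication by a decaying $L^\infty$ potential is compact from $H^{1/2}$ to $H^{-1/2}$ (split the potential into a compactly supported piece plus a small $L^\infty$-tail and use Rellich), and Proposition \ref{prop:reg} provides $Q_\beta\to Q^+$ in $L^\infty$; consequently $K_\beta$ is compact and norm-converges to $K^+ f:=-2|Q^+|^2 f-(Q^+)^2\overline{f}$ as $\beta\to 1$. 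Thus $\mathcal{L}_\beta$ is Fredholm of index $0$, a direct computation shows it is real-symmetric for \eqref{realcompes}, and differentiating \eqref{travellingwave} in phase and translation yields $iQ_\beta,\partial_xQ_\beta\in\ker\mathcal{L}_\beta$. The coercivity at the heart of the proof is the identity
\be
\langle\mathcal{M}_\beta f,f\rangle=\||D|^{1/2}f_+\|_{L^2}^2+\frac{1+\beta}{1-\beta}\||D|^{1/2}f_-\|_{L^2}^2+\|f\|_{L^2}^2,
\ee
with $f_+:=\Pi_+f$ and $f_-:=(I-\Pi_+)f$; the diverging coefficient $\tfrac{1+\beta}{1-\beta}$ severely penalizes the negative-frequency part as $\beta\to 1$.

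To prove \eqref{estLbeta}, I would argue by contradiction: extract sequences $\beta_n\to 1$ (allowed by shrinking $\beta_*$) and $f_n\in H^{1/2}$ with $\|f_n\|_{H^{1/2}}=1$ such that $g_n:=\mathcal{L}_{\beta_n}f_n\to 0$ in $H^{-1/2}$ and $(f_n,iQ_{\beta_n}),(f_n,\partial_xQ_{\beta_n})\to 0$. Pairing the equation with $f_n$ and using $\|Q_{\beta_n}\|_{L^\infty}\leq C$ gives $\langle\mathcal{M}_{\beta_n}f_n,f_n\rangle=O(1)$; the identity above then forces $\||D|^{1/2}(f_n)_-\|_{L^2}^2=O(1-\beta_n)\to 0$. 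Extracting a weakly convergent subsequence $f_n\rightharpoonup f_\infty$ in $H^{1/2}$, the weak $L^2$-limit of $(f_n)_-$ must vanish (if $(f_n)_-\rightharpoonup h$ in $L^2$, strong convergence $|D|^{1/2}(f_n)_-\to 0$ in $L^2$ forces $|D|^{1/2}h=0$, so $h=0$), hence $f_\infty\in H^{1/2}_+$. Testing the equation against $\phi\in H^{1/2}_+$ and using $\langle\mathcal{M}_{\beta_n}(f_n)_-,\phi\rangle=0$ by orthogonality of positive and negative frequencies, one obtains $\langle(D+1)f_\infty,\phi\rangle+\langle K^+f_\infty,\phi\rangle=0$ in the limit (the $K_{\beta_n}f_n\to K^+f_\infty$ convergence is strong in $H^{-1/2}$ by norm continuity of $K_\beta$ together with compactness of $K^+$). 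Hence $L^+f_\infty=0$ in $H^{-1/2}_+$ with $L^+f:=Df+f-2|Q^+|^2f-(Q^+)^2\overline{f}$ the Szeg\H{o} linearized operator on $H^{1/2}_+$; the orthogonality conditions pass to the limit and give $f_\infty\perp iQ^+,\partial_xQ^+$, so the Pocovnicu non-degeneracy \cite{Po} forces $f_\infty=0$. Writing $f_n=\mathcal{M}_{\beta_n}^{-1}(g_n-K_{\beta_n}f_n)$ with $\mathcal{M}_{\beta_n}^{-1}$ uniformly bounded $H^{-1/2}\to H^{1/2}$ and $K_{\beta_n}f_n\to 0$ strongly in $H^{-1/2}$ then yields $f_n\to 0$ strongly in $H^{1/2}$, contradicting $\|f_n\|_{H^{1/2}}=1$. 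Combined with Fredholm index $0$ and self-adjointness, \eqref{estLbeta} gives $\ker\mathcal{L}_\beta=\mathrm{span}_{\R}\{iQ_\beta,\partial_xQ_\beta\}$ and $\mathrm{range}(\mathcal{L}_\beta)=(\ker\mathcal{L}_\beta)^\perp$, so \eqref{orhoghog} is exactly the solvability condition for \eqref{cneocneoneonoe} and \eqref{esthoin} follows from \eqref{estLbeta}.

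The main obstacle I anticipate is that the coercivity identity controls only the $\dot H^{1/2}$-seminorm of the negative-frequency part, not its $L^2$-norm, so $L^2$-mass of $(f_n)_-$ could a priori persist concentrated near $\xi=0$; the small Fourier-support argument above showing that any such weakly convergent mass must vanish is what closes the contradiction. A secondary technicality is the norm continuity of $K_\beta$ in $\beta$ near $1$, which rests on $Q_\beta\to Q^+$ in $L^\infty$ from Proposition \ref{prop:reg} together with the compactness of multiplication by a decaying $L^\infty$ potential from $H^{1/2}$ to $H^{-1/2}$.
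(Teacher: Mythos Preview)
Your argument is correct and reaches the same conclusion as the paper, but by a genuinely different route. The paper proceeds in two steps: it first establishes the analogue of \eqref{estLbeta} for the limiting Szeg\H{o} operator $\mathcal L$ on $H^{1/2}_+$ (via an orthogonal decomposition $L^2_+=(V\oplus iV)^\perp\oplus iV\oplus V$ and the coercivity of $\mathcal L$ on $(V\oplus iV)^\perp$ from \cite{Po2}), and then transfers this to $\mathcal L_\beta$ by an explicit perturbation computation, splitting $f=f^++f^-$ and estimating $\Pi_\pm(\mathcal L_\beta f)$ separately. You instead run a single compactness--contradiction directly along $\beta_n\to 1$, using the Fredholm/compact-perturbation structure and the coercivity identity for $\mathcal M_\beta$ to kill the negative-frequency part in the limit, then invoking $\ker\mathcal L=\mathrm{span}_\R\{iQ^+,\partial_xQ^+\}$ once. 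Your approach is more abstract and streamlined; the paper's is more hands-on and yields, as a by-product, the explicit inequality (labeled (estL) there) and its variant with $Q^+$ in place of $Q_\beta$ in the scalar products (Remark~\ref{estLbetaplus}), which they need later. Two small points: (i) your claim that $\mathcal M_\beta:H^{1/2}\to H^{-1/2}$ has $\beta$-uniform norm is false on the negative-frequency side (the symbol is $\sim\frac{1+\beta}{1-\beta}|\xi|$), but you only use the uniform bound on $\mathcal M_\beta^{-1}$, which does hold since the symbol satisfies $m_\beta(\xi)\geq 1+|\xi|$; (ii) since the statement is for all $Q_\beta\in\mathcal Q_\beta$, you should note (as the paper does) that by translation/phase invariance one may normalize the contradicting sequence so that $Q_{\beta_n}\to Q^+$.
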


\begin{proof}[Proof of Proposition \ref{invertibility1}]
The invertibility claim 
follows easily 
once one proves \eqref{estLbeta}. 
Indeed, denote by $P_\beta $ the orthogonal projection onto $V_\beta :={\rm span} _\R (iQ_\beta ,\partial _xQ_\beta )$. Since $V_\beta \subset \ker \LQb $ from the invariance of
the equation on $Q_\beta $ by translation and phase shift, we have
$$f\in \ker \LQb \Rightarrow f-P_\beta f\in \ker \LQb \ .$$
Applying estimate (\ref{estLbeta}) to $f-P_\beta f$, we conclude that $f-P_\beta f=0$, namely $f\in V_\beta $. Therefore,
$\ker \mathcal{L}_\b=V_\beta$. The rest of the statement is just Fredholm alternative applied to the self-adjoint Fredholm operator
$\LQb$.

In the remaining we will prove \eqref{estLbeta}.

\medskip

\noindent
{\bf Step 1:} We first claim that
\be \label{estL}
\forall f\in H^{\frac 12}_+\ ,\  \Vert f\Vert _{H^{\frac 12}}\le C\left (\Vert \mathcal Lf\Vert _{H^{-\frac 12}}+\vert (f,iQ^+)\vert +\vert (f,\partial _xQ^+)\vert \right )\ ,
\ee
where $\mathcal L$ denotes the linearized operator for the equation on $Q^+$,
\begin{equation}\label{defL}
\L\eps :=D\eps +\eps -\Pi _+(2\vert Q^+\vert ^2\eps +(Q^+)^2\overline \eps)\ , \eps \in H^{\frac 12}_+\ .
\end{equation}
To prove this estimate, we closely follow Section 5 of \cite{Po2}. More precisely, we decompose $f\in H^{\frac 12}_+$ according to the orthogonal decomposition
$$L^2_+=(V\oplus iV)^\perp \oplus iV \oplus V ,\ V:={\rm span}_\R (iQ^+, \partial _xQ^+)\ ,$$
which reads
$$f=f'+f''_1+f''_2\ .$$
By translation invariance and phase shift invariance, $\L =0$ on $V$. Moreover, an exact computation yields
$$\L (Q^+)=-2(DQ^++Q^+)\ ,\ \L (DQ_+)=-2DQ^+-4Q^+\ .$$
Consequently, $\L :iV\rightarrow iV $ is one to one. Finally, $\L :(V\oplus iV)^\perp\rightarrow (V\oplus iV)^\perp$ and is coercive (as shown in \cite{Po2}),
\be
\label{ceroclimiting}
(\L f',f')\ge c\Vert f'\Vert _{H^{\frac 12}}^2\ ,
\ee
and consequently,
$$\forall f'\in H^{\frac 12}_+\cap (V\oplus iV)^\perp\ ,\ \Vert f'\Vert _{H^{\frac 12}}\le C\Vert \L f'\Vert _{H^{-\frac 12}}\ .$$
We now proceed by contradiction. Assume (\ref{estL}) fails. Then there exists a sequence $(f_n)$ of $H^{\frac 12}_+$ such that
$$\Vert f_n\Vert _{H^{\frac 12}}=1\ ,\ \Vert \L f_n\Vert _{H^{-\frac 12}}\rightarrow 0\ ,\ \vert (f_n,iQ^+)\vert +\vert (f_n,\partial _xQ^+)\vert \rightarrow 0\ .$$
Decomposing $f_n=f'_n+f''_{n1}+f''_{n2}$, we notice that the last condition exactly means $f''_{n2}\rightarrow 0$ in the plane $V$. Moreover, since 
$\Vert f''_{n1}\Vert _{L^2}\le \Vert f_n\Vert _{L^2}$, we may assume that $f''_{n1}\rightarrow f''_1$ in the plane $iV$. Since
$$\L f_n=\L f'_n+\L f''_{n1}\ ,$$
we have, for every $g\in iV$,
$$(\L f''_{n1}, g)=(\L f_n,g)\rightarrow 0\ ,$$
whence  $(\L f''_1,g)=0$, or $\L f''_1=0$, which implies $f''_1=0$ since $\L :iV\rightarrow iV$ is one to one. Finally, we conclude that
$\L f'_n\rightarrow 0$ in $H^{-\frac 12}$, which implies $f'_n\rightarrow 0$ in $H^{\frac 12}$, and finally $f_n\rightarrow 0$ in $H^{\frac 12}$, a contradiction.\\

\medskip

\noindent
\noindent{\bf Step 2:} Proof of \eqref{estLbeta}. This now follows from a standard perturbation argument. Indeed, since (\ref{esthoin}) is translation and phase-shift invariant, it is enough to 
prove it for $Q_\beta =Q_{\beta _n}\rightarrow Q^+, \, \beta _n\rightarrow 1, \, n\ge N$ sufficiently large. In the following, we write 
$$Q_n = Q_{\beta_n}.$$ 
For $f\in H^{\frac 12}$, we observe that
$$\Vert \LQbn f\Vert _{H^{-\frac 12}} ^2= \Vert \Pi _+\LQbn f\Vert _{H^{-\frac 12}}^2+\Vert \Pi _-\LQbn f\Vert _{H^{-\frac 12}}^2\ .$$
Write $f^\pm :=\Pi _\pm (f)$. We have
$$\Pi _-(\LQbn f)=\frac{1+\beta _n}{1-\beta _n}\vert D\vert f^- +f^--\Pi _-(2\vert Q_n\vert ^2f+Q_n^2\overline f)\,$$
hence, using the $L^4$ bound for $Q_n$,
$$(\Pi _-(\LQbn f), f^-)\ge \frac{1+\beta _n}{1-\beta _n}(\vert D\vert f^-, f^-)+\Vert f^-\Vert _{L^2}^2-\Or (1)\Vert f\Vert _{L^4}\Vert f^-\Vert _{L^4}\ .$$
Using the Gagliardo-Nirenberg inequality for $f^-$ and $\beta _n$ close to $1$, we can absorb $\Vert f^-\Vert _{L^4}^2$ with a large factor and get
$$(\Pi _-(\LQbn f), f^-)\ge \frac{1}{1-\beta _n}(\vert D\vert f^-, f^-)+\Vert f^-\Vert _{L^2}^2-o(1)\Vert f^+\Vert _{L^4}^2\ ,$$
and finally
$$\Vert \Pi _-(\LQbn f)\Vert _{H^{-\frac 12}}^2\ge  c\left (\frac{1}{1-\beta _n}(\vert D\vert f^-, f^-)+\Vert f^-\Vert _{L^2}^2\right )- o(1)\Vert f^+\Vert _{L^4}^2\ .$$
On the other hand,
$$\Pi _+(\LQbn f)=\Pi _+(\LQbn f^+)+\Pi _+(\LQbn f^-)=\L f^++r^++r^-\ ,$$
with
\bee
r^- &=&-\Pi _-(2\vert Q_n\vert ^2f^-+Q_n^2\overline {f^-})\ ,\ \Vert r^-\Vert _{H^{-\frac 12}}\le \Vert r^-\Vert _{L^2}\le \Or (1)\Vert f^-\Vert _{L^4}\ ,\\
r^+ &=&-\Pi _+(2(\vert Q_n\vert ^2-\vert Q^+\vert ^2)f^++(Q_n^2-(Q^+)^2)\overline {f^+})\ ,\ \Vert r^+\Vert _{H^{-\frac 12}}\le \Vert r^+\Vert _{L^2}\le \ o(1)\Vert f^+\Vert _{L^4}\ ,
\eee
where we have used uniform estimates on $Q_n$ and the fact that $Q_n\rightarrow Q^+$ in $L^p$ for every $p$. Finally, 
\begin{equation}\label{eq: estim P_+}
\Vert \Pi _+(\LQbn f)\Vert _{H^{-\frac 12}}^2\ge \Vert \L f^+\Vert _{H^{\frac 12}}^2-o(1)\Vert f^+\Vert _{L^4}^2-\Or (1)\Vert f^-\Vert _{L^4}^2\ .
\end{equation}

Summing up, we get, using again the absorption of $\Vert f^-\Vert _{L^4}$, 
$$\Vert \LQbn f\Vert _{H^{-\frac 12}}^2\ge c\left (\frac{1}{1-\beta _n}(\vert D\vert f^-, f^-)+\Vert f^-\Vert _{L^2}^2\right )+\Vert \L f^+\Vert _{H^{\frac 12}}^2-o(1)\Vert f^+\Vert _{L^4}^2\ .$$
On the other hand,
$$\vert (f,\partial _xQ_n)\vert ^2+\vert (f,iQ_n)\vert ^2\geq \vert (f^+,\partial _xQ^+)\vert ^2+\vert (f^+,iQ^+)\vert ^2-o(1)\Vert f\Vert _{L^2}^2\ .$$
Summing the last two inequalities and using estimate (\ref{estL}) for $f^+$, we absorb the term $o(1)(\Vert f^+\Vert _{L^4}^2+\Vert f\Vert _{L^2}^2)$ and obtain the desired estimate.
\end{proof}

\begin{remark}\label{estLbetaplus}
We also have the estimate 
\begin{equation}\label{estLbetaplusplus}
\Vert f\Vert _{H^{\frac 12}}\le C\left (\Vert \LQb f\Vert _{H^{-\frac 12}}+\vert (f,iQ^+)\vert +\vert (f,\partial _xQ^+)\vert \right )\ ,
\end{equation}
if $\beta $ is close enough to $1$ and $Q_\beta $ is close enough to $Q^+$. This will be useful in the next subsection  for defining a smooth branch of $Q_\beta $.
\end{remark}

%%%%%%%%%%%%%%%%%%%%%%%%%%%%%%%%%%%%%%%%%%%%

\subsection{Uniqueness of traveling waves for $\beta\in (\b_\ast, 1)$ close to $1$}

%%%%%%%%%%%%%%%%%%%%%%%%%%%%%%%%%%%%%%%%%%%%

\begin{proposition}\label{prop: charac Q_beta}
There exists $\beta _*\in (0,1)$ such that the following holds.
\begin{itemize}
\item For every $\beta \in (\beta _*,1)$, for every $Q_\beta ,\tilde Q_\beta $ in $\mathcal Q_\beta $, there exists $(\gamma ,y)\in \T \times \R $ such that
$$\tilde Q_\beta (x)={\rm e}^{i\gamma} Q_\beta (x-y)\ .$$
\item There exists a neighborhood $U$ of $Q^+$ in $H^{\frac 12}$ such that, for every $\beta \in (\beta _*,1)$, $\mathcal Q_\beta \cap U$ contains a unique point $Q_\beta $ satisfying
$$(Q_\beta ,iQ^+)=(Q_\beta ,\pa _xQ^+)=0\ .$$
Moreover, we have
\be \label{QbQ+}
\Vert Q_\b -Q^+\Vert _{H^{1}}=O\left (\vert 1-\b\vert ^{\frac12}\vert \log(1-\b)\vert ^{\frac12}\right )\ .
\ee
The map $\beta \in (\beta _*,1)\mapsto Q_\beta \in H^{\frac 12}$ is smooth, tends to $Q^+$ as $\beta $ tends to $1$, 
and its derivative is uniquely determined by
\bea \label{eqQbdot}
\begin{cases}
\LQb (\pa _\beta Q_\beta )=\frac{2}{1-\beta ^2}(Q_\b^- -\Pi_-(\vert Q_\b \vert^2Q_\b))\cr
\ (\pa _\beta Q_\beta ,iQ^+)=(\pa _\beta Q_\beta ,\pa_xQ^+)=0
\end{cases}
\eea
\end{itemize}
\end{proposition}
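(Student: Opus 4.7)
The plan is to combine the $H^{\frac 12}$-compactness provided by Proposition \ref{prop:QbCV} with the uniform invertibility of $\LQb$ (Proposition \ref{invertibility1} and Remark \ref{estLbetaplus}) in a Lyapunov--Schmidt scheme, and then to upgrade the $H^{\frac 12}$-convergence to $H^1$ via a frequency decomposition of the difference $Q_\beta-Q^+$. The main obstacle is that the operator $\frac{|D|-\beta D}{1-\beta}$ is singular as $\beta\to 1$, so an implicit function theorem at the limit $\beta=1$ is unavailable; every estimate has to be made uniform in $\beta\in(\beta_*,1)$, and the large coefficient $\tfrac{1+\beta}{1-\beta}$ on negative frequencies has to be turned into a quantitative gain.

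First I would fix a canonical representative in each class $\mathcal Q_\beta$. Given $Q_\beta\in \mathcal Q_\beta$, Proposition \ref{prop:QbCV} supplies $(x_0,\gamma_0)\in\R\times\T$ such that $e^{-i\gamma_0}Q_\beta(\cdot+x_0)$ is $H^{\frac 12}$-close to $Q^+$. Since the Jacobian of the map $(x,\gamma)\mapsto\bigl((e^{-i\gamma}Q_\beta(\cdot+x),iQ^+),(e^{-i\gamma}Q_\beta(\cdot+x),\pa_xQ^+)\bigr)$ at $(x_0,\gamma_0)$ is close to its non-degenerate value $\mathrm{diag}(-\|Q^+\|_{L^2}^2,-\|\pa_xQ^+\|_{L^2}^2)$ at $Q^+$, a small application of IFT on $(x,\gamma)$ produces a unique adjustment such that the new representative, still denoted $Q_\beta$, satisfies $(Q_\beta,iQ^+)=(Q_\beta,\pa_xQ^+)=0$ and lies in a prescribed small $H^{\frac 12}$-neighborhood $U$ of $Q^+$. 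For uniqueness of such a representative in $\mathcal Q_\beta\cap U$, I take two candidates $Q_\beta,\widetilde Q_\beta$ and set $w:=Q_\beta-\widetilde Q_\beta$; subtracting the two traveling wave equations yields $\mathcal L_\beta^{\mathrm{mid}} w=N(w)$ with $\|N(w)\|_{H^{-\frac 12}}\lesssim\|w\|_{H^{\frac 12}}^2$, where $\mathcal L_\beta^{\mathrm{mid}}$ is the linearization around a mean state still close to $Q^+$ in $L^\infty$. By the perturbative argument of Step 2 in the proof of Proposition \ref{invertibility1}, $\mathcal L_\beta^{\mathrm{mid}}$ still satisfies the estimate of Remark \ref{estLbetaplus}, and since $(w,iQ^+)=(w,\pa_xQ^+)=0$ by construction, we obtain $\|w\|_{H^{\frac 12}}\lesssim\|w\|_{H^{\frac 12}}^2$, hence $w=0$ for $U$ small enough. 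Full uniqueness in $\mathcal Q_\beta$ up to phase and translation then follows by sending every element to canonical form.

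To upgrade the $H^{\frac 12}$-bound \eqref{Q_b-Q^+} to the $H^1$-bound \eqref{QbQ+}, I would split $w:=Q_\beta-Q^+=w^++w^-$ with $w^\pm:=\Pi_\pm w$. Since $Q^+\in H^{\frac 12}_+$ solves the Szeg\H{o} equation $DQ^++Q^+=\Pi_+(|Q^+|^2Q^+)$ and satisfies $\frac{|D|-\beta D}{1-\beta}Q^+=DQ^+$, the positive-frequency part of the difference equation reads $(D+1)w^+=\Pi_+\bigl(|Q_\beta|^2Q_\beta-|Q^+|^2Q^+\bigr)$, whose right-hand side is bounded in $L^2$ by the uniform $L^\infty$-estimates of Proposition \ref{prop:reg} times $\|w\|_{L^2}$. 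The boundedness of $(D+1)^{-1}:L^2_+\to H^1_+$ then yields $\|w^+\|_{H^1}\lesssim\|w\|_{L^2}\lesssim(1-\beta)^{\frac 12}|\log(1-\beta)|^{\frac 12}$. For $w^-=Q_\beta^-$ (as $\Pi_-Q^+=0$), the explicit Fourier formula \eqref{Qbmoins} combined with the $L^2$-bound on $|Q_\beta|^2Q_\beta$ gives the sharper estimate $\|w^-\|_{H^1}\lesssim(1-\beta)$, exactly as in \eqref{QbmoinsH}. Summing these two bounds produces \eqref{QbQ+}.

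Finally, to establish the smoothness of $\beta\mapsto Q_\beta$ and the formula \eqref{eqQbdot}, I would apply the implicit function theorem to the smooth map
$$\Psi(\beta,Q):=\left(\frac{|D|-\beta D}{1-\beta}Q+Q-|Q|^2Q,\ (Q,iQ^+),\ (Q,\pa_xQ^+)\right)$$
from $(\beta_*,1)\times H^{\frac 12}$ into $H^{-\frac 12}\times\R^2$. At $(\beta,Q_\beta)$ its derivative in $Q$ is $\bigl(\LQb\,\cdot\,,(\cdot,iQ^+),(\cdot,\pa_xQ^+)\bigr)$; by Remark \ref{estLbetaplus} and the fact that $(iQ_\beta,iQ^+)$ and $(\pa_xQ_\beta,\pa_xQ^+)$ are close to non-zero values as $\beta\to 1$, this operator is an isomorphism uniformly in $\beta$. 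Differentiating the equation in $\beta$ and using
$$\pa_\beta\!\left[\frac{|D|-\beta D}{1-\beta}\right]=\frac{|D|-D}{(1-\beta)^2}=\frac{2|D|\Pi_-}{(1-\beta)^2}$$
produces $\LQb(\pa_\beta Q_\beta)=-\frac{2|D|Q_\beta^-}{(1-\beta)^2}$. Substituting $|D|Q_\beta^-=\frac{1-\beta}{1+\beta}\bigl(\Pi_-(|Q_\beta|^2Q_\beta)-Q_\beta^-\bigr)$, obtained by projecting the equation for $Q_\beta$ onto negative frequencies, yields the first line of \eqref{eqQbdot}, while differentiating the two orthogonality conditions in $\beta$ gives the second line.
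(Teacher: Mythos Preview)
Your proof is correct and follows essentially the same route as the paper: IFT to fix the gauge, the invertibility estimate \eqref{estLbetaplusplus} to kill the difference, a frequency splitting to upgrade to $H^1$, and differentiation of the equation to get \eqref{eqQbdot}. Two small slips (neither fatal): the Jacobian you compute is not diagonal---$(\partial_xQ^+,iQ^+)=2\pi$ and $(-iQ^+,\partial_xQ^+)=-2\pi$ are nonzero off-diagonal entries, so the limiting matrix is $\begin{pmatrix}2\pi&-2\pi\\4\pi&-2\pi\end{pmatrix}$, still invertible; and $\|w^-\|_{H^1}=\|Q_\beta^-\|_{H^1}$ is $O((1-\beta)^{1/2})$ rather than $O(1-\beta)$, since the $L^2$-part \eqref{QbmoinsL2} dominates---but this is still absorbed by the $w^+$ contribution.
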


\begin{proof}
Let us prove the first item. We may assume that $Q_\beta $ and $\tilde Q_\beta $ tend to  $Q^+$ as $\beta $ tends to $1$. For $(\gamma ,y)\in \T \times \R $, we then define
$$\eps (x,\gamma ,y,\beta ):= \tilde Q_\beta (x)-{\rm e}^{i\gamma}Q_\beta (x-y)\ ,$$
and 
$$f(\gamma ,y,\beta ):= (\eps (.,\gamma ,y,\beta ), i\tilde Q_\beta )\ ,\ g(\gamma ,y,\beta ):= (\eps (.,\gamma ,y,\beta ), \pa _x\tilde Q_\beta )\ .$$
These two functions are smooth in $(\gamma ,y)$ and their Jacobian matrix at $(\gamma,y)=(0,0)$ is close to 
\bee \begin{pmatrix}
(-iQ^+,iQ^+)& (\pa _xQ^+, iQ^+)\cr
(-iQ^+,\pa _xQ^+)& (\pa _xQ_+, \pa _xQ^+)
\end{pmatrix}
=
\begin{pmatrix}
-2\pi & 2\pi \cr
-2\pi & 4\pi 
\end{pmatrix}
\eee
therefore it is uniformly invertible. Moreover, as $\beta $ goes to $1$, $f(0,0,\beta )$ and $g(0,0,\beta )$ tend to $0$.
By the implicit function theorem, we conclude that there exist functions $\gamma (\beta ), y(\beta )$ with values near $(0,0)$ such that
$$f(\gamma (\beta ),y(\beta ),\beta )=g(\gamma (\beta ),y(\beta ), \beta )=0\ .$$
Then, coming back to the equations satisfied by $Q_\beta $ and $\tilde Q_\beta $,
we infer  that  $\eps (x,\beta ):=\eps (x,\gamma (\beta ) ,y(\beta ),\beta )$ satisfies
$$\Vert \mathcal L_{\tilde Q_\beta} \eps (\, .\, ,\beta )\Vert _{H^{-\frac 12}}\leq Co(1) \Vert \eps (\, .\, ,\beta )\Vert _{H^{\frac 12}}\ ,$$
and, using estimate (\ref{esthoin}), we conclude that $\eps (x,\beta )=0$.
\vskip 0.25cm
Let us come to the second item. Select a family $(Q_\beta ^0)$, with $Q_\beta ^0\in \mathcal Q_\beta $,  which tends to $Q^+$ as $\beta $ tends to $1$. Applying the implicit function theorem as before
to the functions 
$$\tilde f (\gamma, y, \beta ):=({\rm e}^{i\gamma}Q_\beta ^0(.-y), iQ^+)\ ,\ \tilde g(\gamma ,y,\beta ):=({\rm e}^{i\gamma}Q_\beta ^0(.-y), \pa _xQ^+)\ ,$$
we find functions $\tilde \gamma (\beta ), \tilde y(\beta )$ valued near $(0,0)$ which cancel $\tilde f, \tilde g$. This provides the existence of $Q_\beta $.
The uniqueness comes from Remark \ref{estLbetaplus}. Furthermore, as a consequence of \eqref{Q_b-Q^+}, we get
$$\Vert Q_\b -Q^+\Vert _{H^{\frac12}}=O\left (\vert 1-\b\vert ^{\frac12}\vert \log(1-\b)\vert ^{\frac12}\right )\ .$$
Coming back to the equation satisfied by $Q_\b$,
$$Q_\b =\left (\frac{\vert D\vert -\b D}{1-\b}+1\right )^{-1}(\vert Q_\b \vert ^2Q_\b)\ ,$$
and expanding in the $L^2$-norm
$$\vert Q_\b \vert ^2Q_\b=\vert Q^+\vert ^2Q^++O((1-\b)^{\frac 12}\vert \log(1-\b)\vert ^{\frac12})\ ,$$
we infer, in the $L^2$ norm,
$$DQ_\b=D(D+1)^{-1}\Pi ^+(\vert Q^+\vert ^2Q^+)+D\left (\frac{(1+\b )\vert D\vert}{1-\b}+1\right )^{-1}\Pi ^-(\vert Q^+\vert ^2Q^+)+O((1-\b)^{\frac 12}\vert \log(1-\b)\vert ^{\frac12})\ ,$$
and finally
$$DQ_\b=DQ^++O((1-\b)^{\frac 12}\vert \log(1-\b)\vert ^{\frac12})\ ,$$
in the $L^2$ norm, which completes the proof of \eqref{QbQ+}.
\vskip 0.25cm
Using again the equation satisfied by $Q_\beta $ and the estimate from Remark \ref{estLbetaplus},
it is then straightforward to prove that the map $\beta \mapsto Q_\beta $ is smooth on $(\beta _*,1)$ and that its derivative satisfies
$$\LQb (\pa _\beta Q_\beta )+\frac {(\vert D\vert -D)Q_\beta }{(1-\beta )^2}=0\ ,\ (\pa _\beta Q_\beta ,iQ^+)=(\pa_\beta Q_\beta ,\pa_xQ^+)=0.
$$
Notice that $(\vert D\vert -D)Q_\beta=-2DQ_\b^-$. Projecting the equation for $Q_\b$
onto negative Fourier modes, we get
$$\frac{2DQ_\b^-}{1-\b}=\frac{2}{1+\b}(Q_\b^- -\Pi _-(\vert Q_\b\vert^2Q_\b))\ ,$$
which, plugged into the equation on $\pa _\b Q_\b $, leads to (\ref{eqQbdot}). 
\end{proof}

%%%%%%%%%%%%%%%%%%%%%%%%%%%%%%%%%%%%%%%%%%%%%%%%

\section{Properties of $Q_\beta$}
\label{sectionqbeta}

%%%%%%%%%%%%%%%%%%%%%%%%%%%%%%%%%%%%%%%%%%%%%%%%

We collect in this section information on $Q_\beta$ which will be essential for the construction of the two-bubble approximate solutions.

%%%%%%%%%%%%%%%%%%%%%%%%%%%%%%%%%%%%%%%%%%%%%%

\subsection{Weighted norms and Fourier multipliers}

%%%%%%%%%%%%%%%%%%%%%%%%%%%%%%%%%%%%%%%%%%%%%%%%%%%%%

For every function $f$ on $\R $ and $\b \in (\b _*,1)$, we define the following weighted norm,

$$\norm{f}_\b:=\sup _{x\in \R }\la x\ra (1+(1-\beta )\vert x\vert )\vert f(x)\vert \ .$$

The next lemma will be crucial in all our estimates.

\begin{lemma}\label{lemma:iteration}
Let $\{ m_\b\} _{\b _*<\b <1}$ be a family of functions on $\R $ such that
\begin{align}
\sup _{\b }\norm {m_\b }_ {L^2}&\le M_0\ ,\label{mb1}\\
\vert xm_\b (x)\vert &\le \frac{M_0}{1+(1-\b )\vert x\vert }\ ,\label{mb2}
\end{align}
for some $M_0>0$.  Assume $\{ a_\b , b_\b \}_{\b _*<\b <1}$ is  bounded in $L^\infty $ and is tight in  $L^2$, namely
$$\sup_{\b _*<\b <1}\int _{\vert x\vert >R}[\vert a_\b(x)\vert ^2+\vert b_\b (x)\vert ^2]\, dx \td_R,\infty 0\ .$$
 Then there exists a constant $A>0$
independent of $\beta $ such that, if $f,h\in L^2$ satisfy
$$f=m_\b *(a_\b f+b_\b \overline f)+h\ ,$$
the following estimate holds,
$$\norm{f}_\b\le  A[(\norm{a_\b }_{L^\infty}+\norm{a_\b }_{L^2}+\norm{b_\b }_{L^\infty}+\norm{b_\b }_{L^2})\norm{f}_{L^2}+\norm{h}_\b]\ .$$
\end{lemma}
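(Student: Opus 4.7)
The plan is to set $\phi(x):=\la x\ra(1+(1-\beta)|x|)$ so that $\norm{f}_\beta=\norm{\phi f}_{L^\infty}$, and derive a pointwise bound on $\phi(x)|f(x)|$ starting from the integral equation $f=m_\beta * G+h$, where $G:=a_\beta f+b_\beta\overline f$. Since $\norm{f}_\beta$ is not known \emph{a priori} to be finite for $f\in L^2$, I work with the truncated weight $\phi_N(x):=\min(\phi(x),N)$, prove a bound on $\norm{\phi_N f}_{L^\infty}$ uniform in $N$, and then send $N\to\infty$.

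Fix $x\in\R$ with $|x|\geq 1$ and split the convolution at $|y|=|x|/2$, writing $A=\{|y|\le|x|/2\}$ and $B=\{|y|>|x|/2\}$. In Region $A$, $|x-y|\geq|x|/2$, so both $\la x\ra/|x-y|$ and $(1+(1-\beta)|x|)/(1+(1-\beta)|x-y|)$ are bounded by absolute constants. The pointwise bound \eqref{mb2} then yields $\phi(x)|m_\beta(x-y)|\leq CM_0$, so
$$\phi(x)\int_A|m_\beta(x-y)||G(y)|\,dy\leq CM_0\norm{G}_{L^1}\leq CM_0(\norm{a_\beta}_{L^2}+\norm{b_\beta}_{L^2})\norm{f}_{L^2}.$$
In Region $B$, $|y|>|x|/2$ gives $\la x\ra\leq 2\la y\ra$ and $1+(1-\beta)|x|\leq 2(1+(1-\beta)|y|)$, hence $\phi(x)\leq 4\phi(y)$; the elementary identity $\min(4a,N)=4\min(a,N/4)$ upgrades this to $\phi_N(x)\leq 4\phi_N(y)$. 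Substituting and pulling $\norm{\phi_N f}_{L^\infty}$ out of the integrand,
$$\phi_N(x)\int_B|m_\beta(x-y)||G(y)|\,dy\leq 4\norm{\phi_N f}_{L^\infty}\int_B|m_\beta(x-y)|(|a_\beta|+|b_\beta|)(y)\,dy,$$
and Cauchy--Schwarz combined with \eqref{mb1} controls the remaining integral by $M_0\bigl(\norm{a_\beta\mathbf{1}_{|y|>|x|/2}}_{L^2}+\norm{b_\beta\mathbf{1}_{|y|>|x|/2}}_{L^2}\bigr)$. The tightness assumption makes this factor, say $\eta(|x|/2)$, tend to $0$ uniformly in $\beta$ as $|x|\to\infty$; fixing $R_0\geq 1$ with $8M_0\eta(R_0/2)\leq 1$---a choice depending only on $M_0$ and the tightness modulus, hence independent of $\beta$---ensures that the Region $B$ contribution is at most $\tfrac12\norm{\phi_N f}_{L^\infty}$ for every $|x|>R_0$.

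For $|x|\leq R_0$ the weight is bounded uniformly in $\beta$ by $\phi(R_0)\leq\la R_0\ra(1+R_0)$, and Cauchy--Schwarz with $\norm{G}_{L^2}\leq(\norm{a_\beta}_{L^\infty}+\norm{b_\beta}_{L^\infty})\norm{f}_{L^2}$ controls $|m_\beta * G(x)|$ directly. Taking the supremum over $x$, combining the two ranges, and using $\norm{h}_{L^\infty}\leq\norm{h}_\beta$ (valid since $\phi\geq 1$), one obtains
$$\norm{\phi_N f}_{L^\infty}\leq C\bigl[\norm{h}_\beta+(\norm{a_\beta}_{L^\infty}+\norm{a_\beta}_{L^2}+\norm{b_\beta}_{L^\infty}+\norm{b_\beta}_{L^2})\norm{f}_{L^2}\bigr]+\tfrac12\norm{\phi_N f}_{L^\infty}.$$
Absorbing the last term and letting $N\to\infty$ gives the claim with $A=2C$ independent of $\beta$.

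The principal obstacle is the circularity of the natural approach: the transfer $\phi(x)\leq 4\phi(y)$ in Region $B$ forces $\norm{\phi f}_{L^\infty}$ itself to appear on the right-hand side, and we cannot assume a priori that this quantity is finite for an $L^2$ solution $f$. The truncation $\phi_N$ bypasses this difficulty in a manner that is both quantitative and compatible with monotone passage to the limit. A secondary but essential point is that the \emph{tightness} of $\{a_\beta,b_\beta\}$ in $L^2$---not merely their boundedness---is precisely what supplies the $\beta$-uniform smallness at infinity required to close the absorption step while keeping $A$ independent of $\beta$; without it, the choice of $R_0$ would depend on $\beta$ and the constant in the final estimate would blow up as $\beta\to 1$.
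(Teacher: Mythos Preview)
Your proof is correct and rests on the same core decomposition as the paper's: split the convolution according to whether $|y|$ is small or large compared to $|x|$, use the pointwise decay \eqref{mb2} of $m_\beta$ on the near piece, and use the $L^2$-tightness of $\{a_\beta,b_\beta\}$ on the far piece to produce a small self-referential term. The difference lies in how this self-referential term is closed. The paper introduces $M(R):=\sup_{|x|\ge R}|f(x)|$, derives the recursion
\[
M(R)\le \frac{K}{R(1+(1-\beta)R)}+\tfrac{1}{8}\,M\!\left(\tfrac{R}{2}\right),
\]
and iterates it along the dyadic scales $R=2^n$, summing a geometric series to obtain the weighted decay. You instead multiply through by the truncated weight $\phi_N$, use $\phi_N(x)\le 4\phi_N(y)$ on Region~$B$ to make $\|\phi_N f\|_{L^\infty}$ reappear with a factor $\le\tfrac12$, and absorb it directly because the truncation guarantees finiteness. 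Your route is a bit more streamlined --- one absorption step instead of a dyadic summation --- while the paper's iteration is slightly more hands-on and gives scale-by-scale decay without the truncation bookkeeping. Both arguments exploit tightness in exactly the same way to obtain $\beta$-independence of the constant.
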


\begin{proof}
First of all, we have trivially
$$\norm{f}_{L^\infty}\le \norm{m_\b}_{L^2}(\norm{a_\b }_{L^\infty}+\norm{b_\b }_{L^\infty})\norm{f}_{L^2}+\norm{h}_{L^\infty}\ ,$$
hence it is enough to estimate $\vert f(x)\vert$ for $x$ large enough. Let $R_0>0$ such that
$$\sup _\b \norm{m_\b}_{L^2}\left [\left (\int _{\vert y\vert \ge R_0/2}\vert a_\b (y)\vert ^2\, dy\right )^{1/2}+\left (\int _{\vert y\vert \ge R_0/2}\vert b_\b (y)\vert ^2)\, dy\right )^{1/2}\right ]\le \frac 18\ .$$
For every $R>0$, we set 
$$M(R):=\sup _{\vert x\vert \ge R}\vert f(x)\vert \ .$$
For $\vert x\vert \ge R$, and $R\ge R_0$, we write
\bee
\vert m_\b *(a_\b f+b_\b \overline f)(x)\vert &\le &\left \vert \int _{\vert y\vert \le \frac R2}m_\b (x-y)(a_\b (y)f(y)+b_\b (y)\overline {f(y)})\, dy\right \vert \\
& +&\left \vert \int _{\vert y\vert \ge \frac R2}m_\b (x-y)(a_\b (y)f(y)+b_\b (y)\overline {f(y)}) dy\right \vert  \\
&\le & \frac C{R(1+(1-\b)R)} (\norm{a_\b}_{L^2}+\norm{b_\b}_{L^2})\norm{f}_{L^2}+\frac 18 M\left (\frac R2\right )\ .
\eee
This implies, for every $R\ge R_0$, 
$$M(R)\le \frac{C(\norm{a_\b}_{L^2}+\norm{b_\b}_{L^2})\norm{f}_{L^2}+\norm{h}_\b}{R(1+(1-\b)R)}+\frac 18 M\left (\frac R2\right )\ .$$
Applying this to $R=2^n$ for $n\ge n_0$, we obtain
$$M(2^n)\le K2^{-n}(1+(1-\b)2^n)^{-1}+\frac 18 M(2^{n-1})\ ,\ K:=C(\norm{a_\b}_{L^2}+\norm{b_\b}_{L^2})\norm{f}_{L^2}+\norm{h}_\b\ .$$
Iterating, we get
\bee M(2^n)&\le & K\sum _{p=0}^{n-n_0}2^{-(n-p)}(1+(1-\b)2^{n-p})^{-1}\left (\frac 18\right )^p+\left (\frac 18\right )^{n-n_0+1}M(2^{n_0-1})\\
&\le & K2^{-n}(1+(1-\b)2^n)^{-1}\sum _{p=0}^{n-n_0}2^{-p}+\left (\frac 18\right )^{n-n_0+1}M(2^{n_0-1})\\
&\le & (2K+4^{n_0}M(2^{n_0-1}))\, 2^{-n}(1+(1-\b)2^n)^{-1}\ .
\eee
Since $\vert x\vert \sim 2^n$ for $2^n\le \vert x\vert \le 2^{n+1}$, this completes the proof of the lemma.
\end{proof}

We now introduce an important class of  families $\{ m_\b\} _{\b _*<\b<1}$ satisfying estimates (\ref{mb1}), (\ref{mb2}). Denote by $\mathcal M$  the class of families $\{ \mu _\b\} _{\b _*<\b<1}$ such that the Fourier transform is given by
\be \label{mub}
\hat{\mu}_\b (\xi )=A_\b \left (f_+(\xi )\, {\bf 1}_{\xi >0}+f_-\left (-\frac{1+\b }{1-\b}\xi \right )\, {\bf 1}_{\xi <0}   \right )\ ,
\ee
where $f_\pm \in C^\infty ([0,+\infty ))$ satisfy the following requirements,
$$\forall j\ge 0, \forall \zeta \in (0,+\infty), \vert f^{(j)}_\pm(\zeta )\vert \le C_j(1+ \zeta )^{-j-1}\ ,\ f_+(0)=f_-(0)\ ,$$
and where $\b \mapsto A_\b $ is smooth on $(\b _*,1)$ and  is  bounded with bounded derivatives of any order. Indeed, the $L^2$-estimate (\ref{mb1}) on $\mu _\b $ is provided by 
$$\vert f_\pm(\zeta )\vert \le C_0(1+\zeta  )^{-1}\ ,$$
while (\ref{mb2}) comes from 
$$x\mu_\b (x)=A_\b \left (F_+(x)-F_-\left (-\frac{1-\b}{1+\b}x\right )\right )\ ,\ F_\pm(y):=\int _0^{+\infty}if'_\pm (\zeta )\, e^{iy\zeta}\, \frac{d\zeta}{2\pi}=O\left ( \frac{1}{1+\vert y\vert }\right )\ . $$
The advantage of the class $\mathcal M$ is that it is stable through various important operations. The first one is of course the product of convolution, which corresponds to the product of functions $\b \mapsto A_\b$ and $\zeta \mapsto f_\pm (\zeta)$. The second one is the operator $x\pa _x+1$, which corresponds to replacing $f_\pm $ by $-\zeta f'_\pm $. Finally, if $\{ \mu _\b\} _{\b _*<\b<1}$ belongs to class $\mathcal M$, then
\bea
(1-\b)\pa _\b\hat{\mu}_\b (\xi )&=&(1-\b)A'_\b\left (f_+(\xi )\, {\bf 1}_{\xi >0}+f_-\left (-\frac{1+\b }{1-\b}\xi \right )\, {\bf 1}_{\xi <0}   \right )\\
&&+\frac{2A_\b}{1+\b}\, g_-\left (-\frac{1+\b}{1-\b}\xi  \right )\, {\bf 1}_{\xi <0} \ ,
\eea 
where $g_-(\zeta ):=\zeta f_-'(\zeta)$. Hence the family
$$\{ (1-\b)\pa _\b \mu _\b \} _{\b _*<\b<1}$$
is a sum of elements of class $\mathcal M$.

A typical example of a family in class $\mathcal M$ is 
$$m_\b=\mathcal F^{-1}\left (\frac{1}{1+\frac{\vert \xi\vert -\b \xi}{1-\b}}\right )\ ,$$
which corresponds to
$$A_\b =1\ ,\ f_+(\zeta )=f_-(\zeta )=(1+\zeta)^{-1}\ .$$
The above considerations lead to the following result, which will be of constant use in the sequel.
\begin{lemma}\label{multipliers}
All the multipliers $$m_{\b ,p,q}:=(x\pa _x)^p((1-\b)\pa _\b)^qm_\b ,\ p,q\ge 0\ ,$$ and any convolution products between them satisfy properties (\ref{mb1}) and (\ref{mb2}). 
\end{lemma}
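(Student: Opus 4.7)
The plan is to prove by induction on $p+q$ that each $m_{\beta,p,q}$ belongs to the finite linear span of the class $\mathcal{M}$, and that arbitrary convolution products of such elements still lie in this span. Once established, properties \eqref{mb1} and \eqref{mb2} for $m_{\beta,p,q}$ and its convolution products follow from the corresponding properties for individual elements of $\mathcal{M}$ already verified in the excerpt, together with the trivial stability of these two pointwise estimates under finite sums (up to adjusting the constant $M_0$ by the number of summands).

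First I would spell out three closure properties of $\mathcal{M}$ which are essentially contained in the preceding paragraphs: (i) $\mathcal{M}$ is stable under convolution, since $\hat{\mu}^1_\beta\hat{\mu}^2_\beta$ fits the template \eqref{mub} with parameters $(A^1_\beta A^2_\beta,\, f^1_\pm f^2_\pm)$, Leibniz's rule providing the required decay of derivatives of the product; (ii) the operator $x\pa_x + 1$ preserves $\mathcal{M}$ since its Fourier symbol is $-\xi \pa_\xi$, hence it replaces $f_\pm(\zeta)$ by $-\zeta f'_\pm(\zeta)$, which retains the decay hierarchy and the matching condition $g_+(0)=g_-(0)=0$; (iii) $(1-\beta)\pa_\beta$ maps any element of $\mathcal{M}$ into a sum of two elements of $\mathcal{M}$, by the explicit formula already displayed. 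Writing $x\pa_x = (x\pa_x + 1) - \mathrm{Id}$ and using stability under scalar multiplication, I infer that $x\pa_x$ itself sends $\mathcal{M}$ into the linear span of $\mathcal{M}$.

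With these closure properties in hand, the induction is formal. The base case $p=q=0$ is the established membership $m_\beta\in\mathcal{M}$ with $A_\beta\equiv 1$ and $f_\pm(\zeta)=(1+\zeta)^{-1}$. The inductive step uses (ii) for the variant $x\pa_x$ and (iii) for $(1-\beta)\pa_\beta$ to pass from $m_{\beta,p,q}$ to $m_{\beta,p+1,q}$ and $m_{\beta,p,q+1}$, remaining always in the span of $\mathcal{M}$. For a convolution product of several $m_{\beta,p_i,q_i}$, bilinearity combined with (i) shows it is again a finite sum of elements of $\mathcal{M}$. The per-element bounds \eqref{mb1} and \eqref{mb2} are the ones already demonstrated in the excerpt, via the $L^2$-integrability of $f_\pm$ and the one-step integration by parts yielding $|F_\pm(y)| \leq C(1+|y|)^{-1}$; they are inherited by finite sums.

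The only genuine bookkeeping obstacle is to check that each application of $(x\pa_x + 1)$ and $(1-\beta)\pa_\beta$ preserves not merely the formal template \eqref{mub} but the full decay hierarchy $|f^{(j)}_\pm(\zeta)|\leq C_j(1+\zeta)^{-j-1}$ for every $j\geq 0$. This is immediate from Leibniz's rule: multiplication by $\zeta$ costs at most one factor of $(1+\zeta)^{-1}$, more than compensated by the gain from differentiation, while the smoothness of $\beta\mapsto A_\beta$ together with the boundedness of all its derivatives on $(\beta_*,1)$ ensures uniform-in-$\beta$ control of every iterate. No new idea beyond the degree-one computation already presented in the excerpt is required.
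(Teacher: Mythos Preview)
Your proposal is correct and follows essentially the same approach as the paper. The paper presents the lemma as an immediate consequence of the preceding discussion of the stability properties of the class $\mathcal{M}$ (under convolution, under $x\partial_x+1$, and under $(1-\beta)\partial_\beta$), and you have simply spelled out the induction and the bookkeeping on the decay hierarchy more explicitly; the trick $x\partial_x=(x\partial_x+1)-\mathrm{Id}$ to pass from the operator preserving $\mathcal{M}$ to $x\partial_x$ itself is a natural detail.
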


We complete this subsection with three auxiliary results. The first one is the crucial estimate for $\LQb$ regarding the weighted norm $\Vert \ \Vert _\b$.
\begin{proposition}[Continuity of $\LQb^{-1}$ in weighted norms]
\label{invertibility}
 Let $\beta \in (\beta _*,1)$ and $g\in H^{-\frac 12}$ with 
$$
(g,iQ_\beta )=(g,\partial _xQ_\beta )=0.
$$ 
Then any  solution $f$ to 
$$
\LQb f=g,\  f\in H^{\frac 12}
$$  satisfies:
\be
\label{linfinitycontrolbisbistierce}
\|f\|_\b\le C(\Vert g\Vert _{H^{-\frac 12}}+\vert (f,iQ_\b)\vert +\vert (f,\pa _xQ_\b)\vert +\Vert m_\b *g\Vert _\b)
\ee
where $$m_\b=\mathcal F^{-1}\left (\frac{1}{1+\frac{\vert \xi\vert -\b \xi}{1-\b}}\right )\ .$$
\end{proposition}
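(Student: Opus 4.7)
The plan is to recast the equation $\LQb f = g$ as a convolution fixed-point relation to which Lemma~\ref{lemma:iteration} can be applied directly. Expanding $\LQb$ and taking the Fourier transform of
$$\frac{|D|-\beta D}{1-\beta}f + f - 2|Q_\beta|^2 f - Q_\beta^2 \overline{f} = g,$$
I would divide by the (non-vanishing, positive) symbol $1+\frac{|\xi|-\beta\xi}{1-\beta}$ and take the inverse Fourier transform to obtain
$$f = m_\beta * \bigl(2|Q_\beta|^2 f + Q_\beta^2 \overline{f}\bigr) + m_\beta * g.$$
This is exactly the structural form required by Lemma~\ref{lemma:iteration}, with $a_\beta := 2|Q_\beta|^2$, $b_\beta := Q_\beta^2$ and $h := m_\beta * g$.

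Next I would verify the hypotheses of Lemma~\ref{lemma:iteration} uniformly in $\beta\in(\beta_*,1)$. The multiplier $m_\beta$ belongs to the class $\mathcal M$ (it corresponds to $A_\beta=1$ and $f_\pm(\zeta)=(1+\zeta)^{-1}$), so Lemma~\ref{multipliers} supplies properties \eqref{mb1} and \eqref{mb2} with a uniform constant $M_0$. The coefficients $a_\beta$ and $b_\beta$ are uniformly bounded in $L^\infty$ by Proposition~\ref{prop:reg}, and they are tight in $L^2$ because $Q_\beta\to Q^+$ in $H^s$ for every $s\geq 0$ (Propositions~\ref{prop:QbCV} and~\ref{prop:reg}), so in particular in $L^4$, which implies tightness of $|Q_\beta|^2$ and $Q_\beta^2$ in $L^2$. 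The lemma then yields
$$\|f\|_\beta \leq A\bigl[(\|Q_\beta\|_{L^\infty}^2 + \|Q_\beta\|_{L^4}^2)\|f\|_{L^2} + \|m_\beta * g\|_\beta\bigr] \leq C\bigl(\|f\|_{L^2} + \|m_\beta * g\|_\beta\bigr).$$

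To close the estimate it remains to control $\|f\|_{L^2}$. Since $f\in H^{\frac12}$ and $\LQb f = g$, the Sobolev estimate \eqref{estLbeta} from Proposition~\ref{invertibility1} gives
$$\|f\|_{L^2} \leq \|f\|_{H^{\frac12}} \leq C\bigl(\|g\|_{H^{-\frac12}} + |(f,iQ_\beta)| + |(f,\partial_x Q_\beta)|\bigr).$$
Substituting this into the previous display produces the desired inequality \eqref{linfinitycontrolbisbistierce}.

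I do not expect a real obstacle: the content of the proposition is essentially that Lemma~\ref{lemma:iteration} applies, so the only work is the bookkeeping above. The one place to be careful is the uniformity of the tightness of $a_\beta,b_\beta$ in $L^2$ as $\beta\uparrow 1$, but this is a direct consequence of the $H^s$-convergence $Q_\beta\to Q^+$ already established, combined with Sobolev embedding to pass to uniform $L^\infty$ bounds.
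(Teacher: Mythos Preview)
Your proof is correct and follows essentially the same approach as the paper: rewrite $\LQb f=g$ as $f=m_\beta*(2|Q_\beta|^2 f+Q_\beta^2\overline f)+m_\beta*g$, apply Lemma~\ref{lemma:iteration} with $a_\beta=2|Q_\beta|^2$, $b_\beta=Q_\beta^2$, $h=m_\beta*g$, and then bound $\|f\|_{L^2}\le\|f\|_{H^{1/2}}$ via the estimate \eqref{estLbeta} from Proposition~\ref{invertibility1}. The only cosmetic difference is that the paper cites Proposition~\ref{prop: charac Q_beta} for the $L^\infty$ and tightness properties of $Q_\beta$, whereas you invoke Propositions~\ref{prop:QbCV} and~\ref{prop:reg}; either reference suffices.
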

\begin{proof}
The equation reads
$$f=m_\b*g +m_\b *(2\vert Q_\b\vert ^2f+Q_\b ^2\overline f)\ ,$$
so we are in position to apply Lemma \ref{lemma:iteration} with $a_\b =2\vert Q_\b \vert ^2$, $b_\b =Q_\b ^2$, $h=m_\b *g$. In view of the $L^\infty $-estimates and the tightness property for the family $Q_\b $ obtained from Proposition \ref{prop: charac Q_beta}, we infer
$$\Vert f\Vert _\b\le B(\Vert f\Vert _{L^2}+\Vert m_\b*g\Vert _\b)\ .$$
On the other hand, by Proposition \ref{invertibility1}, 
$$\Vert f\Vert _{L^2}\le \Vert f\Vert _{H^{\frac 12}}\lesssim \Vert g\Vert _{H^{-\frac12}}+\vert (f,iQ_\b)\vert +\vert (f,\pa _xQ_\b)\vert  .$$
This completes the proof.
\end{proof}
\begin{remark}\label{invertibility+}
 In view of Remark \ref{estLbetaplus}, one can replace $$\vert (f,iQ_\b)\vert +\vert (f,\pa _xQ_\b)\vert $$ by $$\vert (f,iQ_+)\vert +\vert (f,\pa _xQ_+)\vert $$
in the right hand side of the estimate \eqref{linfinitycontrolbisbistierce}.
\end{remark}

The second result is the following lemma.

\begin{lemma}\label{mub123}
Assume $\mu _\b$ satisfies (\ref{mb1}) and (\ref{mb2}). Then
$$\Vert \mu _\b *(h_1h_2)\Vert _\b\lesssim \Vert h_1\Vert _\b\, \Vert h_2\Vert _\b \ .$$
\end{lemma}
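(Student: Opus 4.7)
The plan is to show $w_\b(x)\bigl|\mu_\b*(h_1h_2)(x)\bigr| \lesssim \Vert h_1\Vert_\b\Vert h_2\Vert_\b$ uniformly in $x\in\R$ and $\b\in(\b_*,1)$, where $w_\b(x):=\la x\ra(1+(1-\b)|x|)$ is the weight defining $\Vert\cdot\Vert_\b$. From the hypothesis, $\Vert\mu_\b\Vert_{L^2}\le M_0$ and $|\mu_\b(z)|\le M_0/(|z|(1+(1-\b)|z|))$ for $z\ne 0$. The definition of $\Vert\cdot\Vert_\b$ immediately gives $|h_1(y)h_2(y)|\le \Vert h_1\Vert_\b\Vert h_2\Vert_\b w_\b(y)^{-2}$, so in particular $\Vert h_1h_2\Vert_{L^2}\lesssim \Vert h_1\Vert_\b\Vert h_2\Vert_\b$ uniformly in $\b$, since $\int w_\b^{-4}\le\int\la y\ra^{-4}\,dy$.

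For $|x|\le 1$, I would apply Cauchy--Schwarz and $(\ref{mb1})$ to obtain $|\mu_\b*(h_1h_2)(x)|\le \Vert\mu_\b\Vert_{L^2}\Vert h_1h_2\Vert_{L^2}\lesssim \Vert h_1\Vert_\b\Vert h_2\Vert_\b$; as $w_\b(x)$ is bounded on this region, this is sufficient. For $|x|\ge 1$, I would split the integral into $I_1(x)$ on $\{|y|\le|x|/2\}$ and $I_2(x)$ on $\{|y|\ge|x|/2\}$. In the region of $I_1$ one has $|x-y|\ge|x|/2$, so $|x-y|(1+(1-\b)|x-y|)\gtrsim |x|(1+(1-\b)|x|)\sim w_\b(x)$, and hence $|\mu_\b(x-y)|\lesssim 1/w_\b(x)$ by $(\ref{mb2})$; integrating $|h_1h_2|$ against the bounded function $w_\b^{-2}$ (which is $L^1$ uniformly in $\b$) yields $|I_1(x)|\lesssim \Vert h_1\Vert_\b\Vert h_2\Vert_\b/w_\b(x)$.

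For $I_2$, the only tool left is the $L^2$ control on $\mu_\b$, so I would use Cauchy--Schwarz:
\begin{equation*}
|I_2(x)|\le \Vert\mu_\b\Vert_{L^2}\left(\int_{|y|\ge|x|/2}|h_1h_2|^2\,dy\right)^{1/2}\le M_0\Vert h_1\Vert_\b\Vert h_2\Vert_\b\left(\int_{|y|\ge|x|/2}\frac{dy}{w_\b(y)^4}\right)^{1/2}.
\end{equation*}
The main obstacle, and the only delicate point in the proof, is to verify that $\int_{|y|\ge|x|/2}w_\b(y)^{-4}\,dy\lesssim w_\b(x)^{-2}$. I would do this by the rescaling $s=(1-\b)y$, which reduces the integral to $(1-\b)^3\int_{(1-\b)|x|/2}^{\infty}s^{-4}(1+s)^{-4}\,ds$, and then split into the two cases $(1-\b)|x|\lesssim 1$ (tail of order $(1-\b)^3\cdot((1-\b)|x|)^{-3}\sim |x|^{-3}\lesssim w_\b(x)^{-2}$) and $(1-\b)|x|\gtrsim 1$ (tail of order $(1-\b)^3\cdot((1-\b)|x|)^{-7}\lesssim ((1-\b)^2|x|^4)^{-1}\sim w_\b(x)^{-2}$, using $(1-\b)|x|\gtrsim 1$ to absorb the extra $|x|^{-3}$ factor). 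Combining this with the Cauchy--Schwarz bound gives $w_\b(x)|I_2(x)|\lesssim\Vert h_1\Vert_\b\Vert h_2\Vert_\b$, which together with the estimates on $I_1$ and the $|x|\le 1$ case concludes the proof.
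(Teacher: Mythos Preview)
Your proof is correct and follows essentially the same strategy as the paper: an $L^2*L^2\subset L^\infty$ bound near the origin, then for $|x|\ge 1$ the same splitting at $|y|=|x|/2$, using the pointwise kernel bound (\ref{mb2}) on the inner region and Cauchy--Schwarz with $\Vert\mu_\b\Vert_{L^2}$ on the outer one. The only minor difference is how you handle the tail integral $\int_{|y|\ge|x|/2}w_\b(y)^{-4}\,dy$: you rescale $s=(1-\b)y$ and split into two regimes, whereas the paper simply observes that on $\{|y|\ge|x|/2\}$ one has $1+(1-\b)|y|\gtrsim 1+(1-\b)|x|$, pulls this factor out, and integrates the remaining $|y|^{-4}$ to get $|x|^{-3}$ directly, yielding the slightly sharper bound $|x|^{-3/2}(1+(1-\b)|x|)^{-2}$ without case analysis.
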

\begin{proof}
First of all, the $L^\infty$-bound is an easy consequence of $L^2*L^2\subset L^\infty $, so we may assume $\vert x\vert \ge 1$. Then we split
\bee
 \mu _\b *(h_1h_2)(x)&=&\int _{\vert y\vert <\frac{\vert x\vert}{2}} \mu _\b (x-y)h_1(y)h_2(y)\, dy +\\
 &&\int _{\vert y\vert \ge\frac{\vert x\vert}{2}}  \mu _\b (x-y)h_1(y)h_2(y)\, dy\ \\
&=&O(\vert x\vert ^{-1} (1+(1-\b)\vert x\vert )^{-1}  )\Vert h_1h_2\Vert _{L^1}\\
&&+\Vert \mu _\b\Vert _{L^2}\, \Vert h_1 h_2 \Vert _{L^2(\vert y\vert >\vert x\vert /2)}\\
&\le & O(\vert x\vert ^{-1} (1+(1-\b)\vert x\vert )^{-1}  )\Vert h_1\Vert _{L^2} \Vert h_2\Vert _{L^2}\\
&&+O(\vert x\vert ^{-3/2} (1+(1-\b)\vert x\vert )^{-2}  )\Vert h_1\Vert _\b \Vert h_2\Vert _\b \ , 
\eee
and the lemma follows.
\end{proof}

The third result concerns the $L^p$ norm of elements of class $\mathcal M$.

\begin{lemma}\label{mubLp}
If $\{ \mu _\b\} _{\b _*<\b<1}$ belongs to class $\mathcal M$, then there exists $C>0$ such that, for every $p\in (1,\infty )$,
for every $\b \in (\b _*,1)$, 
$$\Vert \mu _\b \Vert _{L^p}\le C\, \max \left (\frac{1}{p-1},p\right )\ .$$
\end{lemma}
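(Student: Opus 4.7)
The plan is to reduce the statement to a direct $L^p$ computation on two fixed one-sided Fourier integrals, independent of $\beta$, by exploiting the scaling built into the definition of class $\mathcal M$. Inverting the Fourier representation \eqref{mub} and using the change of variable $\eta = \tfrac{1+\beta}{1-\beta}|\xi|$ in the negative-frequency half, one writes
\[
\mu_\beta(x) \,=\, A_\beta \bigl[G_+(x) \,+\, \alpha\, G_-(-\alpha x)\bigr],
\qquad
G_\pm(y) \,:=\, \frac{1}{2\pi}\int_0^\infty f_\pm(\xi)\, e^{iy\xi}\, d\xi,
\]
with $\alpha := \tfrac{1-\beta}{1+\beta} \in (0,1)$. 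Since $|A_\beta|\leq C$ uniformly, and a rescaling gives $\|\alpha G_-(-\alpha\,\cdot)\|_{L^p} = \alpha^{1-1/p}\|G_-\|_{L^p} \leq \|G_-\|_{L^p}$ (using $\alpha\leq 1$ and $p\geq 1$), the problem reduces to proving $\|G_\pm\|_{L^p}\leq C\max\bigl(\tfrac{1}{p-1},\,p\bigr)$ for the two fixed profiles $G_\pm$, which no longer depend on $\beta$.

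Next I would establish pointwise bounds on $G_\pm$. For $|y|\geq 1$, a single integration by parts --- using $f_\pm(0)$ as a boundary term and $|f_\pm'|\lesssim (1+\xi)^{-2}\in L^1$ --- yields $|G_\pm(y)|\leq C/|y|$. For $|y|\leq 1$, I split the defining integral at the frequency $\xi = 1/|y|$: the low-frequency piece is bounded by $\int_0^{1/|y|}(1+\xi)^{-1}\,d\xi \lesssim \log(2/|y|)$, while the high-frequency piece is $O(1)$ after integration by parts, the boundary term $f_\pm(1/|y|)/|y|$ and the tail integral $\int_{1/|y|}^\infty (1+\xi)^{-2}d\xi/|y|$ both being $O(1)$. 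Together,
\[
|G_\pm(y)| \,\leq\, C\bigl[(1+\log(2/|y|))\,\mathbf{1}_{|y|\leq 1} \,+\, |y|^{-1}\,\mathbf{1}_{|y|\geq 1}\bigr].
\]

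The final step is to integrate these pointwise bounds. The substitution $u=\log(2/|y|)$ identifies $\int_{|y|\leq 1}(1+\log(2/|y|))^p\,dy$ with a finite multiple of $\Gamma(p+1)$, while $\int_{|y|\geq 1}|y|^{-p}\,dy = 2/(p-1)$. Combining this with the elementary estimates $\Gamma(p+1)^{1/p}\leq p$ (from $p!\leq p^p$) and $(2/(p-1))^{1/p}\leq \max(2/(p-1),\,1)$ valid for $p>1$, one obtains $\|G_\pm\|_{L^p}\leq C\max(p,\,1/(p-1))$, which together with the first step proves the lemma.

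The main obstacle is the sharpness of the endpoint behavior rather than any hidden technicality: the logarithmic singularity of $G_\pm$ at the origin is precisely what produces the linear growth in $p$ as $p\to\infty$, and the $1/|y|$ tail at infinity is precisely what produces the $1/(p-1)$ blow-up as $p\to 1$. These two sources of growth are intrinsic to the class $\mathcal M$ and match the target bound exactly; the only care required is in organising Step 1 so that the $\beta$-dependence is absorbed cleanly into the rescaling of the negative-frequency component, and in checking that the two integration-by-parts arguments in Step 2 produce constants independent of $\beta$ (which they do, since the hypotheses on $f_\pm$ are $\beta$-free).
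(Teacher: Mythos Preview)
Your proof is correct and follows essentially the same route as the paper's: reduce to the fixed $\beta$-independent profiles $G_\pm$ via the scaling $x\mapsto -\alpha x$, establish the $1/|y|$ tail by one integration by parts and the $\log(1/|y|)$ singularity at the origin by a low/high frequency split, then compute the $L^p$ norm of this envelope to obtain $p^p + 1/(p-1)$. The only cosmetic difference is that the paper uses a smooth cutoff $\varphi(x\xi)$ in the frequency splitting where you use a sharp cut at $\xi=1/|y|$.
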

\begin{proof}
From \eqref{mub}, the following holds,
$$\mu_\b (x)=A_\b \left (\mu _+(x)+\frac{1-\b}{1+\b}\mu _-\left ( -\frac{1-\b}{1+\b}x \right )   \right )\ ,\ \mu _\pm :=\mathcal F^{-1}(f_\pm)\ .$$
It is therefore sufficient to prove that, for every $f\in C^1(\R _+)$ such that 
$$\vert f(\xi )\vert \le \frac{C}{1+\xi}\ ,\ \vert f'(\xi )\vert \le \frac{C}{(1+\xi)^2}\ ,$$
the inverse Fourier transform $\mu =\mathcal F^{-1}(f)$ satisfies
$$\forall p\in (1,\infty)\ ,\ \Vert \mu\Vert _{L^p(\R)}\le \tilde C\, \max \left (\frac{1}{p-1},p\right )\ .$$
First, an integration by part leads to
$$x\mu (x)=\frac{if(0)}{2\pi}+i\int _0^\infty e^{ix\xi}f'(\xi)\, \frac{d\xi }{2\pi}\ ,$$
which provides the bound
$$\vert \mu (x)\vert \lesssim \frac{1}{|x|}\ .$$
Secondly, if $x$ is close to $0$, introducing a cut-off function $\varphi $ such that $\varphi =1$ near $0$,
and writing
$$\mu (x)=\int _0^\infty e^{ix\xi}\varphi (x\xi)f(\xi)\, \frac{d\xi }{2\pi}+\int _0^\infty e^{ix\xi}(1-\varphi (x\xi))f(\xi)\, \frac{d\xi }{2\pi}\ :=\mu _<(x)+\mu_>(x)\ ,$$
we observe that $$\vert \mu _<(x)\vert \lesssim \log \left (\frac 1{\vert x\vert}\right )\ ,$$
while 
$$\vert x\mu _>(x)\vert \lesssim \int _{\R}\left \vert \frac{d}{d\xi}[\varphi (x\xi)f(\xi)]\right \vert \, d\xi \lesssim \vert x\vert \ .$$
We infer that, near $x=0$, 
$$\vert \mu (x)\vert \lesssim  \log \left (\frac 1{\vert x\vert}\right )\ .$$
Consequently,
\bee \Vert \mu \Vert _{L^p}^p &\lesssim &\int _{\vert x\vert \le 1}\left (\log \left (\frac 1{\vert x\vert}\right )\right )^p\, dx+\int _{\vert x\vert \ge 1}\frac{dx}{\vert x\vert ^p}\\
&\lesssim&p^p +\frac{1}{p-1}\ .
\eee
This completes the proof.
\end{proof}

%%%%%%%%%%%%%%%%%%%%%%%%%%%%%%%%%%%%%%%%%
\subsection{Weighted estimates on $Q_\b$}
%%%%%%%%%%%%%%%%%%%%%%%%%%%%%%%%%%%%%%%

\begin{proposition}\label{boundQb}
For every $p,q\in \N $, there exists $C_{p,q}$ such that 
$$\forall \b \in (\b _*,1)\ , \ \norm{(x\pa _x)^p ((1-\b)\pa _\b)^qQ_\b }_\b\leq C_{p,q}\ .$$
\end{proposition}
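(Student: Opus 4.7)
The plan is to induct on $p+q$, using the fixed-point identity $Q_\b = m_\b*(|Q_\b|^2Q_\b)$ together with Lemmas \ref{lemma:iteration}, \ref{multipliers}, and \ref{mub123}. For the base case $p=q=0$, apply Lemma \ref{lemma:iteration} to $Q_\b = m_\b*(a_\b Q_\b)$ with $a_\b = |Q_\b|^2$, $b_\b = 0$, $h=0$: the family $a_\b$ is uniformly bounded in $L^\infty$ and tight in $L^2$ by Proposition \ref{prop:reg} together with the $H^s$-convergence $Q_\b \to Q^+$, while $\|Q_\b\|_{L^2}^2 = 2I_\b/(1-\b) \leq 2I^+$ is uniformly bounded by \eqref{idQbeta}. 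This gives $\|Q_\b\|_\b \leq C$.

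For the inductive step, set $f_{p,q}:=(x\pa_x)^p((1-\b)\pa_\b)^q Q_\b$ and assume the bound at all strictly lower orders. Since $x\pa_x$ and $(1-\b)\pa_\b$ are commuting derivations, I apply $(x\pa_x)^p((1-\b)\pa_\b)^q$ to the fixed-point identity by combining the Leibniz rule with the convolution identity
$$
(x\pa_x)(m_\b * g) = ((x\pa_x+1)m_\b)*g + m_\b*(x\pa_x g),
$$
which follows from the decomposition $x = (x-y)+y$ under the integral. Isolating the top-order contribution produces an equation of the form
$$
f_{p,q} = m_\b*(2|Q_\b|^2 f_{p,q} + Q_\b^2 \overline{f_{p,q}}) + H_{p,q},
$$
in which $H_{p,q}$ is a finite linear combination of terms $\mu_\b*(g_1 g_2 g_3)$ with $\mu_\b$ in class $\mathcal{M}$ (by Lemma \ref{multipliers}, applied to iterated $(x\pa_x+1)$- and $(1-\b)\pa_\b$-derivatives of $m_\b$) and each $g_i$ either equal to $Q_\b$, $\overline{Q_\b}$, or to a strictly lower-order $f_{p',q'}$ or its conjugate. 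Combining the pointwise bound $\|gh\|_\b \leq \|g\|_\b\|h\|_\b$ with Lemma \ref{mub123} and the induction hypothesis yields $\|H_{p,q}\|_\b \leq C^{(1)}_{p,q}$. Lemma \ref{lemma:iteration} with $a_\b = 2|Q_\b|^2$ and $b_\b = Q_\b^2$ then gives
$$
\|f_{p,q}\|_\b \leq A\bigl(\,C\|f_{p,q}\|_{L^2} + \|H_{p,q}\|_\b\,\bigr).
$$

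The last ingredient is a uniform $L^2$-control on $f_{p,q}$. For $(1-\b)\pa_\b$-iterates, differentiating \eqref{eqQbdot} successively and using that $\LQb$ depends on the scalar $\b$ only produces equations $\LQb\bigl(((1-\b)\pa_\b)^qQ_\b\bigr) = \widetilde H_q$ with $\widetilde H_q$ uniformly bounded in $H^{-1/2}$ by induction; the orthogonalities $(((1-\b)\pa_\b)^qQ_\b,iQ^+)=(((1-\b)\pa_\b)^qQ_\b,\pa_xQ^+)=0$ persist because the linear forms $iQ^+,\pa_xQ^+$ are $\b$-independent, so Proposition \ref{invertibility1} together with Remark \ref{estLbetaplus} yields the $H^{1/2}$-bound. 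For $x\pa_x$-derivatives, the key identity is the Fourier-side commutator
$$
\bigl[x\pa_x,\ \tfrac{|D|-\b D}{1-\b}\bigr] = -\tfrac{|D|-\b D}{1-\b},
$$
obtained from $[x\pa_x,|D|]=-|D|$ and $[x\pa_x,D]=-D$. Combined with the equation for $Q_\b$, it yields the clean identity $\LQb(x\pa_x Q_\b) = |Q_\b|^2 Q_\b - Q_\b$, whose right-hand side is uniformly in $L^2$; the projections $(x\pa_x Q_\b, iQ^+)$ and $(x\pa_x Q_\b, \pa_xQ^+)$ are controlled by one integration by parts, using $|Q^+(x)|+|x\pa_xQ^+(x)| \lesssim 1/\jb{x}$ and the base bound $\|Q_\b\|_\b \leq C$. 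Higher $p$ and mixed $(p,q)$ are handled by iterating the operator identity $(x\pa_x)^p\tfrac{|D|-\b D}{1-\b} = \tfrac{|D|-\b D}{1-\b}(x\pa_x-1)^p$ together with the $\b$-derivation scheme.

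The hard part will be to propagate this uniform $L^2$-control to all iterated $x\pa_x$-derivatives: the singular $(1+\b)/(1-\b)$ weight on negative frequencies in $\tfrac{|D|-\b D}{1-\b}$ a priori threatens to blow up when one estimates $\tfrac{|D|-\b D}{1-\b}(x\pa_x)^jQ_\b$ in $H^{-1/2}$, so the bookkeeping must exploit the favorable commutator cancellation above together with Lemma \ref{multipliers} to keep every source term in the $\LQb$-equation uniformly bounded in $H^{-1/2}$ and with bounded projections onto $iQ^+$ and $\pa_xQ^+$.
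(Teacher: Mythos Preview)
Your overall plan matches the paper's: iterate the fixed-point identity $Q_\b=m_\b*(|Q_\b|^2Q_\b)$, use Lemma~\ref{lemma:iteration} to get the $\|\cdot\|_\b$ bound from an $L^2$ bound plus a bound on the remainder $H_{p,q}$, and feed in Lemma~\ref{mub123} for the remainder. That part is fine.

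The gap is in your $L^2$ control of the $\tL_\b=(1-\b)\pa_\b$ iterates, and you have in fact located the singularity in the wrong place. The commutator $[x\pa_x,\tfrac{|D|-\b D}{1-\b}]=-\tfrac{|D|-\b D}{1-\b}$ is benign: it returns the same operator, so estimating $\tfrac{|D|-\b D}{1-\b}(x\pa_x)^jQ_\b$ in $H^{-1/2}$ is never the obstruction (and indeed the paper handles the $x\pa_x$ direction by the elementary observation that $\widehat{\pa_x m_\b}$ is a uniformly bounded multiplier, avoiding $\LQb$-inversion there altogether). The dangerous commutator is the one with $\tL_\b$: differentiating the principal symbol gives
\[
\tL_\b\!\left[\tfrac{|D|-\b D}{1-\b}\right]=-\tfrac{2D\Pi^-}{1-\b},
\]
so when you write $\LQb(\tL_\b^q Q_\b)=\widetilde H_q$, the source $\widetilde H_q$ contains terms of the form $\tfrac{2D(\tL_\b^{r}Q_\b)^-}{1-\b}$ with $r<q$. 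These are \emph{not} controlled in $H^{-1/2}$ by the induction hypothesis $\|\tL_\b^{r}Q_\b\|_\b\le C$ alone, since $\tfrac{D\Pi^-}{1-\b}$ is unbounded from $H^{1/2}$ to $H^{-1/2}$ as $\b\uparrow 1$.

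The paper closes this by carrying an auxiliary hypothesis through the induction: alongside $\|\tL_\b^qQ_\b\|_\b\le C_{0,q}$ one proves $\|D\pa_\b(\tL_\b^{q-1}Q_\b)^-\|_{L^2}\le C'_q$. This is obtained not from $\LQb$-inversion but by projecting the fixed-point representation onto negative frequencies and using that for any $\{\mu_\b\}\in\mathcal M$ the multiplier $(1-\b)^{-1}D\mu_\b^-$ is uniformly bounded on $L^2$ (its Fourier symbol is $O(1)$). Once this extra control is available, your scheme goes through; without it, the claim ``$\widetilde H_q$ uniformly bounded in $H^{-1/2}$ by induction'' is not justified. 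So: swap your diagnosis of where the hard part lies, and add the negative-frequency $L^2$ estimate as a companion induction hypothesis.
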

\begin{proof} First assume $p=q=0$. We use  the identity
$$Q_\b =m_\b *(Q_\b \vert Q_\b \vert ^2)\ ,$$
and Lemma \ref{lemma:iteration} with 
$$m_\b=\mathcal F^{-1}\left (\frac{1}{1+\frac{\vert \xi\vert -\b \xi}{1-\b}}\right )\ ,\ a_\b =\vert Q_\b \vert ^2\ ,\   b_\b =0\ ,\ h=0\ , $$
and we easily obtain
$$\norm{Q_\b}_\b\le C_{0,0}\ .$$
Now let us prove the estimate for $p=0$ and every $q$. Set $\tL_\b:=(1-\b)\pa _\b$.  From equation (\ref{eqQbdot}), we have
\bee
\begin{cases}
\LQb(\tL_\b Q_\b)=\frac{2}{1+\beta }(Q_\b^- -\Pi_-(\vert Q_\b \vert^2Q_\b))\cr
(\tL_\b Q_\b ,iQ^+)=(\tL_\b Q_\b ,\pa_xQ^+)=0
\end{cases}
\eee
From a priori $H^s$ estimates on $Q_\b$ and inequality (\ref{linfinitycontrolbisbistierce}) --- in fact Remark \ref{invertibility+}--- we infer
$$\norm{\tL_\b Q_\b}_\b\le C \left (1+\norm{m_\b*(Q_\b^- -\Pi_-(\vert Q_\b \vert^2Q_\b))}_\b\right )\ .$$
From the equation \eqref{travellingwave} of $Q_\b$, we have
$$Q_\b=m_\b*(\vert Q_\b\vert ^2Q_\b)\ ,$$
so that, with $m_\b^-:=\Pi_-m_\b$,
$$m_\b*(Q_\b^- -\Pi_-(\vert Q_\b \vert^2Q_\b))=(m_\b^-*m_\b^--m_\b^-)*(\vert Q_\b\vert ^2Q_\b)\ .$$
Notice that
$$\mathcal F(m_\b^-*m_\b^--m_\b^-)(\xi)={\bf 1}_{\xi <0}\, \frac{\frac{1+\b}{1-\b}\xi}{(1-\frac{1+\b}{1-\b}\xi)^2}\ ,$$
so that $\{ m_\b^-*m_\b^--m_\b^-\} _{\b_*<\b<1}$ belongs to class $\mathcal M$, and therefore Lemma \ref{mub123} yields 
$$\norm{\tL_\b Q_\b}_\b\leq C_{0,1}\ .$$
For further reference, we are going to estimate $\norm{D\pa _\b Q_\b^-}_{L^2}$. Projecting the equation of $\tL_\b Q_\b$ onto negative Fourier modes, 
we get
$$(1+\b)D\pa _\b Q_\b^-=\tL_\b Q_\b^- - \Pi _-(2\vert Q_\b\vert ^2\tL _\b Q_\b+Q_\b^2\overline{\tL _\b Q_\b})-\frac{2}{1+\b}(Q_\b^--\Pi_-(|Q_\b|^2Q_\b))\ .$$
 From the estimate on $\tL_\b Q_\b$ we just established, we infer
 $$\norm{D\pa _\b Q_\b^-}_{L^2}\le C'_1\ .$$
 Let us prove by induction on $q\ge 1$ that 
 \be \label{inductionq}
 \norm{\tL_\b^qQ_\b}_\b \le C_{0,q}\ ,\ \norm{D\pa _\b\tL_\b^{q-1}Q_\b}_{L^2}\le C'_q\ ,
 \ee
 where $C_{0,q}$ and $C'_q$ are independent of $\b $. Notice that we just proved the case $q=1$. 
In order to deal with higher orders, we observe that, for every function $f_\b$ depending smoothly on $\b$, 
$$\LQb(\tL_\b f_\b)=\tL_\b(\LQb f_\b)+\frac{2Df_\b^-}{1-\b}+4{\rm Re}(\overline Q_\b\tL_\b Q_\b)f_\b+2Q_\b\tL_\b Q_\b\overline f_\b\ .$$
From this identity and the formula for $\tL_\b Q_\b$, we infer that
$\LQb((\tL_\b)^{q+1}Q_\b)$ is a linear combination of terms of the following form.
\begin{itemize}
\item $D\pa _\b(\tL_\b)^rQ_\b$, with $r\le q-1$.
\item $A_\b (\tL_\b )^rQ_\b^-$ for $r\le q$ and $A_\b $ depends smoothly on $\b$, is bounded as well as its derivatives.
\item $B_\b \Pi _-\left ((\tL_\b)^aQ_\b(\tL_\b)^bQ_\b\overline {(\tL_\b)^cQ_\b}\right )$, where $a+b+c\le q$, and $B_\b $ depends smoothly on $\b$, is bounded as well as its derivatives.
\item $C_\b (\tL_\b)^aQ_\b(\tL_\b)^bQ_\b\overline {(\tL_\b)^cQ_\b}$, where $a+b+c\le q+1$, $a,b,c\leq q$, and $C_\b $ depends smoothly on $\b$, is bounded as well as its derivatives.
\end{itemize}
Since all these terms are bounded in $L^2$ by the induction assumption, and since $((\tL_\b)^{q+1} Q_\b ,iQ^+)=((\tL_\b)^{q+1}Q_\b ,\pa_xQ^+)=0$,
we infer from inequality (\ref{linfinitycontrolbisbistierce}) --- in fact Remark \ref{invertibility+}--- that 
$\norm{(\tL_\b)^{q+1}Q_\b}_{L^2}$ is bounded independently of $\b $.  \\
Now let us prove (\ref{inductionq}) at step $q+1$. Applying $(\tL_\b)^{q+1}$ to
$$Q_\b=m_\b*(\vert Q_\b\vert ^2Q_\b)\ ,$$
we obtain
$$(\tL_\b)^{q+1}Q_\b=m_\b*\left ( 2\vert Q_\b\vert^2 (\tL_\b)^{q+1}Q_\b+Q_\b^2\overline{(\tL_\b)^{q+1}Q_\b}  \right )+R_{\b,q}\ ,$$
where $R_{\b,q}$ is a finite sum of terms of the form $$(\tL_\b)^am_\b*\left [(\tL_\b)^bQ_\b(\tL_\b)^cQ_\b\overline{(\tL_\b)^dQ_\b}\right ]\ ,\ a+b+c+d=q+1\ ,\  \max(b,c,d)\le q\ .$$
  Using Lemma \ref{lemma:iteration}, the $L^2$ estimate on $(\tL_\b)^{q+1}Q_\b$, and Lemmas \ref{multipliers} and \ref{mub123}, as well as the induction assumption, we infer
$$\norm{(\tL_\b)^{q+1}Q_\b}_\b \le C_{0,q+1}\ .$$
Furthermore,
$$D\pa _\b (\tL_\b)^qQ_\b^-=\frac{Dm_\b}{1-\b}*\left ( 2\vert Q_\b\vert^2 (\tL_\b)^{q+1}Q_\b+Q_\b^2\overline{(\tL_\b)^{q+1}Q_\b}  \right )+(1-\b)^{-1}DR_{\b,q}\ ,$$
where $(1-\b)^{-1}DR_{\b,q}$ is a finite sum of terms of the form 
$$(1-\b)^{-1}D(\tL_\b)^am_\b*\left [(\tL_\b)^bQ_\b(\tL_\b)^cQ_\b\overline{(\tL_\b)^dQ_\b}\right ]\ ,\  a+b+c+d=q+1\ ,\  \max(b,c,d)\le q\ .$$
 It remains to observe that, if $\{ \mu _\b\} $ is an element of class $\mathcal M$, then
$$(1-\b)^{-1}\widehat{D\mu_\b ^-}(\xi )={\bf 1}_{\xi <0}\frac{i\xi}{1-\b}f_-\left (-\frac{1+\b}{1-\b}\xi \right )$$
is uniformly bounded in $L^\infty$, therefore the convolution with $(1-\b)^{-1}D\mu _\b^-$ is uniformly bounded on $L^2$. This proves the $L^2$-estimate on $D\pa _\b (\tL_\b)^qQ_\b^-$, and completes the proof of (\ref{inductionq}) at step $q+1$. 

Finally, we prove the estimate for every $p,q$, by induction on $p+q$. Assume that
$$\norm{\La _x^r(\tL _\b)^sQ_\b}_\b \le C_{r,s}\ ,\ r+s\le n\ ,$$
and let us prove the inequality for $r+s=n+1$. Since the case $r=0$ is already known, we may assume $r=p+1, s=q$ with $p+q=n$.
Recall that $\La _x:=x\pa _x$. We use the identity
\be \label{Lambdastar}
\Lambda _x (f*g)=\Lambda _x(f)*g+f*\Lambda _x(g)+f*g=(\Lambda _x+I)f*g+f*\Lambda _x(g)\ 
\ee
to obtain
$$\La _x^p(\tL_\b)^q Q_\b=m_\b*\left ( 2\vert Q_\b\vert^2 \La _x^p(\tL_\b)^qQ_\b+Q_\b^2\overline{\La _x^p(\tL_\b)^qQ_\b}  \right )+R_{\b,p,q}\ ,$$
where $R_{\b,p,q}$ is a finite sum of terms of the  form
\bee
&&(\La _x+I)^{a'}(\tL_\b)^am_\b*\left [\La _x^{b'}(\tL_\b)^bQ_\b \La _x^{c'}(\tL_\b)^cQ_\b \overline{\La _x^{d'}(\tL_\b)^dQ_\b}\right ]\ ,\\
&& a+b+c+d=q\ ,\ a'+b'+c'+d'=p\ ,\ \  \max(b,c,d)\le q-1\ {\rm or}\  \max(b',c',d')\le p-1\ .
\eee
Let us first prove that $\La _x^{p+1}(\tL_\b)^q Q_\b$ is uniformly bounded in $L^2$. We apply $\La _x $ to the above formula giving $\La _x^p(\tL_\b)^q Q_\b$. We expand $\La _xR_{\b,p,q}$ using again identity (\ref{Lambdastar}), and we get, by the induction assumption, that $\La _xR_{\b,p,q}$
is uniformly bounded in $L^2$. As for the term 
$$m_\b*\left ( 2\vert Q_\b\vert^2 \La _x^p(\tL_\b)^qQ_\b+Q_\b^2\overline{\La _x^p(\tL_\b)^qQ_\b}  \right )\ ,$$
we write
$$x\pa _x[m_\b*f]=x\pa _xm_\b *f+\pa _xm_\b*(xf)\ .$$
From the induction assumption, we easily get that 
$$\norm{x\pa _xm_\b *\left ( 2\vert Q_\b\vert^2 \La _x^p(\tL_\b)^qQ_\b+Q_\b^2\overline{\La _x^p(\tL_\b)^qQ_\b}  \right )}_{L^2}\le A_{p,q}\ .$$
On the other hand, since 
$$\widehat{\pa _xm_\b}(\xi)=\frac{i\xi}{1+\frac{\vert \xi\vert -\b\xi}{1-\b}}$$
is uniformly bounded, the uniform bounds on
$$\norm{xQ_\b}_{L^\infty}, \norm{Q_\b}_{L^\infty}\ ,\ \norm{\La _x^p(\tL_\b)^qQ_\b}_{L^2}$$
imply
$$\norm{\pa _xm_\b *\left ( 2x\vert Q_\b\vert^2 \La _x^p(\tL_\b)^qQ_\b+xQ_\b^2\overline{\La _x^p(\tL_\b)^qQ_\b}  \right )}_{L^2}\le B_{p,q}\ .$$
Summing up, we have proved that $\La _x^{p+1}(\tL_\b)^q Q_\b$ is uniformly bounded in $L^2$. It remains to prove a uniform bound of the weighted norm. But this is now a consequence of the formula
$$\La _x^{p+1}(\tL_\b)^q Q_\b=m_\b*\left ( 2\vert Q_\b\vert^2 \La _x^{p+1}(\tL_\b)^qQ_\b+Q_\b^2\overline{\La _x^{p+1}(\tL_\b)^qQ_\b}  \right )+R_{\b,p+1,q}\ ,$$
of Lemmas \ref{lemma:iteration}, \ref{multipliers}, \ref{mub123} and of the induction assumption. The proof is complete.
\end{proof}

%%%%%%%%%%%%%%%%%%%%%%%%%%%%%%%%%%%%%%%%%%

\subsection{Inverting $\mathcal L_\b$ with a special right hand side.}

In this section, we consider the equation

\be
\label{nceionceoneo}
\matchal L_{\beta}(i\rho_\b)=i\pa_{y}Q_{\beta}\ ,\ 
(i\rho_\b,iQ_{\beta})=(i\rho_\b,\pa_{y}Q_{\beta})=0\ .
\ee

\noindent
Since $i\pa_{y}Q_{\beta}$ is orthogonal to $iQ_\b $ and $\pa_y Q_\b $, this equation has a unique solution
given by Proposition \ref{invertibility}. The next lemma describes this solution as $\b $ tends to $1$.

\begin{lemma}
\label{lemmacalculrho}
Let $i\rho_\b$ be defined by \eqref{nceionceoneo}. 
Then,
\be
\label{irho}
i\rho_\b= Q_\b+\frac{i}{2}\pa_y Q_\b + O((1-\b)^{\frac 12}\vert \log (1-\b)\vert ^{\frac 12}) \text{ in } H^{\frac 12}(\R).
\ee

\end{lemma}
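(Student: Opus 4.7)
The strategy is to identify $A_\beta := Q_\beta + \frac{i}{2}\partial_y Q_\beta$ as an approximate solution to \eqref{nceionceoneo} and to control the error $g_\beta := i\rho_\beta - A_\beta$ by combining the coercivity of Proposition~\ref{invertibility1} with the additional smallness that $\mathcal{L}_\beta^{-1}$ enjoys on negative-frequency functions thanks to the large multiplier $\frac{1+\beta}{1-\beta}|D|$.

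First, integrations by parts using $\int \partial_y |Q_\beta|^2\, dy = 0$ and $\int |\partial_y Q_\beta|^2 \in \R$ yield $(A_\beta, iQ_\beta) = (A_\beta, \partial_y Q_\beta) = 0$, so $g_\beta$ inherits the orthogonality conditions of $i\rho_\beta$. A short computation based on $\mathcal{L}_\beta(Q_\beta) = -2|Q_\beta|^2 Q_\beta$ (immediate from the equation for $Q_\beta$), $\mathcal{L}_\beta(\partial_y Q_\beta) = 0$ (translation invariance), and the $\R$-linearity identity $\mathcal{L}_\beta(i\varepsilon) = i\mathcal{L}_\beta(\varepsilon) + 2iQ_\beta^2 \overline{\varepsilon}$ then gives
\[
\mathcal{L}_\beta g_\beta = -\mathcal{R}_\beta, \qquad \mathcal{R}_\beta := -2|Q_\beta|^2 Q_\beta + iQ_\beta^2 \partial_y\overline{Q_\beta} - i\partial_y Q_\beta.
\]
The crucial algebraic observation is that, at the Szeg\H{o} limit $Q_\beta = Q^+$, partial-fraction expansions derived from $Q^+(y) = 2/(2y+i)$ yield $|Q^+|^2 Q^+ = Q^+ - \overline{Q^+} - i\partial_y Q^+$ and $(Q^+)^2 \partial_y \overline{Q^+} = -2iQ^+ - \partial_y Q^+ + 2i\overline{Q^+} - \partial_y \overline{Q^+}$; substitution collapses $\mathcal{R}_\beta|_{Q_\beta = Q^+}$ to the purely negative-frequency function $-i\partial_y \overline{Q^+}$. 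Writing $Q_\beta = Q^+ + r_\beta$ with $\|r_\beta\|_{H^1} \lesssim (1-\beta)^{1/2} |\log(1-\beta)|^{1/2}$ from \eqref{QbQ+}, and using the uniform Sobolev bounds of Proposition~\ref{prop:reg}, one obtains the decomposition $\mathcal{R}_\beta = -i\partial_y \overline{Q^+} + \mathcal{N}_\beta$ with $\|\mathcal{N}_\beta\|_{H^{-1/2}} \lesssim (1-\beta)^{1/2} |\log(1-\beta)|^{1/2}$.

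The main obstacle is that $\|-i\partial_y \overline{Q^+}\|_{H^{-1/2}}$ is of order one, so the coercivity estimate \eqref{esthoin} applied directly to $g_\beta$ is not sharp. To exploit the negative-frequency structure of the obstructing term, I introduce the explicit corrector
\[
h_\beta := \Bigl(1 + \tfrac{1+\beta}{1-\beta}|D|\Bigr)^{-1}\bigl(-i\partial_y \overline{Q^+}\bigr),
\]
which has support in $\{\xi < 0\}$ in Fourier space. A direct Fourier-space rescaling of the integral defining $\|h_\beta\|_{H^{1/2}}^2$ (using $\widehat{Q^+}(\xi) = -2\pi i e^{-\xi/2} \mathbf{1}_{\xi > 0}$ and substituting $u = \frac{1+\beta}{1-\beta}|\xi|$) gives $\|h_\beta\|_{H^{1/2}} \lesssim 1-\beta$. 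Since $h_\beta$ is purely negative-frequency, $\frac{|D|-\beta D}{1-\beta} h_\beta = \frac{1+\beta}{1-\beta} |D| h_\beta$, so that
\[
\mathcal{L}_\beta h_\beta = -i\partial_y \overline{Q^+} - \bigl(2|Q_\beta|^2 h_\beta + Q_\beta^2 \overline{h_\beta}\bigr),
\]
where the bracket is $O(1-\beta)$ in $L^2 \hookrightarrow H^{-1/2}$ by $\|Q_\beta\|_{L^\infty} \lesssim 1$ (Proposition~\ref{prop:reg}). Hence $\tilde g_\beta := g_\beta + h_\beta$ solves $\mathcal{L}_\beta \tilde g_\beta = -\mathcal{N}_\beta - (2|Q_\beta|^2 h_\beta + Q_\beta^2 \overline{h_\beta})$, whose $H^{-1/2}$-norm is $O((1-\beta)^{1/2} |\log(1-\beta)|^{1/2})$. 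The orthogonality defects $|(\tilde g_\beta, iQ_\beta)| = |(h_\beta, iQ_\beta)|$ and $|(\tilde g_\beta, \partial_y Q_\beta)| = |(h_\beta, \partial_y Q_\beta)|$ are $O(1-\beta)$ by Cauchy--Schwarz. Proposition~\ref{invertibility1} therefore gives $\|\tilde g_\beta\|_{H^{1/2}} \lesssim (1-\beta)^{1/2} |\log(1-\beta)|^{1/2}$, and the triangle inequality together with $\|h_\beta\|_{H^{1/2}} \lesssim 1-\beta$ yields \eqref{irho}.
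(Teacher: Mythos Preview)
Your proof is correct. The overall strategy---compute $\mathcal L_\beta$ on the ansatz $A_\beta=Q_\beta+\tfrac{i}{2}\partial_y Q_\beta$, identify the leading obstruction at the Szeg\H{o} limit as a purely negative-frequency term, add a small corrector, and conclude via the coercivity of Proposition~\ref{invertibility1}---is the same as the paper's. Your algebraic reduction $\mathcal R_\beta\big|_{Q_\beta=Q^+}=-i\partial_y\overline{Q^+}$ is exactly the paper's identity $-2|Q^+|^2Q^+-i|Q^+|^4-i(\overline{Q^+})^2=i\partial_yQ^+$ read from the other side.

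The one genuine difference is the choice of corrector. The paper adds $\tfrac12(1-\beta)\overline{Q_\beta}$ to $A_\beta$, computing $\mathcal L_\beta(\overline{Q_\beta})$ explicitly from the $Q_\beta$ equation; this is a direct algebraic guess. You instead invert the dominant negative-frequency part of the operator on the obstruction, setting $h_\beta=(1+\tfrac{1+\beta}{1-\beta}|D|)^{-1}(-i\partial_y\overline{Q^+})$. The two correctors are morally the same object (applying $1+\tfrac{1+\beta}{1-\beta}|D|$ to $-\tfrac12(1-\beta)\overline{Q^+}$ gives $-i\partial_y\overline{Q^+}$ up to $O(1-\beta)$), so the estimates line up identically. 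Your Fourier-multiplier construction is a bit more systematic and would generalize more readily to situations where one does not happen to know an explicit function like $\overline{Q_\beta}$ whose image under $\mathcal L_\beta$ is computable; the paper's choice is shorter here because it avoids the Fourier-side computation of $\|h_\beta\|_{H^{1/2}}$.
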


\begin{proof}
A computation based on the equation satisfied by $Q_\b$ shows that
\begin{equation*}
\mathcal{L}_{\b}\Big(Q_\b+\frac{i}{2}\pa_y Q_\b\Big)=-2|Q_\b|^2Q_\b+iQ_\b^2\overline{\pa_y Q_\b}.
\end{equation*}

\noindent
On the other hand, we have
\begin{equation*}
\mathcal{L}_{\b}(\overline{Q_\b})=\overline{\frac{2\b DQ_\b}{1-\b}}-|Q_\b|^2\overline{Q_\b}-Q_\b^3.
\end{equation*}

\noindent
From the last two equations, we conclude that
\begin{align*}
\mathcal{L}_{\b}\Big(Q_\b+\frac{i}{2}\pa_y Q_\b+\frac{1}{2}(1-\b)\overline{Q_\b}\Big)
&=-2|Q_\b|^2Q_\b+iQ_\b^2\overline{\pa_y Q_\b}
+i\b \pa_y \overline{Q_\b}\\
&-\frac{1}{2}(1-\b)|Q_\b|^2\overline{Q_\b}-\frac{1}{2}(1-\b)Q_\b^3=:RHS
\end{align*}

\noindent
Using \eqref{QbQ+} and Proposition \ref{prop:reg}, we then notice that 
\begin{align*}
RHS&=-2|Q^+|^2Q^+-i|Q^+|^4-i\overline{Q^+}^2+O((1-\b)^{1/2}\vert \log  (1-\b)\vert ^{1/2}) \text{ in } H^{-1/2}\\
&=i\pa_y Q^++ O((1-\b)^{1/2}\vert \log (1-\b)\vert ^{1/2}) \text{ in } H^{-1/2}\\
&=i\pa_y Q_\b+O((1-\b)^{1/2}\vert \log (1-\b)\vert ^{1/2}) \text{ in } H^{-1/2}
\end{align*}

\noindent
Thus, denoting
\begin{equation*}
g_\b:=Q_\b+\frac{i}{2}\pa_y Q_\b+\frac{1}{2}(1-\b)\overline{Q_\b},
\end{equation*}

\noindent
we have that
\begin{equation*}
\mathcal{L}_{\b}(g_\b)=i\pa_y Q_\b+O((1-\b)^{1/2}\vert \log (1-\b)\vert ^{1/2}) \text{ in } H^{-1/2}.
\end{equation*}

\noindent
Notice that 
\[\Big(Q_\b+\frac{i}{2}\pa_y Q_\b, iQ_\b\Big)=\Big(Q_\b+\frac{i}{2}\pa_y Q_\b, \pa_y Q_\b\Big)=0.\]

\noindent
Then, considering 
\begin{equation}\label{tildeg}
\tilde{g}_\b:=g_\b-\frac{1}{2}(1-\b)\textup{Proj}_{(iQ_{\b},\pa_y Q_\b)}\overline{Q_\b}\ ,
\end{equation}

\noindent
we have that $(\tilde{g}_\b, iQ_\b)=(\tilde{g}_\b,\pa_y Q_\b)=0$ and
\begin{equation*}
\mathcal{L}_{\b}(\tilde{g}_\b)=\mathcal{L}_{\b}(g_\b)=i\pa_y Q_\b+O((1-\b)^{1/2}\vert \log (1-\b)\vert ^{1/2}) \text{ in } H^{-1/2}\ .
\end{equation*}

\noindent
Since $\mathcal{L}_{\b} (i\rho_{\b})=i\pa_{y} Q_{\b}$, 
it follows that $(i\rho_{\b}-\tilde{g}_{\b}, iQ_{\b})=(i\rho_{\b}-\tilde{g}_{\b},\pa_{y} Q_{\b})=0$ and
\[\mathcal{L}_{\b} (i\rho_{\b}-\tilde{g}_{\b})=O((1-\b)^{1/2}\vert \log (1-\b)\vert ^{1/2}) \text{ in } H^{-1/2}.\]

\noindent
Then, by Proposition \ref{invertibility1},
we have that
\[i\rho_{\b}-\tilde{g}_{\b}=O((1-\b)^{1/2}\vert \log (1-\b)\vert ^{1/2}) \text{ in } H^{1/2}\] 

\noindent
In view of  \eqref{tildeg}, we have $\tilde g_\b =Q_\b+\frac{i}{2}\pa_y Q_\b + O(1-\b) \text{ in } H^{\frac 12}(\R)$, thus  \eqref{irho} is proved.
\end{proof}

%%%%%%%%%%%%%%%%%%%%%%%%%%%%%%%%%%%%%%%%%%%%%%%%%%

\subsection{The profiles of $Q_\b (x)$ and of $\pa _xQ_\b (x)$ at infinity}

%%%%%%%%%%%%%%%%%%%%%%%%%%%%%%%%%%%%%%%

\begin{proposition}\label{prop:equivalent Q_b}
Consider the following function,
\be
F(x)=\int _0^\infty \frac{\a \, e^{-\a }}{\a -ix}\, d\a \ ,\ x\in \R\ ,\label{functionF}
\ee
and the quantity
\begin{equation}
c_\b :=\frac{i}{2\pi}\int _\R \vert Q_\b (x)\vert ^2Q_\b (x)\, dx\ .
\end{equation}
Then, as $\b \rightarrow 1$ and $\vert x\vert \rightarrow \infty$, we have
\begin{align}
Q_\b (x)&= \frac{c_\b}{x}F\left (-\frac{1-\b}{1+\b}x\right )+O\left (\frac 1{x^2}\right ) \label{Qbx}\\
\pa _xQ_\b (x)&= \frac{ic_\b}{x}\frac{1-\b}{1+\b} F\left (-\frac{1-\b}{1+\b}x\right )-\frac{c_\b}{x^2}
+O\left ( \frac{1-\b}{x^2}+\frac{ \log \vert x\vert}{\vert x\vert ^{3}} \right ).
\label{DQbx}
\end{align}
\end{proposition}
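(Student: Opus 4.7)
The plan is to derive an explicit closed-form expression for the kernel $m_\beta=\mathcal F^{-1}\bigl((1+(|\xi|-\beta\xi)/(1-\beta))^{-1}\bigr)$ in terms of $F$, then apply it to the fixed-point identity $Q_\beta = m_\beta*(|Q_\beta|^2 Q_\beta)$ obtained from \eqref{travellingwave}. Using the integral representation $F(z)=\int_0^\infty e^{iz\xi}(1+\xi)^{-2}d\xi$, which one checks by Fubini in the $\alpha$-integral defining $F$, a single integration by parts gives $\int_0^\infty(1+\xi)^{-1}e^{iz\xi}d\xi = i(1-F(z))/z$. Splitting $m_\beta=m_\beta^++m_\beta^-$ according to the sign of frequency (and changing variables $\alpha=\mu\eta$ with $\mu:=\frac{1+\beta}{1-\beta}$ in the negative half), we obtain the master formula
\[
m_\beta(x)=\frac{i}{2\pi x}\bigl(F(-x/\mu)-F(x)\bigr), \qquad \partial_x m_\beta(x)=-\frac{1}{2\pi x}\Bigl(\frac{F(-x/\mu)}{\mu}+F(x)\Bigr),
\]
the second formula being a direct consequence of the first together with the ODE $F'(y)=(F(y)-1)/y-iF(y)$, itself derived by IBP in the $\alpha$-integral. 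Since $-2\pi i c_\beta=\int|Q_\beta|^2 Q_\beta\,dx$, combining these identities with the asymptotic $F(x)=i/x+2/x^2+O(1/|x|^3)$ (again by IBP) yields
\[
-2\pi ic_\beta\, m_\beta(x)=\frac{c_\beta}{x}F(-x/\mu)+O(1/x^2),\qquad
-2\pi ic_\beta\,\partial_x m_\beta(x)=\frac{ic_\beta}{\mu x}F(-x/\mu)-\frac{c_\beta}{x^2}+O(1/|x|^3).
\]

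I would then expand
\[
Q_\beta(x)=-2\pi ic_\beta\, m_\beta(x)+\mathcal E_0(x),\quad \mathcal E_0(x):=\int\bigl[m_\beta(x-y)-m_\beta(x)\bigr](|Q_\beta|^2 Q_\beta)(y)\,dy,
\]
and reduce \eqref{Qbx} to showing $\mathcal E_0(x)=O(1/x^2)$ uniformly in $\beta$. I split the $y$-integral at $|y|\leq|x|/2$ and $|y|>|x|/2$. On the first region, the mean value theorem and the uniform bound $|\partial_x m_\beta(z)|\lesssim 1/|z|^2$ for $|z|$ large (deduced from Lemma \ref{multipliers} applied with $p=1$, since $\Lambda_x m_\beta$ belongs to class $\mathcal M$) give an integrand $\lesssim|y|/x^2$; integrability of $|y|\,|Q_\beta|^3$ uniformly in $\beta$ comes from Proposition \ref{boundQb}. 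On the second region I use Cauchy--Schwarz with the uniform bound $\|m_\beta\|_{L^2}\lesssim 1$ and the weighted $L^2$-decay $\|\mathbf 1_{|y|>|x|/2}|Q_\beta|^3\|_{L^2}\lesssim|x|^{-5/2}$, again from Proposition \ref{boundQb}, which contributes $O(|x|^{-5/2})=o(1/x^2)$.

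For $\partial_x Q_\beta$, I would first integrate by parts to move the derivative onto $|Q_\beta|^2 Q_\beta$, which is crucial because $\partial_x m_\beta$ has a $1/x$-singularity at $0$ while $m_\beta$ is only logarithmically singular. Using $\int y\,\partial_y(|Q_\beta|^2 Q_\beta)\,dy=2\pi ic_\beta$, this rewriting produces a genuine second-order Taylor remainder
\[
\mathcal E_1(x)=\int\bigl[m_\beta(x-y)-m_\beta(x)+y\partial_x m_\beta(x)\bigr]\partial_y(|Q_\beta|^2 Q_\beta)(y)\,dy,
\]
which I estimate as before: the uniform bound $|\partial_x^2 m_\beta(z)|\lesssim 1/|z|^3$ from Lemma \ref{multipliers} with $p=2$ controls $|y|\leq|x|/2$, while the weighted decay of $\partial_y(|Q_\beta|^2 Q_\beta)$ (a direct consequence of Proposition \ref{boundQb} for both $Q_\beta$ and $\partial_y Q_\beta$) controls $|y|>|x|/2$. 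Combined with the $F(x)$-expansion above, this produces \eqref{DQbx}.

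The main obstacle is the control of the error contributions from the far region $|y|\sim|x|$, where $x-y$ can approach $0$ and $m_\beta$ is only in $L^2$ with a logarithmic singularity; this is precisely why the derivative statement picks up the $\log|x|/|x|^3$ term, while the interplay between the two regimes $|x|\ll 1/(1-\beta)$ and $|x|\gg 1/(1-\beta)$ in the weighted bounds of Proposition \ref{boundQb} is responsible for the $(1-\beta)/x^2$ term. Apart from these bookkeeping estimates, the structural identity $m_\beta(x)=\frac{i}{2\pi x}(F(-x/\mu)-F(x))$ does all the heavy lifting.
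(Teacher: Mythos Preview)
Your proposal is correct. For \eqref{Qbx}, you and the paper both rest on the master identity $xm_\beta(x)=\frac{i}{2\pi}\bigl(F(-x/\mu)-F(x)\bigr)$; the paper organizes the convolution via the splitting $x=(x-y)+y$ and bounds the tail $\int m_\beta(x-y)\,y\,g_\beta(y)\,dy$ by H\"older with the $L^p$-bounds of Lemma~\ref{mubLp}, while you Taylor-expand $m_\beta(x-y)$ to first order in $y$ and use the pointwise bound $|\partial_x m_\beta(z)|\lesssim 1/z^2$ extracted from Lemma~\ref{multipliers}. These are minor rearrangements of the same idea.

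For \eqref{DQbx} your route is genuinely different and somewhat cleaner. The paper computes $\partial_x m_\beta$ as a tempered distribution, which produces a $pv(1/x)$ and a $\delta_0$ contribution that must then be expanded and estimated term by term; the specific errors $(1-\beta)/x^2$ and $\log|x|/|x|^3$ in the statement arise from this route (the former from residual $pv$-type terms, the latter from H\"older at $p\downarrow 1$ on integrals of $G$). Your integration by parts $\partial_x Q_\beta=m_\beta*\partial_y g_\beta$ sidesteps the distributional singularity entirely: the second-order Taylor remainder, controlled on $|y|\le|x|/2$ by $|\partial_x^2 m_\beta(z)|\lesssim 1/|z|^3$ (again Lemma~\ref{multipliers}) and on $|y|>|x|/2$ by Cauchy--Schwarz against $\partial_y g_\beta=O(\langle y\rangle^{-4})$, in fact yields the sharper bound $\mathcal E_1(x)=O(1/|x|^3)$. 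So your closing remark that the $(1-\beta)/x^2$ and $\log|x|/|x|^3$ terms come from ``bookkeeping'' in your argument is slightly off: in your approach those specific terms never appear, and your final estimate is simply stronger than the one stated.
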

\begin{remark}
\begin{enumerate}
\item From the previous section and by Lemma \ref{lemmaalgebraSzeg\H{o}}, we know that $c_\b$ tends to $1$ as $\b $ tends to 1. In the next subsection --- see (\ref{varcb})--- we will prove  
that
\be
\label{estcbeta}
c_\b =1+O((1-\b)\vert \log(1-\b)\vert )\ .
\ee
\item Notice that $F(x)=1+O(|x||\log |x||)$
as $x\to 0 $ and $|F(x)|\lesssim \frac{1}{|x|}$ for all $|x|>0$. Therefore, as $\b\to 1$ and $|x|\to \infty$, we infer from \eqref{Qbx}  that
\begin{equation}\label{boundQ_b}
|Q_\b(x)|\lesssim \frac{1}{|1-\b|x^2}, \quad \forall |x|>0.
\end{equation}
Furthermore, if $0<1-\b\ll1$, $|x|\gg 1$, and $(1-\b)|x|\ll 1$, the following asymptotics follows from \eqref{Qbx} and \eqref{estcbeta}:
\begin{align}\label{asymQ_b}
Q_\b(x)&=\frac{1+O((1-\b)|\log (1-\b)|)}{x}\left[1+O\left((1-\b)|x|\left|\log ((1-\b)|x|)\right|\right)\right]\notag\\
&\hphantom{XX}+O\left(\frac{1}{x^2}\right).
\end{align}

\item In view of the identity
\be \label{FF'}
F'(x)=\left (\frac 1x-i\right )F(x)-\frac 1x\ ,
\ee
the main term in the asymptotics \eqref{DQbx} for $\pa _xQ_\b(x)$ is indeed obtained by deriving the main term in the asymptotics \eqref{Qbx} of 
$Q_\b (x)$.
\end{enumerate}
\end{remark}
\begin{proof}
The starting point is again the formula
$$Q_\b =m_\b *(\vert Q_\b\vert ^2Q_\b )\ ,$$
where
$$m_\b =\mathcal F^{-1}\left (\frac{1}{1+\frac{\vert \xi \vert -\b \xi}{1-\b}}\right )\ .$$
We notice that, for $x\ne 0$,  
$$m_\b (x)=\frac{1}{2\pi}\left (G(x)+\frac{1-\b}{1+\b} G\left (-\frac{1-\b}{1+\b}x\right ) \right )\ ,$$
where $$G(x)=\int _0^\infty \frac{e^{ix\xi}}{1+\xi }\, d\xi =\int _0^{\infty}\frac{e^{-\a}}{\a -ix}\, d\a ,$$
the second integral being obtained from the former by writing
$$\frac{1}{1+\xi}=\int _0^\infty e^{-\a(1+\xi)}\, d\a \ .$$
It is easy to check that $G$ is smooth outside $x=0$, $G(x)= ix^{-1}+x^{-2}+O(x^{-3})$ as $x\rightarrow \infty$, and $G(x)\sim \log \vert x\vert $
as $x\rightarrow 0$. In particular, $G\in L^p(\R )$ for every $p\in (1,\infty)$, with 
\begin{equation}\label{GLp}
\Vert G\Vert _{L^p}\le C \max \left (\frac{1}{p-1}, p\right )\ .
\end{equation}
Next we split
$$
xQ_\b (x)=\int _\R (x-y)m_\b (x-y)\vert Q_\b (y)\vert ^2Q_\b(y)\, dy+\int _\R m_\b (x-y)y\vert Q_\b (y)\vert ^2Q_\b(y)\, dy\ .
$$
Let us estimate the second integral in the right hand side, writing
\bee
\int _\R m_\b (x-y)y\vert Q_\b (y)\vert ^2Q_\b(y)\, dy&=&\int _{\vert y\vert \le \vert x\vert /2} m_\b (x-y)y\vert Q_\b (y)\vert ^2Q_\b(y)\, dy+\\
&&\int _{\vert y\vert > \vert x\vert /2}  m_\b (x-y)y\vert Q_\b (y)\vert ^2Q_\b(y)\, dy\ .
\eee
From H\"older's inequality and the uniform bound $\vert Q_\b (x)\vert \la x\ra $ from Proposition \ref{boundQb}, we have, for every $p>1$, close to $1$,
$$\left \vert \int _{\vert y\vert > \vert x\vert /2}  m_\b (x-y)y\vert Q_\b (y)\vert ^2Q_\b(y)\, dy\right \vert \lesssim \frac{1}{p-1}\vert x\vert ^{-1-\frac 1p  }\ ,$$
which, by choosing 
$$p=1+\frac{1}{\log\vert x\vert}\ ,$$
yields
$$\left \vert \int _{\vert y\vert > \vert x\vert /2}  m_\b (x-y)y\vert Q_\b (y)\vert ^2Q_\b(y)\, dy\right \vert \lesssim \frac{\log \vert x\vert}{x^{2-\frac{1}{\log|x|}}}\lesssim \frac{\log |x|}{|x|^{2}}\ .$$
On the other hand, because of the bounds on $G$, we have
$$\vert m_\b (x)\vert \lesssim \vert x\vert ^{-1}\ ,\ \vert x\vert \rightarrow \infty \ .$$
Indeed, the only non trivial case is $(1-\b)\vert x\vert \le 1$, so that
$$\left \vert \frac{1-\b}{1+\b} G\left (-\frac{1-\b}{1+\b}x\right )\right \vert \lesssim (1-\b)\vert \log[(1-\b)\vert x\vert]\lesssim \frac{1}{\vert x\vert }\ .$$
We conclude that
$$\left \vert \int _{\vert y\vert \le \vert x\vert /2}  m_\b (x-y)y\vert Q_\b (y)\vert ^2Q_\b(y)\, dy\right \vert \lesssim \frac{1}{\vert x\vert}\ ,$$
so that
$$Q_\b (x)=\frac{1}{x}\int _\R (x-y)m_\b (x-y)\vert Q_\b (y)\vert ^2Q_\b(y)\, dy+O\left (\frac{1}{x^2}\right )\ .$$
We come to the first integral. We observe that
$$\widehat{xm_\b}(\xi )=i\pa _\xi \left (\frac{1}{1+\frac{\vert \xi \vert -\b \xi}{1-\b}} \right )=i\left (\frac{1+\b}{1-\b}\right ){{\bf 1}_{\xi <0}}{\left(1+\frac{1+\b}{1-\b}\vert \xi\vert \right)^{-2}}-i\frac{{\bf 1}_{\xi >0}}{(1+\xi)^2}\ ,$$
so that
\bea
xm_\b (x)&=&\frac{i}{2\pi}\left (F\left (-\frac{1-\b}{1+\b}x\right ) -F(x)\right )\ ,\\ F(x)&:=&\int _0^\infty \frac{e^{ix\xi}}{(1+\xi)^2}\, d\xi =\int_0^\infty \frac{\a\, e^{-\a}}{\a -ix}\, d\a =1+ixG(x)\ .
\eea
This leads to
\bee
\int _\R (x-y)m_\b (x-y)\vert Q_\b (y)\vert ^2Q_\b(y)\, dy&=&-\int _\R F(x-y)g_\b (y)\, dy\\
&+&\int _\R F\left (-\frac{1-\b}{1+\b}(x-y)\right )g_\b (y)\, dy\ ,\\
\ g_\b &:=&\frac{i}{2\pi}\vert Q_\b \vert ^2Q_\b \ .
\eee
Again we are going to estimate the above two integrals by using the properties of $F$, namely that $F$ is smooth outside the origin, it is bounded near $0$, $F(x)=O(x^{-1})$ at infinity, while $\vert F'(x)\vert =O(\vert  \log\vert x\vert \vert )$ near $0$ and $F'(x)=O(x^{-2})$ at infinity. Furthermore, let us recall from Proposition \ref{boundQb} that
$$g_\b (y)=O(\la y\ra ^{-3})\ .$$
We infer the following estimates,
\bee 
\int _\R F(x-y)g_\b (y)\, dy&=&\int _{\vert y\vert \le \frac{\vert x\vert}{2}} F(x-y)g_\b (y)\, dy+\int _{\vert y\vert > \frac{\vert x\vert}{2}} F(x-y)g_\b (y)\, dy\\
&=&O(\vert x\vert ^{-1})+O(x^{-2})\ ,\\
\int _\R F\left (-\frac{1-\b}{1+\b}(x-y)\right )g_\b (y)\, dy&=& F\left (-\frac{1-\b}{1+\b}x\right )\int _{\vert y\vert \le \frac{\vert x\vert}{2}}g_\b (y)\, dy\\
&+&\int _{\vert y\vert > \frac{\vert x\vert}{2}} F\left (-\frac{1-\b}{1+\b}(x-y)\right )g_\b (y)\, dy\\
&+&\int _{\vert y\vert \le \frac{\vert x\vert}{2}} \left (F\left (-\frac{1-\b}{1+\b}(x-y)\right )
- F\left (-\frac{1-\b}{1+\b}x\right ) \right )g_\b (y)\, dy\\
&=&c_\b F\left (-\frac{1-\b}{1+\b}x\right )+O(x^{-2})+O(\vert x\vert ^{-1}\omega ((1-\b)\vert x\vert ))\ ,\\
\omega (s)&:=&\begin{cases} s\vert \log s\vert &\textrm{if}\ 0<s\le \frac 12\\
\frac{1}{s}&\textrm{if}\ \frac 12\le s\ \end{cases}\ .
\eee
This completes the proof of (\ref{Qbx}). Let us come to the proof of (\ref{DQbx}). Notice that
\bee
\widehat{\pa _xm_\b}(\xi )&=&\frac{i\xi}{1+\frac{\vert \xi\vert -\b \xi}{1-\b}}\\
&=&i\left( {\bf 1}_{\xi >0}-\frac{1-\b}{1+\b}{\bf 1}_{\xi <0}-\frac{ {\bf 1}_{\xi >0}}{1+\xi}+ \frac{1-\b}{1+\b}\frac{{\bf 1}_{\xi <0}}{\left (1+\frac{\vert \xi\vert -\b \xi}{1-\b}\right )}\right )\ ,
\eee
so that, using the formulae $\mathcal F^{-1}({\bf 1}_{\pm \xi >0})=\mp \frac{1}{2\pi i}pv \left (\frac 1x\right )+\frac 12  \delta _0$,
$$
\pa _xm_\b (x)=\frac{-1}{\pi (1+\b)}pv\left (\frac {1}{x}\right )+\frac{i\b}{2(1+\b)}\delta _0-\frac{i}{2\pi}G(x)+\frac{i}{2\pi}\left (\frac{1-\b}{1+\b}\right )^2
G\left (-\frac{1-\b}{1+\b}x \right )\ ,
$$
and
\bee \pa _xQ_\b (x)&=&\frac{2i}{1+\b}pv\left (\frac 1x\right )*g_\b+\frac{i\b}{2(1+\b)}\vert Q_\b(x)\vert ^2Q_\b(x)\\
&+&\int _\R\left (\left (\frac{1-\b}{1+\b}\right )^2
 G\left (-\frac{1-\b}{1+\b}(x-y) \right )-G(x-y)\right )g_\b(y)\, dy\ .
 \eee
Using, similarly as above, the estimates on $G$, and Proposition \ref{boundQb} for $g_\b$ , we have
\bee
\int _{\vert y\vert > \vert x\vert /2} G(x-y)g_\b (y)\, dy&=&O\left (\frac{\log \vert x\vert}{\vert x\vert ^{3}} \right )\ ,\\
\int _{\vert y\vert \le  \vert x\vert /2} G(x-y)g_\b (y)\, dy&=&\frac{i}{x}\int _\R g_\b +\frac{1}{x^2}\left (i\int _\R yg_\b +\int _\R g_\b\right )+O\left (\frac{\log \vert x\vert}{\vert x\vert ^3} \right )\ .
\eee
On the other hand,
$$pv\left (\frac 1x\right )*g_\b =\frac 1x\int _\R g_\b +\frac{1}{x^2}\int _\R yg_\b +\frac{1}{x^2}\, pv\left (\frac 1x\right )*(y^2g_\b)\ .$$
From Proposition \ref{boundQb},  $h_\b (y):=y^2g_\b (y)$ satisfies $h_\b(y)=O(\la y\ra ^{-1}) $ and $h_\b'(y)=O(\la y\ra ^{-2})$. We infer
\bee
pv\left (\frac 1x\right )*h_\b &=&\int _0^\infty \frac{h_\b (x-z)-h_\b (x+z)}{z}\, dz\\
&=&\int _{\vert \vert x\vert -z\vert >\vert x\vert /2}\frac{h_\b (x-z)-h_\b (x+z)}{z}\, dz\\
&&+\int _{\vert \vert x\vert -z\vert \le \vert x\vert /2}\frac{h_\b (x-z)-h_\b (x+z)}{z}\, dz\\
&\lesssim &\int _{\vert \vert x\vert -z\vert >\vert x\vert /2}\frac{dz}{\la \vert x\vert -z\ra ^2}+\int _{\vert \vert  x\vert -z\vert \le \vert x\vert /2}\frac{dz}{\vert x\vert \la \vert x\vert -z\ra}\\
&=&O(\vert x\vert ^{-1})+O(\vert x\vert ^{-1}\log \vert x\vert )\ .
\eee
Summing up, we have proved that, as $x\rightarrow \infty$, 
$$\frac{2i}{1+\b}pv\left (\frac 1x\right )*g_\b-G*g_\b=\frac{i(1-\b)}{(1+\b)x}\int _\R g_\b -\frac{1}{x^2}\int _\R g_\b +O\left (\frac{1-\b}{x^2}+\frac{\log \vert x\vert}{\vert x\vert ^3}  \right ) \ .   $$
It remains to study the last integral, namely
$$\int _\R \left (\frac{1-\b}{1+\b}\right )^2 G\left (-\frac{1-\b}{1+\b}(x-y) \right )\, g_\b(y)\, dy=\int _{\vert y\vert \le \vert x\vert /2}...+\int _{\vert y\vert > \vert x\vert /2}...\ .$$
Using again H\"older's inequality and optimizing on the power, we get
$$\left \vert \int _{\vert y\vert > \vert x\vert /2}\left (\frac{1-\b}{1+\b}\right )^2 G\left (-\frac{1-\b}{1+\b}(x-y) \right )\, g_\b(y)\, dy\right \vert \lesssim \frac{(1-\b)\log \vert x\vert}{\vert x\vert ^{3}}\ .$$
On the other hand, because of the estimates on $G'$, we have
\bee 
&&\int _{\vert y\vertÊ\le \vert x\vert /2}\left (\frac{1-\b}{1+\b}\right )^2 G\left (-\frac{1-\b}{1+\b}(x-y) \right )\, g_\b(y)\, dy=\\
&&\left (\frac{1-\b}{1+\b}\right )^2 G\left (-\frac{1-\b}{1+\b}x \right )\int _\R g_\b (y)\, dy+O\left (\frac{1-\b}{x^2}\right )\ .
\eee
In view of the identity
$$G(x)=\frac{F(x)-1}{ix}\ ,$$
this completes the proof of (\ref{DQbx}). 
\end{proof}

%%%%%%%%%%%%%%%%%%%%%%%%%%%%%%%%%%%%%%%%%%%%%%%%

\subsection{Further estimates on $\pa _\b Q_\b$}

%%%%%%%%%%%%%%%%%%%%%%%%%%%%%%%%%%%%%%%%%%%%%%%%%%

In this subsection, we improve some  the estimates on $\dot Q_\b:=\pa _\b Q_\b$ deduced in Proposition \ref{boundQb}. 

\begin{proposition}\label{furtherQbdot}
The following estimates hold as $\b $ tends to $1$.
\begin{align}
&\Vert \dot Q_\b^+\Vert _{H^{\frac 12}} \lesssim \vert \log(1-\b)\vert \ , \label{Qbdotplus}\\
&\vert \widehat{\dot Q_\b ^-}(\xi)\vert \le \frac{C}{1-\b+(1+\b)\vert \xi\vert} \ .\label{Qbdotmoins}
\end{align}
Furthermore, if $H_\b =(1-\b)\pa _\b^2Q_\b $ or $H_\b =\pa _\b y\pa _yQ_\b$, we have similarly
\begin{align}
&\Vert H_\b^+\Vert _{H^{\frac 12}} \lesssim \vert \log(1-\b)\vert \ , \label{Hbplus}\\
&\vert \widehat{H_\b ^-}(\xi)\vert \le \frac{C}{1-\b+(1+\b)\vert \xi\vert} \ .\label{Hbmoins}
\end{align}
In particular, 
\begin{align}
& \frac{d}{d\b}\Vert Q_\b\Vert _{L^2}^2  =O(\vert \log(1-\b)\vert )\ ,\label{varL2b}\\ 
& \frac{d}{d\b}(DQ_\b, Q_\b) =O(\vert \log(1-\b)\vert )\ ,\label{varHdemib}\\ 
& \frac{d}{d\b}\int _\R \vert Q_\b\vert ^2Q_\b  =O(\vert \log(1-\b)\vert )\ ,\label{varcb}
\end{align}
and, if $H_\b$ is as above, and $\rho_\b$ is defined by \eqref{nceionceoneo}, we have
\be \vert (H_\b,Q_\b)\vert +\vert (H_\b,DQ_\b)\vert +\vert (H_\b,i\rho_\b)\vert =O(\vert \log(1-\b)\vert ).\label{Hbinners}
\ee
\end{proposition}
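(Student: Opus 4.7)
The strategy is to split $\dot Q_\beta = \dot Q_\beta^+ + \dot Q_\beta^-$ and to exploit the Fourier-support structure of the source in \eqref{eqQbdot}. As a first reduction, applying $\Pi_-$ to the profile equation $\frac{|D|-\beta D}{1-\beta}Q_\beta + Q_\beta = |Q_\beta|^2Q_\beta$ and using that $\frac{|D|-\beta D}{1-\beta}$ acts as $\frac{1+\beta}{1-\beta}|D|$ on $\{\xi<0\}$, I rewrite the right hand side of \eqref{eqQbdot} as the explicit, Szegő-free source
\begin{equation*}
\LQb \dot Q_\beta \;=\; -\frac{2|D|Q_\beta^-}{(1-\beta)^2} \;=:\; g_\beta,
\end{equation*}
which is supported on negative frequencies. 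To prove \eqref{Qbdotmoins} I project this equation on $\Pi_-$ and invert pointwise in Fourier:
\begin{equation*}
\widehat{\dot Q_\beta^-}(\xi) \;=\; \frac{(1-\beta)\mathbf{1}_{\xi<0}}{(1-\beta)+(1+\beta)|\xi|}\Big[\hat g_\beta(\xi) + \widehat{\Pi_-(2|Q_\beta|^2\dot Q_\beta + Q_\beta^2\overline{\dot Q_\beta})}(\xi)\Big].
\end{equation*}
I then show both $(1-\beta)|\hat g_\beta|$ and $(1-\beta)|\widehat{\mathrm{NL}}|$ are bounded uniformly in $\xi$: for $g_\beta$, the explicit formula \eqref{Qbmoins} gives $(1-\beta)|\hat g_\beta(\xi)|\leq \frac{2|\xi|}{(1-\beta)+(1+\beta)|\xi|}\|\widehat{|Q_\beta|^2Q_\beta}\|_{L^\infty}\lesssim 1$; for the nonlinear piece, Proposition \ref{boundQb} yields $(1-\beta)\|Q_\beta^2\dot Q_\beta\|_{L^1}\lesssim \int \frac{dx}{\langle x\rangle^3}\lesssim 1$, hence $(1-\beta)|\widehat{\mathrm{NL}}(\xi)|\lesssim 1$.

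For \eqref{Qbdotplus} I project $\LQb\dot Q_\beta = g_\beta$ on positive frequencies (where $g_\beta$ vanishes) and rewrite it in terms of the linearization $\mathcal L$ at $Q^+$ from \eqref{defL}:
\begin{equation*}
\mathcal L\dot Q_\beta^+ = \Pi_+\big(2|Q_\beta|^2\dot Q_\beta^- + Q_\beta^2\overline{\dot Q_\beta^-}\big) + \Pi_+\big(2(|Q_\beta|^2-|Q^+|^2)\dot Q_\beta^+ + (Q_\beta^2-(Q^+)^2)\overline{\dot Q_\beta^+}\big).
\end{equation*}
The orthogonality $(\dot Q_\beta, iQ^+) = (\dot Q_\beta, \partial_xQ^+) = 0$ from \eqref{eqQbdot} transfers to $\dot Q_\beta^+$ since $iQ^+, \partial_xQ^+ \in H^{\frac 12}_+$, so the coercivity \eqref{estL} gives $\|\dot Q_\beta^+\|_{H^{1/2}}\lesssim \|\mathcal L\dot Q_\beta^+\|_{H^{-1/2}}$; the $Q_\beta-Q^+$ perturbative term is absorbed using \eqref{QbQ+} and the Sobolev embedding $H^1\hookrightarrow L^\infty$. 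The main estimate is then
\begin{equation*}
\|\Pi_+(|Q_\beta|^2\dot Q_\beta^-)\|_{H^{-1/2}} + \|\Pi_+(Q_\beta^2\overline{\dot Q_\beta^-})\|_{H^{-1/2}}\lesssim |\log(1-\beta)|,
\end{equation*}
which I prove by bounding, for $\eta > 0$, the convolution
\begin{equation*}
|\widehat{|Q_\beta|^2\dot Q_\beta^-}(\eta)| \;\lesssim\; \int_0^\infty \frac{C_N\,d\zeta}{(1+\eta+\zeta)^N(1-\beta+(1+\beta)\zeta)}
\end{equation*}
where the Schwartz decay of $\widehat{|Q_\beta|^2}$ comes from Proposition \ref{prop:reg} and the pointwise bound on $\hat{\dot Q_\beta^-}$ from the just-proved \eqref{Qbdotmoins}; the window $\zeta\in(1-\beta,1)$ produces the $|\log(1-\beta)|$ while $\zeta<1-\beta$ and $\zeta>1$ contribute $O(1)$.

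The bounds \eqref{Hbplus}--\eqref{Hbmoins} on $H_\beta$ are proved by the same scheme. For $H_\beta = (1-\beta)\partial_\beta^2 Q_\beta$, differentiating \eqref{eqQbdot} in $\beta$ and multiplying by $1-\beta$ gives $\LQb H_\beta = \tilde g_\beta$ where $\tilde g_\beta$ preserves the negative-frequency main structure of $g_\beta$ modulo commutator remainders controlled by Proposition \ref{boundQb} and the already-established bounds on $\dot Q_\beta^\pm$; orthogonality to $iQ^+,\partial_xQ^+$ passes by $\beta$-differentiation. For $H_\beta = y\partial_y\dot Q_\beta = \partial_\beta y\partial_yQ_\beta$, applying $y\partial_y$ to \eqref{eqQbdot} using the commutator identity $[y\partial_y, \frac{|D|-\beta D}{1-\beta}] = -\frac{|D|-\beta D}{1-\beta}$ produces an equation of the same type; the orthogonality conditions in this case reduce after integration by parts to pairings of the form $(\dot Q_\beta^+, iy\partial_yQ^+)$, which are $O(|\log(1-\beta)|)$ since $iy\partial_yQ^+\in H^{\frac 12}_+$ (a direct computation on $\widehat{Q^+}$ shows this) and therefore pairs only with $\dot Q_\beta^+$.

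Each consequence \eqref{varL2b}--\eqref{Hbinners} finally reduces to an integral $(F, G)$ with $F\in\{\dot Q_\beta, H_\beta\}$ and $G$ a fixed Schwartz-like function. Splitting $(F,G) = (F^+, G^+) + (F^-, G^-)$ using that the real pairing kills mixed-support Fourier components, Cauchy-Schwarz against \eqref{Qbdotplus}/\eqref{Hbplus} handles the $+$ piece, while for the $-$ piece Plancherel yields $|(F^-,G^-)|\lesssim \int \frac{C_N\, d\zeta}{(1-\beta+(1+\beta)\zeta)\langle\zeta\rangle^N}\lesssim |\log(1-\beta)|$, the $|\log|$ coming once more from the window $\zeta\in (1-\beta,1)$. \textbf{Main obstacle.} The technical crux is that a naive $L^2$ inversion of $\LQb$ on negative frequencies loses a factor $(1-\beta)^{-1/2}$; the improvement down to a logarithmic loss is \emph{only} available because in every key estimate the slowly-decaying pointwise Fourier bound on $\dot Q_\beta^-$ is convolved against Schwartz symbols ($\widehat{|Q_\beta|^2}$, $\widehat{Q_\beta^2}$, or smooth test functions), which localize the dangerous integration to the narrow frequency band $|\xi|\in(1-\beta,1)$ of logarithmic mass — an asymmetry between the $+$ and $-$ sides of the bifurcation that must be kept track of throughout the whole argument.
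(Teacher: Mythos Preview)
Your proposal is correct and follows essentially the same route as the paper: split into $\Pi_\pm$, invert the negative-frequency part pointwise in Fourier using the $L^2$ bounds on $(1-\beta)\dot Q_\beta$ from Proposition~\ref{boundQb}, then feed the resulting bound \eqref{Qbdotmoins} into the coercivity of the positive-frequency linearization to get \eqref{Qbdotplus}, and iterate for $H_\beta$. One small imprecision: the pointwise ``Schwartz decay'' of $\widehat{|Q_\beta|^2}$ does not follow from Proposition~\ref{prop:reg} (which only gives uniform $H^s$ bounds) but from the weighted estimates of Proposition~\ref{boundQb}, which yield $|Q_\beta|^2\in W^{k,1}$ uniformly; the paper sidesteps this by using only $\widehat{Q_\beta^2}\in L^1\cap L^2$ and Young's inequality on the Fourier side, which is slightly cleaner.
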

\begin{proof}
We  project the equation (\ref{eqQbdot}) for $\dot Q_\b$ onto the negative and positive modes. This gives
\begin{align*}
&(1+\b)\vert D\vert \dot Q_\b^-+(1-\b)\dot Q_\b^-=\\  
&\frac{2}{1+\b}[Q_\b^--\Pi _-(\vert Q_\b\vert ^2Q_\b )]+\Pi_-[ 2\vert Q_\b\vert ^2(1-\b)\dot Q_\b+Q_\b^2(1-\b)\overline {\dot Q_\b}  ]\ ,\\
&D\dot Q_\b^+ +\dot Q_\b^+-\Pi _+[2\vert Q_\b\vert ^2\dot Q_\b ^++Q_\b^2\overline{\dot Q_\b^+}]=\Pi _+[2\vert Q_\b\vert ^2\dot Q_\b ^-+Q_\b^2\overline{\dot Q_\b^-}]\ ,\\
&(\dot Q_\b^+,iQ^+)=(\dot Q_\b^+,\pa _xQ^+)=0\ .
\end{align*}
Using the last equation, the invertibility \eqref{estL} of $\mathcal L$ defined in \eqref{defL}, and a perturbation argument as in Proposition \ref{invertibility1}, we can estimate $\dot Q_\b ^+$ by means of  $\dot Q_\b ^-$ as follows,
\be \label{Qbdotplusmoins}
\Vert \dot Q_\b ^+\Vert _{H^{\frac12}}\lesssim \Vert \vert Q_\b\vert ^2\dot Q_\b ^-+Q_\b^2\overline{\dot Q_\b^-}\Vert _{H^{-\frac 12}}\lesssim \Vert Q_\b^2\dot Q_\b^-\Vert _{L^2}\ .
\ee
On the other hand, the first equation leads to
\bee 
\widehat {\dot Q_\b ^-}(\xi )&=& \frac{  \hat \ell _\b (\xi)}{1-\b+(1+\b)\vert \xi\vert}\ ,\\ 
 \ell_\b &:=&\frac{2}{1+\b}\Pi_-[m_\b*(\vert Q_\b\vert ^2Q_\b )- \vert Q_\b\vert ^2Q_\b ]+\Pi_-[ 2\vert Q_\b\vert ^2(1-\b)\dot Q_\b+Q_\b^2(1-\b)\overline {\dot Q_\b}  ]\ .
\eee
Using the $L^2$ bound on $(1-\b)\dot Q_\b$ from Proposition \ref{boundQb}, the above expression of $\ell _\b$ implies 
$$\Vert \hat \ell _\b\Vert _{L^\infty}\le C\ ,$$
which proves (\ref{Qbdotmoins}). 
Coming back to (\ref{Qbdotplusmoins}), we infer, using the $L^1$ and the $L^2$ bound on $\widehat{Q_\b^2}$, and from Young's $L^1*L^2\subset L^2$ inequality,
\bee
\Vert \dot Q_\b ^+\Vert _{H^{\frac12}}&\lesssim &\left [ \int _\R\left \vert \int _\R \frac{\vert \widehat{Q_\b^2}(\xi -\eta)\vert }
{1-\b +(1+\b)\vert \eta \vert}  \, d\eta \right \vert ^2  \, d\xi \right ]^{\frac 12}\\
&\lesssim & \left (\int _{\vert \eta\vert >1} \frac{d\eta }{(1-\b +(1+\b)\vert \eta \vert)^2} \right )^{\frac 12}+\int_{\vert \eta\vert \le 1}\frac{d\eta}{1-\b +(1+\b)\vert \eta \vert}\\
&\lesssim& \vert \log(1-\beta)\vert .
\eee

This proves (\ref{Qbdotplus}). 
\par\noindent
Next we prove \eqref{Hbplus} and \eqref{Hbmoins}. We  apply $(1-\b)\pa _\b$ to the above equations on $Q_\b^+$ and $Q_\b^-$. With $H_\b:=(1-\b)\pa _\b ^2Q_\b$, we infer
\begin{align*}
&(1+\b)\vert D\vert H_\b^-+(1-\b)H_\b^-+(\vert D\vert -1)(1-\b)\dot Q_\b^-=\\  
&(1-\b)\pa _\b \left (\frac{2}{1+\b}[Q_\b^--\Pi _-(\vert Q_\b\vert ^2Q_\b )]+\Pi_-[ 2\vert Q_\b\vert ^2(1-\b)\dot Q_\b+Q_\b^2(1-\b)\overline {\dot Q_\b}  ]\right )\ ,\\
&DH_\b^+ +H_\b^+-\Pi _+[2\vert Q_\b\vert ^2H_\b ^++Q_\b^2\overline{H_\b^+}]=\\
&\Pi _+[2\vert Q_\b\vert ^2H_\b ^-+Q_\b^2\overline{H_\b^-}
+2(1-\b)\pa _\b(\vert Q_\b\vert^2)\dot Q_\b+(1-\b)\pa _\b(Q_\b^2)\overline {\dot Q_\b}]\ ,\\
&(H_\b^+,iQ^+)=(H_\b^+,\pa _xQ^+)=0\ .
\end{align*}
In view of \eqref{Qbdotmoins}, the Fourier transform of $(\vert D\vert -1)(1-\b)\dot Q_\b^-$ is uniformly bounded. Furthermore, using again \eqref{Qbdotmoins} and the $L^2$ bound on $[(1-\b)\pa _\b]^k Q_\b$ from Proposition \ref{boundQb}, the Fourier transform of the right hand side of the equation on $H_\b ^-$ is uniformly bounded. This provides estimate \eqref{Hbmoins}. In order to obtain \eqref{Hbplus}, we use the equation
on $H_\b^+$. Notice that, again by \eqref{Qbdotplus} and \eqref{Qbdotmoins} combined with the Hausdorff--Young inequality,
\begin{align*}
&\Vert  2(1-\b)\pa _\b(\vert Q_\b\vert^2)\dot Q_\b+(1-\b)\pa _\b(Q_\b^2)\overline {\dot Q_\b}  \Vert _{L^2}\lesssim (1-\b)\Vert \dot Q_\b\Vert _{L^4}^2\\
& \lesssim (1-\b)\left (\int _\R \frac{d\xi}{((1-\b)+(1+\b)\vert \xi\vert)^{4/3}}\right )^{3/2}+(1-\b)|\log (1-\b)|^2\\
&\lesssim (1-\b)^{1/2}\ .
\end{align*}
By the perturbation argument of Proposition \ref{invertibility1}, we infer
$$\Vert H_\b ^+\Vert _{H^{\frac12}}\lesssim  \Vert Q_\b^2H_\b^-\Vert _{L^2} +O((1-\b)^{1/2})\ ,$$
and we obtain \eqref{Hbplus} exactly as we obtained \eqref{Qbdotplus} above. 
\vskip0.25cm
Next we deal with the case of $H_\b:=y\pa _y\dot Q_\b$. Applying $y\pa _y$ to the equation on $Q_\b$, we get
$$\mathcal L_\b (y\pa _yQ_\b)=\vert Q_\b\vert ^2Q_\b -Q_\b\ ,$$
and, taking the derivative with respect to $\b$ and projecting on the negative and positive modes, we obtain
\begin{align*}
&(1+\b)\vert D\vert H_\b^-+(1-\b)H_\b^-=\\  
&\Pi_-[ 2\vert Q_\b\vert ^2(1-\b)H_\b+Q_\b^2(1-\b)\overline {H_\b}  ]
+\frac{2}{1+\b}\Pi_-(y\pa _yQ_\b)\\
&+\frac{2}{1+\b}[Q_\b^--\Pi _-(\vert Q_\b\vert ^2Q_\b )]+\Pi_-[ 2\vert Q_\b\vert ^2(1-\b)\dot Q_\b+Q_\b^2(1-\b)\overline {\dot Q_\b}  ]-(1-\b)\dot Q_\b^-\\
&-\frac{2}{1+\b}\Pi_-[2|Q_\b|^2y\pa_y Q_\b + Q_\b^2\overline{y\pa_y Q_\b}]\\
&+2\Pi_-[(1-\b)\overline{\dot Q_\b}Q_\b y\pa_y Q_\b
+(1-\b)\dot Q_\b\overline{Q_\b}y\pa_y Q_\b
+(1-\b)\dot Q_\b Q_\b \overline{y\pa_y Q_\b}],\\
&DH_\b^+ +H_\b^+-\Pi _+[2\vert Q_\b\vert ^2H_\b ^++Q_\b^2\overline{H_\b^+}]=\\
&\Pi _+[2\vert Q_\b\vert ^2H_\b ^-+Q_\b^2\overline{H_\b^-}
+2\vert Q_\b\vert^2\dot Q_\b+Q_\b^2\overline {\dot Q_\b}]-\dot Q_\b^+\\
&+2\Pi_+[\overline{\dot Q_\b} Q_\b y\pa_y Q_\b
+\dot Q_\b\overline{Q_\b} y\pa_y  Q_\b
+\dot Q_\b Q_\b\overline{y\pa_y Q_\b}],\\
&(H_\b^+,iQ^+)=(H_\b^+,\pa _xQ^+)=0\ .
\end{align*}
Again, from Proposition \ref{boundQb}, we notice that the Fourier transform of the right hand side of the equation on $H_\b^-$ is bounded.
This provides \eqref{Hbmoins}. Using again the perturbation argument of Proposition  \ref{invertibility1}, we infer
$$\Vert H_\b ^+\Vert _{H^{\frac12}}
\lesssim  \Vert Q_\b^2H_\b^-\Vert _{L^2} +\Vert Q_\b^2\dot Q_\b\Vert _{L^2}+\Vert \dot Q_\b^+\Vert _{L^2} +\|\dot Q_\b Q_\b y\pa_y Q_\b\|_{L^2}
$$
and \eqref{Hbplus} again follows from \eqref{Hbmoins}, \eqref{Qbdotmoins}, \eqref{Qbdotplus},
and the $L^1$- and $L^2$- bounds on $\widehat{Q_\b y\pa_y Q_\b}$.
\vskip0.25cm
Let us come to the proof of (\ref{varL2b}). We have
$$\frac{d}{d\b}\Vert Q_\b\Vert _{L^2}^2=2(Q_\b,\dot Q_\b)=2(Q_\b^+,\dot Q_\b^+)+2(Q_\b^-,\dot Q_\b^-)\ .$$
From (\ref{Qbdotplus}) and the $L^2$ bound on $Q_\b$, we infer
$$\vert (Q_\b^+,\dot Q_\b^+)\vert \lesssim \vert \log(1-\b)\vert \ .$$
From (\ref{Qbdotmoins}) and the representation of $Q_\b^-$, we infer
$$\vert (Q_\b^-,\dot Q_\b^-)\vert \lesssim \int _\R \frac{(1-\b)\vert \widehat{\vert Q_\b\vert ^2Q_\b}(\xi)\vert}{(1-\b+(1+\b)\vert \xi\vert)^2}\, d\xi =O(1)\ .$$
This completes the proof of (\ref{varL2b}). The proof of (\ref{varHdemib}) is similar. As for (\ref{varcb}), we write
$$\frac{d}{d\b}\int _\R \vert Q_\b\vert ^2Q_\b =2\int _\R \vert Q_\b\vert ^2\dot Q_\b+\int_\R Q_\b^2\overline{\dot Q_\b}\ .$$
Write $\dot Q_\b=\dot Q_\b^++\dot Q_\b^-$ in the two integrals of the above right hand side. The contribution of $\dot Q_\b^+ $ is $O(\vert \log(1-\b)\vert)$ because of (\ref{Qbdotplus}). As for the contribution of $\dot Q_\b^-$, we evaluate it by means of the Plancherel theorem. In view of (\ref{Qbdotmoins}),
it is $O(\vert \log(1-\b)\vert)$. This completes the proof of (\ref{varcb}).\\
The proof of the first two estimates of \eqref{Hbinners} follows exactly the same lines as  (\ref{varL2b}). As for the last estimate, we recall from \eqref{irho}
that
$$\Vert i\rho_\b -Q_\b-\frac 12DQ_\b\Vert _{L^2}\lesssim (1-\b)^{1/2}\vert \log(1-\b)\vert^{1/2}\ ,$$
so that
$$\vert (H_\b,i\rho_\b)\vert \lesssim \vert \log(1-\b)\vert +(\Vert H_\b^+\Vert _{L^2}+\Vert H_\b^-\Vert _{L^2})(1-\b)^{1/2}\vert \log(1-\b)\vert^{1/2}\ ,$$
and the proof is completed by using \eqref{Hbplus} and \eqref{Hbmoins}.
\end{proof}

%%%%%%%%%%%%%%%%%%%%%%%%%%%%%%%%%%%%%%%%%%%%%%%%

\section{The two-bubble approximate solution}
\label{sectionfour}

%%%%%%%%%%%%%%%%%%%%%%%%%%%%%%%%%%%%%%%%%%%%%%%%

This section is devoted to the construction of the two-bubble approximate solution. The general strategy follows the lines of \cite{KMRhartree} for the Hartree problem with the additional difficulties of keeping very carefully track of the leading order terms generated by the critically slow decay of the solitary wave and getting estimates which are uniform in the singular limit $\beta\to 1$.

%%%%%%%%%%%%%%%%%%%%%%%%%%%%%%%%%%%%%%%%%%%%%%%

\subsection{Renormalization and slow variables}

%%%%%%%%%%%%%%%%%%%%%%%%%%%%%%%%%%%%%%%%%%%%%%%%

Let $$u_j(t,x)=\frac{1}{\l_j^{\frac 12}}v_j(s_j,y_j)e^{i\gamma_j},  \ \ \frac{ds_j}{dt}=\frac{1}{\l_j(t)}, \ \ y_j:=\frac{x-x_j(t)}{\l_j(t)(1-\beta_j(t))},$$ 
for $j=1,2$. We have 
\bee
&&i\pa_tu_j-|D|u_j+u_j|u_j|^2\\
\nonumber & = & \frac{1}{\l_j^{\frac 32}}\left[i\pa_{s_j}v_j-\frac{(|D|-\beta_j D)v_j}{1-\b_j}-i\frac{(\l_j)_{s_j}}{\l_j}\Lambda v_j-\frac{i}{1-\b_j}
\left(\frac{(x_j)_{s_j}}{\l_j}-\beta_j\right)\pa_{y_j}v_j\right.\\
& + & \left. \frac{i(\b_j)_{s_j}}{1-\b_j}y_j\pa_{y_j}v_j-(\gamma_j)_{s_j}v_j+v_j|v_j|^2\right]e^{i\gamma_j}(s_j,y_j).
\eee
Let us define the relative numbers 
$$ \ \ X=x_2-x_1, \ \ \ \ \mu=\frac{\l_2}{\l_1}, \ \ \Gamma=\gamma_2-\gamma_1, 
$$ and 
\be
\label{defbr}
b=\frac{1-\beta_2}{1-\beta_1},\ \ R=\frac{X}{\l_1(1-\beta_1)}.
\ee
We observe the relation
\be
\label{estyunytwo}
y_1=R+\mu b y_2.
\ee
We then decompose $u(t,x)=u_1(t,x)+u_2(t,x)$, expand the nonlinearity
\bee
u|u|^2& = & u_1(|u_1|^2+2|u_2|^2+u_1\bar{u_2})+u_2(|u_2|^2+2|u_1|^2+u_2\bar{u_1})
\eee
and split the contributions of crossed terms using a cut off function 
\be
\label{defchir}
\chi_R(x)=\chi\left(\frac{y_1}{R}\right)=\chi\left(1+\frac{\mu b}{R}y_2\right)
\ee to obtain:

$$i\pa_tu-|D|u+u|u|^2=\frac{1}{\l_1^{\frac 32}}\mathcal E_1(s_1,y_1)e^{i\gamma_1}+\frac{1}{\l_2^{\frac 32}}\mathcal E_2(s_2,y_2)e^{i\gamma_2}$$ with \bee
\mathcal E_1& = & i\pa_{s_1}v_1-\frac{(|D|-\beta_1 D)v_1}{1-\b_1}-v_1+ v_1|v_1|^2\\
& - & i\lslone\Lambda v_1-\frac{i}{1-\b_1}\left(\xslone-\beta_1\right)\pa_{y_1}v_1+\frac{i(\b_1)_{s_1}}{1-\b_1}y_1\pa_{y_1}v_1-[(\gamma_1)_{s_1}-1]v_1\\
& + &\chi_R\left[\frac{2}{\mu}v_1|v_2|^2+\frac{e^{-i\Gamma}}{\sqrt{\mu}}v^2_1 \overline{v_2}+2\frac{e^{i\Gamma}}{\sqrt \mu}\vert v_1\vert ^2v_2+\frac{e^{2i\Gamma}}{\mu}\overline{v_1}v_2^2\right ]\ ,\\
\eee
\bee
\mathcal E_2& = & i\pa_{s_2}v_2-\frac{(|D|-\beta_2 D)v_2}{1-\b_2}-v_2+ v_2|v_2|^2\\
& - & i\lsltwo\Lambda v_2-\frac{i}{1-\b_2}\left(\xsltwo-\beta_2\right)\pa_{y_2}v_2+\frac{i(\b_2)_{s_2}}{1-\b_2}y_2\pa_{y_2}v_2-[(\gamma_2)_{s_2}-1]v_2\\
& + & (1-\chi_R)\left[2\mu v\vert v_1\vert ^2v_2+2\sqrt{\mu}e^{-i\Gamma}v_1|v_2|^2+\sqrt{\mu}e^{i\Gamma}\overline{v_1}v_2^2+\mu e^{-2i\Gamma}v_1^2\overline{v_2}\right].
\eee
The full vector of parameters is denoted by
 \be
 \label{defp}
 \mathcal P=(\l_1,\l_2,\beta_1,\beta_2,\Gamma,R).
 \ee
Following \cite{KMRhartree}, we now look for a solution to 
$$\mathcal E_1=\mathcal E_2=0$$
 in the form of a slowly modulated two-bubble, i.e. 
$$v_j(s_j,y_j)=V_j(y_j,\mathcal P(s_j))$$ where the time dependence of the parameters is frozen for translation and phase invariances:
\be
\label{lawsfrozen}
 \frac{(x_j)_{s_j}}{\l_j}=\beta_j, \ \ (\gamma_j)_{s_{j}}=1,
 \ee the dependence of scaling and speed is computed iteratively according to a dynamical system 
 \be
 \label{lawscomputed}
 \frac{(\l_j)_{s_j}}{\l_j}=M_j(\mathcal P), \ \ \frac{(\beta_j)_{s_j}}{1-\beta_j}=B_j(\mathcal P),
 \ee
 \be
 \label{actiongamma}
 \Gamma_{s_1}=\frac{1}{\mu}-1, \ \ \Gamma_{s_2}=1-\mu,\ \  X_t=\beta_2-\beta_1
 \ee
 and the remaining time derivatives for $(b,R)$ are modeled after \fref{defbr}, \fref{lawsfrozen}, \fref{lawscomputed}:
 \be
 \label{actionR}
 R_{s_1}=1-b+(B_1-M_1)R, \ \ R_{s_2}=\mu(1-b+(B_1-M_1)R).
 \ee
 Hence
 \bea
  \label{expressioneone}
\nonumber \mathcal E_1& = & -\frac{(|D|-\beta_1 D)V_1}{1-\b_1}-V_1+ V_1|V_1|^2-iM_1\Lambda V_1+iB_1\left[y_1\pa_{y_1}V_1+(1-\beta_1)\frac{\pa V_1}{\pa \beta_1}\right]\\
& + & i\l_1M_1\frac{\pa V_1}{\pa \l_1}+i\l_1M_2\frac{\pa V_1}{\pa \l_2}+i\frac{(1-\beta_2)B_2}{\mu}\frac{\pa V_1}{\pa \beta_2}\\
\nonumber & + & i\frac{1-\mu}{\mu}\frac{\pa V_1}{\pa \Gamma}+i(1-b+(B_1-M_1)R)\frac{\pa V_1}{\pa R}\\
\nonumber & + &\chi_R\left[\frac{2}{\mu}V_1|V_2|^2+\frac{e^{-i\Gamma}}{\sqrt{\mu}}V^2_1 \overline{V_2}+2\frac{e^{i\Gamma}}{\sqrt{\mu}}|V_1|^2V_2+\frac{e^{2i\Gamma}}{\mu}\overline{V_1}V_2^2\right],
\eea
 \bea
 \label{expressionetwo}
 \nonumber \mathcal E_2& = & -\frac{(|D|-\beta_2 D)V_2}{1-\b_2}-V_2+ V_2|V_2|^2-iM_2\Lambda V_2+iB_2\left[y_2\pa_{y_2}V_2+(1-\beta_2)\frac{\pa V_2}{\pa \beta_2}\right]\\
 &+& i\l_2M_2\frac{\pa V_2}{\pa \l_2}+i\l_2M_1\frac{\pa V_2}{\pa \l_1}+i\mu(1-\beta_1)B_1\frac{\pa V_2}{\pa \beta_1}\\
 \nonumber & + & i(1-\mu)\frac{\pa V_2}{\pa \Gamma}+i\mu(1-b+(B_1-M_1)R)\frac{\pa V_2}{\pa R}\\
\nonumber & + & (1-\chi_R)\left[2\mu \vert V_1\vert ^2V_2+2\sqrt{\mu}e^{-i\Gamma}V_1|V_2|^2+\sqrt{\mu}e^{i\Gamma}\overline {V_1}V_2^2+\mu e^{-2i\Gamma}V_1^2\overline{V_2}\right].
\eea
 and we need to solve the system of {\it nonlinear elliptic equations} in $V_1,V_2$, 
 \be
 \label{solutioneone}
 \left\{\begin{array}{ll} \mathcal E_1(y_1)=0 \ \ \mbox{with}\ \ y_2=\frac{y_1-R}{b\mu},\\ 
 \mathcal E_2(y_2)=0\ \ \mbox{with}\ \ y_1=R+b\mu y_2.
 \end{array}\right.
 \ee
 in a suitable range of parameters $\mathcal P$.

%%%%%%%%%%%%%%%%%%%%%%%%%%%%%%%%%%%%%%%%%%%%%%%%%%%

\subsection{Definition of admissible functions}

%%%%%%%%%%%%%%%%%%%%%%%%%%%%%%%%%%%%%%%%%%%%%%%%%%%

We define the open set of parameters: 
\be
\label{bootass}
\mathcal P\in \mathcal O\equiv \left\{\begin{array}{llll}
\beta_j\in (\beta^*,1) , \ \ j=1,2\\
R>R_*\\
|1-\l_1|+|1-\l_2|<\eta^\delta\\
\frac{\eta}{2}<1-\beta_1<2\eta, \ \ 1-\b _2\ge e^{-R}, \ \ 0<b<\delta
 \end{array}\right.
\ee
for some universal constants $R_*\gg1$, $0<\eta,\delta\ll 1$ to be chosen later.\\

We now define a suitable topology:

\begin{definition}[Admissible function] 
\label{defadmissible}
 We consider functions $g=g(y,\matchal P):\Bbb R\times \mathcal O\rightarrow \Bbb C$.\\
{\em (i) (}$L^\infty $-admissibility{\rm )}. We say that $g$ is $L^\infty$-admissible  if  $\forall \alpha\in \Bbb N^7$, $\exists A_\alpha $ , $\forall \mathcal P\in \mathcal O$,
\be
\label{normeginfty}
\left\|\Lambda_y^{\alpha_1}\Lambda_R^{\alpha_2}\pa^{\alpha_3}_{\l_1}\pa^{\alpha_4}_{\l_2}\pa^{\alpha_5}_{\Gamma} \tL_{\beta_1}^{\alpha_6}\tL_{\beta_2}^{\alpha_7}g(\cdot,\mathcal P)\right\|_\infty \leq A_\alpha. 
\ee
{\em (ii) (}Admissibility with respect to a bubble{\rm ).} Let $j\in \{ 1,2\} $. We say that $g$ is admissible with respect to the bubble $j$ --- or $j$--admissible --- if  $\forall \alpha\in \Bbb N^7$, $\exists A_\alpha $, $\forall \mathcal P\in \mathcal O$,
\begin{align}
\label{normeg}
\left\|\Lambda_y^{\alpha_1}\Lambda_R^{\alpha_2}\pa^{\alpha_3}_{\l_1}\pa^{\alpha_4}_{\l_2}\pa^{\alpha_5}_{\Gamma}\tL_{\beta_1}^{\alpha_6}\tL_{\beta_2}^{\alpha_7} g(\cdot,\mathcal P)\right\|_{\b _j}
\leq A_\alpha .
\end{align}
{\em (iii) (}Strong admissibility with respect to a bubble{\rm ).} Let $j\in \{ 1,2\} $. We say that $g$ is strongly admissible with respect to the bubble $j$ --- or $j$--strongly admissible --- if it is $j$--admissible and if, for every family $\{ \mu _\b\} _{\b \in (\b *,1)}$ of multipliers  in the class $\mathcal M$, the convolution product
$$\mu _{\b _j}*g(.,\mathcal P)$$
is $j$--admissible.
\end{definition}
Notice that admissibility with respect to the bubble $j$ implies $L^\infty $-admissibility. Furthermore, we have the following
fundamental property.
\begin{lemma}[Admissibility of $Q_\beta$] \label{lemma: admis Q}
For $j=1,2$, $Q_{\b _j}$ is strongly $j$--admissible. 
\end{lemma}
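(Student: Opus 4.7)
The plan is to dispose of $j$-admissibility as an immediate corollary of Proposition \ref{boundQb}, and then upgrade it to strong $j$-admissibility using the elliptic equation for $Q_\beta$ and the stability properties of the class $\mathcal M$.

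First I would observe that the function $g(y,\mathcal P) = Q_{\beta_j}(y)$ depends on $\mathcal P$ only through the single component $\beta_j$. Consequently, all derivatives with respect to $\lambda_1, \lambda_2, \Gamma, R$ and to $\beta_{3-j}$ in the multi-index of \eqref{normeg} annihilate $Q_{\beta_j}$, and the required estimate reduces to
\[
\|\Lambda_y^{\alpha_1}\tilde\Lambda_{\beta_j}^{\alpha_{j+5}} Q_{\beta_j}\|_{\beta_j} \leq A_\alpha,
\]
which is exactly the content of Proposition \ref{boundQb}. So $j$-admissibility is immediate.

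Next, for strong $j$-admissibility, given an arbitrary family $\{\mu_\beta\}\in\mathcal M$, I would exploit the fixed-point identity $Q_\beta = m_\beta * (|Q_\beta|^2 Q_\beta)$ coming from \eqref{travellingwave} to rewrite
\[
\mu_{\beta_j} * Q_{\beta_j} = \tilde\mu_{\beta_j} * (|Q_{\beta_j}|^2 Q_{\beta_j}), \qquad \tilde\mu_\beta := \mu_\beta * m_\beta.
\]
The family $\{\tilde\mu_\beta\}$ again belongs to class $\mathcal M$, because $\mathcal M$ is stable under products of convolution (this is precisely the observation following \eqref{mub}: the Fourier structure of $\mathcal M$ is preserved under multiplication of the profile functions $f_\pm$). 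Applying $\Lambda_y^{p}\tilde\Lambda_{\beta_j}^{q}$ to this convolution and using repeatedly the identities
\[
\Lambda_y(f*g) = (\Lambda_y + I)f * g + f * \Lambda_y g, \qquad \tilde\Lambda_\beta(f*g) = \tilde\Lambda_\beta f * g + f * \tilde\Lambda_\beta g,
\]
together with Leibniz for the pointwise product $|Q_\beta|^2 Q_\beta$, expands $\Lambda_y^p \tilde\Lambda_{\beta_j}^q(\tilde\mu_{\beta_j} * (|Q_{\beta_j}|^2 Q_{\beta_j}))$ into a finite linear combination of terms of the form
\[
m'_{\beta_j} * \bigl(\phi^{(1)}_{\beta_j}\,\phi^{(2)}_{\beta_j}\,\phi^{(3)}_{\beta_j}\bigr),
\]
where each $\phi^{(k)}_\beta$ is $\Lambda_y^{p_k}\tilde\Lambda_\beta^{q_k} Q_\beta$ or its complex conjugate with $p_k\leq p$, $q_k\leq q$, and $m'_{\beta_j}$ is a finite sum of elements of class $\mathcal M$; this last point uses the stability of $\mathcal M$ under $(\Lambda_y + I)$ and $\tilde\Lambda_\beta$ recalled before Lemma \ref{multipliers}.

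To conclude, each factor $\phi^{(k)}_{\beta_j}$ is bounded in $\|\cdot\|_{\beta_j}$ by Proposition \ref{boundQb}. Grouping the first two into $h_1 := \phi^{(1)}_{\beta_j}\phi^{(2)}_{\beta_j}$, one has the pointwise bound
\[
|h_1(y)| \leq \frac{\|\phi^{(1)}_{\beta_j}\|_{\beta_j}\|\phi^{(2)}_{\beta_j}\|_{\beta_j}}{\langle y\rangle^2(1+(1-\beta_j)|y|)^2},
\]
which, being stronger than the weight defining $\|\cdot\|_{\beta_j}$, shows $\|h_1\|_{\beta_j}\lesssim 1$ uniformly in $\beta_j$. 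Applying Lemma \ref{mub123} to each element of $\mathcal M$ appearing in $m'_{\beta_j}$ with $h_2 := \phi^{(3)}_{\beta_j}$ then yields $\|m'_{\beta_j}*(h_1 h_2)\|_{\beta_j}\lesssim 1$, which is the desired strong admissibility bound. There is no deep obstacle here; the only care required is the algebraic bookkeeping showing that the commutators of $\Lambda_y$ and $\tilde\Lambda_\beta$ with convolution keep one inside class $\mathcal M$, together with checking that $\mu_\beta * m_\beta \in \mathcal M$ so that Lemma \ref{mub123} is applicable.
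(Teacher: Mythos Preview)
Your proof is correct and follows essentially the same approach as the paper: $j$-admissibility is immediate from Proposition \ref{boundQb}, and strong $j$-admissibility is obtained by rewriting $\mu_{\beta_j}*Q_{\beta_j} = \tilde\mu_{\beta_j}*(|Q_{\beta_j}|^2 Q_{\beta_j})$ via the elliptic equation, invoking the stability of class $\mathcal M$ under convolution and under $(\Lambda_y+I)$, $\tilde\Lambda_\beta$, and concluding with Lemma \ref{mub123}. Your write-up simply makes the Leibniz expansion and the application of Lemma \ref{mub123} more explicit than the paper's terse version.
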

\begin{proof}
Admissibility of $Q_{\b _j}$ with respect to the bubble $j$ is a straightforward consequence of Proposition \ref{boundQb}. 
Given $\{ \mu _\b\} _{\b \in (\b *,1)}$ a family of multipliers  in the class $\mathcal M$, let us come to the $j$--admissibility of $\mu _{\b _j}*Q_{\b _j}$. From the identity
$$Q_\b =m_\b *(\vert Q_\b\vert ^2Q_\b )\ ,$$
and the invariance of $\mathcal M$ by convolution, we infer that
$$\mu _\b *Q_\b =\tilde \mu _\b *(\vert Q_\b\vert ^2Q_\b )\ ,$$
where $\{ \tilde \mu _\b\} _{\b \in (\b *,1)}$ belongs to $\mathcal M$. Then, applying $\tilde \Lambda _\b ^p\Lambda _y^q$
to this identity, and using the stability properties of class $\mathcal M$ through these operations, the $j$--admissibility of  $\mu _{\b _j}*Q_{\b _j}$ follows from the $j$--admissibility of $Q_{\b _j}$ and from Lemma \ref{mub123}. 
\end{proof}

%%%%%%%%%%%%%%%%%%%%%%%%%%%%%%%%%%%%%%%%%%%%%%%%%%%

\subsection{Stability properties of admissible functions}

%%%%%%%%%%%%%%%%%%%%%%%%%%%%%%%%%%%%%%%%%%%%%%%%%%%

 We now prove some elementary stability properties of admissible functions. 
  
 \begin{lemma}[Stability properties of admissible functions]\label{stabprop}
The following stability properties hold.

\noindent
{\em (i) (}Stability by derivation{\rm ).} Assume $g$ is $j$--admissible (resp. strongly $j$--admissible).  Then 
 \begin{equation}\label{deriv}
 \Lambda_yg, \Lambda_Rg, \pa_{\l_j}g, \pa_{\Gamma} g, \tL_{\beta_j}g
\end{equation}
are $j$--admissible (resp. strongly $j$--admissible).

\noindent
 {\em (ii) (}Stability by multiplication{\rm ).} If  $g$ is $j$--admissible, $h$ is $L^\infty $-admissible, then $gh$ is $j$--admissible.
Furthermore, if $g$ and $h$ are $j$--admissible, then $gh$ is strongly $j$--admissible. 

\noindent
 {\em (iii) (}Exchange of variables{\rm ).} Given a function $g=g(y)$, we define
 \be
 \label{firstestiem}
g^\sharp (y_1): = g\left (\frac{y_1-R}{b\mu}\right )
 \ee
 and  
  \be
  \label{firstestiembis}
g^\flat (y_2):=g(R+b\mu y_2)\ .
  \ee
If $g_2$ is $2$--admissible, then  $R(1+(1-\b_1)R)b^{-1}\chi _Rg_2^\sharp $ is $L^\infty$-admissible, and $b^{-1}\chi _Rg_2^\sharp $ is $1$--admissible.  If  $g_1 $ is $1$--admissible, then $R(1+(1-\b_1)R)((1-\chi _R)g_1)^\flat $ is $L^\infty$-admissible. 

\noindent
 {\em (iv) (}Stability by scalar product{\rm ).} If $g$ is $j$--admissible, then 
 $(g,iQ_{\b _j})$ and $(g,\pa _yQ_{\b _j})$ are $L^\infty$-admissible.
 
 \noindent{\em (v) (}Stability by convolution{\rm ).} If $g$ is strongly $j$--admissible and if $\{ \mu _\b\} _{\b \in (\b *,1)}$ belongs to class $\mathcal M$, then $\mu _{\b _j}*g$ is strongly $j$--admissible.
 
 \noindent{\em (vi) (}Mixed cubic nonlinearity and convolution{\rm ).} Assume $g_1,h_1$ are $1$--admissible, and $g_2, h_2$ are $2$--admissible.  Then 
 $$R(1+(1-\b_1)R)b^{-1}\chi _Rg_1g_2^\sharp h_2^\sharp \ ,\  R(1+(1-\b_1)R)b^{-1}\chi _Rg_1h_1g_2^\sharp $$
 are strongly $1$--admissible, and 
 $$R(1+(1-\b_1)R)((1-\chi _R)g_1)^\flat g_2h_2\ ,\ R(1+(1-\b_1)R)((1-\chi _R)g_1h_1)^\flat g_2$$
 are strongly $2$--admissible.

  \end{lemma}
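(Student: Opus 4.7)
The plan is to treat (i)--(vi) in order, relying on three structural ingredients already established in the excerpt: Lemma \ref{mub123} (the bilinear convolution bound $\|\mu_\b*(h_1h_2)\|_\b \lesssim \|h_1\|_\b \|h_2\|_\b$); the stability properties of class $\mathcal M$ recorded after Lemma \ref{multipliers} (closure under convolution, under $\Lambda_y+I$, and, up to a finite sum, under $(1-\b)\pa_\b$); and the admissibility of $Q_{\b_j}$ itself (Proposition \ref{boundQb} together with Lemma \ref{lemma: admis Q}). Items (i), (ii), (iv) are essentially formal. For (i), the operators $\Lambda_y,\Lambda_R,\pa_{\l_j},\pa_\Gamma,\tL_{\b_j}$ are exactly those appearing in the definition \eqref{normeg} of $j$-admissibility, so applying one of them to $g$ merely shifts a multi-index $\alpha$ and the required uniform bound is inherited; the strong-admissibility case is preserved because the same operators commute with convolution against a kernel in $\mathcal M$ (see (v) below). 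For (ii), the weight $\la y\ra(1+(1-\b_j)|y|)$ is compatible with multiplication by $L^\infty$-bounded factors, so a Leibniz expansion of parameter derivatives of $gh$ gives $j$-admissibility immediately, and Lemma \ref{mub123} upgrades a product of two $j$-admissible factors to a strongly $j$-admissible one once the relevant convolution is taken. For (iv), the estimate
\[ |(g,iQ_{\b_j})| \leq \|g\|_{\b_j}\,\|Q_{\b_j}\|_{\b_j}\int_{\R}\frac{dy}{\la y\ra^2(1+(1-\b_j)|y|)^2}=O(\|g\|_{\b_j}), \]
together with Proposition \ref{boundQb} and the Leibniz rule for parameter derivatives of the pairing, gives the $L^\infty$-admissibility of $(g,iQ_{\b_j})$ and, identically, of $(g,\pa_y Q_{\b_j})$.

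The technical heart is (iii), which requires careful analysis of the change of variables $y_2 = (y_1-R)/(b\mu)$. Fix a compactly supported $\chi$ equal to $1$ near $0$ and vanishing near $1$, so that $|y_1-R|\gtrsim R$ and $|y_1|\lesssim R$ on $\mathrm{supp}(\chi_R)$, while $|y_1|\gtrsim R$ on $\mathrm{supp}(1-\chi_R)$. Using $1-\b_2 = b(1-\b_1)$ and $\mu\sim 1$ from \eqref{bootass}, the first bound yields the key inequality
\[ \la y_2\ra(1+(1-\b_2)|y_2|) \gtrsim \frac{R(1+(1-\b_1)R)}{b} \quad \text{on } \mathrm{supp}(\chi_R), \]
so that $|g_2^\sharp(y_1)|\lesssim b/(R(1+(1-\b_1)R))$ there, which is precisely the $L^\infty$-admissibility of $R(1+(1-\b_1)R)b^{-1}\chi_R g_2^\sharp$; combining with $\la y_1\ra(1+(1-\b_1)|y_1|)\lesssim R(1+(1-\b_1)R)$ on the same support yields the $1$-admissibility of $b^{-1}\chi_R g_2^\sharp$. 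The $\flat$ claim follows symmetrically from the decay of $g_1$ on $\mathrm{supp}(1-\chi_R)$. Parameter derivatives of $g_2^\sharp$ are handled by the chain rule, each expanding as a finite sum of $\sharp$'s of $y$- and parameter-derivatives of $g_2$ (admissible by (i)) multiplied by polynomials in $y_2$ and $R/(b\mu)$, which on $\mathrm{supp}(\chi_R)$ are absorbed by iterating the enhanced decay displayed above.

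For (v), the commutation identity \eqref{Lambdastar} and its analogue for $\tL_{\b_j}$ show that every parameter derivative of $\mu_{\b_j}*g$ expands as a finite sum of convolutions $\tilde\mu_{\b_j}*\tilde g$ with $\tilde\mu\in\mathcal M$ and $\tilde g$ a parameter derivative of $g$; strong $j$-admissibility of $g$ then bounds each such term in the weighted norm, giving $j$-admissibility of $\mu_{\b_j}*g$. Composing with a further $\nu\in\mathcal M$ lands us inside $\mathcal M$ by its convolution stability, which upgrades $\mu_{\b_j}*g$ to strongly $j$-admissible. Item (vi) then follows by combining (iii), (ii), and (v): (iii) provides $j$-admissibility of $b^{-1}\chi_R g_2^\sharp$ (or its symmetric $\flat$-version), carrying the correct prefactor $R(1+(1-\b_1)R)$; (ii) multiplies by the remaining admissible factors; and Lemma \ref{mub123} promotes the result to strong $j$-admissibility. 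The main obstacle will be the bookkeeping in (iii): verifying that on $\mathrm{supp}(\chi_R)$ every potentially large factor of $R/b$ produced by repeated parameter differentiation is absorbed by the enhanced decay of $g_2^\sharp$, uniformly over arbitrarily high-order multi-indices $\alpha$.
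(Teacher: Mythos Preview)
Your treatment of (i)--(v) and three of the four cases in (vi) is essentially the paper's approach, and is correct (with the minor caveat that in (i) the operators $\Lambda_y$ and $\tL_{\b_j}$ do \emph{not} commute with convolution---you use the correct expansion \eqref{Lambdastar} in (v) but misstate this in (i)).

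There is, however, a genuine gap in (vi) for the fourth term
\[
R(1+(1-\b_1)R)\,((1-\chi_R)g_1h_1)^\flat\, g_2.
\]
Your scheme ``(iii) gives admissibility of the $\flat$-factor, (ii) multiplies by $g_2$, Lemma~\ref{mub123} upgrades to strong'' breaks down here: from (iii) you only obtain that $((1-\chi_R)g_1h_1)^\flat$ is $L^\infty$-admissible, and Lemma~\ref{mub123} requires \emph{two} factors in $\|\cdot\|_{\b_2}$. But $((1-\chi_R)g_1h_1)^\flat$ is not uniformly in $\|\cdot\|_{\b_2}$: for $y_2\to+\infty$ one computes $\la y_2\ra(1+(1-\b_2)|y_2|)\,|h_1^\flat(y_2)|\sim b^{-1}$, which is unbounded on $\mathcal O$. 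This is precisely the asymmetry between the bubbles flagged in Remark~\ref{dissymetry}. In the other three cases of (vi) there are always two factors carrying the $\|\cdot\|_{\b_j}$-decay (either $g_1,h_1$ on the $1$-side, or $g_2,h_2$ on the $2$-side, or $g_1$ together with the $1$-admissible $b^{-1}\chi_R g_2^\sharp$), so (ii) applies; here there is only $g_2$.

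The paper closes this case by estimating $\mu_{\b_2}*((1-\chi_R)^\flat g_1^\flat h_1^\flat g_2)$ directly: split the convolution at $|y_2'|=|y_2|/2$, use the pointwise decay of $\mu_{\b_2}$ on the near piece and the $L^p$ bound $\|\mu_\b\|_{L^p}\lesssim (p-1)^{-1}$ of Lemma~\ref{mubLp} with H\"older on the far piece. Both pieces produce a factor $|\log(1-\b_2)|$, which is then killed by the extra $R^{-1}$ coming from the double decay of $g_1h_1$, \emph{provided} one invokes the constraint $1-\b_2\ge e^{-R}$ from the definition \eqref{bootass} of $\mathcal O$. Without this specific argument and this specific hypothesis on $\mathcal O$, the strong $2$-admissibility does not follow.
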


 \begin{proof}[Proof of Lemma \ref{stabprop}] The first two properties are almost immediate --- notice that the strong admissibility of $gh$ is a consequence of Lemma \ref{mub123}. \\ 
Property (iii) is established 
 by first observing that $|y_1|\le \frac R2$ on the support of $\chi _R\, g_2^\sharp $, so that
 $$(1-\b_2)\frac{\vert y_1-R\vert }{b\mu}\ge (1-\b_1)\frac{R}{2\mu}\ .$$
 Similarly, $R+b\mu y_2\ge R/4$ on the support of  $((1-\chi _R)g_1)^\flat$, so that 
 $$(1-\b_1)\vert R+b\mu y_2\vert \ge (1-\b_1)\frac{R}{4}\ .$$
  In the first case, we also have, on the support of  $\chi _R\, g_2^\sharp $,
 $$\vert y_1-R\vert \ge \frac 14(\vert y_1\vert +R)\ ,\ $$
 so that 
 $$\Vert \chi _Rg_2^\sharp \Vert _{\b _1}\lesssim b\Vert g_2\Vert _{\b _2}
 \text{ and } R(1+(1-\b_1)R)\| \chi _Rg_2^\sharp\|_{L^\infty}\lesssim b\|g_2\|_\b.$$
 We argue similarly for $((1-\chi _R)g_1)^\flat $.
 Furthermore, 
 $$\Lambda _{y_1}(g_2^\sharp)=(\Lambda _{y_2}g_2)^\sharp +\frac{R}{b\mu}(\pa _{y_2}g_2)^\sharp\ ,\  \Lambda _{y_2}g_1^\flat =(\Lambda _{y_1}g_1)^\flat -R(\pa _{y_1}g_1)^\flat \ ,$$
with similar formulae for derivatives $\tL_{\b _j}, \Lambda _R, \pa _{\l_j}$. Since 
$$\pa _{y_j}^kg_j(y_j)=O(\la y_j\ra ^{- k-1})\ ,$$
this  provides the correct decay of derivatives of $\chi _R\, g_2^\sharp $ and of $((1-\chi _R)g_1)^\flat$.\\
Let us prove  property (iv). The $L^\infty$-admissibility of $(g,iQ_{\b _j})$ is a consequence of the Cauchy--Schwarz inequality and of the $j$--admissibility of  $g$ and $Q_{\b_j}$. As for the $L^\infty$-admissibility of $(g,\pa _yQ_{\b _j})$, it is a consequence of  the $j$--admissibility of  $g$ and of  the boundedness in $L^2$ of
$\tL_{\b}^aQ_\b \tL_{\b}^bQ_\b \tL_{\b}^cQ_\b $. The latter fact follows from  the identity
$$\pa _yQ_\b =\pa _ym_\b  *(\vert Q_\b \vert ^2Q_\b)\ ,$$
and of the boundedness of the Fourier transforms of $\pa _y\tL_{\b}^qm_\b $.\\
Property (v) is an immediate consequence of the invariance of class $\mathcal M$ by convolution.\\
Finally, let us prove property (vi). By properties (iii) and (ii), we immediately get that 
 $$R(1+(1-\b_1)R)b^{-1}\chi _Rg_1g_2^\sharp h_2^\sharp \ ,\  R(1+(1-\b_1)R)b^{-1}\chi _Rg_1h_1g_2^\sharp $$
 are strongly $1$--admissible, and 
 $$R(1+(1-\b_1)R)((1-\chi _R)g_1)^\flat g_2h_2$$ is strongly $2$--admissible. Furthermore, $R^2((1-\chi _R)g_1h_1)^\flat g_2$ is $2$--admissible for the same reasons.\\
  The strong admissibility of $R(1+(1-\b_1)R)((1-\chi _R)g_1h_1)^\flat g_2$ requires a 
 specific proof, as follows. We proceed as in the proof of Lemma \ref{mub123}. First of all, the $L^\infty$-bound 
 of $\mu _{\b _2}* R^2((1-\chi _R)g_1h_1)^\flat g_2)$ is a consequence of $L^2*L^2\subset L^\infty $. Then we consider the case $\vert y_1\vert \ge 1$. We split
\bee 
&&\mu _{\b _2}*((1-\chi _R)^\flat g_1^\flat h_1^\flat g_2)(y_2)=\\ &&\int _{\vert y'_2\vert <\frac{\vert y_2\vert}{2}}\mu _{\b _2}(y_2-y'_2) (1-\chi _R)(R+\mu by'_2)g_1(R+\mu by'_2)h_1(R+\mu by'_2)g_2(y'_2)\, dy'_2\\
&+& \int _{\vert y'_2\vert \ge \frac{\vert y_2\vert}{2}}  \mu _{\b _2}(y_2-y'_2) (1-\chi _R)(R+\mu by'_2)g_1(R+\mu by'_2)h_1(R+\mu by'_2)g_2(y'_2)\, dy'_2\ . 
\eee 
In view of decaying properties of $\mu _\b$ and of the $L^\infty $-bound on $(1-\chi _R)g_1h_1$, the first term in the right hand side is bounded by
\bee 
&&\frac{\Vert g_2\Vert _{\b _2}}{\vert y_2\vert (1+(1-\b_2)\vert y_2\vert) R^2(1+(1-\b_1)R)^2}\int _\R\frac{ dy'_2}{(1+\vert y'_2\vert +(1-\b _2)\vert y'_2\vert ^2)}\, dy'_2\\ 
&\lesssim &
 \frac{\vert \log (1-\b _2)\vert }{\vert y_2\vert (1+(1-\b_2)\vert y_2\vert )R^2(1+(1-\b_1)R)^2}\ .
\eee
For the second term, we need the following $L^p$ bound on $\mu _\b$, proved in Lemma \ref{mubLp},
$$\Vert \mu _\b\Vert _{L^p(\R)}\le \frac{C}{p-1}\ ,\ 1<p\le 2.$$
Using this bound and H\"older's inequality, we infer that, for $2\le q<\infty $, the second term is bounded by 
\bee
&&\frac{Cq}{R^2(1+(1-\b_1)R)(1+\vert y_2\vert )(1+(1-\b _2)\vert y_2\vert )}\left (\int _\R \frac{dy'_2}{(1+(1-\b _2)\vert y'_2\vert)^q}\right )^{\frac1q} \\
&&\lesssim \frac{Cq(1-\b _2)^{-1/q}}{R^2(1+(1-\b_1)R)(1+\vert y_2\vert )(1+(1-\b _2)\vert y_2\vert )}\ .
\eee
Optimizing on $q$, we get the bound 
$$ \frac{\vert \log (1-\b _2)\vert }{(1+\vert y_2\vert )(1+(1-\b_2)\vert y_2\vert )R^2(1+(1-\b_1)R)}.$$
We conclude that
$$\Vert \mu _{\b _2}*((1-\chi _R)^\flat g_1^\flat h_1^\flat g_2)\Vert _{\b _2}\lesssim \frac{\vert \log (1-\b _2)\vert }{R^2(1+(1-\b_1)R)}\le \frac{1}{R(1+(1-\b_1)R)}$$
because of the assumption 
\be \label{bootassspe}
1-\b _2\ge e^{-R}
\ee
from \eqref{bootass}. Similar estimates hold for the derivatives. This completes the proof.
\end{proof}
\begin{remark}\label{dissymetry}
Because $b$ is bounded but can be small in the set of parameters $\mathcal O$, there is some asymmetry between bubble 1 and bubble 2, which is reflected by the specificity of the last case in property (vi), for which we had to 
introduce assumption \eqref{bootassspe}.
\end{remark}

%%%%%%%%%%%%%%%%%%%%%%%%%%%%%%%%%%%%%%%%%%%%%%%%%%%

\subsection{Continuity of $\mathcal L_\beta^{-1}$ on admissible functions}

%%%%%%%%%%%%%%%%%%%%%%%%%%%%%%%%%%%%%%%%%%%%%%%%%%%

We claim a uniform continuity property of $\mathcal L_\beta^{-1}$ with respect to Schwartz-like norms which will be essential to control the error in the construction of the approximate 2-bubble. Recall that
$$\Phi _\b :=y\pa _yQ_\b+(1-\b)\pa _\b Q_\b\ .$$

\begin{lemma}[Generalized invertibility]
\label{generalinvert}
Let $j=1$ or $j=2$,  let $d$ be a nonnegative integer, and $\alpha \in \R $ such that $\vert \alpha \vert <\alpha _*(d)$. If $\eta <\eta _*(d)$ and if $g$  is of the form
$$g(y,\mathcal P)=\sum _{r=-d}^d g_r(y,\mathcal P^*)\, e^{ir\Gamma}\ ,$$
where $\mathcal P^*:=(\l_1,\l_2,\b_1,\b_2,R)$, and each $g_r\ ,\ r=-d,\dots,d,$ is strongly $j$--admissible, then  the problem
$$\mathcal L_{\b _j}f  -i\alpha \pa _\Gamma f=g-iM(\mathcal P)\Lambda Q_{\b _j} +iB(\mathcal P)\Phi _{\b _j}\ ,\ (f,iQ_{\b _j})=(f,\pa_yQ_{\b _j})=0\ ,$$
admits a unique solution $(f,M,B)$, where $M(\mathcal P), B(\mathcal P)$ are real valued, and 
$$f(y,\mathcal P)= \sum _{r=-d}^d f_r(y,\mathcal P^*)\, e^{ir\Gamma}\ ,$$
where each $f_r\ ,\ r=-d,\dots,d,$ is in $H^{\frac 12}$ in the variable $y$. Furthermore, $M,B$ are $L^\infty$-admissible, and $f$ is strongly $j$--admissible. 
\end{lemma}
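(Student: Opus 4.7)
The approach is to Fourier--decompose in the $\Gamma$--variable: write
\begin{equation*}
f = \sum_{r=-d}^d f_r(y,\mathcal P^*)\,e^{ir\Gamma},\quad M = \sum_{r=-d}^d M_r(\mathcal P^*)\,e^{ir\Gamma},\quad B = \sum_{r=-d}^d B_r(\mathcal P^*)\,e^{ir\Gamma},
\end{equation*}
subject to the reality constraints $M_{-r}=\overline{M_r}$ and $B_{-r}=\overline{B_r}$. Since $\mathcal L_{\b_j}$ contains the antilinear term $-Q_{\b_j}^2\overline{(\cdot)}$, which sends mode $r$ to mode $-r$, the equation decouples into finitely many subsystems indexed by pairs $\{r,-r\}$. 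The mode--$r$ equation reads
\begin{equation*}
L_+ f_r - Q_{\b_j}^2\overline{f_{-r}} + r\alpha f_r = g_r - iM_r\Lambda Q_{\b_j} + iB_r\Phi_{\b_j}, \quad (f_r,iQ_{\b_j})=(f_r,\pa_y Q_{\b_j})=0,
\end{equation*}
where $L_+\eps := \frac{|D|-\b_j D}{1-\b_j}\eps + \eps - 2|Q_{\b_j}|^2\eps$.

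For $r=0$ the mode operator is exactly $\mathcal L_{\b_j}$ and Proposition~\ref{invertibility1} applies directly: the compatibility condition that the right-hand side lie in $\{iQ_{\b_j},\pa_y Q_{\b_j}\}^\perp$ is a $2\times 2$ real linear system in $(M_0,B_0)$ whose matrix converges, up to $O((1-\b_j)^{1/2}|\log(1-\b_j)|^{1/2})$ corrections, to the pairings of $-i\Lambda Q^+$ and $i\Phi^+=iy\pa_yQ^+$ against $iQ^+,\pa_yQ^+$; that limiting matrix is non-degenerate by the Szeg\H{o} algebra of Appendix A as recorded in Appendix C, so $(M_0,B_0)$ is uniquely determined, and Proposition~\ref{invertibility1} then yields $f_0\in H^{\frac 12}$. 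For each pair $r\neq 0$, we view the mode equation as a system on $(f_r,f_{-r})\in H^{\frac 12}\times H^{\frac 12}$; the operator splits on the kernel $V_{\b_j}\oplus V_{\b_j}$, $V_{\b_j}=\operatorname{span}_\R\{iQ_{\b_j},\pa_yQ_{\b_j}\}$, into eigenspaces of the twist with eigenvalues $\pm r\alpha$, while on $V_{\b_j}^\perp\oplus V_{\b_j}^\perp$ it is a small perturbation of $\mathcal L_{\b_j}\oplus\mathcal L_{\b_j}$ that remains invertible provided $|\alpha|<\alpha_*(d)$ stays strictly below the uniform spectral gap provided by Proposition~\ref{invertibility1}. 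The compatibility condition is now a $4\times 4$ real linear system for $(\Re M_r,\Im M_r,\Re B_r,\Im B_r)$ with the same leading order non-degenerate matrix, giving unique $M_r,B_r$ and then unique $f_r,f_{-r}$.

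Admissibility is obtained by differentiating the system in the parameters of $\mathcal P^*=(\l_1,\l_2,\b_1,\b_2,R)$. Each derivative $\partial^\gamma f_r$ satisfies the same operator equation with source built from $\partial^\delta g_{r'}$, $\partial^\delta Q_{\b_j}$, $\partial^\delta\Lambda Q_{\b_j}$, $\partial^\delta\Phi_{\b_j}$ for $\delta<\gamma$, and lower-order derivatives $\partial^\delta f_{r'}$, all $j$--admissible by Proposition~\ref{boundQb} and the stability properties of Lemma~\ref{stabprop}. The weighted norm estimate \eqref{linfinitycontrolbisbistierce} of Proposition~\ref{invertibility} closes the $j$--admissibility inductively, while Lemma~\ref{stabprop}(iv) gives the $L^\infty$--admissibility of $M_r,B_r$ (expressed as finite linear combinations of inner products of admissible functions). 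For strong $j$--admissibility, rewrite the equation via the resolvent $m_{\b_j}\in\mathcal M$:
\begin{equation*}
f_r = m_{\b_j}*\bigl(2|Q_{\b_j}|^2 f_r + Q_{\b_j}^2\overline{f_{-r}} - r\alpha f_r + g_r - iM_r\Lambda Q_{\b_j}+iB_r\Phi_{\b_j}\bigr).
\end{equation*}
Convolving with an arbitrary $\mu_{\b_j}\in\mathcal M$ gives $\mu_{\b_j}*f_r = \widetilde\mu_{\b_j}*(\cdots)$ with $\widetilde\mu_{\b_j}:=\mu_{\b_j}*m_{\b_j}\in\mathcal M$ (stability of $\mathcal M$ under convolution), and Lemma~\ref{mub123} together with Lemma~\ref{stabprop}(v) propagate the weighted norm through the convolution without logarithmic losses.

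The main obstacle is uniform-in-$\b_j$ invertibility of the mode-$(r,-r)$ system in the relevant weighted norms as $\b_j\to 1$, where $\mathcal L_{\b_j}$ degenerates. This is what forces the double smallness $|\alpha|<\alpha_*(d)$ and $\eta<\eta_*(d)$: the former keeps the twist $r\alpha$ for $|r|\leq d$ below the uniform spectral gap of Proposition~\ref{invertibility1}, while the latter ensures $\b_j$ is close enough to $1$ for the convergence $Q_{\b_j}\to Q^+$ from Proposition~\ref{prop: charac Q_beta} and the weighted estimates of Proposition~\ref{boundQb} to be effective with constants uniform over $\mathcal O$.
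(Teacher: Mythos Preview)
Your approach is essentially the paper's: Fourier--decompose in $\Gamma$, solve the zero mode directly via Proposition~\ref{invertibility1}, treat each nonzero mode pair as a small perturbation (invertible for $|\alpha r|$ small), and obtain admissibility by induction on derivatives via the resolvent identity $f_r=m_{\b_j}*(\cdots)$ together with Lemmas~\ref{lemma:iteration} and~\ref{mub123}. The only substantive difference is your choice of the complex exponential basis $\{e^{ir\Gamma}\}$ in place of the paper's real cos/sin basis; the paper notes that because $\mathcal L_{\b_j}$ is only $\R$--linear (the term $-Q_{\b_j}^2\overline\eps$ is antilinear), the real basis is more natural. In that basis the global constraints decompose cleanly as $(f_r^\pm,iQ_{\b_j})=(f_r^\pm,\pa_yQ_{\b_j})=0$, and the coupled $r\neq 0$ system is solved by an explicit contraction mapping in $H^{1/2}\times H^{1/2}$ rather than by a spectral--gap sketch.

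Your complex--exponential formulation has one slip worth fixing: the mode--wise constraints you wrote, $(f_r,iQ_{\b_j})=(f_r,\pa_yQ_{\b_j})=0$, are \emph{not} what the global orthogonality $(f,iQ_{\b_j})=0$ for all $\Gamma$ gives. Setting $c_r:=\int f_r\,\overline{iQ_{\b_j}}$, the vanishing of $(f,iQ_{\b_j})=\Re\sum_r c_r e^{ir\Gamma}$ for every $\Gamma$ is equivalent to $c_r+\overline{c_{-r}}=0$, whereas your condition only says $\Re c_r=0$; these differ (yours leaves $\Im c_r$ free and forces $\Re c_r=0$ rather than $\Re c_r=-\Re c_{-r}$). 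An $f$ assembled from modes obeying your constraints would therefore not, in general, satisfy the orthogonality in the lemma. The fix is to impose $c_r+\overline{c_{-r}}=0$ instead, or simply to switch to the cos/sin basis as the paper does, which makes this issue disappear.
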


\begin{proof}
Since $\mathcal L_\b$ is not $\C $--linear, it is preferable to use the Fourier expansion in cosines and sines,
so we write 
\bee
g(y,\mathcal P^*)=g_0(y,\mathcal P^*)+\sum _{r=1}^d[g_r^+(y,\mathcal P^*)\cos (r\Gamma )+g_r^-(y,\mathcal P^*)\sin (r\Gamma)]\ ,\\
f(y,\mathcal P^*)=f_0(y,\mathcal P^*)+\sum _{r=1}^d[f_r^+(y,\mathcal P^*)\cos (r\Gamma )+f_r^-(y,\mathcal P^*)\sin (r\Gamma)]\ ,\\
M(\mathcal P)=M_0(\mathcal P^*)+\sum _{r=1}^d[M_r^+(\mathcal P^*)\cos(r\Gamma)+M_r^-(\mathcal P^*)\sin(r\Gamma)]\ ,\\
B(\mathcal P)=B_0(\mathcal P^*)+\sum _{r=1}^d[B_r^+(\mathcal P^*)\cos(r\Gamma)+B_r^-(\mathcal P^*)\sin(r\Gamma)]\ .
\eee
The problem on $f, M,B$ is therefore equivalent to the following family of problems
\begin{align}
\label{f0}&\mathcal L_{\b _j}f_0=iM_0\Lambda Q_{\b _j}-iB_0\Phi _{\b _j}+g_0\ ,\ (f_0,iQ_{\b _j})=(f_0,\pa _{y}Q_{\b _j})=0\ ,\\
\label{fr}&\left \{ \begin{array}{ll}\mathcal L_{\b _j}f_r^+-i\alpha rf_r^-=iM_r^+\Lambda Q_{\b _j}-iB_r^+\Phi _{\b _j}+g_r^+\ ,\ (f_r^+,iQ_{\b _j})=(f_r^+,\pa _{y}Q_{\b _j})=0\ ,\\ \mathcal L_{\b _j}f_r^-+i\alpha rf_r^+=iM_r^-\Lambda Q_{\b _j}-iB_r^-\Phi _{\b _j}+g_r^-\  ,\ (f_r^-,iQ_{\b _j})=(f_r^-,\pa _{y}Q_{\b _j})=0\ ,
\end{array}\right .
\end{align}
Let us first deal with (\ref{f0}).  Recall from Proposition \ref{invertibility1} that 
$$\ker \mathcal L_{\b _j}=span _{\R} \{ iQ_{\b _j},\pa_{y_j} Q_{\b _j}\} ,$$
and that the range of $\mathcal L_{\b _j}$ coincides with the orthogonal of $span _{\R} \{ iQ_{\b _j},\pa_{y_j} Q_{\b _j}\} .$

Consequently,  the real numbers $M_0, B_0$ must satisfy the
orthogonality conditions
$$(g_0-iM_0\Lambda Q_{\b _j} +iB_0\Phi _{\b _j},iQ_{\b _j})=(g_0-iM_0\Lambda Q_{\b _j} +iB_0\Phi _{\b _j},\pa _{y_j}Q_{\b _j})= 0\ .$$
Notice that, in view of \eqref{varL2b}, \eqref{varHdemib}, 
\bee
(i\Lambda Q_\b, iQ_\b)&=&(\Lambda Q_\b,Q_\b)=0\ ,\\
(i\Phi _\b,iQ_\b)&=&(\Phi_\b,Q_\b)=\frac{1-\b}2\frac{d}{d\b}\Vert Q_\b\Vert_{L^2}^2-\frac 12\Vert Q_\b\Vert_{L^2}^2=-\pi +O((1-\b)\vert \log(1-\b)\vert)\ ,\\
(i\Lambda Q_\b,\pa _yQ_\b)&=&\frac 12(Q_\b,DQ_\b)=\pi +O((1-\b)\vert \log(1-\b)\vert)\ ,\\
(i\Phi _\b,\pa_yQ_\b)&=&\frac{1-\b}{2}\frac{d}{d\b}(Q_\b,DQ_\b)=O((1-\b)\vert \log(1-\b)\vert)\ .
\eee
In view of these identities, we infer that $M_0,B_0$ are characterized for $\b _j$ close enough to $1$ --- hence for $\eta $ small enough ---, given by the following formulae
\begin{align}
B_0&=  \frac{2(g_0,iQ_{\b_j})}{ \Vert Q_{\b _j}\Vert_{L^2}^2-\tL_{\b _j} \Vert Q_{\b _j}\Vert_{L^2}^2}\ ,\  \label{B0}\\
M_0&=  \frac{2(g_0,\pa_yQ_{\b _j})}{(Q_{\b _j},DQ_{\b _j})}+\frac{ 2(g_0,iQ_{\b_j}) \tL_{\b _j} \Vert Q_{\b _j}\Vert_{L^2}^2}{ (Q_{\b _j},DQ_{\b _j})( \Vert Q_{\b _j}\Vert_{L^2}^2-\tL_{\b _j} \Vert Q_{\b _j}\Vert_{L^2}^2)}\ .      \label{M0}
\end{align}
In view of these formulae and of property (v) in Lemma \ref{stabprop}, we conclude that $M_0$ and $B_0$ are $L^\infty $-admissible.\\
Then Proposition \ref{invertibility} provides existence and uniqueness of function $f_0$, as well as the estimate
$$\Vert f_0\Vert _{\b _j}\lesssim \Vert g_0\Vert _{L^2}+\Vert m_{\b _j}*g_0\Vert _{\b _j}\ .$$
Applying inductively $\Lambda _y^p\tL_{\b _j}^q $ to the identity
$$f_0=m_{\b _j}*(iM_0\Lambda Q_{\b _j}-iB_0\Phi _{\b _j}+g_0)+m_{\b _j}*(2\vert Q_{\b _j}\vert ^2f_0+Q_{\b _j}^2\overline f_0)\ ,$$
and using that $\Lambda Q_{\b _j}, \Phi _{\b _j}$ and $g_0$ are strongly $j$--admissible, we conclude from Lemma \ref{lemma:iteration} that $f_0$ is  strongly $j$--admissible.
\medskip

\noindent Let us come to the systems (\ref{fr}). Given $g\in H^{-\frac12}$, define 
\begin{align*}
B[g]&:=  \frac{2(g,iQ_{\b_j})}{ \Vert Q_{\b _j}\Vert_{L^2}^2-\tL_{\b _j} \Vert Q_{\b _j}\Vert_{L^2}^2}\ ,\  \\
M[g]&:=  \frac{2(g_,\pa_yQ_{\b _j})}{(Q_{\b _j},DQ_{\b _j})}+\frac{ 2(g,iQ_{\b_j}) \tL_{\b _j} \Vert Q_{\b _j}\Vert_{L^2}^2}{ (Q_{\b _j},DQ_{\b _j})( \Vert Q_{\b _j}\Vert_{L^2}^2-\tL_{\b _j} \Vert Q_{\b _j}\Vert_{L^2}^2)}\ .      
\end{align*}
and let $$\mathcal L_{\b }^{-1}:H^{-\frac 12}\cap (\ker \mathcal L_\b)^{\perp}\rightarrow H^{\frac 12}\cap (\ker \mathcal L_\b)^{\perp}$$
be the $\R $--linear isomorphism provided by Proposition \ref{invertibility1} . Then the system \eqref{fr} is equivalent to
$$
\left \{ \begin{array}{ll}f_r^+=\mathcal L_{\b_j}^{-1}(g_r^++i\alpha rf_r^-+iM[ g_r^++i\alpha rf_r^- ]\Lambda Q_{\b _j}-iB[g_r^++i\alpha rf_r^-]\Phi _{\b _j}), \\ 
f_r^-=\mathcal L_{\b_j}^{-1}(g_r^--i\alpha rf_r^++iM[g_r^--i\alpha rf_r^+]\Lambda Q_{\b _j}-iB[g_r^--i\alpha rf_r^+]\Phi _{\b _j})\ .\end{array}\right .
$$
The right hand side in the above side defines a mapping of $(f_r^+,f_r^-)\in H^{1/2}\times H^{1/2}$ which is contracting if $\alpha r$ is small enough. This provides existence and uniqueness of $(f_r^+,f_r^-)$ as well as uniform bounds in $H^{1/2}$, and the formulae
$$M_r^+=M[ g_r^++i\alpha rf_r^- ], B_r^+=B[ g_r^++i\alpha rf_r^-], M_r^-=M[ g_r^--i\alpha rf_r^+ ]\ ,\ B_r^-=B[g_r^--i\alpha rf_r^+  ]\ .$$

 The strong $j$--admissibility of $f_r^+$ and $f_r^-$ and the $L^\infty $-admissibility of $M_r^\pm, B_r^\pm $ are then obtained from the system
$$
\left \{ \begin{array}{ll}f_r^+=m_{\b _j}*(iM_r^+\Lambda Q_{\b _j}-iB_r^+\Phi _{\b _j}+g_r^++i\alpha rf_r^-)+m_{\b _j}*(2\vert Q_{\b _j}\vert ^2f_r^++Q_{\b _j}^2\overline {f_r^+})\ ,\\ 
f_r^-=m_{\b _j}*(iM_r^-\Lambda Q_{\b _j}-iB_r^-\Phi _{\b _j}+g_r^--i\alpha rf_r^+)+m_{\b _j}*(2\vert Q_{\b _j}\vert ^2f_r^-+Q_{\b _j}^2\overline {f_r^-})\ , \end{array}\right .
$$
applying again Lemma \ref{lemma:iteration}.
\end{proof}

%%%%%%%%%%%%%%%%%%%%%%%%%%%%%%%%%%%%%%%%%%%%%%%%

\subsection{Construction of the approximate solution}

%%%%%%%%%%%%%%%%%%%%%%%%%%%%%%%%%%%%%%%%%%%%%%%%

We are now in position to construct the approximate two-bubble solution.

\begin{proposition}[Construction of the two-bubble]
\label{propwtobublle}
Let $N$ be a positive integer, $0<\eta \ll \eta_*(N)$.
We can find an expansion of the slowly modulated two-bubble for $j=1,2$: 
\bee
V_j^{(N)}(y_j,\matchal P)&=&\sum_{n=0}^NT_{j,n}(y_j,\mathcal P), \\
M_j^{(N)}(\matchal P)&=&\sum_{n=0}^NM_{j,n}(\mathcal P), \\
 B_j^{(N)}(\matchal P)&=&\sum_{n=0}^NB_{j,n}(\mathcal P)
 \eee
  such that the following holds:
  \begin{enumerate}
\item {\rm (}Initialization{\rm ).} For $j=1,2$, $T_{j,0}=Q_{\beta_j}(y_j)$, $M_{j,0}=B_{j,0}=0.$\\
\item {\rm (}Control of the error{\rm ).}  Let $0\leq n\leq N$ and $(\mathcal E_{j,n})_{j=1,2}$ be given by \fref{expressioneone}, \fref{expressionetwo} with $V_j= V_j^{(n)}$. Then\\
 $b^{-1}(1+(1-\b _1)R)R^{n+1}\mathcal E_{1,n}$ is strongly $1$--admissible, and $(1+(1-\b _1)R)R^{n+1}\mathcal E_{2,n}$ is strongly $2$--admissible.
\item  {\rm (}Control of the profile{\rm ).} For all $0\leq n\leq N$, $j=1,2$, \\
$b^{-1}(1+(1-\b _1)R)R^nT_{1,n}$ is strongly $1$--admissible, and $(1+(1-\b _1)R)R^nT_{2,n}$ is strongly $2$--admissible.
\item {\rm (}Orthogonality{\rm ).} For $j=1,2$, $n\geq 1$, $(T_{j,n},iQ_{\beta_j})=(T_{j,n},\pa_{y_j}Q_{\beta_j})=0.$
\item {\rm (}Control of the modulation equations{\rm ).} For all $0\le n\le N$,\\ $b^{-1}(1+(1-\b _1)R)R^nB_{1,n}$, $b^{-1}(1+(1-\b _1)R)R^nM_{1,n}$, $(1+(1-\b _1)R)R^nB_{2,n}$,  and $(1+(1-\b _1)R)R^nM_{2,n}$ are $L^\infty$-admissible.
\end{enumerate}
\end{proposition}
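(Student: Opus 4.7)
The proof proceeds by induction on $n=0,1,\dots,N$, at each step constructing a correction $(T_{j,n+1},M_{j,n+1},B_{j,n+1})$ that improves the admissibility of the error by a factor of $R^{-1}$, by solving a linear elliptic problem via Lemma \ref{generalinvert}. For the base case $n=0$, set $T_{j,0}=Q_{\b_j}$ and $M_{j,0}=B_{j,0}=0$: properties (1), (3)--(5) are immediate from Lemma \ref{lemma: admis Q}, and for (2) the equation of $Q_{\b_j}$ cancels the first three terms of \eqref{expressioneone}--\eqref{expressionetwo}, while the $M,B,\pa_R,\pa_\Gamma$ terms vanish (as $M_{j,0}=B_{j,0}=0$ and $T_{j,0}$ does not depend on $R,\Gamma,\l_k,\b_k$ for $k\neq j$), leaving only the cross term $\chi_R[\cdots]$ in $\mathcal E_{1,0}$ and $(1-\chi_R)[\cdots]$ in $\mathcal E_{2,0}$; part (vi) of Lemma \ref{stabprop}, applied to each cubic monomial in $Q_{\b_1},\overline{Q_{\b_1}},Q_{\b_2},\overline{Q_{\b_2}}$, then yields the required strong admissibility.

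For the inductive step, assume (1)--(5) at order $n$, set $V_j^{(n+1)}=V_j^{(n)}+T_{j,n+1}$, $M_j^{(n+1)}=M_j^{(n)}+M_{j,n+1}$, $B_j^{(n+1)}=B_j^{(n)}+B_{j,n+1}$ in \eqref{expressioneone}--\eqref{expressionetwo}, and expand. Using the equation for $Q_{\b_j}$ and the linearization $V|V|^2=Q_{\b_j}|Q_{\b_j}|^2+2|Q_{\b_j}|^2T_{j,n+1}+Q_{\b_j}^2\overline{T_{j,n+1}}+O(T_{j,n+1}^2,T_{j,n+1}(V_j^{(n)}-Q_{\b_j}))$, one finds, with $\alpha_1:=\tfrac{1-\mu}{\mu}$ and $\alpha_2:=1-\mu$,
\[
\mathcal E_j^{(n+1)}=\mathcal E_j^{(n)}-\mathcal L_{\b_j}T_{j,n+1}+i\alpha_j\pa_\Gamma T_{j,n+1}-iM_{j,n+1}\Lambda Q_{\b_j}+iB_{j,n+1}\Phi_{\b_j}+\mathcal R_{j,n+1},
\]
where $\mathcal R_{j,n+1}$ collects the quadratic and cubic terms in $T_{j,n+1}$, the products with $V_j^{(n)}-Q_{\b_j}=\sum_{k\leq n}T_{j,k}$, the updates of the cross terms due to $V_k^{(n+1)}-V_k^{(n)}$, and the couplings of $(M_{j,n+1},B_{j,n+1})$ to $V_j^{(n)}-Q_{\b_j}$ or $T_{j,n+1}$. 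Cancelling $\mathcal E_j^{(n)}$ reduces to solving $\mathcal L_{\b_j}T_{j,n+1}-i\alpha_j\pa_\Gamma T_{j,n+1}=\mathcal E_j^{(n)}-iM_{j,n+1}\Lambda Q_{\b_j}+iB_{j,n+1}\Phi_{\b_j}$ with orthogonality $(T_{j,n+1},iQ_{\b_j})=(T_{j,n+1},\pa_yQ_{\b_j})=0$. Setting $\sigma_n^{(1)}:=\tfrac{b}{R^{n+1}(1+(1-\b_1)R)}$, $\sigma_n^{(2)}:=\tfrac{1}{R^{n+1}(1+(1-\b_1)R)}$ and writing $T_{j,n+1}=\sigma_n^{(j)}\tilde T$, $M_{j,n+1}=\sigma_n^{(j)}\tilde M$, $B_{j,n+1}=\sigma_n^{(j)}\tilde B$, $\mathcal E_j^{(n)}=\sigma_n^{(j)}\tilde g_n$, the $(y,\Gamma)$-independent factor $\sigma_n^{(j)}$ divides out, yielding $\mathcal L_{\b_j}\tilde T-i\alpha_j\pa_\Gamma\tilde T=\tilde g_n-i\tilde M\Lambda Q_{\b_j}+i\tilde B\Phi_{\b_j}$ with $\tilde g_n$ strongly $j$--admissible by the inductive hypothesis (2) and of bounded Fourier degree $d(n)$ in $\Gamma$; since $|\alpha_j|\lesssim\eta^\delta$ can be made small, Lemma \ref{generalinvert} delivers $\tilde T$ strongly $j$--admissible and $\tilde M,\tilde B$ $L^\infty$--admissible, i.e.\ properties (3)--(5) at order $n+1$, while (4) is built in.

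Property (2) at order $n+1$ follows from the explicit form of $\mathcal E_j^{(n+1)}=\mathcal R_{j,n+1}$: each of its terms picks up at least one extra factor of order $R^{-1}(1+(1-\b_1)R)^{-1}$ relative to the inductive bound, either through a nonlinear product of $T_{j,n+1}$ or of some $T_{j,k}$ with an inductively admissible factor (by parts (ii) and (v) of Lemma \ref{stabprop}), or through the $R$-gain built into the $\chi_R$/$(1-\chi_R)$ exchange of parts (iii) and (vi). This closes the induction. The principal obstacles I anticipate are: (a) the $\Gamma$-Fourier degree $d(n)$ grows under the cubic interactions but remains finite after $N$ steps, and the smallness condition $|\alpha_j|<\alpha_*(d(N))$ required by Lemma \ref{generalinvert} is absorbed by taking $\eta<\eta_*(N)$; (b) the asymmetry between the two bubbles pointed out in Remark \ref{dissymetry} forces reliance on the bound $1-\b_2\geq e^{-R}$ in $\mathcal O$ (cf.\ \eqref{bootass}) in the last cross-term estimate of Lemma \ref{stabprop}(vi), applied to $\mathcal E_{2,n+1}$; (c) the singular limit $\b_j\to 1$ in $\mathcal L_{\b_j}$ is absorbed uniformly thanks to Proposition \ref{invertibility} and to the weighted estimates of Propositions \ref{boundQb} and \ref{furtherQbdot}.
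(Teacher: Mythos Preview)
Your plan is correct and follows the paper's inductive scheme almost exactly: same base case via Lemma~\ref{stabprop}(vi), same linear equation $\mathcal L_{\b_j}T_{j,n+1}-i\alpha_j\pa_\Gamma T_{j,n+1}=\mathcal E_j^{(n)}-iM_{j,n+1}\Lambda Q_{\b_j}+iB_{j,n+1}\Phi_{\b_j}$ solved via Lemma~\ref{generalinvert}, same bookkeeping of trigonometric degree in $\Gamma$ and of the smallness $|\alpha_j|<\alpha_*(d(N))$ via $\eta<\eta_*(N)$.

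Two small corrections are in order. First, your list of contributions to $\mathcal R_{j,n+1}$ omits the lower-order modulation terms (what the paper calls $\mathrm{Modlow}$): the derivatives $i(1-b+(B_j-M_j)R)\pa_RT_{j,n+1}$, $i\l_jM_k\pa_{\l_k}T_{j,n+1}$, and the $\tL_{\b_{j+1}}$ term, together with $-iM_j^{(n)}\Lambda T_{j,n+1}+iB_j^{(n)}[\Lambda_{y_j}+\tL_{\b_j}]T_{j,n+1}$. These fit none of your four listed categories, and the $\pa_R$ term in fact carries the \emph{worst} behavior in the whole remainder. Second, the extra decay you claim for $\mathcal R_{j,n+1}$ is overstated: the term $i\frac{1-b}{R}\Lambda_RT_{j,n+1}$ gains only a factor $R^{-1}$, not $R^{-1}(1+(1-\b_1)R)^{-1}$. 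This is harmless, since passing from $(\sigma_n^{(j)})^{-1}\mathcal E_j^{(n)}$ strongly $j$--admissible to $(\sigma_{n+1}^{(j)})^{-1}\mathcal E_j^{(n+1)}$ strongly $j$--admissible requires precisely a gain of $R^{-1}$, but you should state it correctly.
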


\begin{proof}[Proof of Proposition \ref{propwtobublle}] 
We argue by induction on $N$. In order to deal with the dependence on the phase $\Gamma$, we need a more refined description of the error and claim inductively:
\be
\label{controletjn}
T_{j,n}= \sum_{r=-d_n}^{d_n} T_{j,n,r}\, e^{ir\Gamma}
\ee
where $d_n$ is an integer, $b^{-1}(1+(1-\b _1)R)R^nT_{1,n,r}$ is strongly $1$--admissible,\\ $(1+(1-\b _1)R)R^nT_{2,n,r}$ is strongly $2$--admissible,
and they do not depend on $\Gamma$. 
Moreover,
\be
\label{inductionnbis}
\mathcal E_{j,n}=\sum_{r=-d_{n+1}}^{d_{n+1}} \E_{j,n,r}\, e^{ir\Gamma}
\ee
where  $b^{-1}(1+(1-\b _1)R)R^{n+1}\E_{1,n,r}$ is strongly $1$--admissible, $(1+(1-\b _1)R)R^{n+1}\E_{2,n,r}$  is strongly $2$--admissible, and they do not depend on $\Gamma$. Finally,
\begin{align*}
M_{j,n}&= \sum_{r=-d_n}^{d_n} M_{j,n,r}\, e^{ir\Gamma} \\
B_{j,n}&= \sum_{r=-d_n}^{d_n} B_{j,n,r}\, e^{ir\Gamma}
\end{align*}
where $b^{-1}(1+(1-\b _1)R)R^{n}M_{1,n,r}, b^{-1}(1+(1-\b _1)R)R^{n}B_{1,n,r}, (1+(1-\b _1)R)R^{n}M_{2,n,r},$ $(1+(1-\b _1)R)R^{n}B_{2,n,r}$ are $L^\infty$--admissible and do not depend on $\Gamma$ nor $y$.

\noindent{\bf Step 1:} Initialization $N=0$. We inject the decomposition 
$$V_j=V_j^{(0)}=Q_{\beta_j}(y_j), \ \ M_{j,0}=B_{j,0}=0$$ 
$j=1,2$, into the definitions \eqref{expressioneone} and \eqref{expressionetwo}
 of the errors and compute from the equation of $Q_{\beta_j}$:
\bee
\mathcal E_{1,0}&=&\chi_R\left[\frac{2}{\mu}Q_{\beta_1}|Q_{\beta_2}|^2+\frac{e^{-i\Gamma}}{\sqrt{\mu}}Q_{\beta_1}^2\overline{Q_{\beta_2}}+2\frac{e^{i\Gamma}}{\sqrt{\mu}}|Q_{\beta_1}|^2Q_{\beta_2}+\frac{e^{2i\Gamma}}{\mu}\overline{Q_{\beta_1}}Q_{\beta_2}^2\right]\ ,\\
 \mathcal E_{2,0}&=&(1-\chi_R)\left[2\mu \vert Q_{\b _1}\vert ^2Q_{\b _2}+2\sqrt{\mu}e^{-i\Gamma}Q_{\b_1}|Q_{\b_2}|^2+\sqrt{\mu}e^{i\Gamma}\overline{Q_{\b_1}}Q_{\b_2}^2+ \mu e^{-2i\Gamma}Q_{\b _1}^2\overline{Q_{\b _2}}\right].
 \eee
 We now recall from that $Q_{\beta_j}$ is strongly $j$--admissible. Therefore, a direct application of Lemma \ref{stabprop}, property (vi),  ensures that  $b^{-1}(1+(1-\b _1)R)R\matchal E_{1,0}$ is strongly $1$--admissible, and $(1+(1-\b _1)R)R\matchal E_{2,0}$ is strongly $2$--admissible. Notice that we have \eqref{inductionnbis}
 with $n=0$, $d_1=2$, and that the admissibility properties transfer to the Fourier coefficients by integration in the $\Gamma $ variable.

 %%%%%%%%%%%%%%%%%%%%%%
 
 \noindent{\bf Step 2:} Induction. We assume the claim for $N=n$ and prove it for $N=n+1$. We expand
\be
\label{vnvnonplusone}
 V^{(n+1)}_{j}=V^{(n)}_{j}+T_{j,n+1}, \ \ j=1,2
 \ee 
 and show how to choose $(T_{j,n+1},M_{j,n+1},B_{j,n+1})$ so that the corresponding errors $\mathcal E_{j,n+1}$ are such that $b^{-1}(1+(1-\b_1)R)R^{n+2}\mathcal E_{1,n+1}$ is strongly $1$--admissible, and $(1+(1-\b_1)R)R^{n+2}\mathcal E_{2,n+1}$ is strongly $1$--admissible. We focus onto the first bubble, the computations for the second bubble are completely analogous, except that  there is no gain of a $b$ factor.\\
In general, we split the  error term $\mathcal E_1$ into four contributions: the nonlinear term,
\be
\label{nonlinearterm}
{\rm NL}_1=-\frac{(|D|-\beta_1 D)V_{1}}{1-\b_1}-V_{1}+ V_{1}|V_{1}|^2,
\ee
the interaction term,
\be
\label{interactionterm}
{\rm Int}_{1}=  \chi_R\left[\frac{2}{\mu}V_{1}|V_2|^2+\frac{e^{-i\Gamma}}{\sqrt{\mu}}(V_{1})^2 \overline{V_2}
+ 2\frac{e^{i\Gamma}}{\sqrt{\mu}}|V_{1}|^2V_2+\frac{e^{2i\Gamma}}{\mu}\overline{V_{1}}(V_2)^2\right],
\ee
the leading order term for modulation equations,
\be
\label{modulationterm}
 {\rm Mod}_{1}=-iM_1\Lambda V_{1}+iB_1[\Lambda _{y_1}V_1+\tL_{\b _1}V_1] + i\frac{1-\mu}{\mu}\frac{\pa V_{1}}{\pa \Gamma}
\ee
and the lower order term for modulation equations,
\bea
\label{lowerordermodulation}
 {\rm Modlow}_{1}&=& i\l_1(M_1\pa _{\l_1}V_{1}+M_2\pa _{\l_2}V_{1})+i\frac{B_2}{\mu}\tL_{\b_2} V_{1}+ \\
\nonumber  &+&i\left(\frac {1-b}R+B_1-M_1\right )\Lambda _R V_{1}\ .
\eea
Notice that we dropped the notation $V^\sharp $ and $V^\flat $ in these formulae, since the indices $1,2$ unambiguously suggest the arguments $y_1,y_2$.
%%%%%%%%%%%%%%%

\noindent{\bf Step 3:} Choice of $T_{1,n+1},M_{1,n+1},B_{1,n+1}$. We inject the decomposition \fref{vnvnonplusone} into \eqref{nonlinearterm} - \eqref{lowerordermodulation} and define $\mathcal E^{(k)}_{1,n+1}$, $k=1,\dots ,4$ by
\bee
&&{\rm NL}_{1,n+1}={\rm NL}_{1,n}-  \L_{\beta_1}T_{1,n+1} + \mathcal E^{(1)}_{1,n+1}\\
&&{\rm Int}_{1,n+1}={\rm Int}_{1,n}+\mathcal E^{(2)}_{1,n+1}\\
&&{\rm Mod}_{1,n+1}={\rm Mod}_{1,n}+\left\{-iM_{1,n+1}\Lambda Q_{\beta_1}+iB_{1,n+1}\Phi_{\beta_1}\right\}+\frac{i(1-\mu)}{\mu}\frac{\pa T_{1,n+1}}{\pa \Gamma}+\mathcal E^{(3)}_{1,n+1}\\
&&{\rm Modlow}_{1,n+1}={\rm Modlow}_{1,n}+\mathcal E^{(4)}_{1,n+1}.
\eee
Therefore 
\bee
\mathcal E_{1,n+1}&=&\mathcal E_{1,n}- \L_{\beta_1}T_{1,n+1}+\frac{i(1-\mu)}{\mu}\frac{\pa T_{1,n+1}}{\pa \Gamma} -iM_{1,n+1}\Lambda Q_{\beta_1}+iB_{1,n+1}\Phi_{\beta_1}\\
&+& \Sigma_{k=1}^4\matchal E_{1,n+1}^{(k)}.
\eee
The smallness assumption on $\eta $ and the definition of $\mathcal O$ imply that $1-\mu $ is small enough with respect to $n$, and we may therefore use Lemma \ref{generalinvert} to solve the equation
$$\mathcal L_{\b _1}T_{1,n+1}+iM_{1,n+1}\Lambda Q_{\b _1}-iB_{1,n+1}\Phi _{\b _1}-i\frac{1-\mu}{\mu }\pa _\Gamma T_{1,n+1}=\mathcal E_{1,n}\ .$$
From the inductive assumption on $\mathcal E_{1,n}$ and Lemma \ref{generalinvert}, we infer that $b^{-1}(1+(1-\b_1)R)R^{n+1}T_{1,n+1}$ is 
strongly $1$--admissible, and that $b^{-1}(1+(1-\b_1)R)R^{n+1}M_{1,n+1}$, $b^{-1}(1+(1-\b_1)R)R^{n+1}B_{1,n+1}$ are $L^\infty$-admissible. Furthermore,
$T_{1,n+1}, M_{1,n+1}, B_{j,n+1}$ are trigonometric polynomials of degree $d_{n+1}$.

%%%%%%%%%%%%%%%%%%%%%%%%%%%

\noindent{\bf Step 4:} Estimating $\mathcal{E}^{(1)}_{1,n+1}$. Explicitly:
\bea
\label{defeonnplusone}
\mathcal{E}^{(1)}_{1,n+1}& = & 2\left[|V_1^{(n)}|^2-|Q_{\beta_1}|^2\right]T_{1,n+1}+\left[(V_1^{(n)})^2-Q_{\beta_1}^2\right]\overline{T_{1,n+1}}\\
\nonumber & + & 2V_1^{(n)}|T_{1,n+1}|^2+\overline{V_1^{(n)}}T_{1,n+1}^2+T_{1,n+1}|T_{1,n+1}|^2.
\eea
First of all, we observe that $\mathcal{E}^{(1)}_{1,n+1}$ is a trigonometric polynomial in $\Gamma$, with a degree $d_{n+2}^{(1)}$ depending only on $n$. Secondly, using Lemma \ref{stabprop}, the $1$--admissibility of $b^{-1}(1+(1\b_1)R)R^kT_{1,k} $, and the $2$--admissibility of $(1+(1\b_1)R)R^kT_{2,k} $
for $k\le n+1$, we conclude that $b^{-1}(1+(1-\b_1)R)R^{n+2}\mathcal{E}^{(1)}_{1,n+1}$ is strongly $1$--admissible.

%%%%%%%%%%%%%%%%%%

\noindent{\bf Step 5:} Estimating $\mathcal E^{(2)}_{1,n+1}$. First of all, we observe that $\mathcal{E}^{(2)}_{1,n+1}$ is a trigonometric polynomial in $\Gamma$, with a degree $d_{n+2}^{(2)}$ depending only on $n$. We then expand the interaction term ${\rm Int}_{1,n+1}$ \fref{interactionterm}. Notice that each term contains an exchange of variables.  Let us consider the term
 $$\frac{e^{2i\Gamma}}{\mu}\chi_R\overline{V_1^{(n)}} T_{2,n+1}V_2^{(n)}.$$ Recall that $V_j^{(n)}$ is $j$--admissible by the induction assumption, and that \\ $(1+(1-\b_1)R)R^{n+1}T_{2,n+1}$ is $1$ admissible by step 3.  By Lemma \ref{stabprop}, (vi), we infer that 
$$b^{-1}(1+(1-\b_1)R)R^{n+2}\frac{e^{2i\Gamma}}{\mu}\chi_R\overline{V_1^{(n)}} T_{2,n+1}V_2^{(n)}$$ is strongly $1$-admissible. 
 The other terms can be treated similarly. We therefore conclude that $b^{-1}(1+(1-\b_1)R)R^{n+2}\mathcal{E}^{(2)}_{1,n+1}$ is strongly $1$--admissible.

\noindent{\bf Step 6:} Estimating $\mathcal E^{(3)}_{1,n+1}$. Again, $\mathcal{E}^{(3)}_{1,n+1}$ is a trigonometric polynomial in $\Gamma$, with a degree $d_{n+2}^{(3)}$ depending only on $n$.\\
 Let us first observe that the term 
$i\frac{1-\mu}{\mu}\frac{\pa T_{1,n+1}}{\pa \Gamma}$ is absent in 
$\mathcal E^{(3)}_{1,n+1}$ since it is now a part of the equation of $T_{1,n+1}$.
For example, let us deal with the contribution of the term $-iM_1\Lambda V_1$ to $\mathcal{E}^{(3)}_{1,n+1}$.
The other contributions can be handled similarly. We have
$$M_1^{(n+1)}\Lambda V_1^{(n+1)}-M_1^{(n)}\Lambda V_1^{(n)}-M_{1,n+1}\Lambda Q_{\b _1}=M_{1,n+1}\Lambda (V_1^{(n)}-Q_{\b _1})+M_1^{(n+1)}\Lambda T_{1,n+1}\ .$$
Let us consider the first term $M_{1,n+1}\Lambda (V_1^{(n)}-Q_{\b _1})$ in the right hand side. 
By step 3, we know that $b^{-1}(1+(1-\b_1)R)R^{n+1}M_{1,n+1}$  is $L^\infty $-admissible, and independent on $y_1$. On the other hand, $R\Lambda (V_1^{(n)}-Q_{\b _1})$ is strongly $1$--admissible. Hence $b^{-1}(1+(1-\b_1)R)R^{n+2}M_{1,n+1}\Lambda (V_1^{(n)}-Q_{\b _1})$ is strongly $1$--admissible. \\
Let us come to the second term $M_1^{(n+1)}\Lambda T_{1,n+1}$ in the right hand side. From step 3, $b^{-1}(1+(1-\b_1)R)R^{n+1}T_{1,n+1}$ is strongly $1$--admissible, while, from step 3 and the induction hypothesis
$$b^{-1}(1+(1-\b_1)R)RM_1^{(n+1)}=b^{-1}(1+(1-\b_1)R)R\sum _{k=1}^{n+1}M_{1,k}$$
is $L^\infty$-admissible and independent on $y_1$. We infer that $b^{-1}(1+(1-\b_1)R)R^{n+2}M_1^{{n+1}}\Lambda T_{1,n+1}$ is 
strongly $1$--admissible. \\
Summing up, $b^{-1}(1+(1-\b_1)R)R^{n+2}\mathcal{E}^{(3)}_{1,n+1}$ is strongly $1$--admissible. 

\noindent{\bf Step 7:} Estimating $\mathcal E^{(4)}_{1,n+1}$. Finally, we deal with  $b^{-1}(1+(1-\b_1)R)R^{n+2}\mathcal{E}^{(4)}_{1,n+1}$ via the lower order term for modulation equations \fref{lowerordermodulation}. In fact, the worst behavior occurs in this part, and comes  from the term
$$i\frac{1-b}{R}\Lambda _RT_{1,n+1}\ .$$
Indeed, this one only provides a gain of $R$, so we get exactly that 
$$b^{-1}(1+(1-\b_1)R)R^{n+2}i\frac{1-b}{R}\Lambda _RT_{1,n+1}$$
is strongly $1$--admissible. The other terms are easier and left to the reader.

Defining $d_{n+2}:=\max\{ d^{(k)}_{n+2}, k=1,\dots ,4\} $, this completes the proof.

\end{proof}

As a consequence of Proposition \ref{propwtobublle}, we establish some additional estimates which will be useful in Section 5.

\begin{corollary}\label{DPiminusDprimeV}
If $V_j=V_j^{(N)}$ as in Proposition \ref{propwtobublle}, and if $$\pa '\in \{\pa _\Gamma, \Lambda _R, \pa _{\lambda _{j+1}},(1-\b_{j+1})\pa _{\b _{j+1}} \} $$ with
$\{j,j+1\} =\{1,2\}$, we have 
$$\Vert D\Pi ^-\pa 'V_j\Vert _{L^2}\lesssim \frac{1-\b_j}{R}\ .$$
\end{corollary}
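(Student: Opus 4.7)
The key structural observation is that none of the derivatives in $\pa'$ acts on the parameter $\b_j$, nor on the translation/phase/scaling parameters of bubble $j$; hence $\pa'$ commutes with the Fourier multipliers $D,|D|,\Pi^-$ and with $\mathcal P_\b := \frac{(1+\b_j)|D|}{1-\b_j}+1$, while annihilating the leading profile $T_{j,0}=Q_{\b_j}$.

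First I would rewrite the identity $\mathcal E_{j,N} = \text{(small)}$ coming from the construction of Proposition \ref{propwtobublle} in the form
\[
\frac{(|D|-\b_j D)}{1-\b_j} V_j + V_j = V_j|V_j|^2 + \mathrm{Mod}_j + \mathrm{Modlow}_j + \mathrm{Int}_j - \mathcal E_{j,N},
\]
with $\mathrm{Mod}_j,\mathrm{Modlow}_j,\mathrm{Int}_j$ as in \eqref{modulationterm}--\eqref{interactionterm}. Projecting onto $\Pi^-$ and using $\Pi^-(|D|-\b_j D)=(1+\b_j)|D|\Pi^-$ yields $\mathcal P_\b \Pi^- V_j = \Pi^-[\cdots]$. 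Since $\pa'\b_j=0$, $\pa'$ commutes with $\mathcal P_\b$, so applying $\pa'$ and then multiplying by $D$ gives
\[
D\Pi^-\pa' V_j \;=\; D\mathcal P_\b^{-1}\,\Pi^-\pa'\bigl[V_j|V_j|^2+\mathrm{Mod}_j+\mathrm{Modlow}_j+\mathrm{Int}_j-\mathcal E_{j,N}\bigr].
\]
Restricted to negative-frequency functions, the symbol of $D\mathcal P_\b^{-1}$ is $\xi/\bigl(\tfrac{(1+\b_j)|\xi|}{1-\b_j}+1\bigr)$ for $\xi<0$, whose $L^\infty$-norm is bounded by $(1-\b_j)/(1+\b_j)\lesssim 1-\b_j$. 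Therefore
\[
\Vert D\Pi^-\pa' V_j\Vert_{L^2} \;\lesssim\; (1-\b_j)\,\bigl\Vert \pa'[V_j|V_j|^2+\mathrm{Mod}_j+\mathrm{Modlow}_j+\mathrm{Int}_j-\mathcal E_{j,N}]\bigr\Vert_{L^2}.
\]

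Second, I would show every summand inside this $L^2$-norm is $O(R^{-1})$. Write $V_j=Q_{\b_j}+W_j$ with $W_j:=\sum_{n\geq 1}T_{j,n}$. Proposition \ref{propwtobublle}(3) combined with Lemma \ref{stabprop}(i)--(ii) gives $\Vert W_j\Vert_{L^2}+\Vert W_j\Vert_{L^\infty}\lesssim R^{-1}$, and the same bounds hold for $\pa' W_j$ because each of $\pa_\Gamma,\Lambda_R,\pa_{\lambda_{j+1}},\tL_{\b_{j+1}}$ preserves admissibility. Since $\pa' Q_{\b_j}=0$, expanding $V_j|V_j|^2-Q_{\b_j}|Q_{\b_j}|^2$ yields only terms at least linear in $W_j$, whence $\Vert \pa'(V_j|V_j|^2)\Vert_{L^2}\lesssim R^{-1}$. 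The modulation coefficients obey $|M_j|+|B_j|\lesssim R^{-1}$ by Proposition \ref{propwtobublle}(5) (recall $M_{j,0}=B_{j,0}=0$), and $\pa_\Gamma V_j=\pa_\Gamma W_j$ has $L^2$-norm of order $R^{-1}$; combined with the uniform $L^2$-bounds on $\Lambda V_j$ and $\Lambda_{y_j}V_j+\tL_{\b_j}V_j$, this yields $L^2$-control by $R^{-1}$ for $\pa'\mathrm{Mod}_j$ and $\pa'\mathrm{Modlow}_j$. The interaction pieces $\pa'\mathrm{Int}_j$ are controlled by Lemma \ref{stabprop}(iii),(vi), which deliver the factor $R^{-1}$ through the cut-off $\chi_R$ (or $1-\chi_R$) combined with the other bubble evaluated at distance $R$. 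Finally $\Vert \pa'\mathcal E_{j,N}\Vert_{L^2}\lesssim R^{-N-1}\ll R^{-1}$ by Proposition \ref{propwtobublle}(2), provided $N$ is chosen sufficiently large.

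The main technical point is pure bookkeeping: one must verify that after applying $\pa'$ every remaining piece carries a factor $R^{-1}$. This crucially exploits that $\pa'$ annihilates the leading profile $Q_{\b_j}$, so all surviving contributions live either in the higher-order corrections $T_{j,n}$, $n\geq 1$, or in coefficients ($M_j$, $B_j$, $1-b$, etc.) which start at order $R^{-1}$ by construction. Combined with the gain of $1-\b_j$ from the symbol estimate of $D\mathcal P_\b^{-1}\Pi^-$, the claimed inequality follows.
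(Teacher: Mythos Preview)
Your proof is correct and follows essentially the same route as the paper: rewrite the equation as $\bigl(\tfrac{|D|-\b_j D}{1-\b_j}+1\bigr)V_j = |V_j|^2V_j + F_j$ with $RF_j$ $j$--admissible, apply $\pa'$ (which commutes with the multiplier and kills $Q_{\b_j}$) so that the right-hand side becomes $O(R^{-1})$ in $L^2$, and then gain the factor $1-\b_j$ from the symbol bound on $D\Pi^-\bigl(\tfrac{|D|-\b_j D}{1-\b_j}+1\bigr)^{-1}$. The only cosmetic difference is that the paper bundles $\mathrm{Mod}_j,\mathrm{Modlow}_j,\mathrm{Int}_j,-\mathcal E_{j,N}$ into a single $F_j$ and invokes admissibility once, whereas you unpack and estimate each block separately.
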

\begin{proof}
From Proposition \ref{propwtobublle}, we know  that $V_j$ is $j$--admissible, and that $R(V_j-Q_{\b_j})$ is $j$--admissible. Moreover, $R^{N+1}\mathcal E_j$ is $j$--admissible, and $RM_j, RB_j$ are $L^\infty$- admissible. Consequently, in view of the expressions \eqref{expressioneone}, \eqref{expressionetwo} of $\mathcal E_j$ and of Lemma \ref{stabprop}, we conclude that 
$$\left (\frac{\vert D\vert -\b_jD}{1-\b_j}+1\right )V_j-\vert V_j\vert ^2V_j=F_j\ ,$$
where $RF_j $ is $j$--admissible. Furthermore, since $\pa 'Q_{\b_j}=0$, $R\pa'V_j$ is $j$--admissible, and so is $R\pa '(\vert V_j\vert ^2V_j)$. This implies in particular
$$\left \Vert \left (\frac{\vert D\vert -\b_jD}{1-\b_j}+1\right )V_j\right \Vert _{L^2}\lesssim \frac{1}{R}\ .$$
The proof is completed by observing that the operator
$$D\Pi ^-\left (\frac{\vert D\vert -\b_jD}{1-\b_j}+1\right )^{-1}$$
has a norm $O(1-\b_j)$ on $L^2$. 
\end{proof}
\begin{corollary}\label{5.77}
If $M_2=M_2^{(N)}$ as in Proposition \ref{propwtobublle}, we have
$$\vert \pa _\Gamma M_2\vert +\vert R\pa_RM_2\vert +\sum_{k=1}^2(1-\b_k)\vert \pa_{\b_k}M_2\vert \lesssim \frac{\vert 1-\mu\vert +(1-\b_2)\vert \log(1-\b_2)\vert +R^{-1}}{R(1+(1-\b_1)R)}\ .$$
\end{corollary}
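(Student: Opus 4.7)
The claim is sharper than what bare admissibility from Proposition \ref{propwtobublle} delivers: the $L^\infty$-admissibility of $(1+(1-\b_1)R)R^nM_{2,n}$ yields only $|M_{2,1}|\lesssim 1/[R(1+(1-\b_1)R)]$, matching the claimed right-hand side only once we extract the extra factor $|1-\mu|+(1-\b_2)|\log(1-\b_2)|+R^{-1}$. My plan is to decompose $M_2=M_{2,1}+\sum_{n=2}^N M_{2,n}$: the tail is dealt with by bare admissibility, while the leading coefficient $M_{2,1}$ is analyzed through the explicit inversion formulas of Lemma \ref{generalinvert}.

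For the tail, for $n\geq 2$, applying any of $\pa_\Gamma$, $R\pa_R$, $(1-\b_k)\pa_{\b_k}$ to $M_{2,n}$ and using the admissibility of $(1+(1-\b_1)R)R^nM_{2,n}$ gives a bound $\lesssim R^{-n}(1+(1-\b_1)R)^{-1}\le R^{-1}/[R(1+(1-\b_1)R)]$, absorbed into the $R^{-1}$ part of the numerator. Only $M_{2,1}$ remains, determined through Lemma \ref{generalinvert} by the equation
\[\mathcal L_{\b_2}T_{2,1}+iM_{2,1}\Lambda Q_{\b_2}-iB_{2,1}\Phi_{\b_2}-i(1-\mu)\pa_\Gamma T_{2,1}=\mathcal E_{2,0},\]
with $\mathcal E_{2,0}$ given explicitly at the end of Proposition \ref{propwtobublle}'s step 1. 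I then expand $M_{2,1}=M_{2,1,0}+\sum_{r\ge 1}[M_{2,1,r}^+\cos r\Gamma+M_{2,1,r}^-\sin r\Gamma]$ according to the Fourier-in-$\Gamma$ decomposition of the construction in Lemma \ref{generalinvert}.

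The $r=0$ mode $M_{2,1,0}$ is $\Gamma$-independent, so $\pa_\Gamma M_{2,1,0}=0$. Formula \eqref{M0} gives $M_{2,1,0}=2(g_0,\pa_y Q_{\b_2})/(Q_{\b_2},DQ_{\b_2})+O((1-\b_2)|\log(1-\b_2)|)(g_0,iQ_{\b_2})$ with $g_0=2\mu(1-\chi_R)|Q_{\b_1}|^2Q_{\b_2}$. Here $(g_0,iQ_{\b_2})$ vanishes since the integrand is real, and integration by parts in $y_2$ combined with the asymptotics of $Q_{\b_1}$ from Proposition \ref{prop:equivalent Q_b} yield $M_{2,1,0}=O(b/[R^3(1+(1-\b_1)R)^2])$, hence the derivatives $R\pa_R M_{2,1,0}$ and $(1-\b_k)\pa_{\b_k}M_{2,1,0}$ (treated via Lemma \ref{stabprop} and Proposition \ref{furtherQbdot}) remain in the $R^{-1}/[R(1+(1-\b_1)R)]$ regime. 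For the modes $r\neq 0$, the proof of Lemma \ref{generalinvert} gives $M_{2,1,r}^\pm=M[g_r^\pm\pm i(1-\mu)r f_r^\mp]$. The coupling part, being proportional to $1-\mu$ and to $f_r^\mp=O(1/[R(1+(1-\b_1)R)])$, contributes $O(|1-\mu|/[R(1+(1-\b_1)R)])$, matching the $|1-\mu|$ term in the numerator. The pure part $M[g_r^\pm]$ splits by \eqref{M0} into a $\pa_y Q_{\b_2}$-piece and an $iQ_{\b_2}$-piece: the latter already carries the factor $(1-\b_2)|\log(1-\b_2)|$ from \eqref{M0}, and the former is killed at leading order by the algebraic identity
\[2\!\int\!|Q|^2\overline{\pa_y Q}\,dy+\int\!Q^2\overline{\pa_y Q}\,dy+\int\!\overline{Q}^{\,2}\pa_y Q\,dy=0,\]
obtained by combining $\int\pa_y[|Q|^2 Q]\,dy=0$ with its complex conjugate and evaluated at $Q=Q^+$. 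The residual comes from $Q_{\b_2}-Q^+$ (controlled by Proposition \ref{prop:QbCV}) and yields the announced $(1-\b_2)|\log(1-\b_2)|$ factor.

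The derivatives $R\pa_R$ and $(1-\b_k)\pa_{\b_k}$ commute with the inversion and the Fourier expansion; applying them to the explicit formulas above and using Proposition \ref{furtherQbdot} together with the strong admissibility framework yields the same structure of cancellations. The main obstacle I foresee is precisely the bookkeeping in the last step: one must verify that after invoking the algebraic identity, the residue is no worse than the announced $(1-\b_2)|\log(1-\b_2)|$, which requires tracking the Taylor expansion $Q_{\b_1}(R+\mu b y_2)=Q_{\b_1}(R)+O(\mu b y_2/R)\pa_x Q_{\b_1}(R)$ together with the $\b$-derivatives of $Q_{\b_2}$ and checking that no uncontrolled logarithmic or polynomial factor is produced when $R\pa_R$ and $(1-\b_k)\pa_{\b_k}$ are applied.
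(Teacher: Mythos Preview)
Your overall strategy---split off the tail $\sum_{n\ge 2}M_{2,n}$ by admissibility and analyze $M_{2,1}$ through the explicit inversion formula from Lemma~\ref{generalinvert}---coincides with the paper's. However, there are two concrete errors in your treatment of the $\pa_yQ_{\b_2}$-piece.

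First, your algebraic identity is wrong as written. From $\int\pa_y(|Q|^2Q)\,dy=0$ you get $2\int|Q|^2\pa_yQ+\int Q^2\overline{\pa_yQ}=0$; conjugating gives $2\int|Q|^2\overline{\pa_yQ}+\int\overline{Q}^{\,2}\pa_yQ=0$. Your three-term identity adds a spurious $\int Q^2\overline{\pa_yQ}$, which equals $-4\pi$ at $Q^+$ by Lemma~\ref{lemmaalgebraSzeg\H{o}} and does not vanish. What you actually need (and what the real-scalar-product pairing $(g_r^\pm,\pa_yQ_{\b_2})$ produces after approximating $Q_{\b_1}\approx Q_{\b_1}(R)$) is precisely the two-term conjugate identity above.

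Second, and more seriously, this identity holds for any $Q$, in particular at $Q=Q_{\b_2}$, so there is no need to pass to $Q^+$. Your detour through $Q^+$ would leave a residual of size $\|Q_{\b_2}-Q^+\|_{H^{1/2}}\sim(1-\b_2)^{1/2}|\log(1-\b_2)|^{1/2}$, strictly worse than the announced factor $(1-\b_2)|\log(1-\b_2)|$. The residual in a correct version of your argument comes instead from the Taylor remainder of $Q_{\b_1}(R+\mu by_2)$ and the cutoff, which give an $R^{-1}$ gain.

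The paper sidesteps both issues by a single integration by parts: noting $2|Q_{\b_2}|^2\overline{\pa_yQ_{\b_2}}+\overline{Q_{\b_2}}^{\,2}\pa_yQ_{\b_2}=\pa_{y_2}(\overline{|Q_{\b_2}|^2Q_{\b_2}})$, the pairing transfers $\pa_{y_2}$ onto $(1-\chi_R)Q_{\b_1}$, and the observation that $R^2(1+(1-\b_1)R)\,\pa_{y_2}((1-\chi_R)Q_{\b_1})$ is $L^\infty$-admissible immediately controls the quantity and all required derivatives in $\Gamma,R,\b_k$ at once. This is both shorter and avoids the residual-tracking you correctly flagged as the main obstacle.
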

\begin{proof}
Since $R^2(1+(1-\b_1)R)(M_2-M_{2,1})$ is $L^\infty $-admissible from Proposition \ref{propwtobublle}, we just have to prove the estimate for $M_{2,1}$.
From the construction of Proposition \ref{propwtobublle} --- see also the proof of Lemma \ref{generalinvert}, we have 
$$M_{2,1}=\frac{2(\mathcal E_{2,0}+i(1-\mu)\pa_\Gamma T_{2,1},\pa_{y_2}Q_{\b_2})}{(Q_{\b_2},DQ_{\b_2})}+\frac{2(\mathcal E_{2,0}+i(1-\mu)\pa_\Gamma T_{2,1},iQ_{\b_2})\tilde \Lambda_{\b_2}\Vert Q_{\b_2}\Vert _{L^2}^2}{(Q_{\b_2},DQ_{\b_2})(\Vert Q_{\b_2}\Vert _{L^2}^2-\tilde \Lambda_{\b_2}\Vert Q_{\b_2}\Vert _{L^2}^2)}\ .$$
Since $Q_{\b_2}$,  and $R(1+(1-\b_1)R)T_{2,1}$  are $2$--admissible, and since $(Q_{\b_2},DQ_{\b_2})^{-1}$,  $(\Vert Q_{\b_2}\Vert _{L^2}^2-\tilde \Lambda_{\b_2}\Vert Q_{\b_2}\Vert _{L^2}^2)^{-1}$ are $L^\infty $-admissible, the only terms to be estimated are 
$$(\mathcal E_{2,0}, \pa_{y_2}Q_{\b_2}), (\mathcal E_{2,0}, iQ_{\b_2})\tilde \Lambda_{\b_2}\Vert Q_{\b_2}\Vert _{L^2}^2\ ,$$
with
$$\mathcal E_{2,0}= (1-\chi_R)\left (2\mu \vert Q_{\b_1}\vert ^2Q_{\b_2}+2\sqrt{\mu}e^{-i\Gamma}Q_{\b_1}\vert Q_{\b_2}\vert ^2+\sqrt{\mu}e^{i\Gamma}
\overline{Q_{\b_1}}Q_{\b_2}^2+\mu e^{-2i\Gamma}Q_{\b_1}^2\overline{Q_{\b_2}}\right )\ .$$
We already know that $R(1+(1-\b_1)R)\mathcal E_{2,0}$ is $2$--admissible. Furthermore, from Proposition \ref{furtherQbdot}, we have
$$\vert \tilde \Lambda_{\b_2}\Vert Q_{\b_2}\Vert _{L^2}^2\vert +\vert \tilde \Lambda_{\b_2}^2\Vert Q_{\b_2}\Vert _{L^2}^2\vert \lesssim (1-\b_2)\vert \log(1-\b_2)\vert \ .$$
This implies the claimed estimate for $(\mathcal E_{2,0}, iQ_{\b_2})\tilde \Lambda_{\b_2}\Vert Q_{\b_2}\Vert _{L^2}^2$. As for $(\mathcal E_{2,0}, \pa_{y_2}Q_{\b_2})$, since $R(1-\chi_R)Q_{\b_1}$ is $L^\infty$-admissible, we just have to study the contribution of the terms with only one factor $Q_{\b_1}$, namely
$$2\sqrt{\mu}((1-\chi_R)e^{-i\Gamma}Q_{\b_1}\vert Q_{\b_2}\vert^2,\pa_{y_2}Q_{\b_2})+\sqrt{\mu}((1-\chi_R)e^{i\Gamma}\overline{Q_{\b_1}}Q_{\b_2}^2,\pa_{y_2}Q_{\b_2})\ .$$
After integrating by parts, this quantity is equal to
$$-\sqrt{\mu}\, {\rm Re}\left (e^{-i\Gamma}\int_{\R} \pa_{y_2}((1-\chi_R)Q_{\b_1})\vert Q_{\b_2}\vert ^2\overline{Q_{\b_2}}\, dy_2   \right )\ .$$
Since $R^2(1+(1-\b_1)R)\pa_{y_2}((1-\chi_R)Q_{\b_1})$ is $L^\infty$-admissible, this completes the proof.
\end{proof}
%%%%%%%%%%%%%%%%%%%%%%%%%%%%%%%%%%%%%%%%%%%

\subsection{Improved decay for $T_{2,1}$}

In this subsection, we improve some estimates of the first correction $T_{2,1}$ to $Q_{\b _2}$ in the approximate solution we have constructed in the previous paragraph.

\begin{lemma}\label{lemma:pa_Gamma_T21}
We have
$$\Big(i\frac{\pa T_{2,1}}{\pa \Gamma}, \pa_{y_2} Q_{\b_2}\Big)=-2\pi\Re (e^{i\Gamma}\overline{Q_{\b_1}(R)})
+O\Big(\frac{|1-\mu|+(1-\b_2)^{1/2}|\log(1-\b _2)\vert ^{1/2}+R^{-1}}{R(1+(1-\b_1)R)}\Big).
$$

\end{lemma}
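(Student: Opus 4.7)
\bigskip

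\noindent\textbf{Proof plan for Lemma \ref{lemma:pa_Gamma_T21}.}

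The starting point is the equation defining $T_{2,1}$ in the construction of Proposition \ref{propwtobublle}. Unravelling the first step of induction (step $n=0$), $T_{2,1}$ solves
\begin{equation*}
\mathcal L_{\b_2} T_{2,1}-i(1-\mu)\pa_\Gamma T_{2,1}+iM_{2,1}\Lambda Q_{\b_2}-iB_{2,1}\Phi_{\b_2}=\mathcal E_{2,0},
\end{equation*}
where $\mathcal E_{2,0}=(1-\chi_R)[2\mu|Q_{\b_1}|^2Q_{\b_2}+2\sqrt{\mu}e^{-i\Gamma}Q_{\b_1}|Q_{\b_2}|^2+\sqrt{\mu}e^{i\Gamma}\overline{Q_{\b_1}}Q_{\b_2}^2+\mu e^{-2i\Gamma}Q_{\b_1}^2\overline{Q_{\b_2}}]$. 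The plan is to differentiate this equation in $\Gamma$ and pair it with $i\rho_{\b_2}$, using the defining relation $\mathcal L_{\b_2}(i\rho_{\b_2})=i\pa_{y_2}Q_{\b_2}$ from \eqref{nceionceoneo}. Since $\mathcal L_{\b_2}$ is $\R$--self-adjoint, $(\mathcal L_{\b_2}\pa_\Gamma T_{2,1},i\rho_{\b_2})=(\pa_\Gamma T_{2,1},i\pa_{y_2}Q_{\b_2})=-(i\pa_\Gamma T_{2,1},\pa_{y_2}Q_{\b_2})$. This yields the master identity
\begin{equation*}
-(i\pa_\Gamma T_{2,1},\pa_{y_2}Q_{\b_2})=(\pa_\Gamma\mathcal E_{2,0},i\rho_{\b_2})+(1-\mu)(\pa_\Gamma^2 T_{2,1},\rho_{\b_2})-(\pa_\Gamma M_{2,1})(\Lambda Q_{\b_2},\rho_{\b_2})+(\pa_\Gamma B_{2,1})(\Phi_{\b_2},\rho_{\b_2}).
\end{equation*}

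Next I would dispatch the three error terms. For the $(1-\mu)$ term, the admissibility of $R(1+(1-\b_1)R)T_{2,1}$ together with $\|\rho_{\b_2}\|_{L^2}\lesssim 1$ (from Lemma \ref{lemmacalculrho}) gives a bound $|1-\mu|/[R(1+(1-\b_1)R)]$. The $M_{2,1}$ and $B_{2,1}$ terms are subtler: the naive admissibility gives only $|\pa_\Gamma M_{2,1}|,|\pa_\Gamma B_{2,1}|\lesssim [R(1+(1-\b_1)R)]^{-1}$, which alone is not enough. The saving comes from computing the inner products $(\Lambda Q_{\b_2},\rho_{\b_2})$ and $(\Phi_{\b_2},\rho_{\b_2})$ by passing to the Szeg\H o limit $Q_{\b_2}\to Q^+$, $i\rho_{\b_2}\to Q^++\tfrac{i}{2}\pa_y Q^+$ (Proposition \ref{prop: charac Q_beta} and Lemma \ref{lemmacalculrho}), together with the explicit contour integrals for $Q^+=\frac{2}{2y+i}$: each of these inner products vanishes at leading order by oddness/holomorphy arguments, hence is $O((1-\b_2)^{1/2}|\log(1-\b_2)|^{1/2})$, contributing the $(1-\b_2)^{1/2}|\log(1-\b_2)|^{1/2}$ component of the claimed error.

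The heart of the argument is the computation of the main term $(\pa_\Gamma \mathcal E_{2,0},i\rho_{\b_2})$. I would simplify it by four successive replacements, each paired with an error bound: (i) $(1-\chi_R)\rightsquigarrow 1$, with error localised to $|y_2|\gtrsim R/(\mu b)$ where both $Q_{\b_2}$ and $\rho_{\b_2}$ are tiny; (ii) $Q_{\b_1}(R+\mu b y_2)\rightsquigarrow Q_{\b_1}(R)$, the difference being $O(\mu b|y_2||\pa_y Q_{\b_1}|(R))=O(b|y_2|/[R^2(1+(1-\b_1)R)])$, which after integration against $|Q_{\b_2}|^2|\rho_{\b_2}|$ yields the $R^{-1}$ component of the error; (iii) $\sqrt\mu,\mu\rightsquigarrow 1$, producing the $|1-\mu|$ component; (iv) $Q_{\b_2}\to Q^+$ and $i\rho_{\b_2}\to i\rho^+:=Q^++\tfrac{i}{2}\pa_y Q^+$, with $H^{1/2}$-error $(1-\b_2)^{1/2}|\log(1-\b_2)|^{1/2}$. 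After these reductions, the three $\Gamma$-dependent terms of $\pa_\Gamma\mathcal E_{2,0}$ produce the integrals $\int|Q^+|^2\overline{i\rho^+}$, $\int(Q^+)^2\overline{i\rho^+}$, and $\int\overline{Q^+}\cdot\overline{i\rho^+}$, to be computed explicitly using $\pa_y Q^+=-(Q^+)^2$ and residues.

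The algebraic miracle is that only the first of these integrals is nonzero: direct contour integration gives $\int|Q^+|^2\overline{Q^+}=2\pi i$, $\int|Q^+|^2\overline{\pa_yQ^+}=2\pi$, so $\int|Q^+|^2\overline{i\rho^+}=\pi i$; whereas $\int(Q^+)^2\overline{Q^+}=-2\pi i$ cancels with $\int(Q^+)^2\overline{\pa_yQ^+}/(-2i)=-2\pi i$ giving $0$; and $\int\overline{Q^+\cdot i\rho^+}\,dy=\overline{i\int (Q^+)^2 dy+\tfrac{i}{2}\cdot\tfrac14[(Q^+)^2]_{-\infty}^\infty\cdot\text{(stuff)}}=0$ since $Q^+$ has its pole in the lower half-plane. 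Consequently the $e^{-i\Gamma}$-term survives alone, contributing $2\pi\Re[e^{-i\Gamma}Q_{\b_1}(R)]=2\pi\Re[e^{i\Gamma}\overline{Q_{\b_1}(R)}]$ after absorbing $\sqrt\mu=1+O(|1-\mu|)$. Combining with the master identity gives the claim. The main obstacle I anticipate is bookkeeping: verifying that every replacement above stays within the very tight error budget $[R(1+(1-\b_1)R)]^{-1}$ times the advertised small factor, particularly the $Q_{\b_1}(y_1)\to Q_{\b_1}(R)$ step, which requires the sharp pointwise bound $|\pa_y Q_{\b_1}(y_1)|\lesssim [R^2(1+(1-\b_1)R)]^{-1}$ for $y_1\sim R$ from Proposition \ref{boundQb}.
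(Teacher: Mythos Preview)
Your approach is correct and essentially matches the paper's: differentiate the defining equation of $T_{2,1}$ in $\Gamma$, pair with $i\rho_{\b_2}$ via $\mathcal L_{\b_2}(i\rho_{\b_2})=i\pa_{y_2}Q_{\b_2}$, bound the $M_{2,1}$, $B_{2,1}$, and $(1-\mu)$ contributions using the smallness of $(\Lambda Q_{\b_2},\rho_{\b_2})$ and $(\Phi_{\b_2},\rho_{\b_2})$, and extract the main term from $(\pa_\Gamma\mathcal E_{2,0},i\rho_{\b_2})$ by freezing $Q_{\b_1}(y_1)\to Q_{\b_1}(R)$ and passing to the Szeg\H{o} limit. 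The only notable differences are cosmetic: the paper disposes of the $e^{-2i\Gamma}$ term directly via its extra factor $|Q_{\b_1}|\lesssim [R(1+(1-\b_1)R)]^{-1}$ rather than computing $\int\overline{Q^+}\,\overline{i\rho^+}$, and your intermediate arithmetic for $\int(Q^+)^2\overline{i\rho^+}$ contains a sign slip (the coefficient is $-\tfrac{i}{2}$, so $-\tfrac{i}{2}\cdot(-4\pi)=+2\pi i$, and $-2\pi i+2\pi i=0$), though your stated conclusion is right.
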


\begin{proof}
Writing $i\pa_{y_2}Q_{\b_2}=\mathcal L_{\b_2}(i\rho_{\b_2})$, we have
\begin{align}\label{initial}
&(i\pa_\Gamma T_{2,1}, \pa_{y_2}Q_{\b_2})=-(\pa_\Gamma T_{2,1}, i \pa_{y_2}Q_{\b_2})=-(\pa_\Gamma T_{2,1}, \mathcal{L}_{\b_2}i\rho_{\b_2})=-(\pa_\Gamma \mathcal L_{\b_2} (T_{2,1}), i\rho_{\b_2})\notag\\
&=-\Big(\pa_{\Gamma}\mathcal{E}_{2,0}-i\pa_{\Gamma} M_{2,1}\Lambda Q_{\b_2}+i\pa_{\Gamma} B_{2,1}(y_2\pa_{y_2}Q_{\b_2}+(1-\b_2)\pa_{\b_2}Q_{\b_2})+i\frac{1-\mu}{\mu}\pa_\Gamma^2 T_{2,1}, i\rho_{\b_2}\Big)\notag\\
&=I+II+III+IV
\end{align}

\noindent
For IV, we have by Proposition \ref{propwtobublle} that
\be\label{IV}
|IV|\lesssim \frac{|1-\mu|}{R(1+(1-\b_1)R)}\ .
\ee

\noindent
For III,  we have by Proposition \ref{propwtobublle} that $|\pa_{\Gamma}B_{2,1}|\lesssim\frac{1}{R(1+(1-\b_1)R)}$. Then,
\begin{align*}
|III|&=\Big|\Big(i\pa_{\Gamma} B_{2,1}(y_2\pa_{y_2}Q_{\b_2}+(1-\b_2)\pa_{\b_2}Q_{\b_2}), i\rho_{\b_2}\Big)\Big|\\
&\lesssim \Big(\big|(iy_2\pa_{y_2}Q_{\b_2}, i\rho_{\b_2})\big|+(1-\b_2)\big|(i\pa_{\b_2}Q_{\b_2},i\rho_{\b_2})\big|\Big)\frac{1}{R(1+(1-\b_1)R)}
\end{align*}

\noindent
Using Proposition \ref{furtherQbdot} and \eqref{irho}, 
\begin{equation}\label{1}
(1-\b_2)\big|(i\pa_{\b_2}Q_{\b_2},i\rho_{\b_2})\big|\lesssim (1-\b_2)|\log(1-\b_2)|.
\end{equation}

\noindent
Then, by \eqref{irho}, \eqref{QbQ+} and the identity
$$y\pa _yQ^+=Q^++\frac i2\pa_yQ^+$$
we have
\begin{align}
(iy_2\pa_{y_2}Q_{\b_2}, i\rho_{\b_2})
&=(iy_2\pa_{y_2}Q_{\b_2}, Q_{\b_2}+\frac i2\pa _{y_2}Q_{\b _2})+ O((1-\b_2)^{\frac 12}\vert \log(1-\b _2)\vert ^{\frac 12})\notag\\
&=(iy\pa_y Q^+, Q^++\frac i2\pa_yQ^+)+O((1-\b_2)^{\frac 12}\vert \log(1-\b _2)\vert ^{\frac 12}) \notag\\
&=O((1-\b_2)^{\frac 12}\vert \log(1-\b _2)\vert ^{\frac 12})\ .\label{y_pa_y_Q_rho}
\end{align}

\noindent
Thus, we conclude that 
\begin{equation}\label{III}
|III|\lesssim \frac{(1-\b_2)^{1/2}|\log(1-\b _2)|^{1/2}}{R(1+(1-\b_1)R)}.
\end{equation}

\noindent
For II,  we have by Proposition \ref{propwtobublle} that $|\pa_{\Gamma}M_{2,1}|\lesssim\frac{1}{R(1+(1-\b_1)R)}$. 
Then, by \eqref{y_pa_y_Q_rho} and \eqref{irho} :
\begin{align}
(i\Lambda Q_{\b_2}, i\rho_{\b_2})
&=\frac 12(iQ_{\b_2}, i\rho_{\b_2})+(iy_2\pa_{y_2}Q_{\b_2}, i\rho_{\b_2})\label{Lambda_Q_rho}\\
&=\frac 12(iQ_{\b_2},Q_{\b_2})
+\frac 14 (iQ_{\b_2},i\pa_{y_2}Q_{\b_2})
+O((1-\b_2)^{\frac 12}\vert \log(1-\b _2)\vert ^{\frac 12})\\
&=O((1-\b_2)^{\frac 12}\vert \log(1-\b _2)\vert ^{\frac 12}).\notag
\end{align}

\noindent
Therefore,
\begin{equation}\label{II}
|II|\lesssim \frac{(1-\b_2)^{\frac 12}\vert \log(1-\b _2)\vert ^{\frac 12}}{R(1+(1-\b_1)R)}.
\end{equation}

\noindent
Finally, for I, we have that
\begin{align*}
I&=-(\pa_\Gamma \mathcal{E}_{2,0}, i\rho_{\b_2})\\
&=-\Big ((1-\chi_R)[-2i\sqrt{\mu}e^{-i\Gamma}Q_{\b_1}|Q_{\b_2}|^2+i\sqrt{\mu}e^{i\Gamma}\overline{Q_{\b_1}}Q_{\b_2}^2-2i\mu e^{-2i\Gamma}Q_{\beta_1}^2\overline{Q_{\beta_2}}], i\rho_{\b_2}\Big)\\
&=-\sqrt{\mu }\Re \Big(ie^{i\Gamma}\int (1-\chi_R)\overline{Q_{\b_1}(y_1)}[2|Q_{\b_2}|^2i\rho_{\b_2}+Q_{\b_2}^2\overline{i\rho_{\b_2}}]dy_2\Big)
+O\Big(\frac{1}{R^2(1+(1-\b_1)R)^2}\Big)\\
&=\sqrt{\mu }\Im\Big(e^{i\Gamma}\int_{|y_2|\leq \frac{R}{2b\mu}} \overline{Q_{\b_1}(y_1)}[2|Q_{\b_2}|^2i\rho_{\b_2}+Q_{\b_2}^2\overline{i\rho_{\b_2}}]dy_2\Big)
+O\Big(\frac{1}{R^2(1+(1-\b_1)R)^2}\Big).
\end{align*}

\noindent
Let $z_2:=\frac{b\mu y_2}{R}.$
We then Taylor expand for $|z_2|\leq \frac 12$, or equivalently $|y_2|\leq\frac{R}{2b\mu}$, and obtain by Proposition \ref{boundQb}:
\bee
Q_{\beta_1}(y_1)& =& Q_{\beta_1}\left(R(1+z_2)\right)=Q_{\beta_1}(R)-\int_0^1Rz_2\pa_{y_1}Q_{\beta_1}\left(R(1+tz_2)\right)dt\\
& = & Q_{\beta_1}(R)+O\left(\frac{R|z_2|}{R^2(1+(1-\beta_1)R)}\right)=  Q_{\beta_1}(R)+O\left(\frac{b |y_2|}{R^2(1+(1-\beta_1)R)}\right)
\eee
\noindent
Therefore,
\begin{align}\label{I}
I&=\sqrt{\mu }\Im\Big(e^{i\Gamma}\overline{Q_{\b_1}(R)}\int [2|Q_{\b_2}|^2i\rho_{\b_2}+Q_{\b_2}^2\overline{i\rho_{\b_2}}](y_2)dy_2\Big)+O\Big(\frac{1}{R^2(1+(1-\b_1)R)}\Big).
\end{align}

\noindent
Using \eqref{irho} and Lemma \ref{lemmaalgebraSzeg\H{o}}, we have that
\begin{align}\label{3}
\int [2|Q_{\b_2}|^2&i\rho_{\b_2}+Q_{\b_2}^2\overline{i\rho_{\b_2}}](y_2)dy_2
=3\int |Q_{\b_2}|^2Q_{\b_2}dy_2+i\int |Q_{\b_2}|^2\pa_{y_2}Q_{\b_2}dy_2\notag\\
&-\frac{i}{2}\int Q_{\b_2}^2\overline{\pa_{y_2}Q_{\b_2}}dy_2
+O((1-\b_2)^{\frac 12}\vert \log(1-\b _2)\vert ^{\frac 12})\notag\\
&=3\int |Q^+|^2Q^+dy+i\int |Q^+|^2\pa_{y}Q^+dy-\frac{i}{2}\int (Q^+)^2\overline{\pa_{y}Q^+}dy
+O((1-\b_2)^{\frac 12}\vert \log(1-\b _2)\vert ^{\frac 12})\notag\\&=-6\pi i+2\pi i+2\pi i + O((1-\b_2)^{\frac 12}\vert \log(1-\b _2)\vert ^{\frac 12})\notag\\
&=-2\pi i + O((1-\b_2)^{\frac 12}\vert \log(1-\b _2)\vert ^{\frac 12}).
\end{align}

\noindent
Then,
\begin{align*}
I&=-2\pi \Re\Big(e^{i\Gamma}\overline{Q_{\b_1}(R)}\Big)+O\Big(\frac{|1-\mu|+(1-\b_2)^{\frac 12}\vert \log(1-\b _2)\vert ^{\frac 12}}{R(1+(1-\b_1)R)}\Big)+O\Big(\frac{1}{R^2(1+(1-\b_1)R)}\Big).
\end{align*}

\noindent
Combining this with \eqref{IV}, \eqref{initial}, \eqref{III} and \eqref{II}, the conclusion of the 
lemma follows.

\end{proof}

\begin{lemma}\label{lemma:pa_R_T21}
We have 
\begin{align*}
\Big(i\frac{\pa T_{2,1}}{\pa R}, \pa_{y_2} Q_{\b_2}\Big)&=-2\pi\Im (e^{i\Gamma}\overline{\pa_{y_1}Q_{\b_1}(R)})+O\Big(\frac{|1-\mu|+
(1-\b_2)^{\frac 12}\vert \log(1-\b _2)\vert ^{\frac 12}}{R^2(1+(1-\b_1)R)}\Big)\\
&+O\Big(\frac{1}{R^3(1+(1-\b_1)R)}\Big).
\end{align*}

\end{lemma}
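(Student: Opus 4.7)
The plan is to follow the structure of Lemma \ref{lemma:pa_Gamma_T21} verbatim, replacing $\pa_\Gamma$ by $\pa_R$. First I would differentiate the defining equation of $T_{2,1}$,
\[
\mathcal{L}_{\b_2}T_{2,1}+iM_{2,1}\Lambda Q_{\b_2}-iB_{2,1}\Phi_{\b_2}-i(1-\mu)\pa_\Gamma T_{2,1}=\mathcal{E}_{2,0},
\]
with respect to $R$; since $\mathcal{L}_{\b_2}$ is $R$-independent, $\pa_R$ commutes with it. Pairing against $i\rho_{\b_2}$ in the real inner product and using self-adjointness of $\mathcal{L}_{\b_2}$ together with the key identity $\mathcal{L}_{\b_2}(i\rho_{\b_2})=i\pa_{y_2}Q_{\b_2}$ converts the left-hand side into $(i\pa_R T_{2,1},\pa_{y_2}Q_{\b_2})$, while the right-hand side splits into four terms mirroring I--IV of the previous lemma:
\[
-(\pa_R\mathcal{E}_{2,0},i\rho_{\b_2})+\pa_R M_{2,1}(i\Lambda Q_{\b_2},i\rho_{\b_2})-\pa_R B_{2,1}(i\Phi_{\b_2},i\rho_{\b_2})-(1-\mu)(i\pa_R\pa_\Gamma T_{2,1},i\rho_{\b_2}).
\]

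For the three ``secondary'' terms I would reuse the pairings $|(i\Lambda Q_{\b_2},i\rho_{\b_2})|+|(i\Phi_{\b_2},i\rho_{\b_2})|\lesssim (1-\b_2)^{1/2}|\log(1-\b_2)|^{1/2}$ already proved in Lemma \ref{lemma:pa_Gamma_T21}, combined with Proposition \ref{propwtobublle}(5), which yields $|\pa_R M_{2,1}|+|\pa_R B_{2,1}|\lesssim (R^2(1+(1-\b_1)R))^{-1}$ and $\|\pa_R\pa_\Gamma T_{2,1}\|_{L^2}\lesssim (R^2(1+(1-\b_1)R))^{-1}$. These bounds are one factor of $R$ sharper than the corresponding $\Gamma$-derivative bounds, which is precisely what the $R^2(1+(1-\b_1)R)$ denominator of the statement requires; the three terms together contribute $O\bigl(\frac{|1-\mu|+(1-\b_2)^{1/2}|\log(1-\b_2)|^{1/2}}{R^2(1+(1-\b_1)R)}\bigr)$.

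The core of the proof is the analysis of $-(\pa_R\mathcal{E}_{2,0},i\rho_{\b_2})$. Since $y_1=R+b\mu y_2$, differentiating $Q_{\b_1}(y_1)$ in $R$ produces $\pa_{y_1}Q_{\b_1}(y_1)$, whereas $\pa_R\chi_R=-(b\mu y_2/R^2)\chi'(y_1/R)$ is supported on $|y_2|\sim R/(b\mu)$ and, together with all contributions quadratic in $Q_{\b_1}$, is easily bounded by $O(R^{-3}(1+(1-\b_1)R)^{-1})$. The remaining linear-in-$Q_{\b_1}$ contributions, after using the conjugate identity $\Re\int f\overline{g}=\Re\int \overline{f}g$ exactly as in the previous lemma, combine to
\[
-\sqrt{\mu}\,\Re\Bigl(e^{i\Gamma}\int(1-\chi_R)\overline{\pa_{y_1}Q_{\b_1}(y_1)}\bigl[2|Q_{\b_2}|^2 i\rho_{\b_2}+Q_{\b_2}^2\overline{i\rho_{\b_2}}\bigr]\,dy_2\Bigr).
\]
I would then Taylor-expand $\pa_{y_1}Q_{\b_1}(R+b\mu y_2)=\pa_{y_1}Q_{\b_1}(R)+O(b|y_2|/(R^3(1+(1-\b_1)R)))$ on $|y_2|\leq R/(2b\mu)$, truncate the tail via the fast decay of $|Q_{\b_2}|^2|\rho_{\b_2}|$, and apply the algebraic identity \eqref{3} together with $\Re(-2\pi i z)=2\pi\Im(z)$ and $\sqrt{\mu}=1+O(|1-\mu|)$ to extract the leading $-2\pi\Im(e^{i\Gamma}\overline{\pa_{y_1}Q_{\b_1}(R)})$ with all residues fitting into the stated error sizes.

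The only delicate point I anticipate is controlling the Taylor remainder, which requires the pointwise bound $|\pa_{y_1}^2 Q_{\b_1}(y)|\lesssim y^{-3}(1+(1-\b_1)|y|)^{-1}$ at large $|y|$; this follows from the weighted estimate $\|\Lambda_y^2 Q_{\b_1}\|_{\b_1}\lesssim 1$ in Proposition \ref{boundQb} by subtracting the lower-order piece $y\pa_y Q_{\b_1}$ from $\Lambda_y^2 Q_{\b_1}=y\pa_y Q_{\b_1}+y^2\pa_y^2 Q_{\b_1}$. Everything else is routine bookkeeping within the admissibility framework of Section \ref{sectionfour}.
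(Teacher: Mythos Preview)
Your proposal is correct and follows essentially the same approach as the paper: decompose $(i\pa_R T_{2,1},\pa_{y_2}Q_{\b_2})$ via the self-adjointness of $\mathcal L_{\b_2}$ and $\mathcal L_{\b_2}(i\rho_{\b_2})=i\pa_{y_2}Q_{\b_2}$ into the four pieces $V$--$VIII$, bound the three secondary terms using Proposition~\ref{propwtobublle} together with the pairings \eqref{1}, \eqref{y_pa_y_Q_rho}, \eqref{Lambda_Q_rho} from the preceding lemma, and extract the main term from $-(\pa_R\mathcal E_{2,0},i\rho_{\b_2})$ by Taylor-expanding $\pa_{y_1}Q_{\b_1}(R+b\mu y_2)$ around $R$ and applying \eqref{3}. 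Your identification of the second-derivative bound $|\pa_y^2 Q_{\b_1}(y)|\lesssim |y|^{-3}(1+(1-\b_1)|y|)^{-1}$ from Proposition~\ref{boundQb} as the one delicate input is exactly right.
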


\begin{proof}
The proof follows the same lines as the above one.
With the same notation as above, we have
\begin{align}\label{initialbis}
&(i\pa_R T_{2,1}, \pa_{y_2}Q_{\b_2})\notag\\
&=-\Big(\pa_R\mathcal{E}_{2,0}-i\pa_R M_{2,1}\Lambda Q_{\b_2}+i\pa_R B_{2,1}(y_2\pa_{y_2}Q_{\b_2}+(1-\b_2)\pa_{\b_2}Q_{\b_2}), i\rho_{\b_2}\Big)\notag\\
&-\Big(i\frac{1-\mu}{\mu}\pa_\Gamma \pa_R T_{2,1}, i\rho_{\b_2}\Big)\notag\\
&=V+VI+VII+VIII.
\end{align}

\noindent
By Proposition \ref{propwtobublle}, we have that
\be\label{VIII}
|VIII|\lesssim \frac{|1-\mu|}{R^2(1+(1-\b_1)R)}
\ee 

\noindent
Using Proposition \ref{propwtobublle}, we have 
that $|\pa_R B_{2,1}|\lesssim \frac{1}{R^2(1+(1-\b_1)R)}$.
Then, it follows by \eqref{1} and \eqref{y_pa_y_Q_rho} that
\[|VII|\lesssim \frac{(1-\b_2)^{\frac 12}\vert \log(1-\b _2)\vert ^{\frac 12}}{R^2(1+(1-\b_1)R)}.\]

\noindent
Since $|\pa_R M_{2,1}|\lesssim \frac{1}{R^2(1+(1-\b_1)R)}$ by Proposition \ref{propwtobublle}, we have according to \eqref{Lambda_Q_rho} that
\be\label{VI}
|VI|\lesssim \frac{(1-\b_2)^{\frac 12}\vert \log(1-\b _2)\vert ^{\frac 12}}{R^2(1+(1-\b_1)R)}
\ee

\noindent
Lastly, by \eqref{3} we have that $\int (2|Q_{\b_2}|^2i\rho_{\b_2}+Q_{\b_2}^2\overline{i\rho_{\b_2}})dy_2=-2\pi i +O((1-\b_2)^{\frac 12}\vert \log(1-\b _2)\vert ^{\frac 12})$, and thus
\begin{align*}
V&=-(\pa_R \mathcal E_{2,0}, i\rho_{\b_2})=\sqrt{\mu}\Re e^{i\Gamma}\int (1-\chi_R) \overline{\pa_{y_1}Q_{\b_1}}(2|Q_{\b_2}|^2i\rho_{\b_2}+Q_{\b_2}^2\overline{i\rho_{\b_2}})dy_2\\
&+O\left (\int \chi'\left (1+\mu \frac {by_2}{R}\right )\frac{\mu b|y_2|}{R^2}(|Q_{\b_1}|^2+|Q_{\b_1}||Q_{\b_2}|)|Q_{\b_2}i\rho_{\b_2}|dy_2\right )\\
&+O\Big(\int (1-\chi_R)|Q_{\b_1}||\pa_{y_1}Q_{\b_1}||Q_{\b_2}i\rho_{\b_2}|dy_2\Big)\\
&=-2\pi\Im (e^{i\Gamma}\overline{\pa_{y_1}Q_{\b_1}(R)})
+O\left (\frac{(1-\b_2)^{\frac 12}\vert \log(1-\b _2)\vert ^{\frac 12}}{R^2(1+(1-\b_1)R)}\right )\\
&+O\left (\frac{1}{R^3(1+(1-\b_1)R)}\right ).
\end{align*}

\end{proof}

%%%%%%%%%%%%%%%%%%%%%%%%%%%%%%%%%%%%%%

\begin{lemma}\label{lemma:pa_b_T21} We have
\be \label{deg2}
\Big|(1-\beta_2)\Big(i\frac{\pa T_{2,1}}{\pa \beta_2},\pa_{y_2}Q_{\beta_2}\Big)\Big| \notag \\
\lesssim  \frac{(1-\b_2)^\frac{1}{2}\vert \log (1-\b_2)\vert ^{\frac12}+|1-\mu|}{R(1+(1-\b_1)R)}+\frac{\sqrt b}{R^2(1+(1-\b_1)R)}\ .
\ee

\end{lemma}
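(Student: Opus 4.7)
The strategy mirrors Lemmas \ref{lemma:pa_Gamma_T21} and \ref{lemma:pa_R_T21}. Setting $\tL_{\b_2} := (1-\b_2)\pa_{\b_2}$ and using $\matchal L_{\b_2}(i\rho_{\b_2}) = i\pa_{y_2}Q_{\b_2}$ from \eqref{nceionceoneo} together with the self-adjointness of $\matchal L_{\b_2}$, I first write
\begin{align*}
\bigl(i(1-\b_2)\pa_{\b_2}T_{2,1},\pa_{y_2}Q_{\b_2}\bigr) = -\bigl(\matchal L_{\b_2}(\tL_{\b_2} T_{2,1}), i\rho_{\b_2}\bigr).
\end{align*}
Because $\matchal L_{\b_2}$ itself depends on $\b_2$, differentiating
$\matchal L_{\b_2}T_{2,1} = \matchal E_{2,0} - iM_{2,1}\Lambda Q_{\b_2} + iB_{2,1}\Phi_{\b_2} + i\tfrac{1-\mu}{\mu}\pa_\Gamma T_{2,1}$
yields
$\matchal L_{\b_2}(\tL_{\b_2}T_{2,1}) = \tL_{\b_2}(\matchal L_{\b_2}T_{2,1}) - (\tL_{\b_2}\matchal L_{\b_2})T_{2,1}$,
where the commutator $(\tL_{\b_2}\matchal L_{\b_2})\eps = -\tfrac{2D\Pi^-\eps}{1-\b_2} - 2\tL_{\b_2}(|Q_{\b_2}|^2)\eps - \tL_{\b_2}(Q_{\b_2}^2)\bar\eps$ is a new feature absent in the previous two lemmas. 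The apparently singular factor $1/(1-\b_2)$ is the main obstacle.

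For the terms coming from $\tL_{\b_2}(\matchal L_{\b_2}T_{2,1})$, I will Taylor expand $Q_{\b_1}(R+b\mu y_2)$ about $R$ exactly as in Lemmas \ref{lemma:pa_Gamma_T21}, \ref{lemma:pa_R_T21}. The leading contribution of $-(\tL_{\b_2}\matchal E_{2,0},i\rho_{\b_2})$ factors out $|Q_{\b_1}(R)|\lesssim \tfrac{1}{R(1+(1-\b_1)R)}$ times integrals of the form $\int \tL_{\b_2}(|Q_{\b_2}|^2) i\rho_{\b_2}$; the sharp bound $\|\tL_{\b_2}Q_{\b_2}\|_{L^2}\lesssim (1-\b_2)^{1/2}$ (a direct consequence of \eqref{Qbdotmoins} and \eqref{Qbdotplus}) combined with $\|i\rho_{\b_2}\|_{L^2}=O(1)$ renders this $O\!\bigl(\tfrac{(1-\b_2)^{1/2}}{R(1+(1-\b_1)R)}\bigr)$. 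The Taylor remainder uses $|\pa_{y_1}Q_{\b_1}(R)|\lesssim \tfrac{1}{R^2(1+(1-\b_1)R)}$ and the finite bound $\|y_2 Q_{\b_2}\tL_{\b_2}Q_{\b_2}\|_{L^2}\lesssim 1$, giving the $\tfrac{b}{R^2(1+(1-\b_1)R)}\le \tfrac{\sqrt b}{R^2(1+(1-\b_1)R)}$ contribution. The modulation pieces $\tL_{\b_2}(M_{2,1}\Lambda Q_{\b_2})$ and $\tL_{\b_2}(B_{2,1}\Phi_{\b_2})$ are expanded by Leibniz rule; the $L^\infty$-admissible bounds $|M_{2,1}|+|B_{2,1}|+|\tL_{\b_2}M_{2,1}|+|\tL_{\b_2}B_{2,1}|\lesssim \tfrac{1}{R(1+(1-\b_1)R)}$ from Proposition \ref{propwtobublle}, coupled with the inner-product identity \eqref{Lambda_Q_rho} and its companion for $\Phi_{\b_2}$ obtained from \eqref{Hbinners}, yield $\tfrac{(1-\b_2)^{1/2}|\log(1-\b_2)|^{1/2}}{R(1+(1-\b_1)R)}$. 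The term $i\tfrac{1-\mu}{\mu}\pa_\Gamma\tL_{\b_2}T_{2,1}$ gives the $\tfrac{|1-\mu|}{R(1+(1-\b_1)R)}$ contribution by admissibility.

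The critical step is the singular commutator contribution $\bigl(\tfrac{-2D\Pi^-T_{2,1}}{1-\b_2},i\rho_{\b_2}\bigr)$. The resolution is to project the equation $\matchal L_{\b_2}T_{2,1}=F_0$ onto negative Fourier modes, giving
\begin{align*}
\frac{(1+\b_2)|D|}{1-\b_2}T_{2,1}^- + T_{2,1}^- = 2\Pi^-(|Q_{\b_2}|^2 T_{2,1}) + \Pi^-(Q_{\b_2}^2\overline{T_{2,1}}) + F_0^-,
\end{align*}
so that $\tfrac{-2D\Pi^-T_{2,1}}{1-\b_2} = \tfrac{2|D|T_{2,1}^-}{1-\b_2}$ becomes a uniformly bounded combination. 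Pairing against $i\rho_{\b_2}$ and using the self-adjointness of $\Pi^-$ in the real inner product replaces each term by one paired against $i\rho_{\b_2}^- = \Pi^-(i\rho_{\b_2})$; from Lemma \ref{lemmacalculrho} together with $\|DQ_{\b_2}^-\|_{L^2}\lesssim (1-\b_2)|\log(1-\b_2)|^{1/2}$ from \eqref{QbmoinsH} and $\|Q_{\b_2}^-\|_{L^2}\lesssim (1-\b_2)^{1/2}$, one obtains $\|i\rho_{\b_2}^-\|_{L^2}\lesssim (1-\b_2)^{1/2}|\log(1-\b_2)|^{1/2}$. Combined with $\|T_{2,1}\|_{L^2}\lesssim \tfrac{1}{R(1+(1-\b_1)R)}$, $\|Q_{\b_2}\|_{L^\infty}\lesssim 1$, and $\|F_0\|_{L^2}\lesssim \tfrac{1}{R(1+(1-\b_1)R)}$, all four pairings are bounded by $\tfrac{(1-\b_2)^{1/2}|\log(1-\b_2)|^{1/2}}{R(1+(1-\b_1)R)}$. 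The multiplicative commutator pieces $-2\tL_{\b_2}(|Q_{\b_2}|^2)T_{2,1}$ and $-\tL_{\b_2}(Q_{\b_2}^2)\overline{T_{2,1}}$ paired with $i\rho_{\b_2}$ are estimated via $\|\tL_{\b_2}Q_{\b_2}\|_{L^2}\|Q_{\b_2}\|_{L^\infty}\|T_{2,1}\|_{L^\infty}\|i\rho_{\b_2}\|_{L^2}\lesssim \tfrac{(1-\b_2)^{1/2}}{R(1+(1-\b_1)R)}$. Adding all contributions yields the claim.
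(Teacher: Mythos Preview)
Your proof is correct and follows the same overall architecture as the paper's: write $\matchal L_{\b_2}(\tL_{\b_2}T_{2,1})$ as $\tL_{\b_2}(\matchal L_{\b_2}T_{2,1})$ minus the commutator, handle the singular piece $\tfrac{2DT_{2,1}^-}{1-\b_2}$ by projecting the equation for $T_{2,1}$ onto negative modes and pairing against $i\rho_{\b_2}^-$ (using $\|i\rho_{\b_2}^-\|_{L^2}\lesssim (1-\b_2)^{1/2}|\log(1-\b_2)|^{1/2}$), and estimate the remaining terms.

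There is one genuine difference in how the modulation pieces are treated. The paper spends most of its effort deriving the sharp bound
\[
(1-\b_2)\bigl(|\pa_{\b_2}M_{2,1}|+|\pa_{\b_2}B_{2,1}|\bigr)\lesssim \frac{\sqrt b}{R^2(1+(1-\b_1)R)}+\frac{(1-\b_2)(\log(1-\b_2))^2+|1-\mu|}{R(1+(1-\b_1)R)}
\]
by going back to the explicit formulae \eqref{B0}, \eqref{M0} and estimating several inner products with $Q_{\b_2}$ and $\pa_{\b_2}Q_{\b_2}$; it then pairs $-i(\tL_{\b_2}M_{2,1})\Lambda Q_{\b_2}$ and $i(\tL_{\b_2}B_{2,1})\Phi_{\b_2}$ against $i\rho_{\b_2}$ using only $\|i\rho_{\b_2}\|_{L^2}=O(1)$. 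You take the shorter route: use the admissibility bound $|\tL_{\b_2}M_{2,1}|+|\tL_{\b_2}B_{2,1}|\lesssim \tfrac{1}{R(1+(1-\b_1)R)}$ directly, and recover the needed smallness from the cancellations $(i\Lambda Q_{\b_2},i\rho_{\b_2})=O((1-\b_2)^{1/2}|\log(1-\b_2)|^{1/2})$ and its companion for $\Phi_{\b_2}$. For the remaining Leibniz pieces $-iM_{2,1}\tL_{\b_2}(\Lambda Q_{\b_2})$ and $iB_{2,1}\tL_{\b_2}\Phi_{\b_2}$, both arguments implicitly rely on $\|\tL_{\b_2}\Lambda Q_{\b_2}\|_{L^2}+\|\tL_{\b_2}\Phi_{\b_2}\|_{L^2}\lesssim (1-\b_2)^{1/2}$, which follows from \eqref{Qbdotplus}, \eqref{Qbdotmoins}, \eqref{Hbplus}, \eqref{Hbmoins}. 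Your approach is somewhat more economical; the paper's yields slightly sharper intermediate control of the modulation derivatives, though this extra precision is not needed for the final statement.
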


\begin{proof}
Using the symmetry of $\mathcal{L}_{\b_2}$ with respect to the real scalar product, we write

\begin{align}\label{1S10}
(1-\b_2)\Big(i\frac{\pa T_{2,1}}{\pa \b_2}, &\pa_{y_2}Q_{\b_2}\Big)
=-(1-\b_2)\Big(\frac{\pa T_{2,1}}{\pa \b_2}, i\pa_{y_2}Q_{\b_2}\Big)\\
&=-\Big((1-\b_2)\frac{\pa T_{2,1}}{\pa \b_2}, \mathcal{L}_{\b_2}(i\rho_{\b_2})\Big)
=-\Big((1-\b_2)\mathcal{L}_{\b_2}\Big(\frac{\pa T_{2,1}}{\pa \b_2}\Big), i\rho_{\b_2}\Big)\notag\\
&=-(1-\b_2)\Big(\pa_{\b_2}(\mathcal{L}_{\b_2}T_{2,1})+2\pa_{\b_2}(|Q_{\b_2}|^2)T_{2,1}+\pa_{\b_2}(Q_{\b_2}^2)\overline{T_{2,1}}, i\rho_{\b_2}\Big)\notag\\
&-\Big(\frac{2DT_{2,1}^-}{1-\b_2}, i\rho_{\b_2}\Big).\notag
\end{align}

\noindent
We start by estimating the last term. Firstly,
\begin{align*}
\left|\Big(\frac{2DT_{2,1}^-}{1-\b_2}, i\rho_{\b_2}\Big)\right| 
\leq \left\|\frac{2DT_{2,1}^-}{1-\b_2}\right\|_{L^2}\|i\rho^-_{\b_2}\|_{L^2}.
\end{align*}
Projecting  the equation satisfied by $T_{2,1}$ onto
negative frequencies,
we obtain:
\begin{align*}
-\frac{1+\b_2}{1-\b_2}DT_{2,1}^- &+T_{2,1}^--2\Pi^- (|Q_{\b_2}|^2T_{2,1})
-\Pi^- (Q_{\b_2}^2\overline{T_{2,1}})\\
&=\Pi^-(\mathcal E_{2,0})-iM_{2,1}\Pi^- (\Lambda Q_{\b_2})
+i B_{2,1} \Pi^- (y_2\pa_{y_2}Q_{\b_2}+(1-\b_2)\pa_{\b_2}Q_{\b_2})\\
&+i(1-\mu)\pa_\Gamma T_{2,1}^-
\end{align*}
and therefore, using the $2$--admissibility of $R(1+(1-\b_1)R)T_{2,1}$
and $R(1+(1-\b_1)R)\mathcal E_{2,0}$, as well as the $L^\infty$-admissibility $R(1+(1-\b_1)R)B_{2,1}$ and $R(1+(1-\b_1)R)M_{2,1}$, we infer 
\[\left\|\frac{2DT_{2,1}^-}{1-\b_2}\right\|_{L^2}\lesssim \frac 1{R(1+(1-\b_1)R)}.\]
On the other hand, by \eqref{irho}, we have
\begin{align*}
\|i\rho_{\b_2}^-\|_{L^2}=\left\|Q_{\b_2}^-+\frac i2\pa_{y_2}Q_{\b_2}^-\right\|_{L^2}+O((1-\b_2)^{\frac 12}\vert \log(1-\b_2)\vert^{1/2})\lesssim (1-\b_2)^{\frac 12}\vert \log(1-\b_2)\vert^{1/2}.
\end{align*}
This shows that
\be\label{DT_21^-}
\left|\Big(\frac{2DT_{2,1}^-}{1-\b_2}, i\rho_{\b_2}\Big)\right| \lesssim \frac{(1-\b_2)^{\frac 12}\vert \log(1-\b_2)\vert^{1/2}}{R(1+(1-\b_1)R)}.
\ee
Then, by \eqref{irho}, we easily notice that, for every $p\in (2,\infty)$,
\begin{align}
\left|\Big(2\pa_{\b_2}(|Q_{\b_2}|^2)T_{2,1}+\pa_{\b_2}(Q_{\b_2}^2)\overline{T_{2,1}}, i\rho_{\b_2}\Big)\right|
&\lesssim \frac{\Vert \dot Q_{\b_2}^+\Vert_{L^2} +\Vert \dot Q_{\b_2}^-\Vert_{L^p}+\Vert \dot Q_{\b_2}^-\Vert_{L^2}(1-\b_2)^{1/2}\vert \log(1-\b_2)\vert^{1/2}}{R(1+(1-\b_1)R)}\notag \\
&\lesssim \frac{\vert \log(1-\b_2)\vert +p(1-\b_2)^{-1/p}+ \vert \log(1-\b_2)\vert^{1/2}}{R(1+(1-\b_1)R)}\ ,\notag
\end{align}
where we have used \eqref{Qbdotplus} and \eqref{Qbdotmoins} combined to the Hausdorff--Young inequality. Choosing $p=\vert \log(1-\b_2)\vert$, we conclude
\be \label{pa_bQT_21}
(1-\b_2)\left|\Big(2\pa_{\b_2}(|Q_{\b_2}|^2)T_{2,1}+\pa_{\b_2}(Q_{\b_2}^2)\overline{T_{2,1}}, i\rho_{\b_2}\Big)\right|\lesssim \frac{(1-\b_2)\vert \log(1-\b_2)\vert}{R(1+(1-\b_1)R)}\ .
\ee
Finally, we deal with the term
\[(1-\b_2)\Big(\pa_{\b_2}(\mathcal{L}_{\b_2}T_{2,1}), i\rho_{\b_2}\Big).\]
Recalling the equation of $T_{2,1}$, we have:
\begin{align}
\label{eqtbetzdeux}
\pa_{\b_2}\big(\mathcal L_{\beta_2}T_{2,1}\big)&=\pa_{\b_2}\mathcal E_{2,0}-i\pa_{\b_2}M_{2,1}\Lambda Q_{\beta_2}
-iM_{2,1}\Lambda \pa_{\b_2}Q_{\beta_2}\\
&+i\pa_{\b_2}B_{2,1}\left[y_2\pa_{y_2}Q_{\beta_2}+(1-\beta_2)\frac{\pa Q_{\beta_2}}{\pa \beta_2}\right]\notag\\
&+iB_{2,1}\left[y_2\pa_{y_2}\pa_{\b_2}Q_{\beta_2}-\pa_{\b_2}Q_{\b_2}+(1-\beta_2)\pa_{\b_2}^2Q_{\beta_2}\right]\notag\\
&+i(1-\mu )\pa_\Gamma \pa_{\b_2}T_{2,1}\notag
\end{align}
 with 
 $$
 \mathcal E_{2,0}=  (1-\chi_R)\left[2\sqrt{\mu}e^{-i\Gamma}Q_{\b_1}|Q_{\b_2}|^2+\mu e^{-2i\Gamma}Q_{\b_1}^2\overline{Q_{\b_2}}+2\mu Q_{\b_2}|Q_{\b_1}|^2+\sqrt{\mu}e^{i\Gamma}\overline{Q_{\b_1}}Q_{\b_2}^2\right].
 $$
Because of  Proposition \ref{propwtobublle}, we have the pointwise bound on the Fourier coefficients of $\mathcal E_{2,0}$:
 \be
 \label{poitboundone}
\sum _\pm\sum _{r=1}^2 |\mathcal E_{2,0,r}^\pm (y_2) |+\vert \mathcal E_{2,0,0}(y_2)\vert \lesssim \frac 1{R(1+(1-\b_1)R)\la y_2\ra(1+(1-\beta_2)
\vert y_2\vert)}.
 \ee
 Using the fact that $\frac{\pa y_1}{\pa\b_2}=-\frac{\mu y_2}{1-\b_1}$, we also have the pointwise bound
 \begin{align} 
& \sum _\pm\sum _{r=1}^2 |\pa _{\b_2}\mathcal E_{2,0,r}^pm (y_2) |+\vert \pa_{\b_2} \mathcal E_{2,0,0}(y_2)\vert \\
&\lesssim \frac{|y_2|}{(1-\b_1)R}\pmb{1}_{|y_1|\sim R, |y_2|\sim \frac Rb}\Big(|Q_{\b_1}|
|Q_{\b_2}|^2+|Q_{\b_1}|^2|Q_{\b_2}|\Big)\notag\\
&+\frac{|y_2|}{1-\b_1}\pmb{1}_{|y_1|\geq \frac R4}|\pa_{y_1}Q_{\b_1}|\Big(
|Q_{\b_1}||Q_{\b_2}|+|Q_{\b_2}|^2\Big)\notag\\
&+\pmb{1}_{|y_1|\geq \frac R4}|\pa_{\b_2}Q_{\b_2}|\Big(|Q_{\b_1}|^2+|Q_{\b_1}||Q_{\b_2}|\Big)\notag \\
&=IX+X+XI\ .\notag
 \end{align}
 Using the bounds \eqref{boundQb} on $Q_\b$ and \eqref{Qbdotplus}, \eqref{Qbdotmoins} combined with Hausdorff--Young yield
 \begin{align}  
 &\Vert IX\Vert _{L^2} +\Vert X\Vert _{L^2}\lesssim \frac{1}{(1-\b_1)\sqrt{b}R^2(1+(1-\b_1)R)}\ ,\label{poitboundonebis}\\
 &\Vert XI\Vert _{L^2+L^p}\lesssim  \frac{\vert \log(1-\b_2)\vert+p(1-\b_2)^{-1/p}}{R(1+(1-\b_1)R)}\ ,\ 2\le p<\infty \ .\label{poitboundoneter}
 \end{align}
 We are going to use this to estimate $\pa_{\b_2} B_{2,1}$
 and $\pa_{\b_2}M_{2,1}$.
Recall that
$$B_{2,1,r}^\pm = -\frac{(\matchal E_{2,0,r}^\pm +i(1-\mu)T_{2,1,r}^\pm ,iQ_{\beta_2})}{\big(iy_2\pa_{y_2}Q_{\beta_2}+i (1-\beta_2)\frac{\pa Q_{\beta_2}}{\pa \beta_2},iQ_{\beta_2}\big)},$$ 
and a similar identity for $B_{2,1,0}$. 
Taking the derivative with respect to $\b _2$ and using \eqref{poitboundone}, 
we have
\begin{align*}
|\pa_{\b_2} B_{2,1,r}^\pm |
&\lesssim \left|\left(\pa_{\b_2}\mathcal E_{2,0,r}^\pm +i(1-\mu )\pa_{\b_2}T_{2,1,r}^\pm , iQ_{\b_2}\right)\right|\\
&+\left|(\matchal E_{2,0,r}^\pm +i(1-\mu)T_{2,1,r}^\pm ,i\pa_{\b_2}Q_{\beta_2})\right|\\
&+\frac 1{R(1+(1-\b_1)R)} \left|\big(iy_2\pa_{y_2}\pa_{\b_2}Q_{\beta_2}-i\pa_{\b_2}Q_{\b_2}+i (1-\beta_2)\pa_{\b_2}^2Q_{\beta_2},iQ_{\beta_2}\big)\right|\\
&+\frac 1{R(1+(1-\b_1)R)}\left|\big(iy_2\pa_{y_2}Q_{\beta_2}+i (1-\beta_2)\pa_{\b_2}Q_{\beta_2},i\pa_{\b_2}Q_{\beta_2}\big)\right|\ .
\end{align*}
We estimate the inner products in the right hand side of the above inequality as follows. Notice that, from the admissibility properties and \eqref{poitboundone}, for every $q\in (1,2]$,  
$$\Vert Q_{\b _2}\Vert_{L^q}\lesssim \frac{1}{q-1}\ ,\ \Vert T_{2,1,r}^\pm \Vert _{L^q}+\Vert \mathcal E_{2,1,r}^\pm \Vert _{L^q}\lesssim \frac{1}{(q-1)R(1+(1-\b_1)R)}\ .$$
Given $p\in [2,\infty)$, using \eqref{poitboundonebis}, \eqref{poitboundoneter}, \eqref{Qbdotplus}, \eqref{Qbdotmoins},  H\"older's inequality  leads to
\bee
\vert (\pa _{\b _2}\matchal E_{2,0,r}^\pm ,iQ_{\b _2})\vert &\lesssim & \frac{1}{(1-\b_1)\sqrt{b}R^2(1+(1-\b_1)R)}+ \frac{p\vert \log (1-\b_2)\vert +p^2(1-\b_2)^{-1/p}}{R(1+(1-\b_1)R)}   \\
\vert (\matchal E_{2,0,r}^\pm ,i\pa_{\b _2}Q_{\b _2})\vert &\lesssim &  \frac{p\vert \log (1-\b_2)\vert+p^2(1-\b_2)^{-1/p}}{R(1+(1-\b_1)R)}   \\
\vert (T_{2,1,r}^\pm ,i\pa_{\b_2}Q_{\beta_2})\vert &\lesssim & \frac{p\vert \log (1-\b_2)\vert+ p^2(1-\b_2)^{-1/p}}{R(1+(1-\b_1)R)}\\
\vert (\pa _{\b _2}T_{2,1,r}^\pm ,iQ_{\b _2})\vert &\lesssim &\frac{1}{(1-\b_2)R(1+(1-\b_1)R)}\ .
\eee
The other inner products are estimated thanks to \eqref{Hbinners}.
Choosing $p=\vert \log(1-\b_2)\vert$ in the above inequalities, we infer
$$(1-\b_2)|\pa_{\b_2} B_{2,1,r}^\pm |\lesssim   \frac{\sqrt b}{R^2(1+(1-\b_1)R)}+\frac{(1-\b_2)(\log(1-\b_2))^2}{R(1+(1-\b_1)R)}+\frac{\vert 1-\mu\vert}{R(1+(1-\b_1)R)}\ . $$
We obtain the same estimate for $(1-\b_2)\pa _{\b_2}B_{2,1,0}$.\\ 
Arguing analogously, we obtain
$$(1-\b_2)|\pa_{\b_2} M_{2,1,r}^\pm |\lesssim   \frac{\sqrt b}{R^2(1+(1-\b_1)R)}+\frac{(1-\b_2)(\log(1-\b_2))^2}{R(1+(1-\b_1)R)}+\frac{\vert 1-\mu\vert}{R(1+(1-\b_1)R)}\ . $$
Putting together the  above estimates and using  the fact that $|B_{2,1}|+|M_{2,1}|\lesssim \frac 1{R(1+(1-\b_1)R)}$, we obtain from \eqref{eqtbetzdeux}:
\begin{align*}
\left|(1-\b_2)\Big(\pa_{\b_2}(\mathcal{L}_{\b_2}T_{2,1}), i\rho_{\b_2}\Big)\right|
&\lesssim  \frac{\sqrt b}{R^2(1+(1-\b_1)R)}+\frac{(1-\b_2)(\log(1-\b_2))^2+\vert 1-\mu\vert }{R(1+(1-\b_1)R)}\ .
\end{align*}
This together with \eqref{1S10}, \eqref{DT_21^-}, and \eqref{pa_bQT_21} show that
$$
\Big|(1-\beta_2)\Big(i\frac{\pa T_{2,1}}{\pa \beta_2},\pa_{y_2}Q_{\beta_2}\Big)\Big|\lesssim  \frac{(1-\b_2)^\frac{1}{2}\vert \log (1-\b_2)\vert ^{\frac12}+|1-\mu|}{R(1+(1-\b_1)R)}+\frac{\sqrt b}{R^2(1+(1-\b_1)R)}\ ,
$$
which proves \eqref{deg2}.

\end{proof}

%%%%%%%%%%%%%%%%%%%%%%%%%%%%%%%%%%%%%%%%%%%%%%%
%%%%%%%%%%%%%%%%%%%%%%%%%%%%%%%%%%%%%%%%%%%%%%%

\subsection{Sharp modulation equations}

%%%%%%%%%%%%%%%%%%%%%%%%%%%%%%%%%%%%%%%%%%%%%%%
%%%%%%%%%%%%%%%%%%%%%%%%%%%%%%%%%%%%%%%%%%%%%%%

We now compute explicitly the leading order modulation equations. We need to exhibit some fine cancellations which could be computed to the expense of lengthy computations\footnote{because we need the cancellation to the order 2 in the scaling law.} which can be avoided using the following nonlinear algebra. 

Before stating the result, let us define some more notation.
We set
\be \label{NbPb}
N_\b :=\frac{1}{2\pi}\Vert Q_\b\Vert _{L^2}^2\ ,\ P_\b :=\frac{1}{2\pi}(DQ_\b,Q_\b)\ .
\ee
and we recall that
$$c_\b :=\frac{i}{2\pi}\int _\R \vert Q_\b(y)\vert ^2Q_\b(y)\, dy\ .$$
and the asymptotics from Proposition \ref{furtherQbdot},
\bee
N_\b&=&1+O((1-\b)\log(1-\b))\ , \tL_{\b} N_\b =O((1-\b)\log (1-\b))\ ,\\
\ P_\b&=&1+O((1-\b)\log(1-\b))      \ ,\ c_\b= 1+O((1-\b)\log(1-\b))        \ .
\eee
\begin{proposition}[Sharp modulation equations]\label{sharpmod}
Let $B_j^{(N)}, M_j^{(N)}$ be defined by Proposition \ref{propwtobublle}. The following estimates hold for $\mathcal P\in \mathcal O$.
\bea
B_1^{(N)}&=&2\frac{\Re \Big(Q_{\b_2}\Big(-\frac{R}{b\mu}\Big)\, \overline {c_{\b_1}}\, e^{i\Gamma}\Big)}{N_{\b _1}-\tL_{\b _1}N_{\b _1}}
+O\left (\frac{ b(\vert 1-\mu\vert+R^{-1})}{ R(1+(1-\b_1)R)}\right )\ , \label{sharpmodB1}
\\
B_2^{(N)}&=&2\frac{ \Re \Big(Q_{\b_1}(R)\, \overline{c_{\b _2}}\, e^{-i\Gamma}\Big)}
{N_{\b _2}-\tL_{\b _2}N_{\b _2}}
+O\left (\frac{\vert 1-\mu\vert +R^{-1}}{ R(1+(1-\b_1)R)}\right )\ .\label{sharpmodB2}
\eea
\bea
&&M_1^{(N)}-\frac{\tL_{\b_1}P_{\b_1}}{P_{\b_1}}B_1^{(N)}
=O\left ( \frac{b(|1-\mu |+R^{-1})}{R(1+(1-\b_1)R)}\right )\ ,\label{sharpmodM1}
\\
&&M_2^{(N)}-\frac{\tL_{\b_2}P_{\b_2}}{P_{\b_2}}B_2^{(N)} +2(1-\mu)\Re(e^{i\Gamma }\overline{Q_{\b_1}(R)})
+2 \Im (e^{i\Gamma }\overline{\pa_{y_1}Q_{\b_1}(R)})\label{sharpmodM2} \\
&& =O\Big(\frac{(\vert 1-\mu\vert +R^{-1})(\vert 1-\mu\vert +b+(1-\b_2)^{1/2}\vert \log(1-\b_2)\vert ^{1/2})+R^{-2}}{R(1+(1-\b_1)R)}\Big)\ .\notag
\eea
\end{proposition}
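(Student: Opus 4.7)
The strategy is to extract the sharp modulation laws by exploiting the mass and momentum ``quasi-conservation'' for the approximate bubbles $V_j^{(N)}$ constructed in Proposition~\ref{propwtobublle}. Since $\mathcal E_j^{(N)}$ is small in the admissible norms ($O(bR^{-(N+1)}/(1+(1-\beta_1)R))$ for $j=1$, $O(R^{-(N+1)}/(1+(1-\beta_1)R))$ for $j=2$), the inner products $(\mathcal E_j^{(N)},iV_j^{(N)})$ and $(\mathcal E_j^{(N)},\partial_{y_j}V_j^{(N)})$ are of lower order than the quantities sought. Expanding these via \eqref{expressioneone}--\eqref{expressionetwo}, most terms vanish by the symmetry identities
\[
(|D|f,if)=(Df,if)=(\Lambda f,f)=(|f|^2f,if)=0,\ (|D|f,\partial_yf)=(Df,\partial_yf)=(|f|^2f,\partial_yf)=0,
\]
together with the antisymmetry of $\partial_y$ under \eqref{realcompes}.

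\textbf{Extracting $B_j^{(N)}$.} For the $iV_j^{(N)}$-inner product, the unique surviving modulation term is $(iB_j^{(N)}\Phi_{\beta_j},iV_j^{(N)})=\pi B_j^{(N)}(\tL_{\beta_j}-1)N_{\beta_j}+\text{l.o.t.}$, via $(y\partial_yV,V)=-\|V\|^2/2$ and $((1-\beta)\partial_\beta V,V)=(1-\beta)\partial_\beta\|V\|^2/2$. The interaction contribution comes from Taylor-freezing $V_k$ ($k\ne j$) at $y_j=0$; only the two cross-terms \emph{linear} in $V_k$ survive at leading order, the remaining quadratic ones being smaller by an extra factor of $Q_{\beta_k}$-decay. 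Using the algebraic identity $\int|Q_\beta|^2Q_\beta\,dy=-2\pi i c_\beta$, these two terms combine to produce $\pm 2\pi\mu^{-1/2}\mathrm{Re}(Q_{\beta_k}(\cdot)\overline{c_{\beta_j}}e^{\pm i\Gamma})$. Solving the resulting linear identity yields \eqref{sharpmodB1}--\eqref{sharpmodB2}, with the lower-order modulation, $(1-\mu)\partial_\Gamma$, and $R$-shift contributions absorbed into the error via the admissibility bounds of Proposition~\ref{propwtobublle}.

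\textbf{Extracting $M_j^{(N)}$.} For the $\partial_{y_j}V_j^{(N)}$-inner product, the two surviving modulation contributions are $(-iM_j^{(N)}\Lambda V_j,\partial_{y_j}V_j)=-\pi M_j^{(N)}P_{\beta_j}+\text{l.o.t.}$ and $(iB_j^{(N)}\Phi_{\beta_j},\partial_{y_j}V_j)=\pi B_j^{(N)}\tL_{\beta_j}P_{\beta_j}+\text{l.o.t.}$, using $(iV,\partial_yV)=(DV,V)$ and $(i\partial_\beta V,\partial_yV)=(DV,\partial_\beta V)=\partial_\beta P[V]/2$. A key \emph{Szeg\H{o}-type cancellation} occurs here: the zeroth-order Taylor contribution of the interaction exactly vanishes. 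Indeed, an explicit residue computation with $Q^+=2/(2y+i)$ yields $\int|Q^+|^2\overline{\partial_yQ^+}\,dy=2\pi$ and $\int(Q^+)^2\overline{\partial_yQ^+}\,dy=-4\pi$, and the two linear-in-$V_k$ interaction coefficients in \eqref{interactionterm} cancel these contributions up to a $\sqrt\mu$ residue, which produces the factor $(1-\mu)$ present in \eqref{sharpmodM2}. For bubble~1 this reduces all residuals to $O(b(|1-\mu|+R^{-1})/(R(1+(1-\beta_1)R)))$, yielding \eqref{sharpmodM1}. For bubble~2, two higher contributions become leading: the mixing $(i(1-\mu)\partial_\Gamma T_{2,1},\partial_{y_2}Q_{\beta_2})$ gives $-2\pi(1-\mu)\mathrm{Re}(e^{i\Gamma}\overline{Q_{\beta_1}(R)})$ by Lemma~\ref{lemma:pa_Gamma_T21}, and the $R$-translation $(i\mu(1-b+(B_1^{(N)}-M_1^{(N)})R)\partial_R T_{2,1},\partial_{y_2}Q_{\beta_2})$ gives $-2\pi\,\mathrm{Im}(e^{i\Gamma}\overline{\partial_{y_1}Q_{\beta_1}(R)})$ by Lemma~\ref{lemma:pa_R_T21}, while Lemma~\ref{lemma:pa_b_T21} and Corollary~\ref{5.77} control the remaining mixing terms as subleading errors, producing \eqref{sharpmodM2}.

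\textbf{Main obstacle.} The principal technical difficulty is to verify the Szeg\H{o} cancellation for the zeroth-order interaction contribution in the $M_2^{(N)}$ computation, and to track uniformly in the parameters all Taylor remainders, higher corrections $T_{j,n}$ with $n\ge 2$, and $\chi_R$ localization errors with the precise bounds dictated by \eqref{sharpmodB1}--\eqref{sharpmodM2}. Without this cancellation, the formula \eqref{sharpmodM2} would acquire a spurious $O(1/(R(1+(1-\beta_1)R)))$ term that would destroy the delicate phase/interaction balance driving the turbulent regime. The bookkeeping is made possible by the admissibility framework of Proposition~\ref{propwtobublle} (in particular the asymmetric gain of $b$ between the two bubbles highlighted in Remark~\ref{dissymetry}) together with the refined pointwise asymptotics of $Q_\beta$ and $\partial_yQ_\beta$ from Proposition~\ref{prop:equivalent Q_b}.
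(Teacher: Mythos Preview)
Your overall strategy coincides with the paper's proof: test $\mathcal E_j^{(N)}$ against $iv_j$ and $\partial_{y_j}v_j$, exploit the symmetry cancellations you list, read off the modulation coefficients from $(i\Lambda Q_{\beta_j},\partial_{y_j}Q_{\beta_j})$ and $(i\Phi_{\beta_j},iQ_{\beta_j})$, and extract the leading interaction from the terms linear in $V_k$ after freezing $V_k$ at its center. Your use of Lemmas~\ref{lemma:pa_Gamma_T21}--\ref{lemma:pa_b_T21} for the $\partial_\Gamma T_{2,1}$, $\partial_R T_{2,1}$, $(1-\beta_2)\partial_{\beta_2}T_{2,1}$ contributions to $M_2$ is exactly how the paper proceeds.

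One point is misstated. You write that the two linear interaction coefficients ``cancel these contributions up to a $\sqrt\mu$ residue, which produces the factor $(1-\mu)$ present in \eqref{sharpmodM2}.'' This is not correct: both linear-in-$v_1$ terms in the bubble-$2$ interaction carry the \emph{same} prefactor $\sqrt\mu$, so the zeroth-order Taylor cancellation is exact and leaves no $(1-\mu)$ residue from the interaction. In the paper this cancellation is realised not through your Szeg\H{o}-algebra residues but via the equivalent integration-by-parts identity
\[
\Re\int (1-\chi_R)\big[e^{i\Gamma}\overline{v_1}v_2^2\overline{\partial_{y_2}v_2}+2e^{-i\Gamma}v_1|v_2|^2\overline{\partial_{y_2}v_2}\big]\,dy_2
=-\Re\int \sqrt\mu\, e^{i\Gamma}\,|v_2|^2v_2\,\partial_{y_2}\big[(1-\chi_R)\overline{v_1}\big]\,dy_2,
\]
which transfers $\partial_{y_2}$ onto $v_1$ and thereby gains a factor $b\mu$; the outcome is $O\big(b/(R^2(1+(1-\beta_1)R))\big)$, pure error. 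The term $2(1-\mu)\Re(e^{i\Gamma}\overline{Q_{\beta_1}(R)})$ in \eqref{sharpmodM2} therefore arises \emph{entirely} from $(i(1-\mu)\partial_\Gamma T_{2,1},\partial_{y_2}Q_{\beta_2})$ via Lemma~\ref{lemma:pa_Gamma_T21}, as you yourself state two sentences later; there is no contribution from the direct interaction at that order. Once this attribution is fixed, your outline and the paper's proof are the same argument.
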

\begin{proof}
We recall the system of {\it nonlinear elliptic equations} solved in Proposition \ref{propwtobublle}.
$$
\mathcal \Vert \mathcal E_{1,N}\Vert _{\b _1}=O(bR^{-N-1})\ ,\ 
 \Vert \mathcal E_{2,N}\Vert _{\b _2}=O(R^{-N-1})\ .
$$
To simplify the notation, we will use $v_j$ instead of $V_j^{(N)}$
all along this proof. We will also drop the indices $(N)$
from $B_j, M_j$ for $j=1,2$.
\vskip 0.25cm
Let us recall the expressions of $\mathcal E_1, \mathcal E_2$.
 \bea
  \label{expressioneonebis}
\nonumber \mathcal E_1& = & -\frac{(|D|-\beta_1 D)v_1}{1-\b_1}-v_1+ v_1|v_1|^2-iM_1\Lambda v_1+iB_1\left[y_1\pa_{y_1}v_1+(1-\beta_1)\frac{\pa v_1}{\pa \beta_1}\right]\\
& + & i\l_1M_1\frac{\pa v_1}{\pa \l_1}+i\l_1M_2\frac{\pa v_1}{\pa \l_2}+i\frac{(1-\beta_2)B_2}{\mu}\frac{\pa v_1}{\pa \beta_2}\\
\nonumber & + & i\frac{1-\mu}{\mu}\frac{\pa v_1}{\pa \Gamma}+i(1-b+(B_1-M_1)R)\frac{\pa v_1}{\pa R}\\
\nonumber & + &\chi_R\left[\frac{2}{\mu}v_1|v_2|^2+\frac{e^{-i\Gamma}}{\sqrt{\mu}}v^2_1 \overline{v_2}+2\frac{e^{i\Gamma}}{\sqrt{\mu}}|v_1|^2v_2+\frac{e^{2i\Gamma}}{\mu}\overline{v_1}v_2^2\right],
\eea
 \bea
 \label{expressionetwobis}
 \nonumber \mathcal E_2& = & -\frac{(|D|-\beta_2 D)v_2}{1-\b_2}-v_2+ v_2|v_2|^2-iM_2\Lambda v_2+iB_2\left[y_2\pa_{y_2}v_2+(1-\beta_2)\frac{\pa v_2}{\pa \beta_2}\right]\\
 &+& i\l_2M_2\frac{\pa v_2}{\pa \l_2}+i\l_2M_1\frac{\pa v_2}{\pa \l_1}+i\mu(1-\beta_1)B_1\frac{\pa v_2}{\pa \beta_1}\\
 \nonumber & + & i(1-\mu)\frac{\pa v_2}{\pa \Gamma}+i\mu(1-b+(B_1-M_1)R)\frac{\pa v_2}{\pa R}\\
\nonumber & + & (1-\chi_R)\left[2\sqrt{\mu}e^{-i\Gamma}v_1|v_2|^2+\mu e^{-2i \Gamma}v_1^2\overline{v_2}+2\mu v_2|v_1|^2+\sqrt{\mu}e^{i\Gamma}\bar{v_1}v_2^2\right].
\eea
Our strategy is to extract information on $B_j,M_j$ from \eqref{solutioneone}, \eqref{expressioneonebis}, \eqref{expressionetwobis} and the admissibility properties of $v_1,v_2$.

\medskip

\noindent{\bf Step 1:} Speed for the first bubble and estimate on $B_1$. We take the scalar product of \fref{expressioneonebis} with $i v_1$. We observe the cancellations
$$\Big( -\frac{(|D|-\beta_1 D)v_1}{1-\b_1}-v_1+ v_1|v_1|^2,iv_1\Big)=0\ ,\ (i\Lambda v_1,iv_1)=0Ê\ .$$
 Recall from Proposition \ref{propwtobublle}  that 
 $$\vert B_1\vert+\vert M_1\vert \lesssim \frac{b}{R(1+(1-\b_1)R)}\ ,$$
 and that $b^{-1}(1+(1-\b_1)R)R^jT_{1,j}$ is $1$--admissible.
We obtain
\bee &&B_1[(\Lambda _{y_1}Q_{\b _1}+\tL_{\b _1}Q_{\b_1},Q_{\b_1})+O(R^{-1})]=-\frac{1}{\sqrt \mu}\Im \left (e^{i\Gamma }\int _\R \chi_R\vert v_1\vert ^2\overline v_1v_2\, dy_1\right )\\
&&+O\left (\int _{\R }\chi_R\vert v_1\vert^2\vert v_2\vert ^2\, dy_1+\frac{ b}{ R(1+(1-\b_1)R)}\left [\vert 1-\mu\vert+\frac{1}{R}\right ]\right )\ .
\eee
From the $2$--admissibility of $v_2$, we have 
$$\chi_R(y_1)\left \vert v_2\left (\frac{y_1-R}{b\mu}\right )\right \vert ^2\le \frac{b^2}{R^2(1+(1-\b_1)R)^2}\ .$$
This allows to neglect the integral 
$$\int _{\R }\chi_R\vert v_1\vert^2\vert v_2\vert ^2\, dy_1\ .$$
On the other hand,
$$\left \vert \chi _R(y_1)\left (v_2\left (\frac{y_1-R}{b\mu}\right )- v_2\left (\frac{-R}{b\mu}\right )  \right )\right \vert  \lesssim \frac{b^2}{R^2(1+(1-\b_1)R)}\ ,$$
and more precisely, since  $R^j(1+(1-\b_1)R)T_{2,j}$ is $2$--admissible, 
$$\left \vert v_2\left (\frac{-R}{b\mu}\right )-Q_{\b_2}\left ( -\frac{R}{b\mu}  \right )\right \vert \lesssim \frac{b}{R^2(1+(1-\b_1)R)^2}\ .$$
Therefore we can replace $v_2$ by $Q_{\b_2}(-R/(b\mu))$ in the integral
$$\int _\R \chi_R\vert v_1\vert ^2\overline v_1v_2\, dy_1\ .$$
Similarly, because of the estimates on $T_{1,j}$, one can replace $v_1$ by $Q_{\b_1}$ in the above integral, and finally drop the factor $\chi_R$, since the tale of $\vert Q_{\b_1}\vert ^3$ at infinity is small enough.
Identifying the coefficient of $B_1$, we infer
\bee -\pi B_1(N_{\b _1}-\tL_{\b _1}N_{\b _1})&=&-\frac{1}{\sqrt \mu}\Im \left (Q_{\b _2}\left (\frac{-R}{b\mu}\right ) \int _\R \vert Q_{\b _1}\vert ^2\overline Q_{\b _1} \, e^{i\Gamma}\right )\\
&&+O\left (\frac{ b}{ R(1+(1-\b_1)R)}\left [\vert 1-\mu\vert+\frac{1}{R}\right ]\right )\ ,
\eee
which, using the notation for $c_\b$, provides \eqref{sharpmodB1}. Notice that  the factor $1/\sqrt\mu $ has been replaced by $1$ up to an error 
$$O\left (\frac{\vert 1-\mu\vert b}{ R(1+(1-\b_1)R)}\right )\ .$$

\medskip

\noindent{\bf Step 2:} Speed for the second bubble and estimate on $B_2$. We proceed for the second bubble exactly as in Step 1. This leads to  \eqref{sharpmodB2}, as can be checked easily by the reader. Notice that the absence of the factor $b$ in the remainder term is due to the slightly different estimate for $T_{2,1}$ in Proposition \ref{propwtobublle}.

\medskip

\noindent{\bf Step 3:} Scaling for the first bubble and estimate on $M_1$. 
We take the scalar product  of \fref{expressioneonebis} with $\pa_{y_1} v_1$. 
We observe the cancellation
$$\Big( -\frac{(|D|-\beta_1 D)v_1}{1-\b_1}-v_1+ v_1|v_1|^2,\pa_{y_1}v_1\Big)=0.$$ 
We now compute the leading order non linear term. First, by integration by parts,
\begin{align}\label{1S0}
&\left(\chi_R\left[\frac2\mu v_1|v_2|^2+\frac{e^{2i\Gamma}}{\mu}\overline{v_1}v_2^2\right],\pa_{y_1}v_1\right)\\
& = -\frac 1\mu \int |v_1|^2\pa_{y_1}(\chi_R|v_2|^2)dy_1-\frac 1{2\mu}Re\left(\int e^{2i\Gamma}\overline{v_1}^2\pa_{y_1}(\chi_Rv_2^2)dy_1\right).\notag
\end{align}
From Proposition  \ref{propwtobublle}, we have the rough bound
\begin{equation}\label{decay_v_j}
|v_j|+|y_j\pa_{y_j}v_j|\lesssim \frac{1}{\la y_j\ra (1+(1-\b_j)\vert y_j\vert )}.
\end{equation}
Combining this with the fact that on the support of $\chi_{R}$
we have $\frac{R}{2\mu b}\leq |y_2|\leq \frac{3R}{2\mu b}$, we estimate 
\begin{align*}
|\pa_{y_1}(\chi_R|v_2|^2)|&\lesssim \frac{\pmb{1}_{|y_2|\geq \frac{R}{2\mu b}}}{R}|v_2|^2+ \frac{\pmb{1}_{|y_2|\geq \frac{R}{2\mu b}}}{b\mu}\pa_{y_2}(|v_2|^2)\lesssim \frac{b^2}{R^3(1+(1-\b_1)R)^2}
\end{align*}
\noindent
Then, by \eqref{1S0} and \eqref{decay_v_j}, we have
\begin{align}\label{1S00}
\Big|\Big(\chi_R\Big[\frac2\mu v_1|v_2|^2&+\frac{e^{2i\Gamma}}{\mu}\overline{v_1}v_2^2\Big],\pa_{y_1}v_1\Big)\Big|\lesssim\frac{b^2}{R^3(1+(1-\b_1)R)^2}.
\end{align}
For the remaining nonlinear term, we integrate by parts and obtain
\begin{align}\label{1S1}
&\left(\chi_R\left[\frac{e^{-i\Gamma}}{\sqrt{\mu}}v^2_1 \overline{v_2}+2\frac{e^{i\Gamma}}{\sqrt{\mu}}|v_1|^2v_2\right],\pa_{y_1}v_1\right)\notag\\
& = \Re\left(\int \frac{\chi_R}{\sqrt{\mu}}\left[e^{-i\Gamma}v_1^2\overline{v_2}\overline{\pa_{y_1}v_1}+2e^{i\Gamma}v_1\overline{v_1}v_2\overline{\pa_{y_1}v_1}\right]dy_1\right)\notag\\
& = \Re\left(\int \frac{\chi_R}{\sqrt{\mu}}\left[e^{-i\Gamma}\overline{v_2}\left[\pa_{y_1}(v_1^2\overline{v_1})-2v_1\pa_{y_1}v_1\overline{v_1}\right]+2e^{i\Gamma}v_1\overline{v_1}v_2\overline{\pa_{y_1}v_1}\right]dy_1\right)\notag\\
& = -\Re\left(\int \frac{e^{-i\Gamma}}{\sqrt{\mu}}v_1|v_1|^2\pa_{y_1}\left[\chi_R\overline{v_2}\right]dy_1\right)
\end{align}
We extract the leading order term using the following pointwise bound
which is a consequence of the $1$--admissibility of $b^{-1}R(1+(1-\b_1)R)(v_1-Q_{\b_1})$, and of the $2$--admissibility of $R(1+(1-\b_1)R)(v_2-Q_{\b_2})$,
\begin{align*}
\Big|v_1|v_1|^2\pa_{y_1}\left[\chi_R\overline{v_2}\right]&- Q_{\beta_1}|Q_{\beta_1}|^2\pa_{y_1}\left[\chi_R\overline{Q_{\beta_2}}\right]\Big|\\
& \lesssim \frac{b}{R^3(1+(1-\b_1)R)^2\la y_1\ra ^3}
\end{align*}
and thus:
\begin{align}\label{1S2}
-\Re\left(\int \frac{e^{-i\Gamma}}{\sqrt{\mu}}v_1|v_1|^2\pa_{y_1}\left[\chi_R\overline{v_2}\right]dy_1\right)&=  -\Re\left(\int \frac{e^{-i\Gamma}}{\sqrt{\mu}}Q_{\beta_1}|Q_{\beta_1}|^2\pa_{y_1}\left[\chi_R\overline{Q_{\beta_2}}\right]dy_1\right)\notag\\
&+O\left( \frac{b}{R^3(1+(1-\b_1)R)^2}\right)
\end{align}
We now compute the leading order term. 
Let $$z_1=\frac{y_1}{R}$$
Then, using $|\pa_{y_2}^2Q_{\b_2}|\lesssim\frac{1}{\jb{y_2}^3}$, we have for $|z_1|\leq \frac 12$ that
\begin{align*}
\pa_{y_2}Q_{\beta_2}(y_2)&=\pa_{y_2}Q_{\beta_2}\left(\frac{-R}{b\mu}(1-z_1)\right)\\
&=\pa_{y_2}Q_{\beta_2}\left(\frac{-R}{b\mu}\right)+\int_0^1\frac{Rz_1}{b\mu}\pa^2_{y_2}Q_{\beta_2}\left(\frac{-R}{b\mu}(1-tz_1)\right)dt\\
& = \pa_{y_2}Q_{\beta_2}\left(\frac{-R}{b\mu}\right)+O\left(\frac{R|z_1|}{b}
\Big(\frac{b}{R}\Big)^3\right)= \pa_{y_2}Q_{\beta_2}\left(\frac{-R}{b\mu}\right)+O\left(\frac{b^2|y_1|}{R^3}\right).
\end{align*}
Thus,
\begin{align}\label{1S3}
& -\Re\left(\int \frac{e^{-i\Gamma}}{\sqrt{\mu}}Q_{\beta_1}|Q_{\beta_1}|^2\pa_{y_1}\left[\chi_R\overline{Q_{\beta_2}}\right]dy_1\right)\notag\\
& = -\Re\left(\int_{|y_1|\leq \frac R2} \frac{e^{-i\Gamma}}{b\mu\sqrt{\mu}}Q_{\beta_1}|Q_{\beta_1}|^2\chi_R\overline{\pa_{y_2}Q_{\beta_2}}dy_1\right)\notag\\
& - \Re\left(\int_{\frac{R}{4}\leq |y_1|\leq \frac{R}{2}} \frac{e^{-i\Gamma}}{\sqrt{\mu}}Q_{\beta_1}|Q_{\beta_1}|^2\left[\pa_{y_1}\chi_R\right]\overline{Q_{\beta_2}}dy_1\right)\notag\\
& = -\Re\left(\int _{|y_1|\leq \frac R2}\frac{e^{-i\Gamma}}{b\mu\sqrt{\mu}}Q_{\beta_1}|Q_{\beta_1}|^2\pa_{y_2}\overline{Q_{\beta_2}}dy_1\right)\notag\\
& -\Re\left(\int_{|y_1|\leq \frac R2} \frac{e^{-i\Gamma}}{b\mu\sqrt{\mu}}(\chi_R-1)Q_{\beta_1}|Q_{\beta_1}|^2\pa_{y_2}\overline{Q_{\beta_2}}dy_1\right)
+O\left(\frac{b}{R^4}\right)\notag\\
& = -\Re\left(\overline{\pa_{y_2}Q_{\beta_2}\left(\frac{-R}{b\mu}\right)}\int_{|y_1|\leq \frac{R}{2}} \frac{e^{-i\Gamma}}{b\mu\sqrt{\mu}}Q_{\beta_1}|Q_{\beta_1}|^2dy_1\right)+O\left(\frac{b}{R^3}\right)\notag\\
& = -\frac{1}{b\mu}\Re\left(\pa_{y_2}Q_{\beta_2}\left(\frac{-R}{b\mu}\right)\frac{e^{i\Gamma}}{\sqrt{\mu}}\int \overline Q_{\beta_1}|Q_{\beta_1}|^2dy_1\right)+O\left(\frac{b}{R^3}\right)\notag \\
&=\frac{2\pi}{b\mu\sqrt{\mu}}\Im\left (e^{i\Gamma}\overline c_{\b_1}\pa_{y_2}Q_{\beta_2}\left(\frac{-R}{b\mu}\right)\right )+O\left(\frac{b}{R^3}\right)\ .
\end{align}
 
\noindent 
Finally we use again the following bound,
$$\frac{1}{b\mu}\left|\pa_{y_2}Q_{\beta_2}\left(\frac{-R}{b\mu}\right)\right|\lesssim \frac{b}{R^2(1+(1-\b_1)R)}\ .
$$
This together with \eqref{1S1}, \eqref{1S2}, and \eqref{1S3}, yields
$$\Big|\left(\chi_R\left[\frac{e^{-i\Gamma}}{\sqrt{\mu}}v^2_1 \overline{v_2}+2\frac{e^{i\Gamma}}{\sqrt{\mu}}|v_1|^2v_2\right],\pa_{y_1}v_1\right)\Big|
 \lesssim  \frac{b}{R^2(1+(1-\b_1)R)}.
$$

\noindent
Combining this with \eqref{1S00}, we get that
the contribution of the nonlinearity is
\be\label{1S4}
\Big|\Big(\chi_R\Big[\frac2\mu v_1|v_2|^2+\frac{e^{2i\Gamma}}{\mu}\overline{v_1}v_2^2
+\frac{e^{-i\Gamma}}{\sqrt{\mu}}v^2_1 \overline{v_2}+2\frac{e^{i\Gamma}}{\sqrt{\mu}}|v_1|^2v_2\Big],\pa_{y_1}v_1\Big)\Big|
\lesssim \frac{b}{R^2(1+(1-\b_1)R)}.
\ee

In view of the expression \eqref{expressioneonebis} of $\mathcal E_1$, we infer ---assuming $N$ large enough---, 
\bee
&&M_1(-i\Lambda v_1,\pa _{y_1}v_1)+B_1\left(iy_1\pa_{y_1}v_1+i(1-\beta_1)\frac{\pa v_1}{\pa \beta_1},\pa_{y_1}v_1\right)\\
& + & \l_1M_1\left (i\frac{\pa v_1}{\pa \l_1},\pa_{y_1}v_1\right )+\l_1M_2\left (i\frac{\pa v_1}{\pa \l_2},\pa_{y_1}v_1\right )+\frac{(1-\beta_2)B_2}{\mu}\left (i\frac{\pa v_1}{\pa \beta_2},\pa_{y_1}v_1\right )\\
& + & \frac{1-\mu}{\mu}\left (i\frac{\pa v_1}{\pa \Gamma},\pa_{y_1}v_1\right )+(1-b+(B_1-M_1)R)\left (i\frac{\pa v_1}{\pa R},\pa_{y_1}v_1\right )=O (bR^{-2}(1+(1-\b_1)R)^{-1}).
\eee
We now compute the terms involving the modulation equations. 
First, by Proposition \ref{propwtobublle},
we have that 
\bea
\label{ekonvbvr}
\nonumber (-i\Lambda v_1,\pa_{y_1}v_1)& =& (-i\Lambda Q_{\beta_1},\pa_{y_1}Q_{\beta_1})+O\left(\frac{b}{R(1+(1-\b_1)R)}\right)\\
& = & -\pi P_{\b_1}+O\left(\frac{b}{R(1+(1-\b_1)R)}\right).
\eea
On the other hand,
\bee
\Big(iy_1\pa _{y_1}v_1+ i(1-\b_1)\frac{\pa v_1}{\pa \beta_1},\pa_{y_1}v_1\Big)
&=&\Big(i(1-\b_1)\frac{\pa Q_{\beta_1}}{\pa\beta_1},\pa_{y_1}Q_{\beta_1}\Big)
+O\left(\frac{b}{R(1+(1-\b_1)R)}\right),\\
&=&\pi \tL_{\b_1}P_{\b_1}+O\left(\frac{b}{R(1+(1-\b_1)R)}\right)\ .
\eee
Then, by Proposition \ref{propwtobublle},
\begin{align*}
\left|\Big(i\frac{\pa v_1}{\pa {\l_1}},\pa_{y_1}v_1\Big)\right|
&+\left|\Big(i\frac{\pa v_1}{\pa {\l_2}},\pa_{y_1}v_1\Big)\right|
 + \left|\Big(i(1-\beta_2)\frac{\pa v_1}{\pa_{\beta_2}},\pa_{y_1}v_1)\Big)\right|\\
&\lesssim \frac{b}{R(1+(1-\b_1)R},
\end{align*}
and 
\begin{align*}
&\left|\frac{1-\mu}{\mu}\Big(i\frac{\pa v_1}{\pa {\Gamma}},\pa_{y_1}v_1\Big)\right|
+\left|\Big(i\frac{\pa v_1}{\pa {R}},\pa_{y_1}v_1\Big)\right|\\
&\lesssim \frac{b (\vert 1-\mu\vert +R^{-1})}{R(1+(1-\b_1)R)}\ .
\end{align*}
The collection of above bounds yields the identity:
$$-\pi P_{\b _1}M_1+\tL_{\b_1}P_{\b_1}B_1
=   O\left(\frac{b(|1-\mu|+R^{-1})}{R(1+(1-\b_1)R)}\right )\ ,$$
which  leads to the bound
\be
\label{loimun}
\left|M_1-\frac{\tL_{\b_1}P_{\b_1}}{P_{\b_1}}B_1\right|\lesssim   \frac{b(|1-\mu|+R^{-1})}{R(1+(1-\b_1)R)}\ .
\ee

%%%%%%%%%%%%%%%%%%%%%%%%%%%%

\medskip

\noindent{\bf Step 4:} Scaling for the second bubble and estimate on $M_2$. 
We take the scalar product of \fref{expressionetwo} with $\pa_{y_2} v_2$. 
We observe the cancellation
$$\Big( -\frac{(|D|-\beta_2 D)v_2}{1-\b_2}-v_2+ v_2|v_2|^2,\pa_{y_2}v_2\Big)=0.$$ 
We now compute the contribution of the non linear term. Firstly,  by integration by parts,
\begin{align}\label{2S1}
\Big((1-\chi_R)&\left[2\mu v_2|v_1|^2+\mu e^{-2i\Gamma}\overline{v_2}v_1^2\right],\pa_{y_2}v_2\Big)\\
= & - \mu\int |v_2|^2\pa_{y_2}((1-\chi_R)|v_1|^2)dy_2-\frac \mu{2}Re\left(\int e^{-2i\Gamma}\overline{v_2}^2\pa_{y_2}((1-\chi_R)v_1^2)dy_2\right).\notag
\end{align}
By the rough bound \eqref{decay_v_j}, we have
\bee
\big|\pa_{y_2}\big((1-\chi_R)|v_1|^2\big)\big|&\lesssim & \frac{ b\mu}{R}{\bf 1}_{\frac R4 \leq |y_1|\leq \frac R2}|v_1|^2+ b\mu{\bf 1}_{|y_1|\geq \frac R4}\pa_{y_1}(|v_1|^2)\\
& \lesssim & \frac{b{\bf 1}_{\frac R4 \leq |y_1|\leq \frac R2}}{R\la y_1\ra ^2}+ b\frac{{\bf 1}_{|y_1|\geq \frac R4}}{\la y_1\ra ^3}\lesssim \frac{b}{R^3(1+(1-\b_1)R)^2}.
\eee
Then, by \eqref{2S1}, we have
\be\label{1S5}
\Big|\Big((1-\chi_R)\left[2\mu v_2|v_1|^2+\mu e^{-2i\Gamma}\overline{v_2}v_1^2\right],\pa_{y_2}v_2\Big)\Big|
\lesssim \frac{b}{R^3(1+(1-\b_1)R)^2}.
\ee
For the remaining nonlinear term, we integrate by parts and obtain
\begin{align}\label{1S6}
&\Big((1-\chi_R)\left[\sqrt{\mu}e^{i\Gamma}v^2_2 \overline{v_1}+2\sqrt{\mu}e^{-i\Gamma}|v_2|^2v_1\right],\pa_{y_2}v_2\Big)\notag\\
& =  \Re\left(\int \sqrt{\mu}(1-\chi_R)\left[e^{i\Gamma}v_2^2\overline{v_1}\overline{\pa_{y_2}v_2}+2e^{-i\Gamma}v_2\overline{v_2}v_1\overline{\pa_{y_2}v_2}\right]dy_2\right)\notag\\
& = \Re\left(\int  \sqrt{\mu}(1-\chi_R)\left[e^{i\Gamma}\overline{v_1}\left[\pa_{y_2}(v_2^2\overline{v_2})-2v_2\pa_{y_2}v_2\overline{v_2}\right]+2e^{-i\Gamma}v_2\overline{v_2}v_1\overline{\pa_{y_2}v_2}\right]dy_2\right)\notag\\
& = -\Re\left(\int \sqrt{\mu}e^{i\Gamma}v_2|v_2|^2\pa_{y_2}\left[(1-\chi_R)\overline{v_1}\right]dy_2\right)
\end{align}
We extract the leading order term using the pointwise bound:
\bee
&&\Big| v_2|v_2|^2\pa_{y_2}\left[(1-\chi_R)\overline{v_1}\right]- Q_{\beta_2}|Q_{\beta_2}|^2\pa_{y_2}\left[(1-\chi_R)\overline{Q_{\beta_1}}\right]\Big|\\
& \lesssim &\frac{1}{R\la y_2\ra^3}\left[ \frac{b\mu{\bf 1}_{\frac R4 \leq |y_1|\leq \frac R2}}{R\la y_1\ra (1+(1-\b_1)\vert y_1\vert )}+\frac{b\mu{\bf 1}_{|y_1|\geq \frac R4}}{\la y_1\ra^2(1+(1-\b_1)\vert y_1\vert )}\right]\lesssim \frac{b}{R^3\la y_2\ra ^3(1+(1-\b_1)R )}.
\eee
Thus,
\begin{align}\label{1S7}
&-\Re\left(\int \sqrt{\mu}e^{i\Gamma}v_2|v_2|^2\pa_{y_2}\left[(1-\chi_R)\overline{v_1}\right]dy_2\right)\notag\\
& =  -\Re\left(\int \sqrt{\mu}e^{i\Gamma}Q_{\beta_2}|Q_{\beta_2}|^2\pa_{y_2}\left[(1-\chi_R)\overline{Q_{\beta_1}}\right]dy_2\right)+O\left( \frac{b}{R^3(1+(1-\b_1)R )}\right)
\end{align}
We now compute the leading order term. Let $z_2=\frac{b\mu y_2}{R},$ then for $|z_2|\leq \frac 12$:
\bee
\pa_{y_1}Q_{\beta_1}(y_1)&=&\pa_{y_1}Q_{\beta_1}\left(R(1+z_2)\right)=\pa_{y_1}Q_{\beta_1}(R)+\int_0^1Rz_2\pa^2_{y_1}Q_{\beta_1}\left(R(1+tz_2)\right)dt\\
& = & \pa_{y_1}Q_{\beta_1}(R)+O\left(\frac{R|z_2|}{R^3}\right)= \pa_{y_1}Q_{\beta_1}(R)+O\left(\frac{b\mu |y_2|}{R^3}\right)
\eee
and thus:
\bee
&& -\Re\left(\int \sqrt{\mu}e^{i\Gamma}Q_{\beta_2}|Q_{\beta_2}|^2\pa_{y_2}\left[(1-\chi_R)\overline{Q_{\beta_1}}\right]dy_2\right)\\
&=&  -\Re\left(\int_{|y_2|\geq \frac{R}{2b\mu}}b\mu \sqrt{\mu}e^{i\Gamma}Q_{\beta_2}|Q_{\beta_2}|^2\pa_{y_1}\left[(1-\chi_R)\overline{Q_{\beta_1}}\right]dy_2\right)\\
& -& \Re\left(\int_{|y_2|\leq \frac{R}{2b\mu}} b\mu\sqrt{\mu}e^{i\Gamma}Q_{\beta_2}|Q_{\beta_2}|^2\pa_{y_1}\left[(1-\chi_R)\overline{Q_{\beta_1}}\right]dy_2\right)\\
& = & -\Re\left(\int_{|y_2|\leq \frac{R}{2b\mu}} b\mu\sqrt{\mu}e^{i\Gamma}Q_{\beta_2}|Q_{\beta_2}|^2\pa_{y_1}\overline{Q_{\beta_1}}dy_2\right)+O\left(\frac{b^3}{R^4}\right)\\
& = &  -\Re\left( b\mu\sqrt{\mu}e^{i\Gamma}\overline{\pa_{y_1}Q_{\beta_1}(R)}\int_{|y_2|\leq \frac{R}{2b\mu}}Q_{\beta_2}|Q_{\beta_2}|^2dy_2\right)\\
&+&O\left(\frac{b^3}{R^4}+b\int_{|y_2|\leq \frac R{2b\mu}}\frac{b\mu}{R^3}\frac{|y_2|}{\la y_2\ra ^3}dy_2\right)\\
& = & -\Re\left(b\mu\sqrt{\mu}e^{i\Gamma}\overline{\pa_{y_1}Q_{\beta_1}(R)}\int Q_{\beta_2}|Q_{\beta_2}|^2dy_2\right)+O\left(\frac{b}{R^3}\right)\\
&=& O\left(\frac{b}{R^2(1+(1-\beta_1)R)}\right)
 \eee
where we used \eqref{DQbx} in the last step. 
Combining this with \eqref{1S6} and \eqref{1S7}, we obtain
that
$$\Big|\Big((1-\chi_R)\left[\sqrt{\mu}e^{i\Gamma}v^2_2 \overline{v_1}+2\sqrt{\mu}e^{-i\Gamma}|v_2|^2v_1\right],\pa_{y_2}v_2\Big)\Big|
\lesssim \frac{b}{R^2(1+(1-\beta_1)R)}\ .$$

\noindent
This, together with \eqref{1S5} yields
\begin{align*}
&\Big|\Big((1-\chi_R)
\left[2\mu v_2|v_1|^2+\mu e^{-2i\Gamma}\overline{v_2}v_1^2+\sqrt{\mu}e^{i\Gamma}v^2_2 \overline{v_1}+2\sqrt{\mu}e^{-i\Gamma}|v_2|^2v_1\right],\pa_{y_2}v_2\Big)\Big|\notag\\
&\lesssim \frac{b}{R^2(1+(1-\beta_1)R)}\ .\end{align*}
In view of the expression \eqref{expressionetwobis} of $\mathcal E_2$, we infer
\bee
&&M_2(-i\Lambda v_2,\pa _{y_2}v_2)+B_2\left(iy_2\pa_{y_2}v_2+i(1-\beta_2)\frac{\pa v_2}{\pa \beta_2},\pa_{y_2}v_2\right)\\
& + & \l_2M_2\left (i\frac{\pa v_2}{\pa \l_2},\pa_{y_2}v_2\right )+\l_2M_1\left (i\frac{\pa v_2}{\pa \l_1},\pa_{y_2}v_2\right )+\mu B_1\left (i(1-\b_1)\frac{\pa v_2}{\pa \beta_1},\pa_{y_2}v_2\right )\\
& + & (1-\mu)\left (i\frac{\pa v_2}{\pa \Gamma},\pa_{y_2}v_2\right )+(1-b+(B_1-M_1)R)\left (i\frac{\pa v_2}{\pa R},\pa_{y_2}v_2\right )\\
&=&
O \left (\frac{b}{R^2(1+(1-\b_1)R)}+\frac{1}{R^{N+1}}\right ).
\eee
Next we compute the terms involving the modulation equations. On the one hand,
\be
\label{cneneneonevno}
\nonumber (i\Lambda v_2,\pa_{y_2}v_2) = (i\Lambda Q_{\beta_2},\pa_{y_2}Q_{\beta_2})+O\left(\frac{1}{R(1+(1-\b_1)R )}\right)
=  \pi P_{\b_2}+O\left(\frac{1}{R(1+(1-\b_1)R )}\right).
\ee
On the other hand, taking into account Lemma \ref{lemma:pa_b_T21} and \eqref{Qbdotplus}, \eqref{Qbdotmoins}, 
\bee
\Big(iy_2\pa _{y_2}v_2+ i(1-\b_2)\frac{\pa v_2}{\pa \beta_2},\pa_{y_2}v_2\Big)
&=&\Big(i(1-\b_2)\frac{\pa Q_{\beta_2}}{\pa\beta_2},\pa_{y_2}Q_{\beta_2}\Big)\\
&+&O\left(\frac{\vert 1-\mu\vert +(1-\b_2)^{1/2}\vert\log(1-\b _2)\vert ^{1/2}+R^{-1}}{R(1+(1-\b_1)R )}\right),\\
&=&\pi \tL_{\b_2}P_{\b_2}+O\left(\frac{\vert 1-\mu\vert +(1-\b_2)^{1/2}\vert\log(1-\b _2)\vert ^{1/2}+R^{-1}}{R(1+(1-\b_1)R )}\right)\ .
\eee
Then, by construction,
$$
\left|\Big(i\frac{\pa v_2}{\pa {\l_1}},\pa_{y_2}v_2\Big)\right|+\left|\Big(i\frac{\pa v_2}{\pa{\l_2}},\pa_{y_2}v_2\Big)\right|
+\left|\Big(i(1-\beta_1)\frac{\pa v_2}{\pa{\beta_1}},\pa_{y_2}v_2\Big)\right|
 \lesssim \frac{1}{R(1+(1-\b_1)R )}.
$$
Moreover, by Lemmas \ref{lemma:pa_Gamma_T21} and \ref{lemma:pa_R_T21},
we have
\begin{align*}
&(1-\mu)\Big(i\frac{\pa v_2}{\pa {\Gamma}},\pa_{y_2}v_2\Big)
+\mu (1-b)\Big(i\frac{\pa v_2}{\pa {R}},\pa_{y_2}v_2\Big)\\
&=(1-\mu)\Big[-2\pi \Re(e^{i\Gamma}\overline{Q_{\b_1}(R)})\Big]
+\mu (1-b)\Big[-2\pi \Im (e^{i\Gamma}\overline{\pa_{y_1}Q_{\b_1}(R)})\Big]\\
&+O\Big(\frac{(1-\mu)^2+\vert 1-\mu\vert ((1-\b_2)^{1/2}\vert \log(1-\b_2)\vert ^{1/2}+R^{-1})+R^{-2}}{R(1+(1-\b_1)R)}\Big)\ .
\end{align*}
Notice that, in view of \eqref{DQbx}, the factor $\mu (1-b)$ in the above right hand side can be replaced by $1$ up to the expense of the additional error
$$O\left (\frac{b(\vert 1-\mu\vert +R^{-1})}{R(1+(1-\b_1)R}\right )\ .$$

Summing up,  we obtain
\begin{align*}
&M_2-\frac{\tL_{\b _2}P_{\b _2}}{P_{\b _2}}B_2+ 2(1-\mu)\Re(e^{i\Gamma}\overline{Q_{\b_1}(R)})
+2 \Im (e^{i\Gamma}\overline{\pa_{y_1}Q_{\b_1}(R)})=\notag\\
&O\Big(\frac{(\vert 1-\mu\vert +R^{-1})(\vert 1-\mu\vert +b+(1-\b_2)^{1/2}\vert \log(1-\b_2)\vert ^{1/2})+R^{-2}}{R(1+(1-\b_1)R)}\Big)\ .
\end{align*}
This completes the proof.
\end{proof}

%%%%%%%%%%%%%%%%%%%%%%%%%%%%%%%%%%%%%%%%%%
%%%%%%%%%%%%%%%%%%%%%%%%%%%%%%%%%%%%%%%%%%

\subsection{Solving the reduced dynamical system}
\label{sectionreduced}
%%%%%%%%%%%%%%%%%%%%%%%%%%%%%%%%%%%%%%%%%%%
%%%%%%%%%%%%%%%%%%%%%%%%%%%%%%%%%%%%%%%%%%%%%

Our aim in this section is to exhibit a suitable exact solution to the idealized dynamical system
\be
\label{dynamicalystem}
(S)^{\infty}\ \ \left\{ 
\begin{array}{lll}(x_j)_{t}=\beta_j, \ \ 
(\gamma_j)_{t}=\frac{1}{\l_j},\\
 (\l_j)_{t}=M_j(\mathcal P),\ \ \frac{(\beta_j)_{t}}{1-\beta_j}
=\frac{B_j(\mathcal P)}{\l_j},\\
\Gamma=\gamma_2-\gamma_1, \ \ R=\frac{x_2-x_1}{\l_1(1-\beta_1)}
  \end{array}
  \right. \ \ j=1,2,
  \ee  
  with $\mathcal P=(\l_1,\l_2,\beta_1,\beta_2,\Gamma, R)$,
  which will correspond to the leading order two-soliton motion, and where from now on and for the rest of this paper we omit the subscript $N$ for the sake of simplicity.\\
  
Let $0<\eta,\delta\ll 1$. Define the times
 \be
 \label{deftimes}
 T_{\rm in}=\frac{1}{\eta^{2\delta}}<T^-=\frac{\delta}{\eta} 
 \ee
 and consider explicitly the solution $$\tilde{\mathcal P}^\infty=(\l^\infty_1,\l^\infty_2,\beta^\infty_1,\beta^\infty_2,\gamma^\infty_1,\gamma^\infty_2,x^\infty_1,x^\infty_2)$$ to \fref{dynamicalystem} with data at $t=T^-$:
 \be
 \label{datateta}
 \left\{\begin{array}{llll} \l^\infty_1=1, \ \ \l^\infty_2=1 ,\\
\gamma^\infty_1=\gamma^\infty_2=0 ,\\
 1-\beta^\infty_1=\eta, \ \ b^\infty=\frac{1}{(T^-)^2}\ \ \mbox{ie}\ \ 1-\beta^\infty_2=\frac{\eta}{(T^-)^2} ,\\
 x^\infty_1=0, \ \ R^\infty=T^-\ \ \ \ \mbox{ie}\ \  x^\infty_2=T^-\eta=\delta.
 \end{array}\right.
 \ee
  The fact that the system \fref{dynamicalystem} with data \fref{datateta} admits a unique maximal solution   is a simple consequence of the Cauchy--Lipschitz theorem.\\
  
  We first claim the backwards control of this solution in the following perturbative form.
   
 \begin{lemma}[Control of the solution in the perturbative turbulent regime]
 \label{propturbulent}
 Let $\delta>0$ small enough and $0<\eta<\eta^*(\delta,N)$ small enough. Let $\tilde{\mathcal P}$ be the solution to the approximate system 
 \be
\label{dynamicalystemapproximate}
\left\{ 
\begin{array}{lll}(x_j)_{t}=\beta_j+O\left(\frac{1}{t^3}\right), \ \ 
(\gamma_j)_{t}=\frac{1}{\l_j}+O\left(\frac{1}{t^3}\right),\\
 (\l_j)_{t}=M_j(\mathcal P)+O\left(\frac{1}{t^3}\right),\ \ \frac{(\beta_j)_{t}}{1-\beta_j}
=\frac{B_j(\mathcal P)}{\l_j}+O\left(\frac{1}{t^3}\right),\\
\Gamma=\gamma_2-\gamma_1, \ \ R=\frac{x_2-x_1}{\l_1(1-\beta_1)}
  \end{array}
  \right. \ \ j=1,2,
  \ee  
  with initial data at $T^-$ satisfying :
  \be
  \label{datatetaapp}
  |\tilde{\mathcal P}(T^-)-\tilde{\mathcal P}^\infty (T^-)|\leq \eta^{10},
  \ee
  then the parameters satisfy in $t\in [T_{\rm in},T^-]$ the bounds:
\be
\label{esttrubulentregime}
\left\{\begin{array}{llll}
\l_1(t)=1+O\left(\frac{\eta^{\delta}}{t}\right), \ \ \l_2(t)=1+O\left(\frac{\eta^\delta+\eta t|\log \eta t|}{t}\right)\\
1-\beta_1(t)=\eta(1+O(\eta^\delta)), \ \ b(t)=\frac{1+O(\sqrt{\delta})}{t^2}\\
\Gamma(t)=O(\eta t|\log \eta t|)\\
R=t(1+O(\eta^\delta)).
\end{array}\right.
\ee
\end{lemma}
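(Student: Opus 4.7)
\medskip

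The plan is a backwards continuity/bootstrap argument starting from the data \eqref{datatetaapp} at $t=T^-$. Define the bootstrap set $\mathcal B$ by the estimates in \eqref{esttrubulentregime} with each implicit constant doubled. The initial data lies strictly inside $\mathcal B$ by \eqref{datatetaapp}, so by continuity $\mathcal B$ holds on a maximal subinterval $[T_\ast,T^-]$ of $[T_{\rm in},T^-]$, and it suffices to improve all bounds on $[T_\ast,T^-]$.

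\medskip

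\textbf{Step 1 (explicit asymptotic laws).} On $\mathcal B$ one has $R\sim t$, $\Gamma\ll 1$, $|1-\mu|\ll 1$, and the key smallness $(1-\beta_1)R\sim\eta t\le\delta\ll 1$. Combining Proposition~\ref{sharpmod} with the asymptotics from Proposition~\ref{prop:equivalent Q_b} and the expansion $F(x)=1+O(|x||\log|x||)$ near $0$, one extracts
\[
B_1=O(b/R),\qquad M_1=O(\eta|\log\eta|\,b/R),
\]
\[
B_2=\frac{2\cos\Gamma}{R}\bigl(1+\textit{negligible}\bigr),
\]
\[
M_2=\frac{\eta\cos\Gamma}{R}+\frac{2\sin\Gamma}{R^2}-\frac{2(1-\mu)\cos\Gamma}{R}+\textit{negligible},
\]
where the term $\eta/R$ in $M_2$ is precisely the $\Im\bigl(e^{i\Gamma}\overline{\partial_{y_1}Q_{\beta_1}(R)}\bigr)$ contribution in \eqref{sharpmodM2}.

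\medskip

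\textbf{Step 2 (uncoupled parameters).} The slow parameters integrate directly. From $(1-\beta_1)_t/(1-\beta_1)=-B_1/\lambda_1+O(1/t^3)=O(1/t^3)$ and $1-\beta_1(T^-)=\eta+O(\eta^{10})$, one gets $1-\beta_1(t)=\eta(1+O(\eta^{4\delta}))$ by integration on $[t,T^-]\subset[T_{\rm in},T^-]$. The identity $R_t=(1-b)/\lambda_1-R[\lambda_{1,t}/\lambda_1+(1-\beta_1)_t/(1-\beta_1)]=1+O(1/t^2)$ gives $R=t(1+O(\eta^\delta))$. For $b$,
\[
\frac{b_t}{b}=\frac{B_1}{\lambda_1}-\frac{B_2}{\lambda_2}+O(1/t^3)=-\frac{2}{t}+O\!\left(\frac{\Gamma^2}{t}+\frac{1}{t^2}\right),
\]
and the bootstrap bound $\Gamma=O(\eta t|\log\eta t|)$ makes $\int_t^{T^-}\Gamma^2/s\,ds$ bounded by $\eta^2(T^-)^2|\log\delta|^2=\delta^2|\log\delta|^2\ll\sqrt\delta$. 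Together with $b(T^-)=(T^-)^{-2}(1+O(\eta^{10}))$, this yields $b(t)=(1+O(\sqrt\delta))/t^2$.

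\medskip

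\textbf{Step 3 (coupled phase/scaling system).} This is the heart and the main obstacle. Setting $f:=\lambda_1-\lambda_2$ and $g:=\Gamma$, Step~1 gives, to leading order,
\[
g_t=f+O(1/t^3),\qquad f_t=-\frac{\eta}{t}+\frac{2f}{t}-\frac{2g}{t^2}+\text{smaller},
\]
with $f(T^-),g(T^-)=O(\eta^{10})$. Eliminating $f$ produces the second-order linear equation
\[
g_{tt}-\frac{2}{t}g_t+\frac{2}{t^2}g=-\frac{\eta}{t}+\text{smaller},
\]
whose homogeneous characteristic exponents are $1$ and $2$, and for which a particular solution is $\eta t\log t$. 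Imposing the boundary conditions $g(T^-)=g_t(T^-)=0$ yields the explicit formula
\[
g(t)=\eta t\bigl[\log(t/T^-)+1-t/T^-\bigr].
\]
Since $t/T^-\le 1$ and $|\log(t/T^-)|\le|\log(\eta t)|+|\log\delta|$ on $[T_{\rm in},T^-]$, this gives $|g(t)|\lesssim\eta t|\log(\eta t)|$, and correspondingly $|f(t)|=|g_t(t)|\lesssim \eta|\log(\eta t)|$, which (together with $|\lambda_1-1|=O(\eta^\delta/t)$ obtained from the almost-conservation of $\lambda_1$) gives the claimed bound on $\lambda_2$.

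\medskip

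\textbf{Step 4 (closing the bootstrap and main difficulty).} Each improved estimate in Steps~2 and~3 beats the bootstrap constants by a factor $\sqrt\delta$ or $\eta^\delta$, so the bootstrap closes and $T_\ast=T_{\rm in}$. The principal difficulty is the marginal stability of the phase equation: the ODE for $g$ is borderline critical (both homogeneous exponents are positive, so backward integration is only barely controlled), which forces us to use the sharp modulation identities of Proposition~\ref{sharpmod} with their $\sqrt\delta$-level remainders — the cruder form $M_2=O(1/R)$ would produce an uncontrolled logarithmic divergence of $\Gamma$ and destroy the bootstrap. The exact cancellation between $\Re(e^{i\Gamma}\overline{Q_{\beta_1}(R)})$ and $\Im(e^{i\Gamma}\overline{\partial_{y_1}Q_{\beta_1}(R)})$ in \eqref{sharpmodM2}, combined with the precise coefficient $\eta$ of the forcing, is what makes the explicit profile $g(t)=\eta t[\log(t/T^-)+1-t/T^-]$ (rather than something of size $\delta$) emerge.
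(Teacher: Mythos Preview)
Your proposal is correct and follows essentially the same strategy as the paper: a backward bootstrap on $[T_{\rm in},T^-]$, extraction of the leading-order laws from Proposition~\ref{sharpmod} and the asymptotics of $Q_{\beta_1}(R),\partial_{y_1}Q_{\beta_1}(R)$, direct integration of the ``easy'' parameters $\beta_1,b,R$, and a careful treatment of the coupled phase/scaling pair. The only notable difference is that the paper keeps the coupled system in first-order form for $(\Gamma,v)$ with $v=1-\mu$ and solves it by variation of parameters against the explicit fundamental system $\{(t,1),(t^2,2t)\}$, whereas you eliminate one variable to obtain the second-order Euler equation $g_{tt}-\tfrac{2}{t}g_t+\tfrac{2}{t^2}g=\pm\tfrac{\eta}{t}$ and write the closed-form particular solution. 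These are mathematically equivalent; the paper's integral representation makes the control of the remainder terms (your ``$+$ smaller'') more explicit, which is the one place where your write-up is a bit telegraphic --- you should indicate that the perturbation of the forcing by $O(\eta^\delta/t^2+K^2\eta^2|\log(\eta t)|^2)$ feeds through the Duhamel formula with the same fundamental solutions and contributes at most $O(\eta^\delta+\eta t|\log(\eta t)|)$ to $g$.
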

\begin{remark} Notice that the small quantity $\eta t\vert \log\eta t\vert $ grows on $[T_{\rm in},T^-]$ from $(1-\delta )\eta ^{1-\delta}\vert \log \eta\vert $ to $\delta \vert \log \delta \vert$. Therefore, if $\delta $ is small and if $\eta <\eta ^*(\delta )$, this quantity is first smaller that $\eta ^\delta $, then it becomes bigger than $\eta ^\delta $. This explains why we have to keep both quantities in the remainder terms.
\end{remark}
\begin{proof}[Proof of Lemma \ref{propturbulent}] From \fref{datateta} and \fref{datatetaapp}, we may assume the following bounds:
\be
\label{bootsboundturbulent}
\left\{\begin{array}{llll|}
|\l_1(t)-1|\leq \frac{\eta ^{\delta}}{t},\ \ j=1,2\\
|\l_2(t)-1|\leq K \frac{\eta^{\delta}+\eta t|\log(\eta t)|}{t}\\
|1-\beta_1(t)-\eta|\leq \eta^{1+\delta} , \\
\frac{\eta}{2t^2}\leq 1-\beta_2(t)\leq \frac{2\eta}{t^2}\\
\frac{\left|R(t)-t\right|}{t}\leq \eta ^{\delta }\\
|\Gamma(t)|\leq K( \eta^\delta+\eta t|\log (\eta t)|)
\end{array}\right.
\ee
and aim at improving them for some large enough universal constant $K$, and  for $0<\delta<\delta^*(K)$, $0<\eta<\eta^*(K,\delta )$, which proves \eqref{esttrubulentregime} through a standard continuity argument. The difficulty is that the growth of Sobolev norms in \fref{esttrubulentregime} relies on an uniform control of the phase which is not allowed to move, and this requires two integrations in time in the presence of $O(\frac{1}{t^2})$ decay only and hence some suitable cancellation in the modulation equations.\\

\noindent {\bf Step 1:} Leading order modulation equations. We extract the leading order modulation equations of Proposition \ref{sharpmod} in the regime \fref{esttrubulentregime} using the sharp description of the asymptotic structure of $Q_\beta$ given by Proposition \ref{prop:equivalent Q_b}. We estimate from \fref{bootsboundturbulent} 
$$R\sim t\leq\frac{\delta}{\eta}$$ and hence 
$$0<(1-\beta_1)R\lesssim \eta t\lesssim \delta\ll1.$$ 
Now we appeal to the precise description of $Q_\b$ given by \eqref{asymQ_b}:
\bea
\label{nekvnevneoneoneo}
\nonumber Q_{\beta_1}(R)&=&\frac{1+O((1-\beta_1)|\log(1-\beta_1)|)}{R}\left[1+O((1-\beta_1) R\log ((1-\beta_1) R))\right]+O\left(\frac 1{R^2}\right)\\
& = & \frac1t+O\left(\frac{\eta^{\delta}}{t}+\eta|\log \eta t|\right)
\eea
where we used the localization of $R$ given by \eqref{bootsboundturbulent} in the last step. 
Similarly, using \eqref{DQbx}, it follows that
\bea
\label{estderivative}
\nonumber &&\pa_{y_1}Q_{\beta_1}(R)\\
\nonumber &=&\frac{1+O(\eta|\log \eta|)}{R^2}\left\{-1+\frac{i}{2}(1-\beta_1)R\left[1+O((1-\beta_1)R|\log (1-\beta_1)R|)\right]\right\}+O\left(\frac{\eta^\delta}{R^2}\right)\\
& = & \frac{1}{t^2}\left[-1+\frac{i}{2}\eta t\right]+O\left (\frac{\eta^\delta}{t^2}+\eta^2|\log (\eta t)|\right ).
\eea
We also have
\begin{align*}
(1-\b_2)\frac{R}{b\mu}=\frac{(1-\b_1)R}{\mu}\lesssim (1-\b_1)R\lesssim \delta
\end{align*}
and thus,
\bea
\label{estimationbetadeux}
&&\nonumber Q_{\beta_2}\left(-\frac{R}{b\mu}\right)\\
\nonumber&=&-\frac{1+O((1-\beta_2)|\log(1-\beta_2)|)}{\frac{R}{b\mu}}\left[1+O((1-\beta_2) \frac{R}{b}\log ((1-\beta_2) \frac{R}{b}))\right]+O\left(\frac{b^2}{R^2}\right)\\
& = & O\left(\frac{b}{t}\right).
\eea
We now compute the leading order modulation equations of Proposition \ref{sharpmod}. We first have the rough bound
\be
\label{estvoenvoneo}
B_1=O\left(\frac bt\right)
\ee 
and the finer control from \eqref{nekvnevneoneoneo}:
\bea
\label{estvniovnonvoe}
\nonumber B_2&=&2\left[1+O((1-\beta_2)|\log (1-\beta_2)|)\right]\Re\left\{(\cos\Gamma-i\sin\Gamma)\left[\frac1t+O\left(\frac{\eta^{\delta}}{t}+\eta|\log \eta t|\right)\right]\right\}\\
\nonumber & + & O\left(\frac{1-\mu}{t}+\frac 1{t^2}\right)\\
& = & \frac{2\cos\Gamma}{t}+O\left(\frac{\eta^{\delta}}{t}+\eta|\log (\eta t)|+\frac{|1-\mu|}{t}\right)\\
& = &\frac{2}{t}+O\left(\frac{\eta^{\delta}}{t}+\eta|\log (\eta t)|\right)
\eea
where in the last step we used from \eqref{bootsboundturbulent}: 
\bea
\label{esgammam}
\nonumber \frac{\Gamma^2}{t}&\lesssim & \frac{K^2(\eta^{2\delta}+\eta^2t^2|\log(\eta t)|^2)}{t}\lesssim \frac{\eta^{\delta}}{t}+\eta |\log (\eta t)|K^2\eta t|\log (\eta t)|\\
&\lesssim & \frac{\eta^\delta}{t}+\eta |\log (\eta t)| K^2\delta|\log \delta| \lesssim \frac{\eta^\delta}{t}+   \eta |\log (\eta t)|
\eea
 for $\delta<\delta^*(K)$ small enough.
We similarly derive the rough bound
\be
\label{estpointwisemone}
|M_1|\lesssim |1-\beta_1||\log (1-\beta_1)||B_1|+\frac{|b(1-\mu)|}{t}+\frac{|b|}{t^2}\lesssim \frac{\eta^{2\delta}}{t^2}.
\ee
We now estimate $M_2$. First we compute from \eqref{nekvnevneoneoneo}, \eqref{esgammam}:
\bee
2(1-\mu)\Re(e^{i\Gamma}\overline{Q_{\beta_1}(R)})&=&2(1-\mu)\Re\left\{(\cos\Gamma+i\sin\Gamma)\left(\frac1t+O\left (\frac{\eta^{\delta}}{t}+\eta|\log \eta t|\right )\right)\right\}\\
& = & \frac{2(1-\mu)\cos\Gamma}{t}+|1-\mu|O\left(\frac{\eta^{\delta}}{t}+\eta|\log \eta t|\right)\\
& = & \frac{2(1-\mu)}{t}+O\left(\frac{\eta^\delta}{t^2}+K\eta^2|\log \eta t|^2\right)
\eee
where we used in the last step from \eqref{bootsboundturbulent}:
\be
\label{vneivneonvenve}
|1-\mu|\lesssim |\l_2-1|+|\l_1-1|\lesssim K\frac{\eta^\delta+\eta t|\log(\eta t)|}{t}
\ee
and hence
\bee
|1-\mu|\left(\frac{\eta^{\delta}}{t}+\eta|\log \eta t|\right)&\lesssim& K\frac{(\eta^\delta+\eta t|\log \eta t|)^2}{t^2}\lesssim \frac{\eta^\delta}{t^2}+K\eta^2|\log \eta t|^2
\eee
for $\eta <\eta^*(K,\delta)$ small enough.
similarly from \eqref{estderivative}:
\bee
2\Im\left\{e^{i\Gamma}\overline{\pa_{y_1}Q_{\beta_1}(R)}\right\}&=&2\Im\left\{(\cos\Gamma+i\sin\Gamma)\left[\frac{1}{t^2}\left(-1+\frac{i}{2}\eta t\right)+O\left (\frac{\eta^\delta}{t^2}+\eta^2|\log (\eta t)|\right )\right]\right\}\\
& =&-\frac{2\sin\Gamma}{t^2}+\frac{\eta}{t}\cos\Gamma+O\left (\frac{\eta^\delta}{t^2}+\eta^2 |\log (\eta t)|\right )\\
& = & -\frac{2\Gamma}{t^2}+\frac{\eta}{t}+O\left (\frac{\eta^\delta}{t^2}+\eta^2 |\log (\eta t)|\right ),
\eee
where we used in the last step the development of $\cos\Gamma,\sin\Gamma$ with the bounds:
\bee
\frac{|\Gamma|^3}{t^2}+\frac{\eta \Gamma^2}{t}&\lesssim& \frac{K^3(\eta^{3\delta}+\eta^3t^3|\log\eta t|^3)}{t^2}+\frac{\eta K^2(\eta^{2\delta}+\eta^2t^2|\log \eta t|^2)}{t}\\
&\lesssim& \frac{\eta^\delta}{t^2}+\eta^2|\log \eta t|\left[K^3\eta t|\log \eta t|^2+K^2\eta t|\log \eta t|\right]\\
& \lesssim & \frac{\eta^\delta}{t^2}+\eta^2|\log \eta t| K^3\delta |\log \delta|^2\leq \frac{\eta^{\delta}}{t^2}+\eta^2|\log \eta t|
\eee
for $\delta<\delta^*(K)$ small enough.
Using from \eqref{estvniovnonvoe} the rough bound $|B_2|\lesssim \frac 1t$ ensures the finer bound from \eqref{sharpmodM2}:
\bea
\label{sharpboundmtwo}
\nonumber &&\Big| M_2+\frac{2(1-\mu)}{t}-\frac{2\Gamma}{t^2}+\frac{\eta}{t}\Big|\lesssim\frac{\eta^{\delta}}{t^2}+K^2\eta^2|\log(\eta t)|^2+\frac{|1-\beta_2||\log (1-\beta_2)|}{t}\\
& \lesssim &\frac{\eta^{\delta}}{t^2}+K^2\eta^2|\log(\eta t)|^2
\eea
where we used \eqref{bootsboundturbulent} in the last step to estimate $1-\beta_2$.

\noindent{\bf Step 2:} Control of the speeds. We first integrate the law for $\beta_2$ from \eqref{estvniovnonvoe}:
$$
\frac{(\beta_2)_{t}}{1-\beta_2}=\frac{B_2}{\l_2}=\frac{1}{\l_2}\left[\frac{2}{t}+O\left(\frac{\eta^{\delta}}{t}+\eta|\log (\eta t)|\right)\right]= \frac{2}{t}+O\left(\frac{\eta^{\delta}}{t}+\eta|\log (\eta t)|\right).
$$
We integrate on $[t,T^-]$ and use 
\bee
&&\int_t^{T^-}\eta |\log (\eta \tau)|d\tau\lesssim \int_0^{\delta} |\log \sigma|d\sigma\leq \sqrt{\delta}\\
&& \int_{t}^{T^-}\frac{\eta^\delta}{\tau}d\tau=O\left(\eta^\delta|\log \eta|\right)\leq \sqrt{\delta},
\eee
for $\eta <\eta ^*(K,\delta )$,
 to estimate
\bee
-\log\left(\frac{1-\beta_2(T^-)}{1-\beta_2(t)}\right)=2\log\left(\frac{T^-}{t}\right)+O(\sqrt{\delta})
\eee
from which using the initialization \eqref{datateta}, \eqref{datatetaapp}:
\be
\label{loidebetadeux}
1-\beta_2(t)=\frac{(T^-)^2(1-\beta_2(T^-))}{t^2}e^{O(\sqrt{\delta})}=\left[1+O(\sqrt{\delta})\right]\frac{\eta}{t^2}.
\ee
We now compute for $\beta_1$ from \eqref{estvoenvoneo}: $$\Big|\frac{(\beta_1)_{t}}{1-\beta_1}\Big|=\Big|\frac{B_1}{\l_1}\Big|\lesssim \frac{b}{t}\lesssim \frac1{t^3}$$
which time integration using \eqref{datateta}, \eqref{datatetaapp} yields
\be
\label{loibetone}
1-\beta_1(t)=(1-\beta_1(T^-))e^{O(\frac 1{t^2})}=\eta\left (1+O\left (\frac 1{t^2}\right )\right ).
\ee 
Since $t\ge T_{{\rm in}}=\eta ^{-\delta }$, this improves the estimate on $1-\b_1-\eta $.
This yields with \eqref{loidebetadeux}: 
\be
\label{lvejoveveovneoe}
b(t)=\frac{1+O(\sqrt{\delta})}{t^2}.
\ee

\noindent{\bf Step 3:} Control of the scaling and the phase shift. We need to be extra careful to reintegrate the law for $\Gamma$ which requires two integrations in time in the presence of $\frac{1}{t^2}$ decay only, and hence the possibility of logarithmic losses which would be dramatic to control the smallness of the phase and hence the growth of the Sobolev norm. We first integrate $\l_1$ from \eqref{estpointwisemone}: $$|(\l_1)_t|\lesssim |M_1|\lesssim \frac{\eta^{2\delta}}{t^2}$$ and hence from \eqref{datateta}, \eqref{datatetaapp}: 
\be
\label{estloneboot}
\l_1(t)=1+O\left(\frac{\eta^{2\delta}}{t}\right).
\ee 
Now consider $$v=1-\mu$$. Using  \eqref{estloneboot}, we have
\bee
\Gamma_t&=&\frac{1}{\l_2}-\frac{1}{\l_1}=\frac{1-\mu}{\l_2}=\frac{1-\mu}{\l_1(1-(1-\mu))}=v\left[1+O\left (\frac{\eta^{2\delta}}{t}\right )\right][1+O(v)]\\
& = & v+O\left (\frac{\eta^\delta}{t^2}\right )+O(v^2)\ .
\eee
and we now estimate from \eqref{vneivneonvenve}:
$$
v^2\lesssim K^2\frac{(\eta^\delta+\eta t|\log \eta t|)^2}{t^2}\lesssim \frac{\eta^{\delta}}{t^2}+K^2\eta^2|\log \eta t|^2,$$ 
whence the first equation, 
$$\Gamma_t=v+O\left (\frac{\eta^\delta}{t^2}+K^2\eta^2|\log \eta t|^2\right )\ .$$ 

Hence from \eqref{estpointwisemone},  \eqref{sharpboundmtwo}:
\bee
v_t&=&-\mu_t=-\mu \left[\frac{(\l_2)_t}{\l_2}-\frac{(\l_1)_t}{\l_1}\right]=\mu\frac{M_1}{\l_1}-\frac{M_2}{\l_1}\\
&=&-M_2\left[1+O\left (\frac{\eta^{2\delta}}{t}\right )\right]+O\left (\frac{\eta ^{2\delta }}{t^2}\right )=-M_2+O\left (\frac{\eta ^{2\delta }}{t^2}\right )\ .
\eee
and hence from \eqref{sharpboundmtwo}:
\bee
v_t=\frac{2v}{t}-\frac{2\Gamma}{t^2}+\frac{\eta}{t}+O\left(\frac{\eta^{\delta}}{t^2}+K^2\eta^2|\log(\eta t)|^2\right).
\eee
We therefore obtain the following system,
\be
\label{estf}
\left\{\begin{array}{ll}\Gamma_t=v+R_{\Gamma}(t),\\ v_t=\frac{2v}{t}-\frac{2\Gamma}{t^2}+\frac{\eta}{t}+R_v(t)\end{array}\right. 
\ee
with $$|R_\Gamma (t)|+|R_v(t)|\lesssim \frac{\eta^{\delta}}{t^2}+K^2\eta^2|\log(\eta t)|^2,$$ and with the initial data 
$$\Gamma (T^-)=O(\eta ^{10})\ ,v(T^-)=O(\eta ^{10}).$$
A basis of solutions to the linear homogeneous system
\be
\left\{\begin{array}{ll}\Gamma_t=v\\ v_t=\frac{2v}{t}-\frac{2\Gamma}{t^2}\end{array}\right. 
\ee is given by $\{ (\Gamma_1(t),v_1(t))=(t,1), (\Gamma_2(t),v_2(t))=(t^2,2t)\} $,
with Wronskian $$W=v_2\Gamma_1-\Gamma_2v_1=t^2$$ and hence the explicit solution with data \eqref{datateta} is given by:
\bee
&&\Gamma(t)=\Gamma_0(t)-\Gamma_1(t)\int_t^{T^-}\frac{R_\Gamma v_2-R_v\Gamma_2}{W}d\tau-\Gamma_2(t)\int_t^{T^-}\frac{R_v\Gamma_1-R_\Gamma v_1}{W}d\tau,\\
&& v(t)=v_0(t)-v_1(t)\int_t^{T^-}\frac{R_\Gamma v_2-R_v\Gamma_2}{W}d\tau-v_2(t)\int_t^{T^-}\frac{R_v\Gamma_1-R_\Gamma v_1}{W}d\tau,
\eee
where $(\Gamma_0,v_0)$ is the explicit homogeneous solution given by 
\bee
\Gamma_0(t)=\Gamma_1(t)\left (O(\eta ^{10})+\int_t^{T^-}\frac{\eta}{\tau}\psi_2\frac{d\tau}W\right )-\Gamma_2(t)\left (O(\eta ^{10})+\int_t^{T^-}\frac{\eta}{\tau}\Gamma_1\frac{d\tau}{W}\right )= O\left(\eta t (|\log \eta t|)\right),
\eee
and
\bee
v_0(t)&=&v_1(t)\left (O(\eta ^{10})+\int_t^{T^-}\frac{\eta}{\tau}\psi_2\frac{d\tau}W\right )-v_2(t)\left (O(\eta ^{10})+\int_t^{T^-}\frac{\eta}{\tau}\Gamma_1\frac{d\tau}{W}\right )\\
& = & \int_t^{T^-}\frac{\eta}{\tau}d\tau-2t\int_t^{T-}\frac{\eta}{\tau^2}d\tau=\eta\log\left(\frac{T^-}{t}\right)-2\eta t\left(\frac 1t-\frac{1}{T^-}\right)\\
& = & O\left(\frac{\eta t (|\log \eta t|)}{t}\right).
\eee
We now estimate the error:
\bee
&&\left|v_1(t)\int_t^{T^-}\frac{R_\Gamma v_2-R_v\Gamma_2}{W}d\tau-v_2(t)\int_t^{T^-}\frac{R_v\Gamma_1-R_\Gamma v_1}{W}\right|\\
& \lesssim &  \int_t^{T^-}\left[\frac{\eta^\delta}{\tau^2}+K^2\eta^2|\log(\eta \tau)|^2\right]d\tau\lesssim  \frac{\eta^\delta}{t}+K^2\eta\int_0^{\delta}|\log \tau|^2d\tau\lesssim \frac{\eta^\delta}{t}+K^2\eta\delta|\log \delta|^2\\
& \lesssim & \frac{\eta^\delta}{t}+\frac{K^2\delta|\log \delta|^2 \eta t |\log \eta t|}{t|\log \eta t|}\lesssim \frac{\eta^\delta}{t}+\frac{K^2\delta|\log \delta|^2}{|\log \delta|}\frac{\eta t|\log \eta t|}{t}\lesssim  \frac{\eta^\delta+\eta t|\log \eta t|}{t},
\eee
for $\delta<\delta^*(K)$ small enough, and similarly:
$$\left|\Gamma_1(t)\int_t^{T^-}\frac{R_\Gamma v_2-R_v\Gamma_2}{W}d\tau-\Gamma_2(t)\int_t^{T^-}\frac{R_v\Gamma_1-R_\Gamma v_1}{W}\right|\lesssim  \eta^\delta+\eta t|\log \eta t|.
$$
The collection of above bounds using the modified initial data easily ensures 
$$|v(t)|\lesssim \frac{\eta^\delta+\eta t|\log \eta t|}{t}, \ \ |\Gamma(t)|\lesssim \eta^\delta+\eta t|\log \eta t|$$ which closes the bootstrap \eqref{esttrubulentregime} for $\l_2,\Gamma$ on $[T_{\rm in},T^-]$ for $K$ universal large enough.\\

\noindent{\bf Step 4:} Control of the centers and the relative distance.

We compute from \eqref{loibetone}, \eqref{lvejoveveovneoe}:
\bee
(x_2)_t-(x_1)_t&=&\beta_2-\beta_1=1-\beta_1-(1-\beta_2)=(1-\beta_1)(1-b(t))\\
&=& \eta\left (1+O\left (\frac 1{t^2}\right )\right )\left[1-\frac{1+O(\sqrt{\delta})}{t^2}\right]= \eta \left (1+O\left (\frac 1{t^2}\right )\right ).
\eee
Hence using $(x_2-x_1)(T^-)=\eta T^-+O(\eta^9)$ from \eqref{datateta}, we obtain by integration in time:
$$(x_2-x_1)(t)= (x_2-x_1)(T^-)+\eta(t-T^-)+O\left (\frac{\eta }{ t}\right )=\eta t+O\left (\frac{\eta }{ t}\right )\ ,$$
and hence, using \eqref{loibetone}, \eqref{estloneboot}:
\bee
\frac{R(t)-t}{t}&=&\frac{x_2-x_1}{t\l_1(1-\beta_1)}-1=\frac{x_2-x_1}{\eta t}(1+O(\eta^{2\delta}))-1\\
&=&O(\eta^{2\delta})\leq \frac 12\eta^\delta
\eee
which closes the $R$ bound in \eqref{esttrubulentregime}.

\end{proof}

We now come back the exact solution $\tilde{\mathcal P}^{\infty}$ of \eqref{dynamicalystem} with data \eqref{datateta} and claim that the corresponding dynamics is frozen for $t\ge T^-$.

 \begin{lemma}[Post interaction dynamics]
\label{lemmapostinteraction}
For $\delta $ sufficiently small and $\eta <\eta ^*(\delta )$, there holds on $[T^-,+\infty)$:
\be
\label{estsortieinteraction}
\left\{\begin{array}{llll}
\l^\infty_1(t)=1+O(\eta), \ \ \l^\infty_2(t)=1+O(\eta)\\
1-\beta^\infty_1(t)=\eta(1+O(\eta^\delta)), \ \ 1-\beta^\infty_2(t)=\eta^3e^{O(\frac 1{\delta})},\\
\Gamma^\infty(t)=O(t)\\
R^\infty=t(1+O(\eta^\delta)).
\end{array}\right.
\ee
\end{lemma}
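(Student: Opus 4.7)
The plan is to run a standard bootstrap argument on $[T^-,+\infty)$ whose key observation is that, once $t\geq T^-=\delta/\eta$, the integral of the interaction rate over all future time is finite (of size $|\log\delta|\ll 1/\delta$), so the parameters essentially freeze at their values at $T^-$. I would assume, on a maximal interval $[T^-,T^*)$ and for a large universal $C_0$,
\begin{equation*}
|\lambda_j^\infty(t)-1|\leq 2C_0\eta,\ \ |1-\beta_1^\infty(t)-\eta|\leq 2C_0\eta^{1+\delta},\ \ 1-\beta_2^\infty(t)\in [\eta^3 e^{-C_0/\delta},\eta^3 e^{C_0/\delta}],\ \ |R^\infty(t)-t|\leq 2C_0\eta^\delta t,
\end{equation*}
and improve each constant into $C_0$. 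Under this bootstrap we have $R\sim t$, $(1-\beta_1)R\gtrsim\delta$, and $b(t)=(1-\beta_2)/(1-\beta_1)\leq \eta\,e^{C_0/\delta}$. Feeding this into Proposition~\ref{sharpmod} together with Proposition~\ref{prop:equivalent Q_b} yields the pointwise bounds
\begin{equation*}
|B_1|\lesssim\frac{b}{t(1+\eta t)},\quad |B_2|\lesssim\frac{1}{t(1+\eta t)},\quad |M_j|\lesssim (1-\beta_j)|\log(1-\beta_j)|\,|B_j|+\frac{b(|1-\mu|+t^{-1})+t^{-1}}{t(1+\eta t)}.
\end{equation*}

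The crucial analytic input is the elementary computation
\begin{equation*}
\int_{T^-}^{+\infty}\frac{d\tau}{\tau(1+\eta\tau)}=\log\!\Bigl(1+\frac{1}{\eta T^-}\Bigr)=\log(1+1/\delta)\leq \frac{C}{\delta}.
\end{equation*}
Applied to $(\beta_2)_t/(1-\beta_2)=B_2/\lambda_2$, this produces $|\log((1-\beta_2^\infty(t))/(1-\beta_2^\infty(T^-)))|\leq C/\delta$, and since $1-\beta_2^\infty(T^-)=\eta^3/\delta^2=\eta^3 e^{O(1/\delta)}$, this gives the saturation $1-\beta_2^\infty(t)=\eta^3 e^{O(1/\delta)}$. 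Applied to $(\beta_1)_t/(1-\beta_1)=B_1/\lambda_1$ with the extra factor $b\leq \eta e^{C_0/\delta}$, one obtains a total variation of size $\eta\,e^{C_0/\delta}/\delta$, which for $\eta<\eta^*(\delta)$ is dominated by $\eta^\delta$; hence $1-\beta_1^\infty(t)=\eta(1+O(\eta^\delta))$, improving the bootstrap. The same smallness closes the bounds on $\lambda_j$ via $(\log \mu)_t = M_2/\lambda_2-M_1/\lambda_1$ and on $\lambda_1$ individually.

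The center and $R$ bounds are then immediate: $(x_2-x_1)_t=\beta_2-\beta_1=\eta(1+O(\eta^\delta+\delta^2/t^2))$, which integrated from the initial value $(x_2-x_1)(T^-)=\delta$ gives $x_2-x_1=\eta t(1+O(\eta^\delta))$, and division by $\lambda_1(1-\beta_1)=\eta(1+O(\eta^\delta))$ yields $R^\infty(t)=t(1+O(\eta^\delta))$. For the phase, the crude bound $|\Gamma_t|=|1/\lambda_2-1/\lambda_1|\leq C$ combined with $\Gamma^\infty(T^-)=0$ trivially gives $\Gamma^\infty(t)=O(t)$, which is all that is required.

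The only nontrivial obstacle is tracking how the logarithmic divergence $|\log\delta|$ coming from $\int_{T^-}^{+\infty}d\tau/(\tau(1+\eta\tau))$ interacts with the initial sizes. For $1-\beta_2$ this log \emph{is} the exponent $O(1/\delta)$ of the saturation (and the $\delta^{-2}$ in the initial data is absorbed by the same $e^{O(1/\delta)}$); for $\beta_1$, $\mu$, and $\lambda_j$ it must be defeated by the $\eta$-smallness, which is precisely what the hierarchy $\eta<\eta^*(\delta)$ guarantees since $\eta\,e^{C_0/\delta}|\log\delta|\ll\eta^\delta$ under this scale separation. No other subtlety arises since $b$ never needs to be small uniformly in $\delta$, only small compared to $\eta^{-\delta}$.
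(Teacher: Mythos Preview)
Your approach is essentially the paper's: a bootstrap on $[T^-,+\infty)$, brute-force bounds on $B_j,M_j$ from Proposition~\ref{sharpmod} and the decay of $Q_{\beta_1}$, and the observation that the total interaction $\int_{T^-}^{\infty}$ is finite of size $O(1/\delta)$, which produces the $e^{O(1/\delta)}$ saturation for $1-\beta_2$ and is beaten by $\eta$-smallness for the other parameters.

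Two small slips to flag. First, the bootstrap constant cannot be universal: closing the $\lambda_j$ (equivalently $\mu$) bound forces $C_0=C_0(\delta)$, exactly as the paper takes $K=K(\delta)$. Indeed your displayed bound for $M_2$ is missing the term $2(1-\mu)\Re(e^{i\Gamma}\overline{Q_{\beta_1}(R)})=O\!\big(|1-\mu|/(t(1+\eta t))\big)$ from \eqref{sharpmodM2} --- there is no extra factor $b$ in front of $|1-\mu|$ --- and this term is what drives a genuine Gronwall step for $\mu$, yielding $|1-\mu|\lesssim (\eta/\delta)\,e^{O(1/\delta)}$ rather than something that closes with an absolute constant. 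Second, under your own bootstrap one has $b\lesssim \eta^2 e^{C_0/\delta}$, not $\eta\,e^{C_0/\delta}$; this only helps. With these two corrections the argument goes through and coincides with the paper's.
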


\begin{proof} We bootstrap the following bounds on $[T^-,+\infty)$,
\be
\label{estroutside}
\left|\begin{array}{lll}
|1-\l_1(t)|+|1-\l_2|\leq K\eta,\\
|1-\beta_1-\eta|\leq K\eta^\delta, \ \ |1-\beta_2|\leq \eta^2\\
R(t)\geq \frac t2\\
\end{array}\right.
\ee
for some large enough universal constant $K=K(\delta)$, and where we omit the $\infty$ subscript for the sake of clarity. 
Notice that the notation $A\lesssim B$ in this context means $A\le C\, B$ with a constant $C$ independent of $\delta $,
assuming $\eta <\eta ^*(\delta )$. 
  
By \eqref{estroutside} we have
\be
\label{esvnroknrkbnrobn} |b|\lesssim \eta
\ee
and using \eqref{boundQ_b} and \eqref{DQbx}, it follows for $R(1-\beta_1)\gtrsim \delta $ that
\bee
&&|Q_{\beta_1}(R)|
\lesssim \frac{1}{\eta t^2},\\
&& |Q'_{\beta_1}(R)|\lesssim \frac{1}{t^2},\\
&& \left|Q_{\beta_2}\left(-\frac{R}{b\mu}\right)\right|\lesssim \frac{b}{(1-\b_1) t^2}\lesssim \frac{1}{t^2}.
\eee
We may therefore estimate in brute force the parameters using Proposition \ref{sharpmod}:
\bee
&&|B_1|\lesssim \frac{\eta}{t^2}+\frac{K\eta^2}{\eta t^2}\lesssim \frac 1{t^2}\\
&&|B_2|\lesssim \frac{1}{\eta t^2}+\frac{K\eta}{\eta t^2}\lesssim \frac{1}{\eta t^2}\\
&&|M_1| \lesssim \frac{(1-\beta_1)|\log (1-\beta_1)|}{t^2}+\frac{K\eta^2}{\eta t^2}+\frac{K\eta}{t^2}\lesssim \frac{\eta |\log \eta|}{t^2}\\
&&|M_2|\lesssim \frac 1{t^2}+\frac{\vert 1-\mu\vert}{\eta t^2}\ .
\eee
We therefore control the speeds on $[T^-,+\infty)$ using \eqref{esttrubulentregime}:
\bee
&&\left|\frac{(\beta_1)_t}{1-\beta_1}\right|\lesssim |B_1|\lesssim \frac 1{t^2},\ \ \mbox{i.e.}\ \ 1-\beta_1(t)=\eta e^{O\left(\frac{1}{T^-}\right)}=\eta(1+O(\eta))\\
&&\left|\frac{(\beta_2)_t}{1-\beta_2}\right|\lesssim |B_2|\lesssim \frac 1{\eta t^2}, \ \ \mbox{i.e.}\ \ 1-\beta_2(t)=\frac{\eta}{(T^-)^2}e^{O\left(\frac{1}{\eta T^-}\right)}=\eta^3e^{O(\frac1{\delta})}
\eee
and similarly for the first size,
$$\left|\frac{(\l_1)_t}{\l_1}\right|\lesssim |M_1|\lesssim \frac{\sqrt{\eta}}{t^2},\ \ \mbox{i.e.} \ \ \l_1(t)=1+O(\eta).$$ Hence:
$$\left|\mu_t\right|\lesssim |M_2|+|M_1|\lesssim  \frac{|1-\mu|}{\eta t^2}+\frac 1{t^2}$$ 
from  which we infer, using $\mu(T^-)=1$,
$$|1-\mu(t)|\lesssim \frac{1}{T^-}+\int_{T^-}^t \frac{|1-\mu(\tau)|}{\eta \tau ^2}d\tau.$$ 
By Gronwall's lemma, we conclude $$|1-\mu(t)|\lesssim \frac{1}{T^-}e^{O\left(\frac{1}{\eta T^-}\right)}=\eta \, e^{O(\frac 1\delta )}\ .$$
 Hence the control of scalings and speeds is closed for $K=K(\delta )$  large enough in \eqref{estroutside}. We now integrate the position. $$(x_2)_t-(x_1)_t=\beta_2-\beta_1=1-\beta_1-(1-\beta_2)=\eta(1+O(\eta))$$ from which we get $$x_2(t)-x_1(t)=\eta (1+O(\eta))(t-T^-)+\eta T^-=\eta t+O(\eta^2t)$$ and $$R(t)=\frac{x_2-x_1}{t\l_1(1-\beta_1)}\geq \frac 23,$$ which concludes the proof of Lemma \ref{lemmapostinteraction}.
\end{proof}

%%%%%%%%%%%%%%%%%%%%%%%%%%%%%%%%%%%%%%%%%%%%%%%%
%%%%%%%%%%%%%%%%%%%%%%%%%%%%%%%%%%%%%%%%%%%%%%%%

\section{Energy estimates}
\label{sectionenergy}

%%%%%%%%%%%%%%%%%%%%%%%%%%%%%%%%%%%%%%%%%%%%%%%%%%%%%%%%%%%%%%%%%%%%%%%%%%%%%%%%%%%%%%%%%%%%%%%%

This section is devoted to the construction of an exact solution to \fref{halfwave} with two-soliton asymptotic behavior and transient turbulent regime. The strategy is based as in \cite{KMRhartree, MRSring} on an energy method near the explicit approximate solution which can be closed thanks to the arbitrary high order expansion of the approximate solution, and the $R(t)\sim t$ distance between the two waves.

%%%%%%%%%%%%%%%%%%%%%%%%%%%%%%%%%%%%%%%%%%%%%%%%%%%%%%%%%

\subsection{Backwards integration and parametrization of the flow}

%%%%%%%%%%%%%%%%%%%%%%%%%%%%%%%%%%%%%%%%%%%%%%%%%%%%%%%%%

Given parameters $$\mathcal P=(\l_1,\l_2,\beta_1,\beta_2,\Gamma,R),\ \ \widetilde{\matchal P}=(\mathcal P, x_1,x_2,\gamma_1,\gamma_2),$$ we let $$\Phi^{(N)}_{\widetilde{\mathcal P}}(x)=\Phi^{(N,1)}_{\pt}(x)+\Phi^{(N,2)}_{\pt}(x)$$ 
with 
$$\Phi^{(N,j)}_{\widetilde{\mathcal P}}(x)=\frac{1}{\l_j^{\frac 12}}V^{(N)}_{j}(y_j,\mathcal P)e^{i\gamma_j},\ \ y_j=\frac{x-x_j}{\l_j(1-\beta_j)}, \ \ j=1,2, $$ constructed in Proposition \ref{propwtobublle}. We now fix one and for all a large enough number $N\gg 1$, and for the rest of the paper, we omit the subscript $N$ in order to ease notations. We then pick a small enough universal constant $\delta >0$ and, for $0<\eta<\eta ^*(\delta )$, we  consider
$$\pt^\infty=(\l_1^{\infty}, \l_2^{\infty},\gamma_1^{\infty}, \gamma_2^{\infty}, x_1^{\infty},x_2^{\infty})$$ 
to be the exact solution to \eqref{dynamicalystem} with data \eqref{datateta} which is well defined on $[T^-,+\infty)$ from Lemma  \ref{lemmapostinteraction}.\\
We now build an exact solution to the full system \eqref{halfwave} by integrating backwards in time from $+\infty$: we let a sequence $T_n\to +\infty$ and consider $u_n(t)$ the solution to
\be
\label{defunt}
\left\{\begin{array}{ll} i \partial_t u_n = |D| u_n - |u_n|^2 u_n,\\
 u_n(T_n)=\Phi_{\pt^{\infty}(T_n)}(x).\end{array}\right.
 \ee
 We will very precisely study the properties of $u_n(t)$. Here and in the sequel, we omit as much as possible the subscript $n$ to ease notations.\\
From standard modulation argument, as the solution remains close in $H^{\frac 12}$ to a modulated tube around the decoupled two solitary waves , we may consider a decomposition of the flow
\be
\label{decopmun}
u(t,x)=\Phi_{\pt(t)}(x)+\e(t,x)
\ee 
where the parameters 
$$\pt(t)=(\l_1(t),\l_2(t),\beta_1(t),\beta_2(t),x_1(t),x_2(t),\Gamma(t),R(t))\ ,$$ with the explicit dependence 
\be
\label{dependancegammar}
 \Gamma=\gamma_2-\gamma_1, \ \ R=\frac{x_2-x_1}{\l_1(1-\beta_1)}\ ,
 \ee
are chosen for each fixed $t$  in order to manufacture suitable orthogonality conditions on the remainders
\be
\label{defej}
\e_j(t,y_j):=\l_j^{\frac 12}(t)\e\big(t,\l_j(t)(1-\b_j(t))y_j+x_j(t)\big)e^{-i\gamma_j(t)}, \ \ j=1,2.
\ee
Observe that 
\be
\label{relationltwo}
\|\e\|_{L^2}^2=(1-\beta_j)\|\e_j\|_{L^2}^2, \ \ j=1,2.
\ee
Let $\omega$ be the symplectic form
\[\omega (f,g)= Im\int f\bar{g}dx=(f,ig),\]
and consider the generalized null space of the operator $i\mathcal L_\beta$
formed of functions $f\in H^{1/2}$
such that $(i\mathcal L_\b)^2f=0$.
This generalized null subspace
consists of $iQ_{\b}$, $\pa_y Q_\b$,
$\Lambda Q_\b$, and $i\rho_\b$,
where $\rho_\b$ is the unique $H^{\frac 12}$ solution to the problem \eqref{nceionceoneo}.
Indeed,
one can directly check that $i\mathcal L_\b (iQ_\b)
=i\mathcal L_\b (\pa_y Q_\b)=0$
and 
$$(i\mathcal L_\b)^2 (\Lambda Q_\b)
=(i\mathcal L_\b)^2 (i\rho_\b)=0.$$ 
We then impose the set of symplectic orthogonality conditions:
\bee
\omega(\e_j,iQ_{\beta_j})=\omega(\e_j,\pa_{y_j}Q_{\beta_j})= 
\omega(\e_j,\Lambda Q_{\beta_j})=\omega (\e_j,i\rho_j)=0, \ \ j=1,2,
\eee
or equivalently,
\be
\label{orthoe}
(\e_j,Q_{\beta_j})=(\e_j,i\pa_{y_j}Q_{\beta_j})= (\e_j,i\Lambda Q_{\beta_j})=(\e_j,\rho_j)=0, \ \ j=1,2.
\ee

Let $\sigma_j:=(\lambda_j, x_j,\gamma_j,\b_j)$, $j=1,2$ and $\Sigma$ be a compact subset of 
\[\big(\R_+^\ast\times \R\times\R\times (1-\b_\ast,1)\big)^2.\]
For $(\sigma_1,\sigma_2)\in\Sigma$ and $f\in H^{1/2}$, we define
\[\mathcal S_{\sigma_j}f (x)=\frac{1}{\l_j^{1/2}}f\Big(\frac{x-x_j}{\l_j(1-\b_j)}\Big)e^{i\gamma_j}.\]
The existence and uniqueness for each $t$ of $\pt(t)$ ensuring the decomposition \fref{decopmun}, \fref{orthoe} is now a standard consequence of the implicit function theorem 
applied to the function
$G:H^{1/2}\times \Sigma\to \R^8$, $G(\psi,\sigma)=0$, where $G$ is defined by
\begin{align*}
G(\psi,\sigma)
=\left(\begin{array}{llll||||}
(\psi-\mathcal S_{\sigma_1}V_1(\mathcal P)-\mathcal S_{\sigma_2}V_2(\mathcal P), \mathcal S_{\sigma_1}Q_{\b_1})\\
(\psi-\mathcal S_{\sigma_1}V_1(\mathcal P)-\mathcal S_{\sigma_2}V_2(\mathcal P), \mathcal S_{\sigma_1}i\pa_xQ_{\b_1})\\
(\psi-\mathcal S_{\sigma_1}V_1(\mathcal P)-\mathcal S_{\sigma_2}V_2(\mathcal P), \mathcal S_{\sigma_1}i\Lambda Q_{\b_1})\\
(\psi-\mathcal S_{\sigma_1}V_1(\mathcal P)-\mathcal S_{\sigma_2}V_2(\mathcal P), \mathcal S_{\sigma_1}\rho_{\b_1})\\
(\psi-\mathcal S_{\sigma_1}V_1(\mathcal P)-\mathcal S_{\sigma_2}V_2(\mathcal P), \mathcal S_{\sigma_2}Q_{\b_2})\\
(\psi-\mathcal S_{\sigma_1}V_1(\mathcal P)-\mathcal S_{\sigma_2}V_2(\mathcal P), \mathcal S_{\sigma_2}i\pa_xQ_{\b_2})\\
(\psi-\mathcal S_{\sigma_1}V_1(\mathcal P)-\mathcal S_{\sigma_2}V_2(\mathcal P), \mathcal S_{\sigma_2}i\Lambda Q_{\b_2})\\
(\psi-\mathcal S_{\sigma_1}V_1(\mathcal P)-\mathcal S_{\sigma_2}V_2(\mathcal P), \mathcal S_{\sigma_2}\rho_{\b_2})
\end{array}\right),
\end{align*}
where $\sigma=(\sigma_1,\sigma_2)$ and $\mathcal{P}=(\l_1,\l_2,\b_1,\b_2,\Gamma, R)$.
The key ingredient here is that, for any $(\sigma_1^0,\sigma_2^0)\in\Sigma$,
the Jacobian matrix
$$\pa_\sigma G\big(\mathcal S_{\sigma_1^0}V_1^{(N)}+\mathcal S_{\sigma^0_2}V_2^{(N)},\sigma\big)\Big|_{\sigma=(\sigma_1^0,\sigma_2^0)}$$ 
is invertible,
which follows from the fact that the matrix
$$A_j=\left(\begin{array}{llll}
(\Lambda Q_{\beta_j},Q_{\beta_j}) & (\Lambda Q_{\beta_j},i\pa_{y_j}Q_{\beta_j}) &  (\Lambda Q_{\beta_j},i\Lambda Q_{\beta_j}) & (\Lambda Q_{\beta_j},\rho_j)\\
(i Q_{\beta_j},Q_{\beta_j}) & (i Q_{\beta_j},i\pa_{y_j}Q_{\beta_j}) &  (i Q_{\beta_j},i\Lambda Q_{\beta_j}) & (i Q_{\beta_j},\rho_j)\\
(\pa_{y_j}Q_{\beta_j}, Q_{\beta_j}) & (\pa_{y_j}Q_{\beta_j},i\pa_{y_j}Q_{\beta_j}) &  (\pa_{y_j}Q_{\beta_j},i\Lambda Q_{\beta_j}) & (\pa_{y_j}Q_{\beta_j},\rho_j)\\
(\Sigma_j,Q_{\beta_j}) & (\Sigma_j,i\pa_{y_j}Q_{\beta_j}) &  (\Sigma_j,i\Lambda Q_{\beta_j}) & (\Sigma_j,\rho_j)
\end{array}\right)
$$ 
with 
\be
\label{defsigmaj}
\Sigma_j:=y\pa_yQ_{\beta_j}+(1-\beta_j)\pa_{\beta_j}Q_{\beta_j}
\ee
 is non degenerate 
\be
\label{hessianinvert}
\lim_{\beta_j\to 1}|\det A_j|\neq 0, \ \ j=1,2,
\ee
see Appendix \ref{invertaj}. 

%%%%%%%%%%%%%%%%%%%%%%

\subsection{Localized $H^{\frac 12}$-energy}

%%%%%%%%%%%%%%%%%%%%%%

The heart of our analysis is the derivation of a suitable monotonicity formula for a {\it suitable localized $H^{\frac 12}$ energy identity}. The localization procedure is mandatory in order to dynamically adapt the functional to the dramatically changing size of the bubble, but this will lead to serious difficulties due to nonlocal
nature of the problem and the slow decay of the solitary wave. The limiting Szeg\H{o} problem will arise in the form of various different estimates for $\Pi^{\pm}\e$ which will be essential to close the estimates.\\

Let us start by introducing suitable cut-off functions which adapt the energy functional to the dramatic change of size of the second solitary wave.\\

\noindent{\it Space localization}. We pick explicitly a sufficiently smooth non increasing function
\be
\label{defPhione}
\Psi_1(z_1)=\left|\begin{array}{lll} 1\ \ \mbox{for}\ \ z_1\leq \frac 14\\ (1-z_1)^{10}\ \ \mbox{for}\ \ \frac 12\leq z_1\leq 1\\ 0\ \ \mbox{for}\ \ z_1\geq 1.\end{array}\right..
\ee
and let
\be
\label{defPhionebis}
\Phi_1(t,z_1)=\Psi_1+b(t)(1-\Psi_1)=\left|\begin{array}{ll} 1\ \ \mbox{for}\ \ z_1\leq \frac 14\\ b(t)\ \ \mbox{for}\ \ z_1\geq 1.\end{array}\right.
\ee
From this function of $(t,z_1)$ we deduce a function of $(t,y_1)$ and $(t,x)$ via the following change of variables,
$$\phi(t,x)=\phi_1(t,y_1)=\Phi_1(t,z_1), \ \ z_1=\frac{y_1}{R(t)(1-b(t))}.$$
We then define the localization associated to kinetic momentum
\be
\label{defzetabeta}
\zeta(t,x)=\beta_1(t)+(1-\beta_1(t))(1-\phi (t,x)),
\ee
so that
\be
\label{defzeta}
\zeta (t,x)=\zeta_1(t,y_1)=\left\{\begin{array}{ll} \beta_1(t)\ \ \mbox{for}\ \ y_1\leq \frac{(1-b(t))R(t)}4\\ \beta_2 (t)\ \ \mbox{for}\ \ y_1\geq (1-b(t))R(t).
\end{array}\right..
\ee
similarly, let
\be
\label{defPhionebisbis}
\widetilde{\Phi_1}(t,z_1)=\mu (t)\Psi_1(z_1)+(1-\Psi_1(z_1))=\left|\begin{array}{ll} \mu(t)\ \ \mbox{for}\ \ z_1\leq \frac 14\\ 1\ \ \mbox{for}\ \ z_1\geq 1,\end{array}\right.
\ee
with the same change of variables as before,
$$\tilde{\phi}(t,x)=\tilde{\phi}_1(t,y_1)=\widetilde{\Phi_1}(t,z_1), \ \ z_1=\frac{y_1}{R(t)(1-b(t))}
\ .$$
We define the localization attached to the localization of mass,
\be
\label{deftheta}
\theta(t,x)=\frac{1}{\l_2(t)}\tilde{\phi}(t,x)=\theta_1(t,y_1), 
\ee
so that
$$\theta(t,y_1)=\left\{\begin{array}{ll}\frac{1}{\l_1(t)}\ \ \mbox{for}\ \ y_1\leq \frac{(1-b(t))R(t)}4\\ \frac{1}{\l_2(t)} \ \ \mbox{for}\ \ y_1\geq (1-b(t))R(t)
\end{array}\right..
$$
Explicit estimates used throughout the proof involving functions $\zeta,\theta$ are stated in Appendix \ref{zppappcomm}.\\

\noindent{\it Localized energy}. We now introduce the localized energy functional:
\bea
\label{defgt}
\nonumber \mathcal G(\eps):&=&\frac 12\left[(|D|\e-\zeta D\e,\e)+(\theta\e,\e)\right]\\
 &-& \frac 14\left[ \int_\R(|\e+\Phi|^4-|\Phi|^4)dx -4(\e,\Phi|\Phi|^2)\right]
\eea
Notice that the inner products 
are taken  in the $x$ variable, and that $\Phi $ denotes the approximate solution $\Phi _{\pt (t)}$. This functional will be used as our main energy functional.  We indeed first claim that $\mathcal G$ is a coercive functional.

\begin{proposition}[Coercivity of the localized energy]
\label{coerclinearizedenergy}
There holds\footnote{for some universal coercivity constant which is related to the coercivity of the limiting Szeg\H{o} functional \eqref{ceroclimiting}.} :
\be
\label{coerc}
\mathcal G(\eps)\gtrsim(1-\beta_1)\left[\int |\e_1|^2dy_1+\int\phi_1|\Dhalf \e_1^+|^2dy_1\right]+ \int |\Dhalf \e_1^-|^2dy_1
\ee
where $\e_1$ was defined in \eqref{defej}.
\end{proposition}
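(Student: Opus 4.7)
The strategy is to reduce the coercivity of $\mathcal G$ to the coercivity of the linearized operators $\mathcal{L}_{\beta_1},\mathcal{L}_{\beta_2}$ acting in the respective bubble variables, via a two-bubble partition that is made compatible with $\zeta,\theta$ by design. The first step is to expand the nonlinear part of $\mathcal G$. A direct algebraic computation of $|{\e}+\Phi|^4-|\Phi|^4-4|\Phi|^2\Re(\bar\e\Phi)$ gives
$$
\mathcal G(\e)=\mathcal Q(\e)-\int\!\Bigl[|\e|^2\Re(\bar\e\Phi)+\tfrac14|\e|^4\Bigr]dx,\qquad
\mathcal Q(\e):=\tfrac12\bigl[(|D|\e-\zeta D\e,\e)+(\theta\e,\e)\bigr]-\int|\Phi|^2|\e|^2dx-\tfrac12\Re\!\int\Phi^2\bar\e^{\,2}dx .
$$
The cubic and quartic remainders are controlled by Sobolev embedding together with the smallness of $\|\e\|_{H^{1/2}}$ inside the bootstrap, so they are absorbed into the coercive lower bound, and the proof reduces to showing coercivity of $\mathcal Q$.

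Next, I would decompose $\Phi=\Phi_1+\Phi_2$ and observe that the cross quadratic contributions $\int\Re(\Phi_1\overline{\Phi_2})|\e|^2$ and $\int \Phi_1\Phi_2\bar\e^{\,2}$ are of size $O\bigl(R^{-1}(1+(1-\beta_1)R)^{-1}\|\e\|_{L^2}^2\bigr)$, using the weighted $\|\cdot\|_\beta$ control on $Q_{\beta_j}$ from Proposition \ref{boundQb} and the spatial separation of the two bubbles; these are negligible against the coercive lower bound. Then I would perform a partition of unity adapted to $\phi$: write $1=\phi+(1-\phi)$ (up to smoothed cutoffs) so that on the support of $\phi$ we have $\zeta\equiv\beta_1,\theta\equiv 1/\lambda_1$, while on the support of $1-\phi$ we have $\zeta\equiv\beta_2,\theta\equiv 1/\lambda_2$. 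On each piece, after rescaling to the variable $y_j$ via \eqref{defej} and using the Fourier identity $\int(|D|\e-\beta_j D\e)\bar\e\,dx=\lambda_j^{-1}\int(|D|-\beta_jD)\e_j\bar\e_j\,dy_j$, the quadratic form becomes, up to localization errors,
$$
\mathcal Q(\e)\;\approx\;\tfrac12\sum_{j=1,2}\frac{1-\beta_j}{\lambda_j}\bigl(\mathcal L_{\beta_j}\e_j,\e_j\bigr).
$$
Splitting into $\pm$ frequencies gives $\frac{1-\beta}{\lambda}(\mathcal L_\beta f,f)=\tfrac{1-\beta}{\lambda}(|D|f^+,f^+)+\tfrac{1+\beta}{\lambda}(|D|f^-,f^-)+\tfrac{1-\beta}{\lambda}\|f\|_{L^2}^2-\text{(nonlinear)}$, which explains the dissymmetric weights in \eqref{coerc}: the $\e_j^-$ piece automatically picks up an $O(1)$ coefficient via the factor $\tfrac{1+\beta_j}{1-\beta_j}$, while $\e_j^+$ and $\|\e_j\|_{L^2}^2$ only carry a factor $1-\beta_j$.

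To convert the two-bubble coercivity into the one-bubble statement of \eqref{coerc}, I would use $\lambda_1,\mu\sim 1$ and the identity $\|D^{1/2}\e_1\|_{L^2}^2=\mu\,\|D^{1/2}\e_2\|_{L^2}^2$ from the scaling. On the bubble 1 region one has $\phi_1\simeq 1$ and the $j=1$ contribution directly yields $(1-\beta_1)\|\e_1\|_{L^2}^2+(1-\beta_1)\|D^{1/2}\e_1^+\|_{L^2}^2+\|D^{1/2}\e_1^-\|_{L^2}^2$. On the bubble 2 region one has $\phi_1\simeq b$, so the $j=2$ contribution $(1-\beta_2)\|D^{1/2}\e_2^+\|^2=(1-\beta_1)b\,\|D^{1/2}\e_2^+\|^2$ exactly matches the weight $(1-\beta_1)\phi_1$ in \eqref{coerc}; similarly for the mass term, using \eqref{relationltwo}. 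The orthogonality conditions \eqref{orthoe} to $Q_{\beta_j},\,i\partial_y Q_{\beta_j},\,i\Lambda Q_{\beta_j}$ and $\rho_{\beta_j}$ are precisely what allows one to quotient out the two-dimensional kernel and the two-dimensional generalized kernel (the latter carrying the unique negative direction of $\mathcal L_{\beta_j}$), and the resulting coercivity $(\mathcal L_{\beta_j}f,f)\ge c\,(\|D^{1/2}f^+\|_{L^2}^2+\tfrac{1+\beta_j}{1-\beta_j}\|D^{1/2}f^-\|_{L^2}^2+\|f\|_{L^2}^2)$ is obtained by an argument parallel to Step~1--Step~2 of Proposition \ref{invertibility1}, perturbing from the Szeg\H o coercivity \eqref{ceroclimiting} (this is what Appendix F is for).

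The main technical obstacle is that $|D|$ does not commute with the spatial cutoffs, so the rigorous version of Step 3 requires commutator estimates for $[|D|^{1/2},\chi]$ (which is precisely the role of Appendix D). Two features help: first, the cutoff transitions take place in the annulus where $|y_1|\sim R(1-b)$, i.e. far from both solitons, so that the commutator acts on terms weighted by $1/R$; second, one localizes $\e^+$ and $\e^-$ separately, exploiting that the kinetic form splits diagonally on $L^2_+\oplus L^2_-$ and that the $\zeta D$ part of the energy is the one responsible for making the positive and negative frequencies play radically different roles in the two bubble regions. Together with the smallness of the cross terms $\Phi_1\overline\Phi_2$, these commutator corrections are at most $O\bigl(R^{-1}\|\e\|_{H^{1/2}}^2\bigr)$ and are absorbed by the coercive main term. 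Combining all the pieces yields \eqref{coerc}.
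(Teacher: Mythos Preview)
Your outline captures the correct architecture --- expand the quartic, discard cubic/quartic remainders, split by bubbles, treat $\e^+$ and $\e^-$ asymmetrically, and use commutator bounds for the localization. However, the paper's implementation differs from yours in a way that matters, and one step you gloss over is a genuine gap.

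\textbf{Different route to the core coercivity.} You propose to prove a uniform-in-$\beta_j$ coercivity $(\mathcal L_{\beta_j}f,f)\gtrsim \|D^{1/2}f^+\|^2+\tfrac{1+\beta_j}{1-\beta_j}\|D^{1/2}f^-\|^2+\|f\|^2$ under all four orthogonality conditions, then apply it to the localized $\e_j$. The paper never proves or uses such a statement. Instead it works \emph{directly} with the limiting Szeg\H o coercivity \eqref{coercSzeg\H{o}}: it localizes $\e_1^+$ by a cutoff $\chi_\ell$, observes that $\chi_\ell\e_1^+$ is no longer in $L^2_+$, and then re-projects by writing $\chi_\ell\e_1^+=(\chi_\ell\e_1^+)^++(\chi_\ell\e_1^+)^-$. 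The newly created negative-frequency piece $(\chi_\ell\e_1^+)^-$ is shown to be small in the weighted $L^2$ sense by the estimate \eqref{estcomm}, so that the potential term sees essentially only $(\chi_\ell\e_1^+)^+\in H^{1/2}_+$, to which the Szeg\H o coercivity applies. The second bubble is treated the same way after rescaling to $y_2$. This avoids having to prove the $\mathcal L_{\beta}$ coercivity uniformly near $\beta=1$, which is not immediate from Proposition~\ref{invertibility1} (that proposition gives invertibility, not positivity under symplectic orthogonality).

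\textbf{The gap: orthogonality does not survive localization.} You write that the conditions \eqref{orthoe} ``quotient out'' the bad directions of $\mathcal L_{\beta_j}$. But \eqref{orthoe} are conditions on $\e_j$, not on the localized $\chi\e_j$ (or on $\phi\e$, in your scheme). After multiplying by a cutoff, the scalar products with $Q_{\beta_j}$, $i\partial_y Q_{\beta_j}$, etc.\ are no longer zero, and without them no coercivity holds. The paper closes this by showing, using the decay of $Q^+$ and the fact that the cutoff equals $1$ on a large ball, that $((\chi_\ell\e_1^+)^+,Q^+)^2=O((1-\beta_1)^{1/2}+R^{-1})\|\e_1\|_{L^2}^2$, and similarly for the other needed direction; these small errors are then absorbed. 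You need an analogous step, and it is not automatic --- in particular it requires the cutoff transition zone to sit far from both solitons, which is why the paper uses $\chi_\ell,\chi_r$ rather than $\phi,1-\phi$ directly (and then checks $\phi_1\ge\chi_\ell^2+b\chi_r^2$ at the end). Note also that only two of the four conditions in \eqref{orthoe} are actually invoked in the paper's argument.
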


The proof adapts the argument in \cite{MMannals2} and relies on a careful localization of the kinetic energy and the coercivity of the limiting Szeg\H{o} quadratic form. A key fact is that the relative distance $R$ between the solitary waves is always large. The presence of the localization $\phi_1$ in \eqref{coerc} is an essential difficulty of the analysis and shows that one looses control of $\|D^{\frac 12}\e^+\|_{L^2}$ as $\beta_1\to 1$ (through the factor $1-\beta_1$), which reflects the singular nature of the bifurcation $Q^+\to Q_\beta$. This will be a fundamental issue for the forthcoming analysis. The proof of Proposition \ref{coerclinearizedenergy} is detailed in  Appendix \ref{appendixcoerc}.

%%%%%%%%%%%%%%%%%%%%%

\subsection{Bootstrap argument}

%%%%%%%%%%%%%%%%%%%%%

Since $\e(T_n)=0$ and $\mathcal P(T_n)=\matchal P^\infty(T_n)$, we run a bootstrap argument in the following form.  Let 
\be
\label{defbetatilde}\tilde{\beta}_j:=\log(1-\beta_j)
\ee
and 
\bea
\label{defdeltas}
&&|\Delta \l_j|(t):=\sup_{\tau\in [t,T_n]}|\l_j-\l_j^{\infty}|(\tau), \ \ |\Delta \tilde{\beta}_j|(t):=\sup_{\tau\in [t,T_n]}|\tilde{\beta}_j-\tilde \beta_j^{\infty}|(\tau),\\
\label{defdeltasbis}
&& \ \ |\Delta R|(t):=\sup_{\tau\in [t,T_n]}|R-R^{\infty}|(\tau), \ \ |\Delta \Gamma|(t):=\sup_{\tau\in [t,T_n]}|\Gamma-\Gamma^\infty|(\tau),
\eea
we assume on some interval $[T,T_n]$, with $T_{\rm in}\le T\le T_n$, the $H^1$-bounds:
\be
\label{bootdynamicalproved}
\forall t\in [T_{\rm in},T_n], \ \  
\left\{\begin{array}{ll}
\mathcal G(\eps(t))\leq \frac{1}{t^{\frac N2}}\\
\|\e(t)\|^2_{H^1}\leq \frac{1}{t^{\frac N4}}
\end{array}\right.
\ee
and the bounds on the parameters:\\
\noindent{1. post interaction estimates}: for $t\in [T^-,T_n]\cap [T,T_n]$,
\be
\label{boootbound}
\left\{\begin{array}{lll}
|\Delta R|\leq \frac{1}{t^{\frac N8-1}}\\
|\Delta \tilde{\beta}_j|+|\Delta \Gamma|\leq\frac{1}{t^{\frac{N}{8}}},\\
\sum_{j=1,2}|\Delta \l_j|\leq \frac{1}{t^{\frac N8+1}};
\end{array}\right.
\ee
\noindent{2. rough turbulent bounds}: for $t\in [T_{\rm in},T^-]\cap [T,T_n]$,
\be
\label{esttrubulentregimelossy}
\left\{\begin{array}{llll}
|\l_1-1|+|\l_2-1|\leq \frac 1t\\
\frac{\eta}2\leq 1-\beta_1(t)\leq 2\eta, \ \ \frac12\leq t^2b(t)\leq 2\\
|\Gamma(t)|\leq \sqrt{\delta}\\
\frac t2\leq R\leq 2t.
\end{array}\right.
\ee
The heart of our analysis is that all these bounds can be improved.

\begin{proposition}[Bootstrap]
\label{propboot}
For $N\geq N^*$ large enough and $0<\eta<\eta^*(N)$ small enough, the following holds:
\be
\label{bootdynamicalprovedfinal}
\forall t\in [T,T_n], \ \  
\left\{\begin{array}{ll}
\mathcal G(\eps(t))\lesssim \frac{1}{Nt^{\frac N2}}\\
\|\e(t)\|^2_{H^1}\lesssim \frac{1}{Nt^{\frac N4}}
\end{array}\right.
\ee
and the bounds on the parameters:\\
\noindent{1. post interaction estimates}: for $t\in [T^-,T_n]\cap [T,T_n]$,
\be
\label{boootboundfinal}
\left\{\begin{array}{lll}
|\Delta R|\lesssim \frac{1}{Nt^{\frac N8-1}}\\
|\Delta \tilde{\beta}_j|+|\Delta \Gamma|\lesssim\frac{1}{Nt^{\frac{N}{8}}},\\
\sum_{j=1,2}|\Delta \l_j|\lesssim \frac{1}{Nt^{\frac N8+1}};
\end{array}\right.
\ee \noindent{2. rough turbulent bounds}: for $t\in [T_{\rm in},T^-]\cap [T,T_n]$, $\mathcal P$ satisfies \eqref{esttrubulentregime}.
\end{proposition}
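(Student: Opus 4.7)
The plan is to close the bootstrap assumptions \eqref{bootdynamicalproved}, \eqref{boootbound}, \eqref{esttrubulentregimelossy} via three intertwined steps: derivation of sharp modulation equations for the geometrical parameters $\widetilde{\mathcal P}(t)$; a monotonicity estimate for the localized energy $\mathcal G(\e)$ with only a $1/t$ loss; and integration backwards in time from the boundary condition $\e(T_n)=0$.

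\medskip

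\emph{Modulation and reduced dynamics.} Differentiating the orthogonality conditions \eqref{orthoe} in time and using both the equation $i\pa_t u = |D|u - |u|^2 u$ and the identity $i\pa_t \Phi - |D|\Phi + |\Phi|^2\Phi = \l_1^{-3/2}\mathcal E_1 e^{i\gamma_1} + \l_2^{-3/2}\mathcal E_2 e^{i\gamma_2}$ inherited from \eqref{expressioneone}--\eqref{expressionetwo}, we obtain a linear system for $\frac{d\widetilde{\mathcal P}}{dt}$. Non-degeneracy \eqref{hessianinvert} of the Jacobian together with Proposition \ref{sharpmod} yield ODEs of the form
\[\frac{d\mathcal P}{dt} = F(\mathcal P) + O\big(\mathcal G(\e)^{1/2}/R\big) + O\big(R^{-N-1}\big),\]
where $F$ is the right-hand side of the idealized system \eqref{dynamicalystem}. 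Under the bootstrap bound on $\mathcal G(\e)$, the perturbative terms are $O(t^{-N/4 - 1})$, which feed into Lemma \ref{propturbulent} on $[T_{\rm in}, T^-]$ and Lemma \ref{lemmapostinteraction} on $[T^-, T_n]$ through a direct Gronwall argument, improving \eqref{boootbound} to \eqref{boootboundfinal} and yielding the sharp turbulent bounds \eqref{esttrubulentregime}.

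\medskip

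\emph{Energy monotonicity.} The heart of the proof is the estimate
\[\Big|\frac{d}{dt}\mathcal G(\e(t))\Big| \lesssim \frac{\mathcal G(\e)}{t} + \frac{C_N}{t^{N+1}}.\]
Writing the remainder equation $i\pa_t \e = |D|\e - 2|\Phi|^2 \e - \Phi^2 \overline{\e} - \widetilde{\mathcal E} + O(\e^2)$, we differentiate $\mathcal G(\e)$ in time and organize the terms as: (i) leading quadratic contributions that, after rearrangement and use of the modulation equations, reduce to the symmetrized action of $\mathcal L_{\beta_j}$ localized by $\zeta,\theta$ around each bubble, giving a favorable sign modulo lower order terms; (ii) transport terms generated by $\pa_t \zeta$, $\pa_t \theta$ and the commutator $[|D|,\zeta]$, estimated using the cut-off properties derived in Appendix E; (iii) cubic and quartic terms in $\e$ absorbed via the coercivity of Proposition \ref{coerclinearizedenergy} and Sobolev embedding; (iv) source terms of size $O(\mathcal G(\e)^{1/2}\|\widetilde{\mathcal E}\|_{H^{-1/2}})$, controlled by the high-order accuracy $\|\mathcal E_j\|_{H^{-1/2}} \lesssim t^{-N-1}$ from Proposition \ref{propwtobublle}. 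Parts (iii)--(iv) are soft; the challenge is to extract a clean $1/t$ loss from (i)--(ii), which uses two ingredients: a wave-type algebra for the cut-offs (reminiscent of \cite{MMwave}) and the splitting $\e = \Pi^+ \e + \Pi^- \e$. Since $\Pi^\pm \e$ propagate in opposite directions, the naive loss $|\pa_t \zeta| \sim 1/t$ in the interaction zone $\{y_1 \sim R\}$ acquires a definite sign and cancels with the commutator contribution, yielding the sharp $1/t$ loss.

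\medskip

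\emph{Closing the bootstrap.} Given the above monotonicity, Gronwall combined with $\mathcal G(\e(T_n))=0$ yields $\mathcal G(\e(t)) \lesssim C_N/(Nt^{N/2})$ on $[T,T_n]$, improving the first line of \eqref{bootdynamicalproved} for $N$ large. The $H^1$ bound in the second line is propagated by an analogous but higher-regularity variant of the same computation, again exploiting the backwards boundary condition. Combined with the parameter improvement from the modulation step, this closes the bootstrap. The main obstacle throughout is the energy monotonicity itself: the nonlocality of $|D|$, the degeneracy of $\mathcal L_{\beta_j}$ as $\beta_j \to 1$, and the dramatic variation of $b(t)$ between the turbulent regime $b \sim 1/t^2$ and the saturation $b \sim \eta^2 e^{O(1/\delta)}$ all conspire to make a naive localization lose a $\log t$ or $1/b$ factor. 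Recovering the sharp $1/t$ loss crucially requires combining the wave algebra of the cut-offs with the Szeg\H{o} frequency splitting, essentially reducing the most singular part of the energy to a Szeg\H{o}-type quadratic form where the coercivity \eqref{ceroclimiting} applies.
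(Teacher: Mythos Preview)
Your sketch captures the global architecture correctly (modulation, energy monotonicity, backward integration, $\Pi^\pm$ splitting, wave algebra for cut-offs). However, the claimed pointwise estimate
\[\Big|\frac{d}{dt}\mathcal G(\e(t))\Big| \lesssim \frac{\mathcal G(\e)}{t} + \frac{C_N}{t^{N+1}}\]
is stronger than what the energy computation actually yields, and this is a genuine gap in the turbulent regime $t\in[T_{\rm in},T^-]$. The problematic term is the mass-localization contribution $((\pa_t\theta - \pa_x\theta)\e^-,\e^-)$: since $|\lambda_1 - \lambda_2| \lesssim 1/t$ there (not $\lesssim \eta$ as after $T^-$), one only gets a bound $\|\widetilde{\e_1}\|_{L^2}^2/t^2$ with $\widetilde{\e_1} = \e_1^-/\langle z_1\rangle^{(1+\alpha)/2}$. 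But the coercivity \eqref{coerc} gives only $(1-\beta_1)\|\e_1\|_{L^2}^2 \lesssim \mathcal G$, hence $\|\widetilde{\e_1}\|_{L^2}^2 \lesssim \mathcal G/\eta$, and for $t \ll 1/\eta$ the term $\|\widetilde{\e_1}\|_{L^2}^2/t^2$ is \emph{not} dominated by $\mathcal G/t$. Neither the wave algebra nor the $\Pi^\pm$ splitting fixes this; the paper introduces a separate virial-type space-time bound
\[\int_t^{T_n}\frac{\|\widetilde{\e_1}(\tau)\|_{L^2}^2}{\tau}\,d\tau \lesssim \int_t^{T_n}\Big[\mathcal G(\tau) + \tau^{-N-1}\Big]\,d\tau,\]
obtained by differentiating a weighted $L^2$ quantity and using that $\e_1^-$ satisfies a transport equation at speed $\sim 1/(1-\beta_1)$ (Step 7 of the proof of Proposition \ref{propenergy}). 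Only after combining this with the weaker differential inequality \eqref{bvubivbbrirb} and integrating does the energy bound close. Your sketch treats the sharp $1/t$ loss as already achieved, but this extra mechanism is not supplied by the ingredients you list.

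A smaller point: for the post-interaction parameter bounds \eqref{boootboundfinal}, a ``direct Gronwall'' on $|\mathcal P - \mathcal P^\infty|$ using only the generic Lipschitz bounds $|\pa M_j|,|\pa B_j| \lesssim 1/t$ does not close $|\Delta\lambda_j|$ at the rate $t^{-N/8-1}$. The paper needs the sharper $|\pa_\Gamma M_j| + |R\pa_R M_j| + |\pa_{\tilde\beta_k}M_j| \lesssim 1/t^2$ for $t\geq T^-$ (Corollary \ref{5.77}), which relies on the post-interaction smallness $|1-\mu| \lesssim \eta$ and the specific structure of the leading term in $M_2$; Lemma \ref{lemmapostinteraction} concerns only the exact solution $\mathcal P^\infty$ and does not by itself give the difference estimates.
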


Of course, the bounds \eqref{bootdynamicalprovedfinal}, \eqref{boootboundfinal}, \eqref{esttrubulentregime} improve on \eqref{bootdynamicalproved}, \eqref{boootbound}, \eqref{esttrubulentregimelossy} for $N$ universal large enough, so that we can finally set $T=T_{\rm in}$. Proposition \ref{propboot} is the heart of the analysis and implies Theorem \ref{thmmain} through a now classical argument which we detail in Subsection \ref{proofthm} for the convenience of the reader.\\

From now until Subsection \ref{proofthm}, we assume the bounds \eqref{bootdynamicalproved}, \eqref{boootbound}, \eqref{esttrubulentregimelossy} and aim at improving them. Since $t\geq T_{in}=\frac{1}{\eta^{2\delta}}$, we will systematically use the bound $$\frac{1}{\eta^C t^{\sqrt{N}}}\leq 1\ \ \mbox{for}\ \ N\geq N(\delta),  \  \eta<\eta^*(N).$$

Let us also observe from \eqref{boootbound}, \eqref{esttrubulentregimelossy}, \eqref{estsortieinteraction} injected into Proposition \ref{sharpmod} the bounds: $\forall t\in[T_{\rm in},T_n]$,
\be
\label{boundessential}
|B_1|+|M_1|\lesssim \frac bt, \ \ |B_2|\lesssim \frac 1t, \ \ |M_2|\lesssim \frac 1{t^2}.
\ee

%%%%%%%%%%%%%%%%%%%%%%%%%%%%%%%%%%%%%%%%%%%%%%%%%%%%%%%%%

\subsection{Equation for $\e$}

%%%%%%%%%%%%%%%%%%%%%%%%%%%%%%%%%%%%%%%%%%%%%%%%%%%%%%%%%

Let us start by writing the equation for $\e$. Using $\frac{d s_j}{dt}=\frac{1}{\l_j}$,
we compute from \fref{dependancegammar} the generalized modulation equations:
\be
\label{eqgammeexacte}
\Gamma_{s_1}=\frac{(\gamma_2)_{s_2}}{\mu}-(\gamma_1)_{s_1}=\frac{1}{\mu}-1+\frac{(\gamma_2)_{s_2}-1}{\mu}-((\gamma_1)_{s_1}-1), \ \  \Gamma_{s_2}=\mu \Gamma_{s_1}
\ee
and 
\bea
\label{eqgammeexactebis}
\nonumber R_{s_1}&=&1-b+(B_1-M_1)R+\frac{1}{1-\beta_1}\left(\frac{(x_2)_{s_2}}{\l_2}-\beta_2\right)-\frac{1}{1-\beta_1}\left(\frac{(x_1)_{s_1}}{\l_1}-\beta_1\right)\\
& - &R\left(\frac{(\l_1)_{s_1}}{\l_1}-M_1\right)+R\left(\frac{(\beta_1)_{s_1}}{1-\beta_1}-B_1\right)
\eea
We compute by construction:
$$i\pa_t \Phi_{\tilde{P}}-|D|\Phi_{\tilde{P}}+\Phi_{\tilde{P}}|\Phi_{\tilde{P}}|^2=\Psi+\sum_{j=1}^2\frac{1}{\l_j^{\frac 32}}S_j\left(t,y_j\right)e^{i\gamma_j}, \ \ j=1,2,$$ 
where 
\bea
\label{defsj}
\nonumber &&S_j(t,y_j) :=  -i\left[\frac{(\l_j)_{s_j}}{\l_j}-M_j\right]\Lambda V_j-\frac{1}{1-\b_j}\left[\frac{(x_j)_{s_j}}{\l_j}-\beta_j\right]i\pa_{y_j}V_j\\
\nonumber& + & \left[\frac{(\b_j)_{s_j}}{1-\b_j}-B_j\right]i[y_j\pa_{y_j}V_j+(1-\beta_j)\pa_{\beta_j}V_j]-[(\gamma_j)_{s_1}-1]V_j\\
& + & \tilde{S}_j
\eea
encodes the deviation of modulation equations from the idealized dynamical system \eqref{dynamicalystem} with the lower order error computed from \fref{eqgammeexacte}:
\bea
\label{tildestwobis}
&&\tilde{S}_1:= i\left[\frac{\gamma_{s_2}-1}{\mu}-(\gamma_{s_1}-1)\right]\frac{\pa V_1}{\pa \Gamma}\\
\nonumber & + &i \Bigg\{\frac{1}{1-\beta_1}\left(\frac{x_{s_2}}{\l_2}-\beta_2\right)-\frac{1}{1-\beta_1}\left(\frac{x_{s_1}}{\l_1}-\beta_1\right)- R\left(\frac{(\l_1)_{s_1}}{\l_1}-M_1\right)\\
\nonumber & + & R\left(\frac{(\beta_1)_{s_1}}{1-\beta_1}-B_1\right)\Bigg\}\frac{\pa V_1}{\pa R}\\
\nonumber & + &i\l_1\left[\frac{(\lambda_1)_{s_1}}{\lambda_1}-M_1\right]\frac{\pa V_1}{\pa \l_1} +i\l_1\left[\frac{(\lambda_2)_{s_2}}{\lambda_2}-M_2\right]\frac{\pa V_1}{\pa \l_2}\\
\nonumber &+&i\frac{(1-\beta_2)}{\mu}\left[\frac{(\beta_2)_{s_2}}{1-\beta_2}-B_2\right]\frac{\pa V_1}{\pa \beta_2}
\eea
\bea
\label{tildestwo}
&&\tilde{S}_2:= i\left[\gamma_{s_2}-1-\mu(\gamma_{s_1}-1)\right]\frac{\pa V_2}{\pa \Gamma}\\
\nonumber & + &i\mu \Bigg\{\frac{1}{1-\beta_1}\left(\frac{x_{s_2}}{\l_2}-\beta_2\right)-\frac{1}{1-\beta_1}\left(\frac{x_{s_1}}{\l_1}-\beta_1\right)- R\left(\frac{(\l_1)_{s_1}}{\l_1}-M_1\right)\\
\nonumber & + & R\left(\frac{(\beta_1)_{s_1}}{1-\beta_1}-B_1\right)\Bigg\}\frac{\pa V_2}{\pa R}\\
\nonumber & + & i\l_2\left[\frac{(\lambda_1)_{s_1}}{\lambda_1}-M_1\right]\frac{\pa V_2}{\pa \l_1}+ i\l_2\left[\frac{(\lambda_2)_{s_2}}{\lambda_2}-M_2\right]\frac{\pa V_2}{\pa \l_2}\\
\nonumber &+& i\mu(1-\beta_1)\left[\frac{(\beta_1)_{s_1}}{1-\beta_1}-B_1\right]\frac{\pa V_2}{\pa \beta_1}.
\eea
The error term 
\be
\label{defpsi}
\Psi(t,x)=\sum_{j=1}^2\frac{1}{\l_j^{\frac 32}}\mathcal E_{j,N}\left(y_j,\mathcal P(t)\right)e^{i\gamma_j}
\ee 
encodes the error in the construction of $V_j$ and satisfies by construction
\be
\label{errorpsin}
\|\Psi\|_{H^2}\leq\frac{C_N}{\eta^{C_0}R^{N+1}}\leq \frac{1}{\eta^Ct^{N+1}},
\ee
where we recall that $N$ will be fixed later and $\eta<\eta^*(N)$. 
We write the equation for $\e$,
\be
\label{eqe}
i\pa_t\e-|D|\e+2|\Phi_{\tilde{P}}|^2\e+(\Phi_{\tilde{P}})^2\overline{\e}=-N(\e)-\Psi-\sum_{j=1}^2\frac{1}{\l_j^{\frac 32}}S_j\left(t,y_j\right)e^{i\gamma_j},
\ee
where $$N(\e)=(\Phi_{\tilde{P}}+\e)|\Phi_{\tilde{P}}+\e|^2-\Phi_{\tilde{P}}|\Phi_{\tilde{P}}|^2-2|\Phi_{\tilde{P}}|^2\e-(\Phi_{\tilde{P}})^2\overline{\e}.$$ 
In the sequel, we use the notation $$j+1=1\ \ \mbox{for}\ \ j=2.$$ 

%%%%%%%%%%%%%%%%%%%%%%%%%%%%%%%%%%%%%%%%%%%%%%%%%%%%%%%%%

\subsection{Modulation equations}

%%%%%%%%%%%%%%%%%%%%%%%%%%%%%%%%%%%%%%%%%%%%%%%%%%%%%%%%%

At this stage we can evaluate the right hand side of the modulation system applied to the parameters $\mathcal P(t)$ given by the modulation argument.

\begin{lemma}[Modulation equations]
\label{lemmmod}
Let 
$${\rm Mod}_j(t):= \left|\frac{(\l_j)_{s_j}}{\l_j}-M_j\right|+\frac{1}{1-\b_j}\left|\frac{(x_j)_{s_j}}{\l_j}-\beta_j\right|+\left|\frac{(\b_j)_{s_j}}{1-\b_j}-B_j\right|+\left|(\gamma_j)_{s_j}-1\right|,$$
then 
\be
\label{estmod}
\Mod_j (t)\lesssim  \frac{1}{\eta^Ct^{N+1}}+\frac{\|\e_j\|_{L^2}}{t}.
\ee
\end{lemma}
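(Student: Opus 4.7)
\emph{Plan of proof.} The argument is the standard modulation computation: differentiate each of the four orthogonality conditions in \eqref{orthoe} with respect to $t$ and invert the resulting linear system in the eight modulation errors $\Mod_j(t)$, $j=1,2$. The key fact which makes this inversion possible is the non-degeneracy \eqref{hessianinvert} of the matrices $A_j$ built from the generalized null space $\{iQ_{\b_j},\pa_{y_j}Q_{\b_j},\Lambda Q_{\b_j},i\rho_{\b_j}\}$ of $\mathcal L_{\b_j}$.

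First I would rewrite the equation \eqref{eqe} for $\e$ in the $j$-th bubble frame, obtaining
\begin{equation*}
i\pa_{s_j}\e_j-\mathcal L_{\b_j}\e_j=F_j,\qquad F_j=F_j^{\mathrm{mod}}+F_j^{\mathrm{int}}+F_j^{\mathrm{NL}}+F_j^{\Psi},
\end{equation*}
where $F_j^{\mathrm{mod}}$ is the bubble-$j$ piece of $S_j$ (linear in $\Mod_j(t)$ with generators $\Lambda Q_{\b_j}$, $\pa_{y_j}Q_{\b_j}$, $\Sigma_j$, $Q_{\b_j}$ plus admissible corrections coming from $V_j-Q_{\b_j}$), the bubble $j+1$ piece of $S_{j+1}$ together with cross terms $2|\Phi|^2\e+\Phi^2\overline\e-2|V_j|^2\e_j-V_j^2\overline{\e_j}$ make up $F_j^{\mathrm{int}}$, while $F_j^{\mathrm{NL}}$ and $F_j^{\Psi}$ collect $N(\e)$ and $\Psi$ respectively.

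Next I would differentiate each orthogonality $(\e_j,Z)=0$ (for $Z\in\{Q_{\b_j},i\pa_{y_j}Q_{\b_j},i\Lambda Q_{\b_j},\rho_{\b_j}\}$) in $s_j$ and use the Hamiltonian algebra
\begin{equation*}
(\mathcal L_{\b_j}\e_j,Z)=(\e_j,\mathcal L_{\b_j}Z),
\end{equation*}
which on the generalized null space produces only lower order terms killed by the orthogonality conditions, plus  terms of the form $(\Mod_j)\cdot \pa_{\b_j}Z$ or $(\Mod_j)\cdot \pa_{y_j}Z$ that contribute to the matrix $A_j$. The leading order matrix of the resulting $8\times 8$ system decouples into two $4\times 4$ blocks equal to $A_j+o(1)$ as $\b_j\to 1$, each of which is invertible by \eqref{hessianinvert} and Proposition \ref{prop:reg}. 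It then suffices to bound the right hand sides in absolute value by $\eta^{-C}t^{-(N+1)}+t^{-1}\|\e_j\|_{L^2}$ to conclude.

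The bounds on the individual contributions are then straightforward from the bootstrap:
$|(F_j^\Psi,Z)|\le\|\Psi\|_{H^{-1/2}}\|Z\|_{H^{1/2}}\lesssim\eta^{-C}t^{-(N+1)}$ by \eqref{errorpsin};
$|(F_j^{\mathrm{NL}},Z)|\lesssim\|\e\|_{H^{1/2}}^2\lesssim t^{-N/4}$ by \eqref{bootdynamicalproved}, which is subsumed into the first error term for $N$ large; and the coupling from bubble $j+1$ inside $F_j^{\mathrm{int}}$ is handled by switching to the $y_{j+1}$ variable, using the pointwise bound on $V_{j+1}$ and its derivatives implied by the admissibility of Proposition \ref{propwtobublle} on the support of $Z$ (a neighborhood of $y_j=0$), and \eqref{relationltwo}. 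The main obstacle is precisely this last step: because the solitary waves decay only polynomially and the inner products are $\R$-bilinear, one needs the admissibility bound in weighted norm $\|\cdot\|_{\b_{j+1}}$ evaluated at distance $R(1-\b_1)^{-1}$ from the center of bubble $j+1$, giving the gain $R^{-1}(1+(1-\b_1)R)^{-1}\lesssim 1/t$ which is exactly what is needed to yield the linear-in-$\|\e_j\|_{L^2}$ term with factor $1/t$ and close the estimate.
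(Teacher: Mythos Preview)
Your approach is the same as the paper's: differentiate the orthogonality conditions, use the self-adjointness of $\mathcal L_{\b_j}$ together with the null-space structure, and invert via the non-degeneracy of $A_j$. The estimates on $F_j^{\Psi}$ and $F_j^{\mathrm{NL}}$ are also handled as the paper does.

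There is, however, a genuine gap in how you treat $F_j^{\mathrm{int}}$. You lump the bubble-$(j{+}1)$ piece of $S_{j+1}$ together with the potential cross-terms and claim both contribute $O(t^{-1}\|\e_j\|_{L^2})$. The potential cross-terms do behave this way, but the $(S_{j+1},i\Theta_j)$ pairing is \emph{linear in $\Mod_{j+1}$}, not in $\|\e_j\|_{L^2}$: after estimating the weighted integral $\int \la y_1\ra^{-1}(1+(1-\b_1)\la y_1\ra)^{-1}\la y_2\ra^{-1}(1+(1-\b_2)\la y_2\ra)^{-1}\,dy_j$ one only gets a bound of the form $C|\log\eta|\,t^{-1}(\Mod_1+\Mod_2)$. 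Hence the $8\times 8$ system does not decouple into two independent blocks; the off-diagonal coupling is merely small, and one must use $t\gg|\log\eta|$ to absorb it and obtain $\Mod_1+\Mod_2\lesssim \eta^{-C}t^{-(N+1)}+t^{-1}(\|\e_1\|_{L^2}+\|\e_2\|_{L^2})$.

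A second point you omit is the final step: the statement asserts $\Mod_j\lesssim \eta^{-C}t^{-(N+1)}+t^{-1}\|\e_j\|_{L^2}$ with the \emph{same index} $j$ on the right, not the sum $\|\e_1\|_{L^2}+\|\e_2\|_{L^2}$. To pass from the latter to the former one uses $\|\e_1\|_{L^2}=\sqrt{b}\,\|\e_2\|_{L^2}$ (from \eqref{relationltwo}) together with the bootstrap fact $t\sqrt{b}\gtrsim 1$: this yields $\Mod_2\lesssim \eta^{-C}t^{-(N+1)}+t^{-1}\|\e_2\|_{L^2}$ directly, and then reinjecting into the $j=1$ equation (where the cross term now carries an extra $t^{-1}$) gives $\Mod_1\lesssim \eta^{-C}t^{-(N+1)}+t^{-1}\|\e_1\|_{L^2}(1+(t\sqrt{b})^{-1})$, which closes.
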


\begin{proof}[Proof of Lemma \ref{lemmmod}] Let $j=1$ or $j=2$ and consider a generic multiplier 
\be
\Theta(t,x)=\frac{1}{\l_j^{\frac 12}}\Theta_j(y_j,\beta_j)e^{i\gamma_j}, 
\ee
with $\Theta_j$ strongly $j$-admissible. We compute from \fref{eqe}:
\begin{align}
\frac{d}{dt}(\e,\Theta)&=(\e,\pa_t\Theta)+(i\pa_t\e,i\Theta)= 
(\e,-i\pa_t(i\Theta)+|D|(i\Theta)-2|\Phi_{\tilde{\mathcal P}}|^2(i\Theta)-(\Phi_{\tilde{\mathcal P}})^2\overline{i\Theta}))\notag\\
&- \left(N(\e)+\Psi+\Sigma_{k=1}^2\frac{1}{\l_k^{\frac 32}}S_k\left(y_k\right)e^{i\gamma_k},i\Theta\right)\label{pa_t_eps_Theta}
\end{align}
and estimate all terms in this identity.\\

\noindent{\em \underline{The linear terms}}. Using the fact that $M_j,B_j$ are $L^\infty$-admissible, we estimate:
\begin{align*}
i\pa_t\Theta -|D|\Theta& = \frac{1}{\l_j^{\frac 32}}\Big[ -\frac{(|D|-\beta_j D)\Theta_j}{1-\b_j}-\Theta_j-i\frac{(\lambda_j)_{s_j}}{\l_j}\Lambda \Theta_j-\frac{i}{1-\b_j}\left(\frac{(x_j)_{s_j}}{\l_j}-\beta_j\right)\pa_{y_j}\Theta_j\\
& + i\frac{(\beta_j)_{s_j}}{1-\beta_j}[y_j\pa_{y_j}\Theta_j+(1-\beta_j)\pa_{\b_j}\Theta_j]-((\gamma_j)_{s_j}-1)\Theta_j\Big]e^{i\gamma_j}(y_j)\\
& = -\frac{1}{\l_j^{\frac 32}}\left[\frac{(|D|-\beta_j D)\Theta_j}{1-\b_j}+\Theta_j \right]e^{i\gamma_j}(y_j)\\
&+O\left(\Mod_j(t)\big(|\Lambda \Theta_j(y_j)|+|\pa_{y_j}\Theta (y_j)|
+|y_j\pa_{y_j}\Theta_j|+|(1-\b_j)\pa_{\b_j}\Theta_j|+|\Theta_j(y_j)|\big)\right)\\
&+O\Big(|M_j||\Lambda\Theta_j|+|B_j||y_j\pa_{y_j}\Theta_j+(1-\b_j)\pa_{\b_j}\Theta_j|\\
& = -\frac{1}{\l_j^{\frac 32}}\left[\frac{(|D|-\beta_j D)\Theta_j}{1-\b_j}+\Theta_j \right]e^{i\gamma_j}(y_j)\\
&+\Big(\Mod_j(t)+\frac 1t\Big)O\Big(|\Theta_j|+|\pa_{y_j}\Theta_j|+|\Lambda \Theta_j|+|(1-\b_j)\pa_{\b_j}\Theta_j|\Big).
\end{align*}
Then, changing to the $y_j$ variable, using the definition of $\e_j$ in \eqref{defej},
and Cauchy--Schwarz,
we have:
\begin{align*}
(\eps,&-i\pa_t(i\Theta)+|D|i\Theta-2|\Phi_{\tilde{\mathcal P}}|^2(i\Theta)-(\Phi_{\tilde{\mathcal P}})^2\overline{(i\Theta)})
 = \Big(\eps,\frac{1}{\l_j^{\frac 32}}\left[\matchal L_{\beta_j}(i\Theta_j)\right]e^{i\gamma_j}(y_j)\Big)\\
&  +(1-\b_j)\Big(\Mod_j(t)\|\e_j\|_{L^2}+\frac{\|\e_j\|_{L^2}}{t}\Big)\\
&\times O\Big(\|\Theta_j\|_{L^2}+\|\pa_{y_j}\Theta_j\|_{L^2}+\|\Lambda \Theta_j\|_{L^2}+\|(1-\b_j)\pa_{\b_j}\Theta_j\|_{L^2}\Big)\\
&+(1-\b_j)O\Big(\|(|V_j|^2-|Q_{\b_j}|^2)\Theta_j\|_{L^2}\|\e_j\|_{L^2}\Big)\\
&+(1-\b_j)O\Big(\||V_{j+1}|^2\Theta_j\|_{L^2}\|\e_j\|_{L^2}\Big)\\
&+(1-\b_j)O\Big(\|V_jV_{j+1}\Theta_j\|_{L^2}\|\e_j\|_{L^2}\Big)
 \end{align*}
with the convention $y_{j+1}=y_1$ for $j=2$. 
To estimate the remainder, we estimate using that $R(V_j-Q_{\beta_j})$ is $j$-admissible:
\begin{align*}
\|(|V_j|^2-|Q_{\b_j}|^2)\Theta_j\|_{L^2}&\lesssim \|\frac{1}{R\la y_j\ra}\|_{L^2}\lesssim \frac 1t.
\end{align*}

We now use 
\be
\label{lienparametres}
y_1=R+b\mu y_2
\ee 
so that $|y_1|\leq \frac R2$ implies $|y_2|\geq \frac{R}{2\mu b}$ and hence the bounds
\bee
\int \frac{dy_1}{\la y_1\ra^2\la y_2\ra^4}&= &  \int_{|y_1|\leq\frac R2} \frac{d y_1}{\la y_1\ra^2\la y_{2}\ra^4}+\int_{|y_1|\geq \frac R2} \frac{d y_1}{\la y_1\ra^2\la y_{2}\ra^4} \\
& \lesssim & \frac{b^4}{R^4}\int_{|y_1|\leq \frac R2} \frac{d y_1}{\la y_1\ra^2}+\frac{1}{R^2}\int \frac{bd y_2}{\la y_{2}\ra^4}\lesssim \frac{b}{R^2}\lesssim \frac{b}{t^2},
\eee
\bee
\int \frac{dy_1}{\la y_2\ra^2\la y_1\ra^4}&= &  \int_{|y_1|\leq\frac R2} \frac{d y_1}{\la y_2\ra^2\la y_{1}\ra^4}+\int_{|y_1|\geq \frac{R}{2}} \frac{d y_1}{\la y_2\ra^2\la y_{1}\ra^4} \\
& \lesssim &  \frac{b^2}{R^2}\int_{|y_1|\leq\frac R2} \frac{d y_1}{\la y_{1}\ra^4}+\frac{1}{R^4}\int\frac{b\, dy_2}{\la y_2\ra^2}\lesssim \frac{1}{t^2},
\eee
which implies
\[\||V_{j+1}|^2\Theta_j\|_{L^2}+\|V_jV_{j+1}\Theta_j\|_{L^2}\lesssim \frac 1t.\]

\noindent
The above collection of bounds yields
\begin{align}\label{Mod1}
(\eps, -i\pa_t(i\Theta)+|D|\Theta&-2|\Phi_{\tilde{P}}|^2(i\Theta)-(\Phi_{\tilde{P}})^2\overline{(i\Theta)})
 = \frac{1-\b_j}{\l_j}\big(\e_j,\matchal L_{\beta_j}(i\Theta_j)\big)\notag\\
& +(1-\b_j)O\Big[ \frac{\|\e_j\|_{L^2}}{t}+\Mod_j(t)\|\e_j\|_{L^2}\Big]
 \end{align}
 \noindent{\em  \underline{The nonlinear term}}. We estimate using \eqref{bootdynamicalproved}:
\begin{align}
|(N(\e),i\Theta_j)|&\lesssim (1-\beta_j)\int \frac{|\e_j|^2|\Phi_{\tilde{P}}|+|\e_j|^3}{\la y_j\ra}dy_j
\lesssim (1-\beta_j)\big(\|\e_j\|_{L^2}^2+\|\e_j\|_{L^2}^2\|\e_j\|_{H^1}\big)\notag\\
&\lesssim (1-\b_j) \|\e_j\|_{L^2}^2\leq (1-\beta_j)\frac{\|\e_j\|_{L^2}}{t}\label{Mod2}
\end{align}
\noindent{The $\Psi$ term}. From \eqref{errorpsin},
\be
\label{Mod3}(\Psi,i\Theta)\lesssim \frac{1}{\eta^Ct^{N+1}}.
\ee
\noindent{\em  \underline{The $S$-terms and conclusion}}. We now pick $$\Theta_j\in A_j:=\{Q_{\beta_j},i\pa_{y_j}Q_{\beta_j},\Lambda Q_{\beta_j},\rho_j\}$$ which are strongly $j$-admissible, and estimate all terms in \eqref{pa_t_eps_Theta} using \eqref{Mod1}, \eqref{Mod2}, \eqref{Mod3}. The derivative in time of $(\eps , \Theta )$ drops using the orthogonality conditions \fref{orthoe}. 
Moreover, the same orthogonality conditions  \eqref{orthoe} imply that $(\eps_j, \mathcal{L}_{\b_j}(i\Theta_j))=0$.
We now use Appendix \ref{invertaj} to compute all the scalar products and conclude:
$$\Big|\Big((S_j-\tilde S_j)e^{i\gamma_j},i\Theta\Big)\Big|\sim (1-\beta_j) \Mod_j.$$  Thus, in order to estimate $\Mod_j$, we are left with computing the crossed terms and the error $\tilde{S}_j$ terms given by \eqref{tildestwo}, \eqref{tildestwobis}. The detailed estimates are given below.\\

\smallskip
\noindent{\em  \underline{Case $j=1$}}. We rescale to the $y_1$ variable and use the $1$--admissibility of $R(V_1-Q_{\b_1})$ to estimate:
$$|(\tilde{S}_1e^{i\gamma_1},i\Theta)|\lesssim (1-\beta_1)\frac{|\Mod_1|+|\Mod_2|}{t}.$$ 
We now recall \eqref{lienparametres} to estimate:
\bee
 &&\int \frac{d y_1}{\la y_1\ra(1+(1-\beta_1)\la y_1\ra)\la y_{2}\ra(1+(1-\beta_2)\la y_2\ra)}\\
& \lesssim & \int_{ |y_1|\leq \frac R2} \frac{d y_1}{\la y_1\ra(1+(1-\beta_1)\la y_1\ra)\la y_{2}\ra}\\
&+& \int_{\substack{|y_1|\geq \frac R2 }}\frac{d y_1}{\la y_1\ra(1+(1-\beta_1)\la y_1\ra)\la y_{2}\ra(1+(1-\beta_2)\la y_2\ra)} 
\\
&\lesssim &\frac{b}{R}\int \frac{d y_1}{\la y_1\ra(1+(1-\beta_1)\la y_1\ra)}
+\frac 1{R(1+\eta R)} \int \frac{b d y_2}{\la y_{2}\ra(1+(1-\beta_2)\la y_2\ra)} \\
&\lesssim & \frac{b}{t}\frac{\vert \log \eta\vert +\log t}{1+\eta t} \lesssim \frac{b\vert \log \eta \vert}{t},
\eee 
and hence the estimate of the crossed term:
$$
|(S_2e^{i\gamma_2},\Theta_1)|\lesssim (1-\beta_1)\left[\frac{b\vert \log \eta\vert }{t}(\Mod_2+\Mod_1)\right].
$$
This yields the first bound,
\bea
\label{fisboeieoen}
\Mod_1\lesssim \frac{\Mod_1+\Mod_2}{t}+\frac{\|\e_1\|_{L^2}}{t}+\frac{1}{\eta^Ct^{N+1}},
\eea

\smallskip
\noindent{\em  \underline{Case $j=2$}}. We estimate similarly 
$$|(\tilde{S}_2e^{i\gamma_2},\Theta)|\lesssim (1-\beta_2)\frac{\Mod_1+\Mod_2}{t}$$ and 
\bee
&&\int \frac{d y_2}{\la y_1\ra(1+(1-\beta_1)\la y_1\ra)(\la y_{2}\ra(1+(1-\beta_2)\la y_2\ra)}\\
& \lesssim &\frac{1}{b}\int \frac{d y_1}{\la y_1\ra(1+(1-\beta_1)\la y_1\ra)(\la y_{2}\ra(1+(1-\beta_2)\la y_2\ra)}\lesssim \frac{|\log \eta|}{t}
\eee
from which
\bee
\Mod_2&\lesssim& \frac{\Mod_1+\Mod_2}{t}+\frac{|\log \eta|}{t}(\Mod_1+\Mod_2)+\frac{\|\e_2\|_{L^2}}{t}\\
& \lesssim & \frac{|\log \eta|}{t}(\Mod_1+\Mod_2)+\frac{\|\e_2\|_{L^2}}{t}+\frac{1}{\eta^Ct^{N+1}}.
\eee

\smallskip
\noindent{\it  \underline{Conclusion}}. Combined with \eqref{fisboeieoen}, since $t\gg \vert \log \eta \vert $, this yields $$\Mod_1+\Mod_2\lesssim \frac{\|\e_1\|_{L^2}+\|\e_2\|_{L^2}}{t}+\frac{1}{\eta^Ct^{N+1}}$$ and hence using $\|\e_1\|_{L^2}=\sqrt{b}\|\e_2\|_{L^2}$:
\bee
\Mod_2&\lesssim &\Mod_1+\Mod_2\lesssim \frac{\|\e_1\|_{L^2}+\|\e_2\|_{L^2}}{t}+\frac{1}{\eta^Ct^{N+1}}\lesssim  \frac{1}{\eta^Ct^{N+1}}+\frac{\|\e_2\|_{L^2}(1+\sqrt{b})}{t}\\
& \lesssim & \frac{1}{\eta^Ct^{N+1}}+\frac{\|\e_2\|_{L^2}}{t}
\eee 
and from \eqref{fisboeieoen}:
\bee
\Mod_1&\lesssim& \frac{1}{\eta^Ct^{N+1}}+\frac{\|\e_1\|_{L^2}}{t}+\frac{\|\e_2\|_{L^2}}{t^2}\lesssim  \frac{1}{\eta^Ct^{N+1}}+\frac{\|\e_1\|_{L^2}}{t}\left[1+\frac{1}{t\sqrt{b}}\right]\\
& \lesssim &  \frac{1}{\eta^Ct^{N+1}}+\frac{\|\e_1\|_{L^2}}{t}
\eee
where we used 
\be
\label{estbtfond}
t\sqrt{b}\gtrsim 1
\ee 
\end{proof}

%%%%%%%%%%%%%%%%%%%%%%%%%%%%%%%%%%%%%%%%%%%%%%%%%%%%%%%%%

\subsection{Energy estimate}

%%%%%%%%%%%%%%%%%%%%%%%%%%%%%%%%%%%%%%%%%%%%%%%%%%%%%%%%%

We are now in position to derive the key monotonicity formula for the linearized energy $\mathcal G$ which is the second crucial element of our analysis.

\begin{proposition}[Energy estimate for $\mathcal G$]
\label{propenergy}
There holds the improved pointwise bound on $[T_{\rm in}, T_n]$:
\be
\label{energyestimate}
\mathcal G(\eps(t))\leq \frac{C}{Nt^{\frac N2}}
\ee
for some universal constant $C$ independent of $N,\eta,t$.
\end{proposition}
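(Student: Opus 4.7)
The plan is to establish the differential inequality
\begin{equation*}
\left|\frac{d}{dt}\mathcal G(\e(t))\right| \lesssim \frac{\mathcal G(\e(t))}{t} + \frac{C_N}{\eta^C t^{N}},
\end{equation*}
and then to integrate backwards from the boundary condition $\e(T_n)=0$, which forces $\mathcal G(\e(T_n))=0$. Plugging in the bootstrap assumption $\mathcal G(\e(\tau))\le \tau^{-N/2}$ from \eqref{bootdynamicalproved} and integrating on $[t,T_n]$ yields
\begin{equation*}
\mathcal G(\e(t)) \le \int_t^{T_n}\Bigl(\frac{\tau^{-N/2}}{\tau} + \frac{C_N}{\eta^C\tau^N}\Bigr)\,d\tau \lesssim \frac{1}{Nt^{N/2}} + \frac{C_N}{\eta^C t^{N-1}},
\end{equation*}
and for $N$ large enough and $\eta<\eta^*(N)$, the first summand dominates since $t\geq T_{\rm in}=\eta^{-2\delta}$. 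This is precisely the improved bound \eqref{energyestimate}.

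The core of the argument is the derivation of the ODE inequality itself. Splitting $\mathcal G=\mathcal G_{\rm quad}+\mathcal G_{\rm nl}$ with $\mathcal G_{\rm quad}(\e)=\tfrac12[(|D|\e-\zeta D\e,\e)+(\theta\e,\e)]$, I would differentiate in time and use the equation \eqref{eqe} for $\e$. The output splits into four types of terms. (a) The pure Hamiltonian contribution, which would vanish identically if $\Phi_{\pt}$ were an exact solution; (b) the modulation error terms produced by $S_j$, which pair against $\e$ through the quadratic form, and are handled using the orthogonality conditions \eqref{orthoe} together with the sharp modulation estimate $\Mod_j(t)\lesssim \eta^{-C}t^{-N-1}+\|\e_j\|_{L^2}/t$ from Lemma \ref{lemmmod}; (c) the approximation error $(\Psi,\cdot)$, controlled by \eqref{errorpsin} and Cauchy--Schwarz, contributing the $C_N t^{-N}$ piece; (d) the localization error coming from $\partial_t\zeta$, $\partial_t\theta$ and from the commutators $[|D|,\zeta]$. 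Items (b) and (c) are essentially estimated by $\sqrt{\mathcal G(\e)}$ times an acceptable small factor thanks to the coercivity of Proposition \ref{coerclinearizedenergy}.

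The main obstacle is item (d). The time derivatives of $\zeta$ and $\theta$ carry a factor $\dot R/R\sim 1/t$ coming from the width of the cutoffs $\phi_1,\tilde\phi_1$, and after pairing with $|\e|^2$ this produces exactly the critical $\mathcal G(\e)/t$ loss stated above; no better decay can be expected because the localization must track the dynamics of the second bubble. Much more delicate is the commutator $[|D|,\zeta]\e$: since $|D|$ is nonlocal and $Q_\b$ decays only like $\la x\ra^{-1}(1+(1-\b)\la x\ra)^{-1}$, a naive estimate is insufficient. The key is the algebra specific to the half-wave flow mentioned in the introduction and reminiscent of \cite{MMwave}: decomposing $\e=\Pi^+\e+\Pi^-\e$, the operator $|D|-\zeta D$ restricted to $\Pi^\pm\e$ becomes $(1\mp\zeta)(\pm D)$, i.e.\ a first-order local operator, so that $(|D|\e-\zeta D\e,\e) = ((1-\zeta)D\Pi^+\e,\Pi^+\e)+((1+\zeta)D\Pi^-\e,\Pi^-\e)$ plus a cross term handled separately. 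Differentiating in time, the commutators involve only $\partial_x\zeta$ --- of size $1/R\sim 1/t$ and supported on the transition region of the cutoff --- paired against local quadratic forms in $\Pi^\pm\e$, which are controlled by $\mathcal G(\e)$ thanks again to Proposition \ref{coerclinearizedenergy}. This is precisely the place where the splitting in frequencies advertised in the strategy of the proof plays its crucial role.

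The nonlinear piece $\mathcal G_{\rm nl}(\e)=-\tfrac14\int(|\e+\Phi|^4-|\Phi|^4-4\Re(\e\overline\Phi|\Phi|^2))\,dx$ is at least quadratic in $\e$. Its time derivative, when one uses \eqref{eqe} together with the equation for $\Phi$, produces terms that are either quadratic in $\e$ times slowly varying coefficients (absorbed into $\mathcal G(\e)/t$), or higher order in $\e$ and controlled via the $H^1$ bootstrap $\|\e\|_{H^1}^2\le t^{-N/4}$ combined with Sobolev embeddings. Once all four items (a)--(d) are assembled, the ODE inequality is established and backwards integration with the initialization $\e(T_n)=0$ closes the proof with the desired constant $C/N$.
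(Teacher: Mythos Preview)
Your overall architecture is right, but the proposed differential inequality $|d\mathcal G/dt|\lesssim \mathcal G/t+C_N\eta^{-C}t^{-N}$ is too optimistic, and the gaps are precisely where the paper does real work.

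First, the coercivity \eqref{coerc} only gives $\mathcal G\gtrsim(1-\beta_1)\|\e_1\|_{L^2}^2$, so $\|\e_1\|_{L^2}^2\lesssim\mathcal G/\eta$. Thus any localization term of generic size $\frac{1}{t}\|\e_1\|_{L^2}^2$ yields $\frac{1}{\eta t}\mathcal G$, which destroys the argument. Your claim that $\partial_t\zeta$, $\partial_t\theta$ ``after pairing with $|\e|^2$ produce exactly $\mathcal G/t$'' overlooks this $\eta$-loss. The paper resolves this via the pointwise cancellation $|(\partial_t+\partial_x)\zeta|\lesssim\frac{1-\beta_1}{t}\phi_1$ (see \eqref{relationzetat}): the combination $\partial_t+\partial_x$ gains exactly the missing $(1-\beta_1)$ factor, and this is the ``favorable algebra specific to wave-like problems'' alluded to in the introduction. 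Only after grouping the $(\partial_t\pm\partial_x)\zeta$ terms does one see which pieces are harmless.

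Second, the cross terms you dismiss as ``handled separately'' --- specifically $(\partial_x\zeta D\e^-,\e^+)$, $(-i|D|\e,\zeta D\e)=\frac{2}{\l_1^2}(iD\e_1^-,\phi_1 D\e_1^+)$, and $(\theta\e^+,\partial_x\e^-)$ --- carry no $(1-\beta_1)$ gain and cannot be bounded pointwise by $\mathcal G/t$. The paper integrates them by parts \emph{in time} using the renormalized equations \eqref{estoneminus}--\eqref{estoneplus} for $\e_1^\pm$, which converts each into $\frac{d}{dt}\{o_{\eta\to 0}(1)\mathcal G\}+O(\mathcal G/t)$. The actual differential inequality is therefore for $\mathcal G(1+o(1))$, not $\mathcal G$ itself.

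Third, even after the above, the term $((\partial_t-\partial_x)\theta\,\e^-,\e^-)$ in the turbulent window $t\le T^-$ leaves a residual $\|\widetilde{\e_1}\|_{L^2}^2/t^2$ with $\widetilde{\e_1}=\e_1^-/\langle z_1\rangle^{(1+\alpha)/2}$ (see \eqref{defetildeoneminus}), and this weighted norm is \emph{not} controlled by $\mathcal G$ with an acceptable constant. The paper closes this with a separate space-time virial estimate (Step~7, inequality \eqref{essentialbooot}) obtained by a monotonicity argument on $\int h|\e_1^-|^2$ for a well-chosen weight $h$. Only after integrating this space-time bound does the final integration on $[t,T_n]$ yield the $C/(Nt^{N/2})$ improvement. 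Your proposal contains none of these three mechanisms.
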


\begin{proof}[Proof of Proposition \ref{propenergy}] The proof relies on the careful treatment of all terms induced by the localization of mass and energy when computing the time variation of the energy $\matchal G$. The main difficulty is the loss of control of the kinetic energy and mass as $\beta\to1$ for $\e_1^+$ as reflected by \eqref{coerc}, which forces different set of estimates for $\e^\pm$.\\
We rewrite \fref{eqe} as:
\be
\label{eqebis}
i\pa_t\e-|D|\e+(\Phi+\e)|\Phi+\e|^2-\Phi|\Phi|^2=F,
\ee
$$F:=-\Psi-S, \ \ S=\sum_{j=1}^2\frac{1}{\l_j^{\frac 32}}S_j\left(y_j\right)e^{i\gamma_j},$$
or equivalently
\be
\label{defequagt}
\left|\begin{array}{lll} i\pa_t\e-|D|\e+2|\Phi|^2\e+\Phi^2\overline{\e}=G\\ 
N(\e):=(\Phi+\e)|\Phi+\e|^2-\Phi|\Phi|^2-2|\Phi|^2\e-\Phi^2\overline{\e}\\
G:=F-N(\e)=-\Psi-S-N(\e).
 \end{array}\right.
 \ee

\noindent{\bf Step 1:} Localization of mass. We compute the localized mass conservation law and claim
\bea
\label{firstenergyalgebrabism}
\nonumber \frac{d}{dt}\frac 12(\theta\e,\e)&=&\frac 12((\pa_t \theta) \e,\e)+(-i|D|\e,\theta \e)+(i\Phi^2,\theta\e^2)+(iS,\theta\e)\\
&+& O\left(\frac{\matchal G}{t}+\frac{1}{t^{N+1}}\right).
\eea
Indeed, from \eqref{defequagt}:
\bea
\label{deriv_mass}
 \frac{d}{dt}\frac 12(\theta\e,\e)&=&(\theta\pa_t \e,\e)+\frac 12((\pa_t \theta) \e,\e)\\
\nonumber &=& (-i|D|\e +i (2|\Phi|^2\e+\Phi^2\bar{\e})-i G, \theta \e)+\frac 12((\pa_t \theta) \e,\e)\\
&= & (i\Phi^2,\theta\e^2)-(iG,\theta\e)+\frac 12((\pa_t \theta) \e,\e) \notag\\
\nonumber&=& \frac 12((\pa_t \theta) \e,\e)+(-i|D|\e,\theta \e)+(i\Phi^2,\theta\e^2)+ (iN(\e),\theta \e)+(i\Psi, \theta \e)+(iS, \theta \e).
\eea
We estimate from \eqref{errorpsin}, \fref{bootdynamicalproved}:
$$|(\Psi,\theta\e)|\lesssim \frac{\|\e\|_{L^2}}{\eta^Ct^{N+1}}\lesssim \frac{1}{t^{N+1}}.$$ 
For the nonlinear term, we estimate from \eqref{bootdynamicalproved} and \eqref{coerc},
$$|(N(\e),\theta \e)|\lesssim \int (|\e|^4+|\e|^3)\lesssim \|\e\|_{L^{\infty}}\|\e\|_{L^2}^2\lesssim \frac{\mathcal G}{t}$$
and \eqref{firstenergyalgebrabism} is proved.\\

\noindent{\bf Step 2:} Localization of kinetic momentum. We compute the localized kinetic momentum conservation law and claim
\bea
\label{qbuibeibbei}
\frac 12\frac{d}{dt}(\zeta D\e,\e)&=& (2\Phi|\e|^2+\overline{\Phi}\e^2,\zeta\pa_x\Phi)+O\left(\frac{\matchal G}{t}+\frac{1}{t^{N+1}}\right)\\
\nonumber &+&  \frac 12 (\pa_t\zeta D\e,\e)+(-i|D|\e,\zeta D\e+\frac 12\e D\zeta)+(iS,\zeta D\e+\frac 12\e D\zeta).
\eea
Indeed, we compute from \eqref{eqebis}:
\bee
\frac 12\frac{d}{dt}(\zeta D\e,\e)&=&\frac 12 (\pa_t\zeta D\e,\e)+\frac 12(\zeta D\pa_t\e,\e)+\frac 12(\zeta D\e,\pa_t\e)\\
& = & \frac 12 (\pa_t\zeta D\e,\e)+(\pa_t\e,\zeta D\e+\frac 12\e D\zeta)\\
& = & \frac 12 (\pa_t\zeta D\e,\e)+(-i|D|\e +i (2|\Phi|^2\e+\Phi^2\bar{\e})-i G,\zeta D\e+\frac 12\e D\zeta)\\
& = & \frac 12 (\pa_t\zeta D\e,\e)+(-i|D|\e,\zeta D\e+\frac 12\e D\zeta)\\
& + & (i(2|\Phi|^2\e+\Phi^2\bar{\e}),\zeta D\e+\frac 12\e D\zeta)+(-iG,\zeta D\e+\frac 12\e D\zeta).
\eee
We integrate by parts the quadratic term using the pointwise bound \eqref{estderivativezeta}:
\bee
(i(2|\Phi|^2\e+\Phi^2\bar{\e}),\zeta D\e)=(2\Phi|\e|^2+\overline{\Phi}\e^2,\zeta\pa_x\Phi)+O\left(\frac{\|\e\|_{L^2}^2}{t}\right).
\eee
We estimate from \eqref{errorpsin} after integrating by parts:
$$|(i\Psi,\zeta D\e+\frac 12\e D\zeta)|\lesssim \|\Psi\|_{H^1}\|\e\|_{L^2}\lesssim \frac{1}{t^{N+1}}.$$
For the nonlinear term:
$$|(iN(\e),\zeta D\e+\frac 12\e D\zeta)|\lesssim \|\e\|_{H^1}\|\e\|_{L^2}^2\lesssim \frac{\mathcal G}{t}$$
and \eqref{qbuibeibbei} is proved.\\

\noindent{\bf Step 3:} Localized energy identity. We now compute the variation of the linearized energy:
\begin{align}
\label{expersionnenergyeacrte}
\frac{d}{dt}&\left\{\frac 12(|D|\e,\e)-\frac14 \left[\int (|\e+\Phi|^4-|\Phi|^4)-(4\e,\Phi|\Phi|^2)\right]\right\}\\
&=(\pa_t\e,|D|\e)
-\left((\e+\Phi)|\e+\Phi|^2, \pa_t \e+\pa_t \Phi\right)
+(\Phi |\Phi|^2, \pa_t \Phi)\notag\\
&+(\pa_t\e, \Phi |\Phi|^2)
+(\e, \pa_t (\Phi|\Phi|^2))\notag\\
&=(\pa_t \e, |D|\e-(\e+\Phi)|\e+\Phi|^2+\Phi |\Phi|^2)
-(\pa_t\Phi, N(\e))\notag\\
&=(i\Psi +iS, |D|\e-(\e+\Phi)|\e+\Phi|^2+\Phi |\Phi|^2)-(\pa_t\Phi, N(\e))\notag
\end{align}
We estimate all terms in \fref{expersionnenergyeacrte} and in particular first extract the quadratic terms. 
From \eqref{errorpsin}, Sobolev, $\|\Phi\|_{L^{\infty}}\lesssim 1$  and \eqref{bootdynamicalproved}:
\be
\label{Psi}
\Big | (i\Psi,|D|\e-(\Phi+\e)|\Phi+\e|^2+\Phi|\Phi|^2)\Big|\lesssim \|\Psi \|_{H^1}\|\e\|_{L^2}\lesssim \frac{1}{t^{N+1}}.
\ee
Let us estimate the term $(\pa_t\Phi, N(\e))$. Since $V_j$, $R(V_j-Q_{\b_j})$ are $j$--admissible, and $RM_j,RB_j$ are $L^\infty$-admissible, we compute
$$
\pa_{s_j}V_j=\sum_{k=1}^2 \Big[\frac{\pa V_j}{\pa \l_k}(\l_k)_{s_j}+(1-\b_k)\frac{\pa V_j}{\pa \b_k}\cdot\frac{(\b_k)_{s_j}}{1-\b_k}\Big]
+\frac{\pa V_j}{\pa \Gamma} \Gamma_{s_j} + \frac{\pa V_j}{\pa R} R_{s_j}\\
$$
and hence, using \eqref{estmod} and the bootstrap assumption, we infer
\be
\label{beubebei}
\vert \pa_sV_j\vert \ \lesssim \frac{1}{t\la y_j\ra}\ .
\ee
Consequently, the  admissibility of $V_j$,  \eqref{estmod}, 
and the bounds
$1-\b_1\sim\eta$ and $1-\b_2\gtrsim \eta^{3}$ ensure
\begin{align}
\label{expressionpatphi}
\pa_t\Phi = \sum_{j=1}^2\frac{1}{\l_j^{\frac 32}}\Big[&\pa_{s_j}V_j-\frac{(\l_j)_{s_j}}{\l_j}\Lambda V_j-\frac{1}{1-\beta_j}\left[\frac{(x_j)_{s_j}}{\l_j}-\b_j\right]\pa_{y_j}V_j
-\frac{\b_j}{1-\b_j}\pa_{y_j}V_j\notag\\
&+\frac{(\beta_j)_{s_j}}{1-\beta_j}y_j\pa_{y_j}V_j+i(\gamma_j)_{s_j}V_j\Big]e^{i\gamma_j}(y_j)
=  O\left(\sum_{j=1}^2\frac{1}{\eta^{C}\la y_j\ra}\right)
\end{align}
We use this with \eqref{bootdynamicalproved} to estimate:
\bee
&&-(\pa_t \Phi, N(\e))=
-(\pa_t\Phi,(\Phi+\e)|\Phi+\e|^2-\Phi|\Phi|^2-2|\Phi|^2\e-\Phi^2\overline{\e})\\
& = & - \left(\pa_t\Phi,2\Phi|\e|^2+\overline{\Phi}\e^2\right) 
+  O\left(\frac{\|\e\|_{H^{1}}\|\e \|_{L^2}^2}{\eta^{C}}\right)=  - \left(\pa_t\Phi,2\Phi|\e|^2+\overline{\Phi}\e^2\right)  +O\left(\frac{\mathcal G}{t}\right).
\eee
similarly, using \fref{estmod} and \eqref{bootdynamicalproved}:
\bee
&& (iS,|D|\e-(\Phi+\e)|\Phi+\e|^2+\Phi|\Phi|^2)\\
&=& (iS,|D|\e-2|\Phi|^2\e-\Phi^2\overline{\e})+O\left( \frac{\Mod_1+\Mod_2}{\eta^C}\|\e\|_{L^2}^2\right)\\
& =& (iS,|D|\e-2|\Phi|^2\e-\Phi^2\overline{\e})+O\left(\frac{\mathcal G}{t}\right).
\eee
The collection of above bounds yields
\bea
\label{firstenergyalgebrabis}
&&\frac{d}{dt}\left\{\frac 12(|D|\e,\e)-\frac14 \left[\int (|\e+\Phi|^4-|\Phi|^4)-(4\e,\Phi|\Phi|^2)\right]\right\}\\
\nonumber & = & (\e,|D|(iS)-2|\Phi|^2(iS)-\Phi^2\overline{iS}) - \left(\pa_t\Phi,2\Phi|\e|^2+\overline{\Phi}\e^2\right)\\ 
\nonumber &+& O\left( \frac{\mathcal G}{t}+\frac{1}{t^{N+1}}\right).
\eea
We now treat the remaining quadratic terms more carefully and combine them with the leading order quadratic terms in \eqref{firstenergyalgebrabism}, \eqref{qbuibeibbei}. Indeed, we rewrite \fref{expressionpatphi} using \fref{estmod}, \eqref{beubebei}, \eqref{boundessential} and the $j$-admissibility of $V_j$:
\bee
&&\pa_t\Phi\\
&=& \sum_{j=1}^2\frac{1}{\l_j^{\frac 32}}\left[\pa_{s_j}V_j-\frac{(\l_j)_{s_j}}{\l_j}\Lambda V_j-\frac{1}{1-\beta_j}\left[\frac{(x_j)_{s_j}}{\l_j}\right]\pa_{y_j}V_j+\frac{(\beta_j)_{s_j}}{1-\beta_j}y_j\pa_{y_j}V_j+i(\gamma_j)_{s_j}V_j\right]e^{i\gamma_j}(y_j)\\
& = &\sum_{j=1}^2  \frac{i}{\l_j}\Phi^{(j)}-\beta_j\pa_x\Phi^{(j)}+O\left(\sum_{j=1}^2 \frac 1t \frac{1}{\la y_j\ra}\right)
\eee
where we have set
$$\Phi^{(j)}(t,x):=\frac{1}{\l_j^{\frac 12}} V_j\left (\frac{x-x_j}{\l_j(1-\b_j)}\right )e^{i\gamma_j}\ .$$
We infer the bound
\bee
 &&-  \left(\pa_t\Phi,2\Phi|\e|^2+\overline{\Phi}\e^2\right)+(\e^2,i\theta\Phi^2)-(2\Phi|\e|^2+\overline{\Phi}\e^2,\zeta\pa_x\Phi)\\
\nonumber & =&\left(\beta_1\pa_x\Phi^{(1)}
+\beta_2\pa_x\Phi^{(2)}+O\left(\Sigma_{j=1}^2 \frac 1t \frac{1}{\la y_j\ra}\right),2\Phi|\e|^2+\overline{\Phi}\e^2\right)-(2\Phi|\e|^2+\overline{\Phi}\e^2,\zeta\pa_x\Phi)\\
\nonumber & + & (\e^2,i\Phi\left[\theta\Phi-\Sigma_{j=1}^2\frac{1}{\l_j}\Phi^{(j)}\right])-2(i\left(\frac{\Phi^{(1)}}{\l_1}+\frac{\Phi^{(2)}}{\l_2}\right),(\Phi^{(1)}+\Phi^{(2)})|\e|^2)\\
\nonumber & = & -(2\Phi|\e|^2+\overline{\Phi}\e^2,(\zeta-\beta_1)\pa_x\Phi^{(1)}+(\zeta-\beta_2)\pa_x\Phi^{(2)}) + (\e^2,i\Phi\Sigma_{j=1}^2\left(\theta-\frac{1}{\l_j}\right)\Phi^{(j)})\\
& -&\frac{2}{\l_1}(|\e|^2,i\Phi^{(1)}\overline{\Phi^{(2)}})-\frac{2}{\l_2}(|\e|^2,i\Phi^{(2)}\overline{\Phi^{(1)}}) +O\left(\frac{\|\e\|_{L^2}^2}{t}\right)\\
\eee
 We recall \eqref{lienparametres}, and hence $|y_1|\leq \frac R2$ implies $|y_2|\geq \frac{R}{2b\mu}$ from which $$\||\Phi^{(1)}\Phi^{(2)}\|_{L^{\infty}}\lesssim \|\frac{1}{\la y_1\ra \la y_2\ra}\|_{L^{\infty}}\lesssim \frac 1t$$ and hence  
 $$|(|\e|^2,i\Phi^{(1)}\overline{\Phi^{(2)}})|+|(|\e|^2,i\Phi^{(2)}\overline{\Phi^{(1)}})|\lesssim \frac{\|\e\|_{L^2}^2}{t}\lesssim \frac{\mathcal G}{t}.$$ We then 
 use $\la y_1\ra\gtrsim R$ on ${\rm Supp}(1-\phi_1)$ and ${\rm Supp}(\frac 1{\l_1}-\theta)$ and the explicit formula \eqref{defzetabeta} to estimate:
 \bee
&& \left|(\zeta-\beta_1)\pa_x\Phi^{(1)}\right|\lesssim \frac{|1-\phi_1|}{\la y_1\ra ^2}\lesssim \frac 1t\\
 &&\left|\left(\theta-\frac{1}{\l_1}\right)\Phi^{(1)}\right|\lesssim \frac{1}{t}\ .
 \eee
 Similarly, we use  $\la y_2\ra\gtrsim R$ on ${\rm Supp}(b-\phi_1)$ and ${\rm Supp}(\frac 1{\l_2}-\theta)$, and the relation $\beta_2-\zeta=(1-\beta_1)(\phi_1-b)$ to get
 \bee
 &&\left |(\zeta-\beta_2)\pa_x\Phi^{(2)}\right|\lesssim \frac{|b-\phi_1|}{b\la y_2\ra ^2}\lesssim \frac 1t\\
  &&\left|\left(\theta-\frac{1}{\l_2}\right)\Phi^{(2)}\right|\lesssim \frac 1t.
 \eee
 The second estimate above is straightforward. Let us explain how to obtain the first estimate. Recall that $b-\phi_1=(b-1)\Psi _1$, and $0\le \Psi (z_1) \le1$, with $\Psi _1(z_1)=1$ for $z_1\le 1/4$, $\Psi _1(z_1)=(1-z_1)^{10}$ for $1/2\le z_1\le 1$, and $\Psi _1(z_1)=0$ for $z_1\ge 1$, so we may assume $z_1\ge 1$. Moreover, recall that
 $$1-z_1=1-\frac{y_1}{R(1-b)}=1-\frac{R+\mu by_2}{R(1-b)}=\frac{-b}{1-b}\left (1+\frac{\mu y_2}{R}\right )\ge 0\ .$$
 If 
 $$-1\ge \frac{\mu y_2}{R}\ge -\frac{1}{\sqrt b}, $$
  then $\vert 1-z_1\vert \lesssim \sqrt b$, and 
  $$\frac{\Psi _1}{b\la y_2\ra ^2}\lesssim \frac{b^4}{\la y_2\ra ^2}\le \frac{b^4}{R^2}\lesssim \frac{1}{t}\ .$$
  On the other hand, if 
  $$\frac{\mu y_2}{R}\le -\frac{1}{\sqrt b}, $$
  then $\la y_2\ra \gtrsim R/\sqrt b$, and
   $$\frac{\Psi _1}{b\la y_2\ra ^2}\le \frac{1}{b\la y_2\ra ^2}\lesssim \frac{1}{R^2}\lesssim \frac{1}{t}\ .$$
   We conclude using $\|\Phi\|_{L^{\infty}}\lesssim 1$: $$-  \left(\pa_t\Phi,2\Phi|\e|^2+\overline{\Phi}\e^2\right)+(\e^2,i\theta\Phi^2)-(2\Phi|\e|^2+\overline{\Phi}\e^2,\zeta\pa_x\Phi)=O\left(\frac{\mathcal G}{t}\right).$$ 
 Injecting this estimate into \fref{firstenergyalgebrabism}, \eqref{qbuibeibbei} and \fref{firstenergyalgebrabis} yields the full localized energy identity:
\bea
\label{firstenergyalgebra}
\nonumber &&\frac{d}{dt}\left\{\frac 12(|D|\e+\theta\e,\e)-\frac 12(\zeta D\e,\e)-\frac14 \left[\int (|\e+\Phi|^4-|\Phi|^4)-(4\e,\Phi|\Phi|^2)\right]\right\}\\
\nonumber & = & \frac 12((\pa_t \theta) \e,\e)+(-i|D|\e,\theta \e)+  (\e,|D|(iS)+i\theta S-2|\Phi|^2(iS)-\Phi^2\overline{iS})\\
\nonumber & - &  \frac 12 (\pa_t\zeta D\e,\e)+(i|D|\e,\zeta D\e+\frac 12\e D\zeta)-(iS,\zeta D\e+\frac 12\e D\zeta)\\
\nonumber &+& O\left(\frac{\mathcal G}{t}+\frac{1}{t^{N+1}}\right)\\
\nonumber & =& \frac 12((\pa_t \theta) \e,\e)+(-i|D|\e,\theta \e) -  \frac 12 (\pa_t\zeta D\e,\e)+(i|D|\e,\zeta D\e+\frac 12\e D\zeta)\\
\nonumber & +&(\e,(|D|-\zeta D)(iS)+i\theta S-2|\Phi|^2(iS)-\Phi^2\overline{iS})+\frac 12(\e,iSD\zeta)\\
&+& O\left(\frac{\mathcal G}{t}+\frac{1}{t^{N+1}}\right)
\eea
where we integrated by parts the term $(iS,\zeta D\e+\frac 12\e D\zeta)$ in the last step. We now estimate all remaining terms in \eqref{firstenergyalgebra}. The linear terms in \eqref{firstenergyalgebra} induced by the localization of the mass and kinetic momentum\footnote{which is necessary due to the dramatic change of size of each bubble.} are particularly critical for our analysis.\\

\noindent{\bf Step 4:} Modulation equations terms. 
We estimate the remaining modulation equations terms in \eqref{firstenergyalgebra} and claim
\be
\label{estsremainingone}
\left|(\e,(|D|-\zeta D)(iS)+i\theta S-2|\Phi|^2(iS)-\Phi^2\overline{iS})\right|+|(\e,iSD\zeta)|\lesssim \frac{\mathcal G}{t}+\frac{1}{t^{N+1}}.
\ee
Indeed, we first estimate the $S$ terms in the $y_1$ variable. From \eqref{defsj}, \eqref{tildestwobis} and \eqref{estmod} with $\|\e_2\|_{L^2}=\frac{\|\e_1\|_{L^2}}{\sqrt{b}}$:
\bea
\label{estsonehone}
\nonumber \|S_1\|_{H^1_{y_1}}&\lesssim& \Mod_1+\frac{\Mod_2}{t}\lesssim \frac{1}{\eta^Ct^{N+1}}+\frac{\|\e_1\|_{L^2}}{t}\left[1+\frac{1}{\sqrt{b}t}\right]\\
&\lesssim & \frac{1}{\eta^Ct^{N+1}}+\frac{\|\e_1\|_{L^2}}{t}
\eea
where we used \eqref{estbtfond} in the last step, and
\be
\label{estsonehonebis}
\|S_2\|_{H^1_{y_2}}\lesssim \Mod_2+\frac{\Mod_1}{t}\lesssim \frac{1}{\eta^Ct^{N+1}}+\frac{\|\e_2\|_{L^2}}{t}.
\ee
We also have similarly the pointwise bound using the admissibility of $V_j$: 
\be
\label{pointwiseboundstwo}
|\pa^k_{y_2}S_2|\lesssim \frac{1}{\la y_2\ra^k}\left[\frac{1}{\eta^Ct^{N+1}}+\frac{\|\e_2\|_{L^2}}{t}\right].
\ee
In particular,
\bea
\label{seestimatesltwo}
\nonumber \|S\|_{L^2_{y_1}}&\lesssim& \|S_1\|_{L^2_{y_1}}+\|S_2\|_{L^2_{y_1}}\lesssim \|S_1\|_{L^2_{y_1}}+\sqrt{b}\|S_2\|_{L^2_{y_2}}\\
&\lesssim& \frac{1}{\eta^Ct^{N+1}}+\frac{\|\e_1\|_{L^2}}{t}.
\eea
We therefore renormalize to the $y_1$ variable and estimate from \eqref{seestimatesltwo}, \eqref{defzetabeta}:
$$|(\e,iSD\zeta)|\lesssim (1-\beta_1) |(\e_1,S\pa_{y_1}\phi_1)|\lesssim \frac{1-\beta_1}t\|\e_1\|_{L^2}^2+\frac{1}{t^{N+1}},$$ and similarly using $\|\Phi\|_{L^{\infty}}\lesssim 1$:
$$|\left|(\e,i\theta S-2|\Phi|^2(iS)-\Phi^2\overline{iS})\right|\lesssim (1-\beta_1)\|\e_1\|_{L^2}\|S\|_{L^2_{y_1}}\lesssim \frac{1-\beta_1}t\|\e_1\|_{L^2}^2+\frac{1}{t^{N+1}}.$$ We now use $\zeta_1=\beta_1+(1-\beta_1)(1-\phi_1)=1-(1-\beta_1)\phi_1$ to compute:
\bee
|(\e,(|D|-\zeta D)(iS))|&\lesssim& |((|D|-\zeta_1D)\e_1,iS)|\\
&\lesssim &(1-\beta_1)|(\phi_1D\e_1,iS)|+ |(i\e_1^-,D\Pi^-S)|:=I+II.
\eee
We claim:
\be
\label{njeneononeone}
I+II\lesssim \frac{\mathcal G}{t}+\frac{1}{t^{N+1}}
\ee
which concludes the proof of \eqref{estsremainingone}.\\

\smallskip
\noindent{\em  \underline{Control of $I$}}. We split $S=S_1+S_2$ and first estimate after an integration by parts and using \eqref{estsonehone}:
$$|(1-\beta_1)|(\phi_1D\e_1,iS_1)|\lesssim (1-\beta_1)\|\e_1\|_{L^2}\|S_1\|_{H^1_{y_1}}\lesssim \frac{1}{t^{N+1}}+\frac{1-\beta_1}{t}\|\e_1\|_{L^2}^2\lesssim \frac{1}{t^{N+1}}+\frac{\mathcal G}{t}.$$ Next,
\bee
&&|(1-\beta_1)|(\phi_1D\e_1,iS_2)\lesssim (1-\beta_1)|(\phi_2D\e_2,iS_2)|\\
&\lesssim & (1-\beta_2)|(\e_2,iDS_2)|+(1-\beta_1)|(\e_2,D((\phi_2-b)iS_2)|.
\eee
The first term is estimated from \eqref{estsonehonebis}:
\bee
(1-\beta_2)|(\e_2,iDS_2)| \lesssim  (1-\beta_2)\|\e_2\|_{L^2}\left[\frac{1}{\eta^Ct^{N+1}}+\frac{\|\e_2\|_{L^2}}{t}\right]\lesssim \frac{1}{t^{N+1}}+\frac{\matchal G}{t}.
\eee
The second term is estimated using \eqref{pointwiseboundstwo}, \eqref{lienparametres}, $\la y_2\ra\gtrsim \frac{b}{R}$ on $\mbox{Supp}(b-\phi_2)$ and $\|\pa_{y_2}\phi_2\|_{L^{\infty}}\lesssim b\|\pa_{y_1}\phi_1\|_{L^{\infty}}\lesssim \frac bR$ so that:
\bee
(1-\beta_1)|(\e_2,D((\phi_2-b)iS_2)|&\lesssim&  (1-\beta_1)\frac{b}{R}\left[\frac{1}{\eta^Ct^{N+1}}+\frac{\|\e_2\|_{L^2}}{t}\right]\int \frac{|\e_2|}{\la y_2\ra}dy_2\\
& \lesssim & \frac{1-\beta_2}{t}\|\e_2\|_{L^2}^2+\frac{1}{t^{N+1}}\lesssim \frac{\mathcal G}{t}+\frac{1}{t^{N+1}}
\eee
which concludes the proof of \eqref{njeneononeone} for $I$.\\

\smallskip
\noindent{\em  \underline{Control of $II$}}. Consider $S_j-\tilde{S_j}$. Then by commuting the null space relations $$\mathcal L_\beta(\Lambda Q_\beta)=-Q_\beta, \ \ \mathcal L_\beta(iQ_\beta)=0, \ \ \mathcal L_\beta(\pa_yQ_\beta)=0$$ and \eqref{eqQbdot} with $\Pi^-$, we estimate:
$$\|D\Pi^-(\Lambda Q_\beta)\|_{L^2}+\|D\Pi^-Q_\beta\|_{L^2}+\|D\Pi^-\tilde{\Lambda}Q_\beta\|_{L^2}+\|D\Pi^-\pa_yQ_\beta\|_{L^2}\lesssim 1-\beta.$$ 
Hence from \eqref{defsj}:
$$\|D\Pi^-(S_j-\tilde{S}_j)\|_{L^2_{y_j}}\lesssim (1-\beta_j)\Mod_j\lesssim (1-\beta_j)\left[\frac{1}{\eta^Ct^{N+1}}+\frac{\|\e_j\|_{L^2}}{t}\right]$$ from which:
$$|(i\e_1,D\Pi^-(S_1-\tilde{S}_1))|\lesssim  (1-\beta_1)\|\e_1\|_{L^2}\left[\frac{1}{\eta^Ct^{N+1}}+\frac{\|\e_1\|_{L^2}}{t}\right]\lesssim 
\frac{\mathcal G}{t}+\frac{1}{t^{N+1}}$$
and renormalizing to the $y_2$ variable:
\bee
|(i\e_1,D\Pi^-(S_2-\tilde{S}_2))|&=&|(i\e_2,D\Pi^-(S_2-\tilde{S}_2))|\lesssim (1-\beta_2)\|\e_2\|_{L^2}\left[\frac{1}{\eta^Ct^{N+1}}+\frac{\|\e_2\|_{L^2}}{t}\right]\\
&\lesssim& \frac{\mathcal G}{t}+\frac{1}{t^{N+1}}.
\eee
We now argue similarly for the $\tilde{S}_j$ terms. Indeed, from Corollary \ref{DPiminusDprimeV}, we have 
$$\|D\Pi^-\pa_\Gamma V_j\|_{L^2}+\|D\Pi^-\Lambda_RV_j\|_{L^2}+\|D\Pi^-\pa_{\l_{j+1}} V_j\|_{L^2}+\|D\Pi^-(1-\beta_{j+1})\pa_{\beta_{j+1}} V_j\|_{L^2}\lesssim \frac{1-\beta_j}{R}.$$
Hence, arguing like for \eqref{estsonehone}:
$$\|\|D\Pi^-\tilde{S}_1\|_{L^2_{y_1}}\lesssim \frac{1-\beta_1}{t}\left[\Mod_1+\Mod_2\right]\lesssim(1-\beta_1)\left[\frac{1}{\eta^Ct^{N+1}}+\frac{\|\e_1\|_{L^2}}{t}\right]$$ which implies
$$|(i\e_1,D\Pi^-\tilde{S}_1)|\lesssim (1-\beta_1)\|\e_1\|_{L^2}\left[\frac{1}{\eta^Ct^{N+1}}+\frac{\|\e_1\|_{L^2}}{t}\right]\lesssim 
\frac{\mathcal G}{t}+\frac{1}{t^{N+1}}.$$ similarly:
$$\|\|D\Pi^-\tilde{S}_2\|_{L^2_{y_2}}\lesssim \frac{1-\beta_2}{t}\left[\Mod_1+\Mod_2\right]\lesssim (1-\beta_2)\left[\frac{1}{\eta^Ct^{N+1}}+\frac{\|\e_2\|_{L^2}}{t}\right]$$ and
$$|(i\e_2,D\Pi^-\tilde{S}_2)|\lesssim (1-\beta_2)\|\e_2\|_{L^2}\left[\frac{1}{\eta^Ct^{N+1}}+\frac{\|\e_2\|_{L^2}}{t}\right]\lesssim 
\frac{\mathcal G}{t}+\frac{1}{t^{N+1}}.$$
This concludes the proof of \eqref{njeneononeone}.\\

\noindent{\bf Step 5:} Linear momentum terms. Let 
\be
\label{defetildeoneminus}
\widetilde{\e_1}=\frac{\e_1^-}{\la z_1\ra^{\frac{1+\alpha}{2}}}, \ \ z_1=\frac{y_1}{R},
\ee
we claim: 
\bea
\label{estmomentm}
&&-\frac 12 (\pa_t\zeta D\e,\e)+(i|D|\e,\zeta D\e+\frac 12\e D\zeta)\\
\nonumber & = & \frac{d}{dt}\left\{o_{\eta \to 0}(1)\mathcal G\right\}+O\left(\frac{1}{t^{N+1}}+\frac{1}{t}\left[\mathcal G(t)+\frac{\|\et_1\|^2_{L^2}}{t}\right]\right).
\eea

We first compute:
\bee
\nonumber &&(\pa_t\zeta D\e,\e)+(-i|D|\e,\e D\zeta)=(\pa_t\zeta(D\e^++D\e^-),\e^++\e^-)+(D\e^+-D\e^-,(\e^++\e^-)\pa_x\zeta)\\
\nonumber& =& ((\pa_t\zeta+\pa_x\zeta)D\e^+,\e^+)+((\pa_t\zeta-\pa_x\zeta)D\e^-,\e^-)+((\pa_t\zeta-\pa_x\zeta)D\e^-,\e^+)+((\pa_t\zeta+\pa_x\zeta)D\e^+,\e^-)\\
\nonumber& = & ((\pa_t\zeta+\pa_x\zeta)D\e^+,\e^+)+((\pa_t\zeta+\pa_x\zeta)D\e^-,\e^-)+((\pa_t\zeta+\pa_x\zeta)D\e^-,\e^+)+((\pa_t\zeta+\pa_x\zeta)D\e^+,\e^-)\\
&  - & 2(\pa_x\zeta D\e^-,\e^-)-2(\pa_x\zeta D\e^-,\e^+)
\eee
and now estimate the various contributions.\\

\noindent\underline{\em Term $|(\pa_x \zeta D\e^-,\e^-)|$}. We claim:
\be
\label{enjivbibeibeb}
|(\pa_x \zeta D\e^-,\e^-)|\lesssim \frac{1}{t}\left[\mathcal G(t)+\frac{\|\et_1\|^2_{L^2}}{t}\right].
\ee
Indeed, recall \eqref{defzetabeta} and renormalize to the $y_1$ variable to compute:
$$|(\pa_x \zeta D\e^-,\e^-)|\lesssim |(D\e_1^-,\pa_{y_1}\phi_1\e_1^-)|.$$
We then commute: 
\bee
&&|(D\e^-_1,\pa_{y_1}\phi_1\e^-_1)|=\frac{1}{R(1-b)}|(\frac{1}{\la z_1\ra^{\frac{1+\alpha}{2}}}\chi_RD\e^-_1,\e^-_1)|\\
& =& \frac{1}{R(1-b)} \left|(\frac{1}{\la z_1\ra^{\frac{1+\alpha}{2}}}[-|D|^{\frac 12}\chi_R|D|^{\frac 12}\e_1^-+[|D|^{\frac 12},\chi_R]|D|^{\frac 12}\e_1^-],\e_1^-)\right|\\
& \lesssim &\frac 1R\left\{ |(\chi_R|D|^{\frac 12}\e_1^-,|D|^{\frac 12}\widetilde{\e_1})|+|(|D|^{\frac 12}\e_1^-,[|D|^{\frac 12},\chi_R]\widetilde{\e_1})|\right\}\\
& \lesssim &\frac 1R\||D|^{\frac 12}\e_1^-\|_{L^2}\left[ \|\chi_R\la z_1\ra^{\frac{1+\alpha}{2}}\|_{L^{\infty}}\|\frac{1}{\la z_1\ra^{\frac{1+\alpha}{2}}}|D|^{\frac 12}\tilde{\e_1}\|_{L^2}+\|[|D|^{\frac 12},\chi_R]\widetilde{\e_1}\|_{L^2}\right].
\eee
We estimate from \eqref{commestimateone}:
$$\|[|D|^{\frac 12},\chi_R]\widetilde{\e_1}\|_{L^2}\lesssim \frac{1}{\sqrt R}\|\widetilde{\e_1}\|_{L^2}$$ and from \eqref{commestimateoneweighted} applied to $\frac{1}{\la z_1\ra^{\frac{1+\alpha}{2}}}$:
\bea
\label{estdonehalftilde}
\nonumber \|\frac{1}{\la z_1\ra^{\frac{1+\alpha}{2}}}|D|^{\frac 12}\tilde{\e_1}\|_{L^2}&\lesssim & \||D|^{\frac12}\e_1^-\|_{L^2}+\| \frac{1}{\la z_1\ra^{\frac{1+\alpha}{2}}}[|D|^{\frac 12},\frac 1{\la z_1\ra^{\frac{1+\alpha}{2}}}]\e_1^-\|_{L^2}\\
& \lesssim &\||D|^{\frac12}\e_1^-\|_{L^2}+\frac{1}{\sqrt{R}}\|\widetilde{\e}_1\|_{L^2}
\eea
and hence the bound:
$$|(D\e^-_1,\pa_{y_1}\phi_1\e^-_1)|\lesssim \frac{1}{R}\left[\||D|^{\frac12}\e_1^-\|_{L^2}^2+\frac{\|\et_1\|_{L^2}^2}{R}\right]\lesssim \frac{1}{t}\left[\mathcal G+\frac{\|\et_1\|_{L^2}^2}{t}\right],$$ this is \eqref{enjivbibeibeb}.\\

\noindent\underline{{\it Term $(\pa_x\zeta D\e^-,\e^+)$}}. This term cannot be treated directly due to the $\eta$ loss in $\|\e_1^{\pm}\|_{L^2}\lesssim \frac{\mathcal G}{\eta}$. We claim that
\be
\label{nvjenoenovnoe}
(\pa_x\zeta D\e^-,\e^+)=\frac{d}{dt}\left\{o_{\eta \to 0}(\mathcal G)\right\}+O\left(\frac{1}{t^{N+1}}+\frac{\mathcal G(t)}{t}\right).
\ee
Indeed, first we renormalize to the $y_1$ variable,
$$(\pa_x\zeta D\e^-,\e^+)=\frac{1}{\l_1^2}(D\e_1^-,\pa_{y_1}\phi_1\e_1^+)$$ and now we need to use the equation. We rewrite \eqref{eqebis} as 
\bee
&&i\pa_t\e-|D|\e=\tilde{F}\\
&&\tilde{F}(t,x)=-\Psi-S-((\Phi+\e)|\Phi+\e|^2-\Phi|\Phi|^2), \ \ \tilde{F}(t,x)=\tilde{F}_1(s_1,y_1)
\eee 
and renormalize to the $y_1=\frac{x-x_1}{\l_1(1-\beta_1)}$ variable so that
\bee
&&i\pa_{s_1}\e_1-\frac{|D|-\beta_1D}{1-\beta_1}\e_1\\
&=&\l_1^{1+\frac 12}\tilde{F}_1+ i\frac{(\l_1)_{s_1}}{\l_1}(\frac{\e_1}{2}+y_1\pa_{y_1}\e_1)-i\frac{(\beta_1)_{s_1}}{1-\beta_1}y_1\pa_{y_1}\e_1+i\frac{(x_1)_t-\beta_1}{1-\beta_1}\pa_{y_1}\e_1+\gamma_{s_1}\e_1
\eee
and thus after projecting with $\Pi^-$ and using $[\Pi^{\pm},\pa_y]=[\Pi^{\pm},y\pa_y]=0$: 
\bea
\label{estoneminus}
&&i\pa_{s_1}\e_1^-+\frac{1+\beta_1}{1-\beta_1}D\e_1^-\\
\nonumber &=&\l_1^{1+\frac 12}\Pi^-\tilde{F}_1+ i\frac{(\l_1)_{s_1}}{\l_1}(\frac{\e^-_1}{2}+y_1\pa_{y_1}\e^-_1)-i\frac{(\beta_1)_{s_1}}{1-\beta_1}y_1\pa_{y_1}\e^-_1+i\frac{(x_1)_t-\beta_1}{1-\beta_1}\pa_{y_1}\e^-_1+(\gamma_1)_{s_1}\e^-_1,
\eea
and
\bea
\label{estoneplus}
&&i\pa_{s_1}\e_1^+-D\e_1^+\\
\nonumber &=&\l_1^{1+\frac 12}\Pi^+\tilde{F}_1+ i\frac{(\l_1)_{s_1}}{\l_1}(\frac{\e^+_1}{2}+y_1\pa_{y_1}\e^+_1)-i\frac{(\beta_1)_{s_1}}{1-\beta_1}y_1\pa_{y_1}\e^+_1+i\frac{(x_1)_t-\beta_1}{1-\beta_1}\pa_{y_1}\e^+_1+(\gamma_1)_{s_1}\e^+_1.
\eea
Using \eqref{estoneminus}, we have
\bee
&&\frac{1}{\l^2_1}(D\e_1^-,\pa_{y_1}\phi_1\e_1^+)=  \frac{1-\beta_1}{\l_1(1+\beta_1)}(-i\pa_t\e_1^-,\pa_{y_1}\phi_1\e_1^+)\\
& + & \frac{1-\beta_1}{\l^2_1(1+\beta_1)}\left(\l_1^{1+\frac 12}\Pi^-\tilde{F}_1+i\frac{(\l_1)_{s_1}}{\l_1}(\frac{\e^-_1}{2}+y_1\pa_{y_1}\e^-_1)-i\frac{(\beta_1)_{s_1}}{1-\beta_1}y_1\pa_{y_1}\e^-_1\right.\\
&+& \left.i\frac{(x_1)_t-\beta_1}{1-\beta_1}\pa_{y_1}\e^-_1+(\gamma_1)_{s_1}\e^-_1,\pa_{y_1}\phi_1\e_1^+\right).
\eee
We use  $\mbox{Supp}(\pa_{y_1}\phi_1)\subset \{\frac {t}{4}\leq y_1\leq t\}$, $\|\pa_{y_1}\phi_1\|_{L^{\infty}}\lesssim \frac 1t$ and the rough bound 
\be
\label{veryroughbnoud}
\left|\frac{(\l_1)_{s_1}}{\l_1}\right|+\left|\frac{(\beta_1)_{s_1}}{1-\beta_1}\right|+\left|\frac{(x_1)_t-\beta_1}{1-\beta_1}\right|\lesssim \frac 1t, \ \ |(\gamma_1)_{s_1}|\lesssim 1
\ee 
to estimate
\bee
(1-\beta_1)\left|\left(i\frac{(\l_1)_{s_1}}{\l_1}\frac{\e^-_1}{2}+(\gamma_1)_{s_1}\e^-_1,\pa_{y_1}\phi_1\e_1^+\right)\right| \lesssim  \frac{1-\beta_1}t\|\e_1\|_{L^2}^2\lesssim \frac{\matchal G}{t},
\eee
and
\bee
&&(1-\beta_1)\left|\left(i\frac{(\l_1)_{s_1}}{\l_1}y_1\pa_{y_1}\e^-_1-i\frac{(\beta_1)_{s_1}}{1-\beta_1}y_1\pa_{y_1}\e^-_1+i\frac{(x_1)_t-\beta_1}{1-\beta_1}\pa_{y_1}\e^-_1,\pa_{y_1}\phi_1\e_1^+\right)\right|\\
& \lesssim & \frac{1-\beta_1}t\|\e_1\|_{L^2}^2\lesssim \frac{\matchal G}{t}.
\eee
Indeed, in order to absorb the derivative in the second estimate, we make use of the commutator estimate \eqref{estcommlonebis}. For instance,
$$ \vert (y_1\pa_{y_1}\e^-_1,\pa_{y_1}\phi_1\e_1^+)\vert =\vert (\pa_{y_1}[\Pi ^+,y_1\pa_{y_1}\phi_1]\e^-_1,\e^+_1)\vert +O(\|\e_1\|_{L^2}^2)
\lesssim \|\e_1\|_{L^2}^2\ ,$$
ad the two other terms are treated similarly. 

The rough $L^{\infty}$-bound $\|\e_1\|_{L^{\infty}}\le 1$, \eqref{errorpsin} and \eqref{seestimatesltwo} ensure
\be
\label{estfyilde}
\|\tilde{F}_1\|_{L^2}\lesssim \|\e_1\|_{L^2}+\frac{1}{\eta^Ct^{N+1}} 
\ee
and hence:
$$(1-\beta_1)|(\Pi^{-}\tilde{F}_1,\pa_{y_1}\phi_1\e_1^+)|\lesssim \frac{1-\beta_1}{t}\|\e_1\|_{L^2}^2+\frac{1}{t^{N+1}} \lesssim \frac{\matchal G}{t}+\frac{1}{t^{N+1}}.$$ We now integrate by parts in time:
\bee
&&\frac{1}{\l^2_1}(D\e_1^-,\pa_{y_1}\phi_1\e_1^+)=-\frac{1-\beta_1}{\l_1(1+\beta_1)}(i\pa_t\e_1^-,\pa_{y_1}\phi_1\e_1^+)+O\left(\frac{\matchal G}{t}+\frac{1}{t^{N+1}}\right)\\
& = & -\frac{d}{dt}\left\{\frac{1-\beta_1}{\l_1(1+\beta_1)}(i\e_1^-,\pa_{y_1}\phi_1\e_1^+)\right\}-\frac{1-\beta_1}{\l_1(1+\beta_1)} \left(\e_1^-,\pa_{y_1}\phi_1i\pa_t\e_1^+\right)+O\left(\frac{\matchal G}{t}+\frac{1}{t^{N+1}}\right)
\eee
where we used \eqref{veryroughbnoud} and the rough bound $$|\pa_t\pa_{y_1}\phi_1|\lesssim \frac 1t$$ in the last step. We now inject \eqref{estoneplus} and conclude using a similar chain of estimates as above:
\bee
\nonumber \frac{1}{\l^2_1}(D\e_1^-,\pa_{y_1}\phi_1\e_1^+)& = & -\frac{d}{dt}\left\{\frac{1-\beta_1}{\l_1(1+\beta_1)}(i\e_1^-,\pa_{y_1}\phi_1\e_1^+)\right\}+O\left(\frac{\matchal G}{t}+\frac{1}{t^{N+1}}\right)\\
& - &  \frac{1-\beta_1}{\l_1(1+\beta_1)}(\e_1^-,\pa_{y_1}\phi_1D\e_1^+).
\eee
The last term is handled using again the commutator estimate \eqref{estcommlonebis}:
$$|(\e_1^-,\pa_{y_1}\phi_1D\e_1^+)|=|(\e_1^-,[\pa_{y_1}\phi_1,\Pi^+]D\e_1^+)|\lesssim \|\e_1^+\|_{L^2}\|D[\pa_{y_1}\phi_1,\Pi^+]\e_1^-\|_{L^2}\lesssim \frac{\|\e_1\|_{L^2}^2}{t^2}$$
and the boundary term in time is estimated using $\|\pa_{y_1}\phi_1\|_{L^{\infty}}\lesssim \frac 1t$:
$$\left|\frac{1-\beta_1}{\l_1(1+\beta_1)}(i\e_1^-,\pa_{y_1}\phi_1\e_1^+)\right|\lesssim \frac{1-\beta_1}{t}\|\e_1\|_{L^2}^2\lesssim \frac{\mathcal G}{t}.$$
The collection of above bounds yields \eqref{nvjenoenovnoe}.\\

\noindent\underline{{\it Term $(-i|D|\e,\zeta D\e)$}}. We claim similarly
\be
\label{kjvneoneoneonv}
(-i|D|\e,\zeta D\e)=\frac{d}{dt}\left\{o_{\eta \to 0}(\mathcal G)\right\}+O\left(\frac{1}{t^{N+1}}+\frac{\mathcal G(t)}{t}\right).
\ee
Indeed, we compute:
\bee
(-i|D|\e,\zeta D\e)&=&\frac{1}{\l_1^2(1-\beta_1)}(-i(D\e_1^+-D\e_1^-),\zeta_1(D\e_1^++D\e_1^-))\\
&=& \frac{1}{\l_1^2(1-\beta_1)}\left[(-iD\e_1^+,\zeta_1 D\e_1^-)+(iD\e_1^-,\zeta_1 D\e_1^+)\right]\\
& = & \frac{2}{\l_1^2(1-\beta_1)}(iD\e_1^-,\zeta_1 D\e_1^+)=-\frac{2}{\l_1^2}(iD\e_1^-,\phi_1 D\e_1^+).
\eee
We compute from \eqref{estoneminus}:
\bee
&&\frac{1}{\l_1^2}(iD\e_1^-,\phi_1 D\e_1^+)=\frac{1-\beta_1}{\l_1(1+\beta_1)}(i\pa_t\e_1^-,i\phi_1 D\e_1^+)\\
& - & \frac{1-\beta_1}{\l^2_1(1+\beta_1)}\left(\l_1^{1+\frac 12}\Pi^-\tilde{F}_1+ i\frac{(\l_1)_{s_1}}{\l_1}(\frac{\e^-_1}{2}+y_1\pa_{y_1}\e^-_1)-i\frac{(\beta_1)_{s_1}}{1-\beta_1}y_1\pa_{y_1}\e^-_1\right.\\
&+& \left.i\frac{(x_1)_t-\beta_1}{1-\beta_1}\pa_{y_1}\e^-_1+\gamma_{s_1}\e^-_1,i\phi_1 D\e_1^+\right).
\eee
We estimate from \eqref{estfyilde}, \eqref{estcommlonebis}:
\bee
&&(1-\beta_1)|(\Pi^-\tilde{F}_1,i\phi_1 D\e_1^+)|=(1-\beta_1)|(\Pi^-\tilde{F}_1,[\Pi^+,\phi_1] D\e_1^+)|\\
&\lesssim& (1-\beta_1)\|D[\Pi^+,\phi_1]\Pi^-\tilde{F}_1\|_{L^2}\|\e_1^+\|_{L^2}\lesssim  \frac{1-\beta_1}{t}\|\tilde{F}_1\|_{L^2}\|\e_1\|_{L^2}\lesssim \frac{1-\beta_1}t\|\e_1\|^2_{L^2}+\frac{1}{t^{N+1}}\\
&\lesssim &\frac{\mathcal G}{t}+\frac{1}{t^{N+1}}.
\eee
We integrate by parts,
\bee
|(iy_1\pa_{y_1}\e_1^-,i\phi_1 D\e_1^+)|&=&|(i\e_1^-,\pa_{y_1}(y_1\phi_1\pa_{y_1}\e_1^+)|\\
& \lesssim & |(i\e_1^-,(\phi_1+y_1\pa_{y_1}\phi_1)\pa_{y_1}\e_1^+)|+(i\e_1^-,y_1\phi_1\pa^2_{y_1}\e_1^+)|\ .
\eee
For the first term, we estimate from \eqref{estcommlonebis}:
$$|(i\e_1^-,(\phi_1+y_1\pa_{y_1}\phi_1)\pa_{y_1}\e_1^+)|\lesssim \|\e_1^+\|_{L^2}\|\pa_{y_1}[\Pi^+,\phi_1+y_1\pa_{y_1}\phi_1]\e_1^-\|_{L^2}\lesssim \frac{1}{t}\|\e_1\|^2_{L^2}.
$$
For the second term, we use $[\Pi^+,y_1]\pa_{y_1}\e_1=0$ 
and \eqref{estcommlonebisbis} to estimate
\bee
|(i\e_1^-,y_1\phi_1\pa^2_{y_1}\e_1^+)|&=&|(i\e_1^-,\phi_1\Pi^+(y_1\pa^2_{y_1}\e_1^+))|=(i\e_1^-,[\phi_1,\Pi^+](y_1\pa^2_{y_1}\e_1^+))|\\
& \lesssim &\|\e_1^+\|_{L^2}\|\la y_1\ra\pa_{y_1}^2[\Pi^+,\phi_1]\e_1^-\|_{L^2}\lesssim \frac{\|\e_1\|_{L^2}^2}{t}.
\eee
Similarly, $$|(i\pa_{y_1}\e_1^-,\phi_1 D\e_1^+)|+|(\e_1^-,\phi_1 D\e_1^+)|\lesssim \frac{1}{t}\|\e_1\|^2_{L^2}.$$
We therefore integrate by parts and using \eqref{veryroughbnoud}
\bee
&&\frac{1}{\l_1^2}(iD\e_1^-,\phi_1 D\e_1^+)=\frac{1-\beta_1}{\l_1(1+\beta_1)}(i\pa_t\e_1^-,i\phi_1 D\e_1^+)+O\left(\frac{\matchal G}{t}+\frac{1}{t^{N+1}}\right)\\
& = & \frac{d}{dt}\left\{\frac{1-\beta_1}{\l_1(1+\beta_1)}(\e_1^-,\phi_1 D\e_1^+)\right\}-\frac{1-\beta_1}{\l^2_1(1+\beta_1)}(i\phi_1\e_1^-,Di\pa_{s_1}\e_1^+)+O\left(\frac{\matchal G}{t}+\frac{1}{t^{N+1}}\right).
\eee
We now reinject \eqref{estoneplus} and estimate all terms similarly as above using \eqref{estcommlonebis}, \eqref{estcommlonebisbis}, and \eqref{kjvneoneoneonv} follows through a completely similar chain of estimates.
\vskip0.25cm
\noindent\underline{{\em $(\pa_t+\pa_x)\zeta$ terms}}. These terms gain an extra $1-\beta_1$ which is essential to treat the degeneracy of the kinetic energy and the $L^2$ mass for $\e_1^+$ in the lower bound \eqref{coerc}, and we claim:
\bea
\label{neoneovnovnene}
&&\nonumber |((\pa_t\zeta+\pa_x\zeta)D\e^+,\e^+)|+|((\pa_t\zeta+\pa_x\zeta)D\e^-,\e^-)|+|((\pa_t\zeta+\pa_x\zeta)D\e^-,\e^+)|\\
&+& |((\pa_t\zeta+\pa_x\zeta)D\e^+,\e^-)|\lesssim \frac{\mathcal G}{t}.
\eea
 Indeed, let
  \be
  \label{defpsi1}
  \psi(t,x)=\frac{\pa_t\zeta+\pa_x\zeta}{\sqrt{\phi}}, \ \ \psi(x)=\psi_1(y_1).
  \ee
  We estimate, after renormalization to the $y_1 $ variable, using \fref{defzetabeta}, \fref{potzniephione}, \fref{coomestimate}, \fref{poitpsi}, \fref{estconveonovd},
  \bee
 && |(D\e^+,(\pa_t\zeta +\pa_x\zeta)\e^{\pm})|\lesssim \left|(\sqrt{\phi_1}D\e_1^+,\psi_1\e_1^{\pm})\right|\\
  & \lesssim & \left|(D(\sqrt{\phi_1}\e_1^+),\psi_1\e_1^{\pm})\right|+\int \frac{|\pa_{y_1}\phi_1|}{\sqrt{\phi_1}}|\psi_1||\e_1|^2dy_1\\
  & \lesssim & \|\Dhalf(\sqrt{\phi_1}\e_1^+)\|_{L^2}\left[\|[\Dhalf,\psi_1]\e_1^{\pm}\|_{L^2}+\|\psi_1\Dhalf\e_1^{\pm}\|_{L^2}\right]+\frac{1-\beta_1}{R^2}\|\e_1\|_{L^2}^2\\
  & \lesssim & \left[\|\sqrt{\phi_1}\Dhalf \e_1^+\|_{L^2}+\frac{1}{\sqrt{R}}\|\e_1\|_{L^2}\right]\times \left[\frac{1-\beta_1}{t^{\frac 32}}\|\e_1\|_{L^2}+\frac{1-\beta_1}{t}\|\sqrt{\phi_1}\Dhalf \e_1^{\pm}\|_{L^2}\right]\\
  & + & \frac{1-\beta_1}{R^2}\int |\e_1|^2dy_1\lesssim  \frac{1}{t}\matchal G(t)\ .
  \eee
Finally, we infer
\bee
&&|((\pa_t\zeta+\pa_x\zeta)D\e^-,\e^{\pm})|=|(\psi\sqrt{\phi}D\e^-,\e^{\pm})|=|(\sqrt{\phi_1}D\e^-_1,\psi_1\e_1^{\pm})|\\
& = & \left|([\sqrt{\phi_1},|D|^{\frac 12}]\e_1^-+|D|^{\frac 12}\sqrt{\phi_1}|D|^{\frac 12}\e_1^-,\psi_1\e_1^{\pm})\right|\\
& \lesssim & \|[\sqrt{\phi_1},|D|^{\frac 12}]\e_1^-\|_{L^2}\|\psi_1\e_1^{\pm}\|_{L^2}+\|\sqrt{\phi_1}|D|^{\frac 12}\e_1^-\|_{L^2}\left(\|\psi_1|D|^{\frac 12}\e^{\pm}_1\|_{L^2}+\|[|D|^{\frac 12},\psi_1]\e_1\|_{L^2}\right)\ ,
\eee
and hence, using \eqref{poitpsi}, \eqref{estconveonovd}, \eqref{coomestimate},
\bee
&&|((\pa_t\zeta+\pa_x\zeta)D\e^-,\e^{\pm})|\lesssim \frac{1}{\sqrt{t}}\frac{1-\beta_1}{t}\|\e_1\|^2_{L^2}\\
& + & \||D|^{\frac 12}\e_1^-\|_{L^2}\left(\frac{1-\beta_1}{t}\|\sqrt{\phi_1}|D|^{\frac 12}\e_1^{\pm}\|_{L^2}+\frac{1-\beta_1}{t^{\frac 32}}\|\e_1\|_{L^2}\right)\lesssim  \frac{\matchal G}{t},
\eee
and \eqref{neoneovnovnene} is proved.\\

\noindent{\bf Step 6:} Control of mass terms. We claim: 
\bea
\label{estlinearmass}
&&\frac 12((\pa_t \theta) \e,\e)+(-i|D|\e,\theta \e)=\frac{d}{dt}\left\{o_{\eta\to 0}(\mathcal G)\right\}\\
\nonumber &+&  O\left(\frac{1}{t^{N+1}}+\frac{1}{t}\left[\mathcal G(t)+\frac{\|\et_1\|^2_{L^2}}{t}\right]\right).
\eea
Indeed, we split $\e=\e^++\e^-$ and compute:
\bee
&&\frac 12((\pa_t \theta) \e,\e)+(-i|D|\e,\theta \e)=\frac 12((\pa_t\theta)(\e^++\e^-),\e^++\e^-)+(-i(D\e^+-D\e^-),\theta(\e^++\e^-))\\
& = & \frac 12((\pa_t\theta)(\e^++\e^-),\e^++\e^-)-(\pa_x\e^+-\pa_x\e^-,\theta(\e^++\e^-))\\
& = & \frac 12((\pa_t\theta+\pa_x\theta) \e^+,\e^+)+\frac 12((\pa_t\theta-\pa_x\theta) \e^-,\e^-)+(\pa_t\theta \e^+,\e^-)+(\pa_x\e^-,\theta \e^+)-(\pa_x\e^+,\theta \e^-)\\
& = & \frac 12((\pa_t\theta+\pa_x\theta) \e^+,\e^+)+\frac 12((\pa_t\theta-\pa_x\theta) \e^-,\e^-)+((\pa_t\theta+\pa_x\theta) \e^+,\e^-)\\
& - & (\pa_x\theta \e^+,\e^-)+(\pa_x\e^-,\theta \e^+)+(\e^+,\pa_x\theta \e^-+\theta\pa_x\e^-)\\
& = &  \frac 12((\pa_t\theta+\pa_x\theta) \e^+,\e^+)+\frac 12((\pa_t\theta-\pa_x\theta) \e^-,\e^-)+((\pa_t\theta+\pa_x\theta) \e^+,\e^-)\\
& + & 2(\theta\e^+,\pa_x\e^-)\ ,
\eee 
and estimate all terms.\\

\smallskip
\noindent\underline{\em $(\pa_t+\pa_x)\theta$ terms}. We estimate from \eqref{estpatpxtheta},
$$|((\pa_x\theta+\pa_t\theta) \e^{\pm},\e^{\pm})|\lesssim \frac{\|\e\|_{L^2}^2}{t}\lesssim \frac{\mathcal G}{t}.$$

\noindent\underline{\em Term $((\pa_t-\pa_x)\theta \e^-,\e^-)$}. For $t\geq T^-$, we use $$|\l_2-\l_1|\lesssim \eta$$ and \eqref{estlinfitythetea}:
$$|((\pa_x\theta) \e^{-},\e^{-})| \lesssim\frac{|\l_2-\l_1|}{(1-\beta_1)R}\|\e\|_{L^2}^2\lesssim  \frac{\mathcal G(t)}{t}.$$ For $t\leq T^-$, we use the bound $$|\l_2-\l_1|\lesssim \frac 1t$$ and the space localization of $\pa_{y_1}\theta_1$ to estimate from \eqref{estnvienvoene}:
$$|((\pa_x\theta) \e^{-},\e^{-})|\lesssim |((\la z_1\ra^{1+\alpha}\pa_{y_1}\theta_1)\et_1,\et_1)|\lesssim \frac{|\l_2-\l_1|}{t}\|\et_1\|_{L^2}^2\lesssim \frac{\|\et_1\|_{L^2}^2}{t^2}.$$

\noindent\underline{\em Term $(\theta \e^+,\pa_x\e^-)$}. For the last term, we renormalize to the $y_1$ variable 
$$
(\pa_x\e^-,\theta \e^+)=\frac{1}{\l_1}(D\e^-_1,-i\theta_1\e^+_1)
$$
and hence, using \eqref{estoneminus},
\bee
(\pa_x\e^-,\theta \e^+)&=& \frac{1-\beta_1}{1+\beta_1}(i\pa_t\e_1^-,i\theta_1\e^+_1)\\
& + & \frac{1-\beta_1}{\l_1(1+\beta_1)}\left(\l_1^{1+\frac 12}\Pi^-\tilde{F}_1+ i\frac{(\l_1)_{s_1}}{\l_1}(\frac{\e^-_1}{2}+y_1\pa_{y_1}\e^-_1)\right.\\
&-& \left .i\frac{(\beta_1)_{s_1}}{1-\beta_1}y_1\pa_{y_1}\e^-_1+i\frac{(x_1)_t-\beta_1}{1-\beta_1}\pa_{y_1}\e^-_1+(\gamma_1)_{s_1}\e^-_1,\theta_1\e_1^+\right)\\
& = &  \frac{1-\beta_1}{1+\beta_1}(i\pa_t\e_1^-,i\theta_1\e^+_1)+O\left(\frac{1-\beta_1}{t}\|\e_1\|_{L^2}^2\right)
\eee
and hence, integrating by parts in time and using \eqref{estoneplus}, \eqref{veryroughbnoud}, \eqref{estfyilde},
\bee
(\pa_x\e^-,\theta \e^+)&=&\frac{d}{dt}\left\{\frac{1-\beta_1}{1+\beta_1}(\e_1^-,\theta_1\e_1^+)\right\}\\
& - &\frac{1-\beta_1}{\l_1(1+\beta_1)} (\e_1^-,\theta_1D\e_1^+)+O\left(\frac{1-\beta_1}{t}\|\e_1\|_{L^2}^2+\frac{1}{t^{N+1}}.\right)\ .
\eee
We estimate from \eqref{estcommlonebis},
\bee
|(\e_1^-,\theta_1D\e_1^+)|=|(D[\Pi^+,\theta_1]\e_1^-,\e_1^+)|\lesssim \frac{1}{t}\|\e_1\|_{L^2}^2
\eee
and, for the boundary term in time, we use $$\theta_1=\frac{1}{\l_2}\left[\mu\Psi_1+1-\Psi_1\right]\ ,$$ to compute
$$\frac{1-\beta_1}{1+\beta_1}(\e_1^-,\theta_1\e_1^+)=\frac{1-\beta_1}{\l_2(1+\beta_1)}(\e_1^-,(\mu \Psi_1+1-\Psi_1)\e_1^+)=\frac{1-\beta_1}{\l_2(1+\beta_1)}(\mu-1)(\e_1^-,\Psi_1\e_1^+)\ .$$ 
Hence
$$|\frac{1-\beta_1}{1+\beta_1}(\e_1^-,\theta_1\e_1^+)|\lesssim |\l_2-\l_1|(1-\beta_1)\|\e_1\|_{L^2}^2\lesssim \eta^\delta \mathcal G$$
which concludes the proof of \eqref{estlinearmass}.\\

\noindent{\bf Step 7:} Small time improved bound for $\|\et_1\|_{L^2}$.
The collection of above estimates yields the differential control:
\be
\label{bvubivbbrirb}
\left|\frac{d}{dt}\left\{\mathcal G(t)(1+o_{\eta \to 0}(1))\right\}\right|\lesssim\frac{1}{t}\left[\mathcal G(t)+\frac{\|\et_1\|_{L^2}^2}{t}\right]+\frac{1}{t^{N+1}}.
\ee
We now estimate the $\et_1$ term first through the following space time bound: 
\be
\label{essentialbooot}
\int_t^{T_n}\frac{\|\widetilde{\e_1}(\tau)\|^2_{L^2}}{\tau}d\tau\lesssim \int_t^{T_n}\left[\mathcal G(\tau)+\frac{1}{\tau^{N+1}}\right]d\tau.
\ee
which improves on the trivial bound $\|\tilde{\e_1}(t)\|_{L^2}^2\lesssim \|\e_1^-\|_{L^2}^2\lesssim \frac{\mathcal G(t)}{\eta}$ for $t\leq T^-$. Indeed, let $$h(s_1,y_1)=H\left(\frac{y_1}{s_1}\right), \ \ H(z_1)=\int_{z_1}^{+\infty}\frac{dz}{1+\la z\ra^{1+\alpha}}.$$ We estimate from \eqref{estoneminus}: 
\bee
&&\frac12 \frac{d}{ds_1}\int h|\e_1^-|^2=\frac12\int\left(\pa_{s_1}-\frac{1+\beta_1}{1-\beta_1}\pa_{y_1}\right)h|\e_1^-|^2\\
& + & \left(ih\e^-_1,\l_1^{1+\frac 12}\Pi^-\tilde{F}+ i\frac{(\l_1)_{s_1}}{\l_1}(\frac{\e^-_1}{2}+y_1\pa_{y_1}\e^-_1)-i\frac{(\beta_1)_{s_1}}{1-\beta_1}y_1\pa_{y_1}\e^-_1+i\frac{(x_1)_t-\beta_1}{1-\beta_1}\pa_{y_1}\e^-_1+(\gamma_1)_{s_1}\e^-_1\right)\\
& = & \frac12\int\left(\pa_{s_1}-\frac{1+\beta_1}{1-\beta_1}\pa_{y_1}\right)h|\e_1^-|^2+O\left(\|\e_1\|_{L^2}^2+\frac{1}{t^{N+1}}\right)
\eee
where we integrated by parts and use \eqref{veryroughbnoud}, \eqref{estfyilde} in the last step. Moreover,

\bee
\left(\pa_{s_1}-\frac{1+\beta_1}{1-\beta_1}\pa_{y_1}\right)h=\frac{1}{s_1}\left(-z_1-\frac{1+\beta_1}{1-\beta_1}\right)\pa_{z_1}H=\frac{1}{s_1}\left(z_1+\frac{1+\beta_1}{1-\beta_1}\right)\frac{1}{1+\la z_1\ra^{1+\alpha}}
\eee
and hence the bound using $\frac{|z_1|}{\la z_1\ra^{1+\alpha}}\leq 1$:
\bee
\frac{1}{s_1}\int \left(\frac{1+\beta_1}{1-\beta_1}\right)\frac{|\e_1^-|^2}{1+\la z_1\ra^{1+\alpha}}\leq  C\frac{\mathcal G}{1-\beta_1}+\frac1{t^{N+1}}+\frac 12\frac{d}{ds_1}\int h|\e_1^-|^2.
\eee
We integrate this on $[s_1(t),s_1(T_n)]$ with $\e_1(s_1(T_n))=0$ and \eqref{essentialbooot} follows from $s_1\sim t\sim R$, $\frac{\eta}{2}\leq 1-\beta_1\leq 2\eta$.
\vskip 0.25cm
\noindent{\bf Step 8:} Conclusion. We integrate \eqref{bvubivbbrirb} in time on $[t,T_n]$ using $\e(T_n)=0$ so that
$$\matchal G(t)\lesssim \int_t^{T_n}\frac{\matchal G(\tau)}{\tau}d\tau+\int_t^{T_n}\frac{\|\et_1\|_{L^2}^2}{\tau^2}d\tau+\frac{1}{t^{N}}.$$ The first term is estimated using the bootstrap bound \eqref{bootdynamicalproved}:
$$\int_t^{T_n}\frac{\matchal G(\tau)}{\tau}d\tau\lesssim \int_t^{T_n}\frac{1}{\tau^{1+\frac N2}}d\tau\lesssim \frac{1}{N}\frac{1}{t^{\frac N2}}.$$ For the second term, we estimate from \eqref{essentialbooot}:
\bee
\int_t^{T_n}\frac{\|\et_1\|_{L^2}^2}{\tau^2}d\tau&\lesssim &\frac{1}{t}\int_t^{T_n}\frac{\|\et_1\|_{L^2}^2}{\tau}d\tau\lesssim \frac 1t\int_t^{T_n}\left[\mathcal G(\tau)+\frac{1}{\tau^{N+1}}\right]d\tau\\
& \lesssim & \frac{1}{t^{N+1}}+ \frac 1t\int_t^{T_n}\frac{d\tau}{\tau^{\frac N2}}\lesssim  \frac{1}{N}\frac{1}{t^{\frac N2}}
\eee
which concludes the proof of \eqref{energyestimate}.
\end{proof}

%%%%%%%%%%%%%%%%%%%%%%%%%%%%%%%%%%%%%%%%%%%%%%%%%%%%%%%%%

\subsection{Proof of the bootstrap Proposition \ref{propboot}}

%%%%%%%%%%%%%%%%%%%%%%%%%%%%%%%%%%%%%%%%%%%%%%%%%%%%%%%%%

We are now in position to conclude the control of the geometrical parameters and the $H^1$ bound.

\begin{proof}[Proof of Proposition \ref{propboot}] First observe that \eqref{energyestimate} yields the improved $H^{\frac 12}$ bound in \eqref{bootdynamicalprovedfinal}. Moreover, the bounds \eqref{boootbound} at $T^-$ and  \eqref{estmod}, \eqref{coerc} \eqref{energyestimate} allow us to apply the perturbative Lemma \ref{propturbulent} and conclude that $\matchal P$ satisfies \eqref{esttrubulentregime}.
 We therefore need to prove \eqref{boootboundfinal} and the improved $H^1$ bound in \eqref{bootdynamicalprovedfinal}.\\

\noindent{\bf Step 1:} Proof of  \eqref{boootboundfinal}. Recall \eqref{defbetatilde} so that $$(1-\beta_j)\pa_{\beta_j}=\pa_{\tilde{\beta}_j}.$$ Since $RM_j,RB_j$ are $L^{\infty}$-admissible, we have 
\bea
\label{vneneenvo}
&&\sum_{j,k=1}^2\left|\pa_{\l_j}M_k\right|+|\pa_{\tilde{\beta_k}}M_k|+\sum_{j=1}^2 |R\pa_RM_j|+|\pa_{\Gamma}M_j|\lesssim \frac 1t\\
\label{vneneenvobis}
&&\sum_{j,k=1}^2\left|\pa_{\l_j}B_k\right|+|\pa_{\tilde{\beta_k}}B_k|+\sum_{j=1}^2 |R\pa_RB_j|+|\pa_{\Gamma}B_j|\lesssim \frac 1t.
\eea
For $t\ge T^-$, the same chain of estimates like for the proof of Proposition \ref{sharpmod} using $$|1-\mu|\lesssim \eta\ \ \mbox{for}\ \ t\ge T^-$$ ensures the more precise control:
\be
\label{betterphase}
\sum_{j=1}^2|\pa_{\Gamma}M_j|+|R\pa_RM_j|+\sum_{j,k=1}^2|\pa_{\tilde{\beta_k}}M_j|\lesssim \frac1{t^2}.
\ee 
Indeed, if $j=1$, we know that $b^{-1}R(1+(1-\b_1)R)M_1$ is $L^\infty $-admissible, so that 
$$|\pa_{\Gamma}M_1|+|R\pa_RM_1|+\sum_{k=1}^2|\pa_{\tilde{\beta_k}}M_1|\lesssim \frac b{R(1+(1-\b_1)R)}\lesssim \frac 1{t^2}$$
since, for $t\ge T^-$, $b\simeq \eta ^2$, $1-\b_1\sim \eta $ and $R\sim t$.
If $j=2$, Corollary \ref{5.77} leads to 
$$\vert \pa _\Gamma M_2\vert +\vert R\pa_RM_2\vert +\sum_{k=1}^2(1-\b_k)\vert \pa_{\b_k}M_2\vert \lesssim \frac{\vert 1-\mu\vert +(1-\b_2)\vert \log(1-\b_2)\vert +R^{-1}}{R(1+(1-\b_1)R)}\ .$$
Since, for $t\ge T^-$, $\vert 1-\mu\vert \lesssim \eta , 1-\b_2\simeq \eta ^3, 1-\b_1\sim \eta ,R\sim t$, we infer \eqref{betterphase}.\\
Recalling \eqref{defdeltas}, \eqref{defdeltasbis}, then \eqref{vneneenvo}, \eqref{vneneenvobis}, \eqref{betterphase} ensure:
\bee
&&|B_j-B_j^{\infty}|\lesssim \frac{1}{t}\left[\sum_{j=1}^2(|\Delta \l_j|+|\Delta \tilde{\beta}_j|)+|\Delta \Gamma|\right]+\frac{1}{t^2}|\Delta R|\lesssim \frac{1}{t^{\frac N8+1}}\\
&&|M_j-M_j^{\infty}|\lesssim \frac{1}{t}\sum_{j=1}^2|\Delta \l_j|+\frac{1}{t^2}\left(\sum_{j=1,2}|\Delta \tilde{\beta}_j|+|\Delta \Gamma|\right)+\frac{|\Delta R|}{t^3}\lesssim \frac{1}{t^{\frac N8+2}}.
\eee
Moreover, from \eqref{dynamicalystem}, \eqref{estmod}, \eqref{energyestimate}:
\bee
|(\l_j)_t-(\l_j^{\infty})_t|=\left|\frac{(\l_j)_{s_j}}{\l_j}-\frac{(\l_j^{\infty})_{s_j^{\infty}}}{\l_j^{\infty}}\right|\lesssim |M_j-M_j^{\infty}|+\Mod_j\lesssim  \frac{1}{t^{\frac N8+2}}
\eee
which time integration using \eqref{estbeogeo} ensures:
$$|\Delta \l_j|\lesssim \frac{1}{Nt^{\frac N8+1}}.$$
We now compute similarly:
$$|(\tilde{\beta}_j)_t-(\tilde{\beta}^{\infty}_j)_t|=\left|\frac{1}{\l_j}\left[B_j+O(\Mod_j)\right]-\frac{1}{\l_j^{\infty}}B_j^{\infty}\right|\lesssim \frac{1}{t^{\frac N4}}+|B_j-B_j^{\infty}|+|B_j^{\infty}||\l_j-\l_j^{\infty}|\lesssim \frac{1}{t^{\frac N8+1}}$$
and hence by integration in time:
$$|\Delta \tilde{\beta}_j|\lesssim \frac{1}{Nt^{\frac N8}}.$$
We now compute the phase shift:
$$
|\Gamma_t-\Gamma_t^{\infty}|=\left|\frac{1}{\l_2}-\frac{1}{\l^{\infty}_2}-\left(\frac{1}{\l_2}-\frac{1}{\l^{\infty}_2}\right)+O(\Mod_1+\Mod_2)\right|\lesssim \frac{1}{Nt^{\frac N8+1}}
$$
and hence $$|\Delta \Gamma|\lesssim \frac{1}{N^2t^{\frac N8}}.$$ We now estimate from \eqref{eqgammeexactebis}:
$$
|R_t-R^{\infty}_t|\lesssim \sum_{j=1,2}|\Delta \tilde{\beta}_j|+|\Delta \l_j|+\Mod_j+|\Delta \Gamma|+\frac{1}{t}|\Delta R|\lesssim \frac{1}{t^{\frac N8}}
$$
which time integration concludes the proof of \eqref{boootboundfinal}.\\
  
 \noindent{\bf Step 2:} Proof of the $H^1$ bound in \eqref{bootdynamicalprovedfinal}. Since we have closed the $H^{\frac 12}$ bound at the linear level, closing the $H^1$ bound or any higher Sobolev norm is now elementary. Recall \eqref{defequagt} $$
i\pa_t\e-|D|\e+2|\Phi|^2\e+\Phi^2\overline{\e}=G.$$ Let $$z=\Dhalf \e,$$ then:
\be
\label{eqzetaee}
\left\{\begin{array}{ll} i\pa_tz-|D|z+2|\Phi|^2z+\Phi^2\overline{z}=\tilde{G}\\
 \tilde{G}=\Dhalf G-2[\Dhalf, |\Phi|^2]\e-[\Dhalf, \Phi^2]\overline{\e}
 \end{array}\right..
 \ee
 We now run an energy identity on \fref{eqzetaee}.
 We consider
 $$
\mathcal G_0(z):=\frac{1}{2}\Big[(|D|z,z)+(z,z)-(2|\Phi|^2z+\Phi^2\overline{z},z)\Big]$$
 then from \eqref{bootdynamicalproved}: 
 \be
 \label{estionononvv}
 \|z\|_{H^{\frac 12}}^2\lesssim \mathcal G_0(z)+\|z\|_{L^2}^2\lesssim \mathcal G_0(z)+\frac{1}{t^{\frac N2}}. 
 \ee
 We compute the associated energy identity:
 \begin{align}
 \label{esitmatevnieo}
\frac{d}{dt}\mathcal G_0 & = (\pa_t z,|D|z+z-2|\Phi|^2z-\Phi^2\overline{z})-(\pa_t(|\Phi|^2)z+\frac 12\pa_t\Phi^2\overline{z},z) \notag\\
 & = (-i\tilde{G},|D|z+z-2|\Phi|^2z-\Phi^2\overline{z})+(z^2,i\Phi^2)-(\pa_t\Phi, 2\Phi |z|^2+\bar{\Phi }z^2)\notag\\
 &= (z^2,i\Phi^2)+(i\Dhalf (\Psi+S),|D|z+z-2|\Phi|^2z-\Phi^2\overline{z})\notag\\
& +\left(i\left[2[\Dhalf, |\Phi|^2]\e-[\Dhalf, \Phi^2]\overline{\e}\right],|D|z+z-2|\Phi|^2z-\Phi^2\overline{z}\right)\notag\\
& +\left(i\Dhalf N(\e),|D|z+z-2|\Phi|^2z-\Phi^2\overline{z}\right)\notag\\
& -(\pa_t\Phi, 2\Phi |z|^2+\bar{\Phi }z^2)\notag\\
&=I+II+III+IV+V.
 \end{align}
 We now estimate all terms in \fref{esitmatevnieo}. 
 From \fref{bootdynamicalproved} and $\|\Phi\|_{L^{\infty}}\lesssim 1$:
 $$|I|=|(z^2,i\Phi^2)|\lesssim \|\e\|_{H^{\frac 12}}^2\leq \frac 1{t^{\frac N4}}.$$ 
 For II, we use \eqref{errorpsin} and an integration by parts and \eqref{bootdynamicalproved} to estimate:
$$
|(i\Dhalf \Psi,|D|z+z-2|\Phi|^2z-\Phi^2\overline{z})|\lesssim \|\Psi\|_{H^{\frac 32}}\|z\|_{L^2}\lesssim \frac{1}{\eta^Ct^{N+1}}\frac{1}{t^{\frac N4}}\leq \frac{1}{t^{\frac N4}}.
$$
 For the modulation equation term, we estimate in brute force using the admissibility of $V_j$, \eqref{estmod} and \fref{bootdynamicalproved}: $$\|S\|_{H^{\frac 32}}\lesssim\frac{1}{\eta^C}(\Mod_1+\Mod_2)\lesssim \frac{1}{\eta^C} \frac{1}{t^{\frac N4}}$$ and hence:
 $$|(i\Dhalf S,|D|z+z-2|\Phi|^2z-\Phi^2\overline{z})|\lesssim \|S\|_{H^{\frac 32}}\|z\|_{L^2}\lesssim \frac{1}{\eta^Ct^{\frac N4}}\frac{1}{t^{\frac N4}}\leq \frac{1}{t^{\frac N4}}.$$ For III, we use that for any function $\chi$: $$\Dhalf[\Dhalf,\chi]=[|D|,\chi]-[\Dhalf,\chi]\Dhalf$$ 
and hence using \fref{bootdynamicalproved}, \eqref{commestimateone} with $R=1$ and the admissibility of $V_j$:
\bee
&&\left|\left(2i[\Dhalf, |\Phi|^2]\e,|D|z\right)\right|\lesssim \|\Dhalf [\Dhalf, |\Phi|^2]\e\|_{L^2}\|z\|_{H^{\frac 12}}\\
&\lesssim &\Big(\|[|D|, |\Phi|^2]\e\|_{L^2}+\|[\Dhalf, |\Phi|^2]\Dhalf \e\|_{L^2}\Big)\|z\|_{H^{\frac 12}} \lesssim  \frac{1}{\eta^C}\|\e\|_{H^{\frac 12}}\|z\|_{H^{\frac 12}}\\
&\lesssim& \frac{1}{\eta^Ct^{\frac N4}}\|z\|_{H^{\frac 12}}\lesssim \frac{1}{t^{\frac N4+1}}.
\eee
The term $(2i[\Dhalf, |\Phi|^2]\e, z-2|\Phi|^2z-\Phi^2\overline{z})$
being easier to handle and proceeding analogously for the terms containing $i[\Dhalf, \Phi^2]\bar{\eps}$, we conclude that
\[|III| \lesssim \frac{1}{t^{\frac N4+1}}\]
For IV, we develop the cubic non linear term. The most dangerous nonlinear term is the following which we estimate in brute force by Sobolev and \fref{bootdynamicalproved}:
\begin{align*}
\left|\left(i\Dhalf (\e|\e|^2),|D|z\right)\right|&=\left|\left(i|D|(|\e|^2\e), \Dhalf z\right)\right|
\lesssim \|D(|\e|^2\e)\|_{L^2}\|z\|_{H^{\frac 12}}\\
&\lesssim \|D\e\|_{L^2}\|\e\|_{L^{\infty}}^2\|z\|_{H^{\frac 12}}\lesssim (\|\e\|_{L^2}^2+\|D\e\|_{L^2}^2)\|z\|_{H^{\frac 12}}^2\\
&\lesssim \frac{1}{t^{\frac N4+1}}.
\end{align*}
Then, by the fractional Leibniz rule and \fref{bootdynamicalproved}, we also have
\begin{align*}
\left|\left(i\Dhalf (\e|\e|^2),z-2|\Phi|^2z-\Phi^2\overline{z}\right)\right|
&\lesssim \|\Dhalf\e\|_{L^4}\|\e^2\|_{L^4}\|z\|_{L^2}
\lesssim \|z\|_{H^{\frac 12}}\|\e\|_{H^{\frac 12}}^2\|z\|_{L^2}\\
&\lesssim  \frac{1}{t^{\frac N4+1}}.
\end{align*}
We argue similarly for the quadratic terms and obtain:
\begin{align*}
\left|\left(i\Dhalf(2|\e|^2\Phi+\e^2\overline{\Phi}), |D|z+ z-2|\Phi|^2z-\Phi^2\overline{z}\right)\right|
&\lesssim \frac{1}{\eta^{\frac{1+4\delta}{2}}}(\|\e\|_{L^2}^2+\|\e\|_{\dot{H}^1}^2)\|z\|_{H^{\frac 12}}\\
&\lesssim \frac{1}{t^{\frac N4+1}}
\end{align*}
Finally, to estimate V, we use from \eqref{expressionpatphi} the rough bound $\|\pa_t\Phi\|_{L^{\infty}}\lesssim \frac{1}{\eta^{C}}$ to estimate:
$$
|V|=|(\pa_t\Phi, 2\Phi |z|^2+\bar{\Phi }z^2)|\lesssim \|\pa_t\Phi\|_{L^{\infty}}\|\Phi\|_{L^{\infty}}\|z\|_{L^2}^2\lesssim \frac{1}{\eta^Ct^{\frac N2}}\lesssim \frac{1}{t^{\frac N4+1}}.$$
The collection of above bounds yields 
$$\left|\frac{d}{dt}\mathcal G_0\right|\lesssim \frac{1}{t^{\frac N4+1}}$$ which time integration using 
$\e(T_n)=z(T_n)=0$ with \eqref{estionononvv} yields $$\|z\|_{H^{\frac 12}}^2\lesssim \frac{1}{Nt^{\frac N4}}.$$ This concludes the proof of \eqref{bootdynamicalprovedfinal} and of Proposition \ref{propboot}.
\end{proof}

%%%%%%%%%%%%%%%%%%%%%%%%%%%%%%%%%%%%%%%%%%%%%%%%%%%%%%%%%

\subsection{Proof of Theorem \ref{thmmain}}
\label{proofthm} 
%%%%%%%%%%%%%%%%%%%%%%%%%%%%%%%%%%%%%%%%%%%%%%%%%%%%%%%%%

We are now in position to conclude the proof of Theorem \ref{thmmain} as a simple consequence of Proposition \ref{propboot}. The argument is now classical \cite{Martelmulti}, we recall it for the convenience of the reader.

\begin{proof}[Proof of Theorem \ref{thmmain}] First observe that Proposition \ref{propboot} implies that $u_n(t)$ solution to \eqref{defunt} satisfies:
\be
\label{estbeogeo}
\forall n\geq 1, \ \ \forall t\in [T_{in},T_n], \ \ \|u_n(t)-\Phi_{\pt^{\infty}}(t)\|_{H^1}\leq \frac{1}{t^{\frac N{10}}}.
\ee
We now let $n\to +\infty$ and extract a non trivial limit to produce the dynamics described by Theorem \ref{thmmain}.\\

\noindent {\bf Step 1:} $H^{\frac 12}$-compactness. We claim that that the sequence $u_n(T_{in})$ is up to a subsequence $H^{\frac 12}$ compact. Indeed it is $H^1$ bounded from \fref{estbeogeo}. We now claim that it is $H^{\frac 12}$ tight: $\forall \e_0>0,$ $\exists R(\e_0)$ such that:
\be
\label{ltwotightneness}
 \int_{|x|\geq R(\e_0)}|u_n(T_{in})|^2+ \int_{|x|\geq R(\e_0)}|\Dhalf u_n(T_{in})|^2< \e_0.
\ee
Indeed, pick $\e_0>0$, then from \fref{estbeogeo}, we may find a time $T(\e_0)$ such that 
$$\|u_n(T(\e_0))-\Phi_{\pt^{\infty}}(T(\e_0))\|_{H^1}<\e_0$$ 
and then by construction of 
$\Phi_{\widetilde{\mathcal P}^{\infty}}$, we may find $R=R(\e_0)$ 
such that 
$$\forall R\geq R(\e_0), \ \ \int(1-\chi_R)|\Phi_{\pt^{\infty}}(T(\e_0))|^2+\int(1-\chi_R)||\Dhalf \Phi_{\pt^{\infty}}(T(\e_0))|^2<\e_0$$ from which 
$$\int(1-\chi_R)|u_n(T(\e_0))|^2+\int(1-\chi_R)|\Dhalf u_n(T(\e_0))|^2\lesssim \e_0.$$ 
We now propagate this information backwards at $T_{in}$ by localizing the mass and energy conservation laws. Indeed, a brute force computation and \eqref{commd} ensure
$$\left|\frac{d}{dt}\int(1-\chi_R)|u_n|^2\right|\lesssim \frac{\|u_n\|_{L^2}^2}{R}\lesssim \frac{1}{R}$$ and hence $$\int(1-\chi_R)|u_n(T_{in})|^2\lesssim \e_0+\frac{T(\e_0)-T_{in}}{R(\e_0)}\lesssim \e_0$$ by possibly raising the value of $R(\e_0)$. We similarly localize the conservation of energy with $\zeta_R=1-\chi_R$ and estimate using \fref{commestimate}:
\bee
&&\left|\frac{d}{dt}\left\{\frac12\int \zeta_R|\Dhalf u_n|^2+\frac 14\int \zeta_R|u_n|^4\right\}\right|=\left|(\pa_tu_n,[\Dhalf ,\zeta_R]\Dhalf u_n)\right|\\
&\lesssim &\frac{\|u_n\|_{H^1}^2+\|u_n\|^4_{H^1}}{\sqrt{R}}\lesssim \frac{1}{\eta^C\sqrt{R}}
\eee
from which $$\int(1-\chi_R)|\Dhalf u_n(T_{in})|^2\lesssim \e_0+\frac{T(\e_0)-T_{in}}{\eta^CR(\e_0)}\lesssim \e_0$$ by possibly raising the value of $R(\e_0)$, and \fref{ltwotightneness} is proved.\\

\noindent {\bf Step 2:} Conclusion. The $H^1$ global bound and the tightness \fref{ltwotightneness} ensure using the compactness of the Sobolev embedding $H^1\hookrightarrow H^{\frac 12}_{\rm loc}$ the strong convergence up to a subsequence $$u_n(T_{in})\to u(T_{in})\ \ \mbox{in}\ \ H^{\frac 12}\ \ \mbox{as}\ \ n\to +\infty.$$ 
Let $u$ be the solution to \fref{halfwave} with data $u(T_{in})$ , then the continuity of the flow in $H^{\frac 12}$ now yield the convergence of the whole sequence 
$$\forall t\ge T_{in}, \ \ u_n(t)\to u(t)\ \ \mbox{in}\ \ H^{\frac 12}\ \ \mbox{as}\ \ n\to +\infty
$$ and hence from \fref{estbeogeo} and lower semi continuity of the norm: 
$$\forall t\ge T_{in}, \ \ \|u(t)-\Phi_{\pt^{\infty}}(t)\|_{H^1}\leq \frac{1}{t^{\frac N{10}}}.$$ Moreover, since the modulation equation are computed from local in space scalar products, we have\footnote{see for example \cite{MMannals2} for a detailed proof in a similar functional setting.} $$\forall t\ge T_{in}, \ \ \tilde{\matchal P}_{u_n}(t)\to \tilde{\mathcal P}_u(t)\ \ \mbox{as}\ \ n\to+\infty,$$ and hence passing to the limit in the estimates \eqref{boootboundfinal}, \eqref{esttrubulentregime} ensures that $u$ satisfies the expected dynamics of Theorem \ref{thmmain}.
\end{proof}

%%%%%%%%%%%%%%%%%%%%%%%%%%%%%%%%%%%%%%%%%%%%%%%%%%%%%%%%%
%%%%%%%%%%%%%%%%%%%%%%%%%%%%%%%%%%%%%%%%%%%%%%%%%%%%%%%%%

\begin{appendix}

%%%%%%%%%%%%%%%%%%%%%%%%%%%%%%%%%%%%%%%%%%%%%%%%%%%%%%%%%

\section{Algebra for the Szeg\H{o} profile}

%%%%%%%%%%%%%%%%%%%%%%%%%%%%%%%%%%%%%%%%%%%%%%%%%%

\begin{lemma}[Algebraic relations]\label{Alg_rel_Q+}
\label{lemmaalgebraSzeg\H{o}}
There holds:
\be
\label{massmomentum}
\int |Q^+|^2=2\pi, \ \ \int\pa_y Q^+\overline{Q^+}=2i\pi,
\ee
\be
\label{momentun}
\int |Q^+|^2\overline{\pa_yQ^+}=2\pi,\ \ \int (Q^+)^2\overline{\pa_yQ^+}=-4\pi,
\ee
\be
\label{momentdeux}
\int |Q^+|^2\overline{Q^+}=2i\pi,\ \  \int (Q^+)^2\overline{Q^+}=-2i\pi.
\ee
\begin{align}
(y\pa_y Q^+, iQ^+)&=0\label{A1}\\
(y\pa_y Q^+, \pa_y Q^+)&=0.\label{A2}
\end{align}

\end{lemma}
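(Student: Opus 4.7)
All identities are purely explicit computations based on the formula $Q^+(y) = \frac{2}{2y+i}$, so the plan is simply to organize the arithmetic efficiently and to exploit parity whenever possible. Write $\overline{Q^+}(y)=\frac{2}{2y-i}$ and $\pa_yQ^+(y) = -\frac{4}{(2y+i)^2}$, so that all integrands are rational functions of $y$ with a single pole in each half--plane. Each integral can then be evaluated by residues, closing the contour in the upper half--plane (where only the poles coming from $\overline{Q^+}$ or $\overline{\pa_yQ^+}$ contribute).

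I would first dispatch \eqref{massmomentum}. The identity $\int|Q^+|^2\,dy=2\pi$ comes from the antiderivative $\arctan(2y)$, and $\int\pa_yQ^+\cdot\overline{Q^+}\,dy = 2i\pi$ follows from the residue of $-\frac{8}{(2y+i)^2(2y-i)}$ at $y=i/2$ (which equals $1$). The identities \eqref{momentun} and \eqref{momentdeux} are produced the same way: each integrand is a rational function with a single pole of order $\le 3$ in the upper half plane, and one need only expand $\frac{1}{2!}\frac{d^{2}}{dy^{2}}$ (or $\frac{d}{dy}$, depending on the multiplicity) of the regular factor at $y=i/2$. These are short, mechanical computations.

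For the last two identities, I would avoid residues and use symmetry. For \eqref{A1}, recall $(f,g)=\Re\int f\bar g\,dy$, so
\[
(y\pa_yQ^+,iQ^+)=\Im\int y\,\pa_yQ^+\,\overline{Q^+}\,dy.
\]
Since $|Q^+|^2=\frac{4}{4y^2+1}$ is real, $\pa_yQ^+\,\overline{Q^+}+Q^+\,\overline{\pa_yQ^+}=\pa_y|Q^+|^2$ is real, so $\Im(\pa_yQ^+\,\overline{Q^+})$ is an odd function of $y$ (indeed one checks directly that $\Im(\pa_yQ^+\,\overline{Q^+})(y)=-\Im(\pa_yQ^+\,\overline{Q^+})(-y)$ using $Q^+(-y)=\overline{Q^+(y)}$). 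Multiplying by $y$ and integrating gives zero. For \eqref{A2},
\[
(y\pa_yQ^+,\pa_yQ^+)=\int y\,|\pa_yQ^+|^{2}\,dy,
\]
and $|\pa_yQ^+|^{2}=\frac{16}{(4y^{2}+1)^{2}}$ is even, so the integrand is odd and the integral vanishes.

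The only mild subtlety is bookkeeping the signs in the residue calculations of \eqref{momentun}--\eqref{momentdeux}, since the integrands have double or triple poles and it is easy to drop a factor; I would do them in the order: $\int|Q^+|^{2}\overline{\pa_yQ^+}$ (triple pole at $y=i/2$, gives $2\pi$), $\int(Q^+)^{2}\overline{\pa_yQ^+}$ (triple pole at $y=i/2$, gives $-4\pi$), $\int|Q^+|^{2}\overline{Q^+}$ and $\int(Q^+)^{2}\overline{Q^+}$ (double poles, giving $2i\pi$ and $-2i\pi$ respectively). No real obstacle is expected; the whole lemma is routine once the explicit form of $Q^+$ and the parity observations above are in hand.
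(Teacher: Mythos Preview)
Your approach is exactly what the paper does: its entire proof is the one-liner ``these formulas are for instance easy consequences of the residue theorem,'' and your residue-plus-parity plan carries this out in detail. One correction to your argument for \eqref{A1}: the identity is $Q^+(-y)=-\,\overline{Q^+(y)}$ (not $+\overline{Q^+(y)}$), which makes $\Im\big(\pa_yQ^+\,\overline{Q^+}\big)$ an \emph{even} function of $y$, not odd; multiplying by $y$ then gives an odd integrand and the integral vanishes --- so the conclusion stands, but the parity is the opposite of what you wrote (and note that ``odd times $y$'' would be even, which would not give zero). Your pole-order labels are also slightly off ($\int(Q^+)^2\overline{\pa_yQ^+}$ has a double pole at $y=i/2$, and $\int(Q^+)^2\overline{Q^+}$ a simple pole), though the values you quote are correct.
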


\begin{proof}
Since 
$$Q^+(y)=\frac{1}{y+\frac i2}\ ,$$
these formulas are for instance easy consequences of the residue theorem.
\end{proof}

%%%%%%%%%%%%%%%%%%%%%%%%%%%%%%%%%%%%%%%%%%%%%%%%%%%%%%%%%%%%%

\section{The resonant two-soliton Szeg\H{o} dynamics}
\label{systemszego}

%%%%%%%%%%%%%%%%%%%%%%%%%%%%%%%%%%%%%%%%%%%%%%%%%%%%%%%%%%%%%

This appendix revisits the result of Pocovnicu \cite{Po2011} about two-soliton solutions for the cubic Szeg\H{o} equation on the line, by putting emphasis on the ODE system on modulation parameters. 
For ease of notation, in this appendix we set
$$Q(x):=Q^+(x)=\frac{1}{x+\frac i2}\ ,$$
and we look for a solution $u=u(t,x)$ of the cubic Szeg\H{o} equation on the line
$$i\pa _tu-Du+\Pi (u^2\overline u)=0$$
of the form
$$u(t,x)=\alpha _1(t)Q\left (\frac{x-x_1(t)}{\kappa _1(t)}\right )+\alpha _2(t)Q\left (\frac{x-x_2(t)}{\kappa _2(t)}\right )\ =:\alpha _1Q_1+\alpha _2Q_2\ .$$

\subsection{Derivation of the system}

Notice that 
$$Q'=-Q^2\ ,\ xQ'(x)=-Q(x)+\frac i2Q(x)^2\ ,$$
so that
\bee
Du-i\pa_tu&=&i\frac{\alpha _1}{\kappa _1}Q_1^2-\left (i\dot \alpha _1+i\alpha _1\frac{\dot \kappa _1}{\kappa _1}\right )Q_1-\alpha _1\left (i\frac{\dot x_1}{\kappa_1}+\frac 12\frac{\dot \kappa _1}{\kappa _1}\right )Q_1^2+\\
&&i\frac{\alpha _2}{\ka _2}Q_2^2- \left (i\dot \alpha _2+i\alpha _2\frac{\dot \kappa _2}{\kappa _2}\right )Q_2-\alpha _2\left (i\frac{\dot x_2}{\kappa_2}+\frac 12\frac{\dot \kappa _2}{\kappa _2}\right )Q_2^2
\eee
On the other hand, using partial fraction decompositions, it is easy to check the following identities,  for $j,k=1,2$,
\bee
\Pi (Q_j^2\overline Q_k)&=&-\frac{\kappa _j\kappa _k}{\left (x_j-x_k-i\frac{\kappa _j+\kappa _k}{2}\right )^2}\, Q_j+\frac{\ka _k}{x_j-x_k-i\frac{\kappa _j+\ka _k}2}\, Q_j^2\ ,\\
\Pi (Q_1Q_2\overline Q_j)&=&\frac{\kappa _2\ka _j\, Q_1}{ \left (x_1-x_2+i\frac{\kappa _2-\ka _1}2\right )\left (x_1-x_j-i\frac{\ka _1+\ka _j}2\right )}+\frac{\ka _1\ka _j\, Q_2}{ \left (x_2-x_1+i\frac{\ka _1-\ka _2}2\right )\left (x_2-x_j-i\frac{\ka _2+\ka _j}2\right )}\ .
\eee
This leads to
\bee
\Pi (u^2\overline u)&=&\Pi (Q_1^2\overline Q_1)+\Pi (Q_1^2\overline Q_2)+2\Pi (Q_1Q_2\overline Q_1)+2\Pi (Q_1Q_2\overline Q_2)
+\Pi (Q_2^2\overline Q_1)+\Pi (Q_2^2\overline Q_2)\\
&=&\beta _1Q_1^2+\gamma _1Q_1+\beta _2Q_2^2+\gamma _2Q_2\ ,
\eee
with
\bee 
\beta _1&=&i\alpha _1^2\overline \alpha _1+\frac{\ka _2}{\left (x_1-x_2-i\frac{\ka _1+\ka _2}2\right )}\alpha _1^2\overline \alpha _2\\
\gamma _1&=&\alpha _1^2\overline \alpha _1-\frac{\ka _1\ka _2\alpha _1^2\overline \alpha _2}{\left (x_1-x_2-i\frac{\ka _1+\ka _2}2\right )^2}+\frac{2i\ka _2\alpha _1\alpha _2\overline \alpha _1}{x_1-x_2+i\frac{\ka _2-\ka _1}2}+\frac{2\ka _2^2\alpha _1\alpha _2\overline \alpha _2}{\left (x_1-x_2+i\frac{\ka _2-\ka _1}2\right )\left( x_1-x_2-i\frac{\ka _1+\ka _2}2\right )}\\
\beta _2&=&i\alpha _2^2\overline \alpha _2+\frac{\ka _1}{\left (x_2-x_1-i\frac{\ka _2+\ka _1}2\right )}\alpha _2^2\overline \alpha _1\\
\gamma _2&=&\alpha _2^2\overline \alpha _2-\frac{\ka _2\ka _1\alpha _2^2\overline \alpha _1}{\left (x_2-x_1-i\frac{\ka _2+\ka _1}2\right )^2}+\frac{2i\ka _1\alpha _2\alpha _1\overline \alpha _2}{x_2-x_1+i\frac{\ka _1-\ka _2}2}+\frac{2\ka _1^2\alpha _2\alpha _1\overline \alpha _1}{\left (x_2-x_1+i\frac{\ka _1-\ka _2}2\right )\left( x_2-x_1-i\frac{\ka _2+\ka _1}2\right )}
\eee
Identifying $i\pa _tu$ and $\Pi (u^2\overline u)$, we obtain the following system,
\bee
i\frac{1-\dot x_1}{\ka _1}-\frac 12\frac{\dot \ka _1}{\ka _1}&=&i\vert \alpha _1\vert ^2+\frac{\ka _2}{\left (x_1-x_2-i\frac{\ka _1+\ka _2}2\right )}\alpha _1\overline \alpha _2\\
-i\left ( \frac{\dot \alpha _1}{\alpha _1}+\frac{\dot \ka _1}{\ka _1}\right )&=&\vert \alpha _1\vert ^2-\frac{\ka _1\ka _2\alpha _1\overline \alpha _2}{\left (x_1-x_2-i\frac{\ka _1+\ka _2}2\right )^2}+\frac{2i\ka _2\alpha _2\overline \alpha _1}{x_1-x_2+i\frac{\ka _2-\ka _1}2}+\frac{2\ka _2^2\vert \alpha _2\vert ^2
\left( x_1-x_2-i\frac{\ka _1+\ka _2}2\right )^{-1}}{\left (x_1-x_2+i\frac{\ka _2-\ka _1}2\right )}\\
i\frac{1-\dot x_2}{\ka _2}-\frac 12\frac{\dot \ka _2}{\ka _2}&=&i\vert \alpha _2\vert ^2+\frac{\ka _1}{\left (x_2-x_1-i\frac{\ka _2+\ka _1}2\right )}\alpha _2\overline \alpha _1\\
-i\left ( \frac{\dot \alpha _2}{\alpha _2}+\frac{\dot \ka _2}{\ka _2}\right )&=&\vert \alpha _2\vert ^2-\frac{\ka _2\ka _1\alpha _2\overline \alpha _1}{\left (x_2-x_1-i\frac{\ka _2+\ka _1}2\right )^2}+\frac{2i\ka _1\alpha _1\overline \alpha _2}{x_2-x_1+i\frac{\ka _1-\ka _2}2}+\frac{2\ka _1^2\vert \alpha _1\vert ^2\left( x_2-x_1-i\frac{\ka _2+\ka _1}2\right )^{-1}}{\left (x_2-x_1+i\frac{\ka _1-\ka _2}2\right )}
\eee
\subsection{Conservation laws}
Taking the real part of the combination of the first and of the third equation with coefficients $\ka _1$ and $\ka _2$, we derive the first conservation law,
\begin{equation}\label{C1}
\frac{\ka _1+\ka _2}{2}=K\ .
\end{equation}
The other conservation laws are not so easy to figure out. The first one corresponds to the mass conservation,
$$\Vert u\Vert _{L^2}^2=\vert \alpha _1\vert ^2\Vert Q_1\Vert _{L^2}^2+\vert \alpha _2\vert ^2\Vert Q_2\Vert _{L^2}^2+2{\rm Re}[\alpha _1\overline \alpha _2(Q_1\vert Q_2)]\ .$$
An elementary computation leads to
\begin{equation}\label{C2}
(2\pi )^{-1}\Vert u\Vert _{L^2}^2=\vert \alpha _1\vert ^2\ka _1+\vert \alpha _2\vert ^2\ka _2+2\ka _1\ka _2{\rm Im}\left (  \frac{\alpha _1\overline \alpha _2}{x_1-x_2-i\frac{\ka _1+\ka _2}2}\right )=:C\ .
\end{equation}
For the other conservation laws, we use the Lax pair property for the Hankel operators $H_u$, ensuring that the eigenvalues of $H_u^2$ are conservation laws. Recalling that $H_u(h):=\Pi (u\overline h)$, the matrix of $H_u$ in the basis $(Q_1,Q_2)$ is 
\bee
\mathscr M=\left ( \begin{array} {cc}   i\alpha _1 & \frac{\alpha _1\ka _2}{x_1-x_2-i\frac{\ka _1+\ka _2}2}\\
\frac{\alpha _2\ka _1}{x_2-x_1-i\frac{\ka _1+\ka _2}2}  & i\alpha _2\end{array}\right )\ .
\eee
Since $H_u$ is antilinear the trace of $H_u^2$ is 
\begin{equation}\label{C3}
{\rm tr}\left (\mathscr M\overline{\mathscr M}\right )=\vert  \alpha _1\vert ^2+\vert \alpha _2\vert ^2-2\ka _1\ka _2{\rm Re}\left ( \frac{\alpha _1\overline \alpha _2}{\left (x_1-x_2-i\frac{\kappa _1+\kappa _2}2  \right )^2} \right )=M\ ,
\end{equation}
which is also the momentum of $u$, divided by $2\pi $. The determinant of $H_u^2$ is 
\begin{equation}\label{C4}
\left \vert {\rm det}\mathscr M\right \vert ^2=\vert \alpha _1\vert ^2\vert \alpha _2\vert ^2\left (1-\frac{\ka _1\ka _2}{(x_1-x_2)^2+\left (\frac{\ka _1+\ka _2}2\right )^2 }  \right )^2=D\ .
\end{equation}
Let us specify the link of $D$ with the conservation laws $K, M$ and 
$$H:=\frac{1}{2\pi }\Vert u\Vert _{L^4}^4.$$
We claim that
$$4KD=2MC-H\ .$$
\begin{proof} Let us check this identity by calculating $H$. We set $X:=x_1-x_2$.
\bee
H&=&\vert \alpha _1\vert ^4\frac{\Vert Q_1\Vert _{L^2}^4}{2\pi }+ \vert \alpha _2\vert ^4\frac{\Vert Q_2\Vert _{L^2}^4}{2\pi }+  4\vert \alpha _1\vert ^2\vert \alpha _2\vert ^2\frac{\Vert Q_1Q_2\Vert _{L^2}^2}{2\pi }\\
&+&2{\rm Re}\left (\alpha _1^2\overline \alpha _2^2\frac{(Q_1^2\vert Q_2^2)}{2\pi}\right )+4{\rm Re}\left (\alpha _1^2\overline \alpha _1\overline \alpha _2\frac{(Q_1^2\vert Q_1Q_2)}{2\pi}\right )+
4{\rm Re}\left (\alpha _1\alpha _2\overline \alpha _2^2\frac{(Q_1Q_2\vert Q_2^2)}{2\pi}\right )\ .
\eee
Using 
\bee
\frac{\Vert Q_1\Vert _{L^2}^4}{2\pi }&=&2\ka _1\ ,\ \frac{\Vert Q_2\Vert _{L^2}^4}{2\pi }=2\ka _2\ ,\ \frac{\Vert Q_1Q_2\Vert _{L^2}^2}{2\pi }=\frac{2K\ka _1\ka _2}{X^2+K^2}\ ,\ \frac{(Q_1^2\vert Q_2^2)}{2\pi}=\frac{2i\ka _1^2\ka _2^2}{(X-iK)^3}\ ,\\
\frac{(Q_1^2\vert Q_1Q_2)}{2\pi}&=&\frac{-i\ka _1\ka _2}{X-iK}-\frac{\ka _1^2\ka _2}{(X-iK)^2}\ ,\ \frac{(Q_1Q_2\vert Q_2^2)}{2\pi}=\frac{-i\ka _1\ka _2}{X-iK}-\frac{\ka _1\ka _2^2}{(X-iK)^2}\ ,
\eee
we infer
\bee
H&=&2\ka _1\vert \alpha _1\vert ^4+2\ka _2\vert \alpha _2\vert ^4+\vert \alpha _1\vert ^2\vert \alpha _2\vert ^2\frac{8K\ka _1\ka _2}{X^2+K^2}+
4{\rm Re}\left (\alpha _1^2\overline \alpha _2^2\frac{i\ka _1^2\ka _2^2}{(X-iK)^3}\right )\\
&+& 4{\rm Re}\left (\alpha _1^2\overline \alpha _1\overline \alpha _2\left ( \frac{-i\ka _1\ka _2}{X-iK}-\frac{\ka _1^2\ka _2}{(X-iK)^2}    \right )\right )+4{\rm Re}\left (\alpha _1\alpha _2\overline \alpha _2^2\left ( \frac{-i\ka _1\ka _2}{X-iK}-\frac{\ka _1\ka _2^2}{(X-iK)^2}   \right )\right )\ \\
&=&2\ka _1\vert \alpha _1\vert ^4+2\ka _2\vert \alpha _2\vert ^4+\vert \alpha _1\vert ^2\vert \alpha _2\vert ^2\frac{8K\ka _1\ka _2}{X^2+K^2}+
4{\rm Re}\left (\alpha _1^2\overline \alpha _2^2\frac{i\ka _1^2\ka _2^2}{(X-iK)^3}\right )\\
&+&4\ka _1\ka _2(\vert \alpha _1\vert ^2+\vert \alpha _2\vert ^2){\rm Im}\left (\frac{\alpha _1\overline \alpha _2}{X-iK} \right )-4(\ka _1\vert \alpha _1\vert ^2+\ka _2\vert \alpha _2\vert ^2)\ka _1\ka _2K{\rm Re}\left (\frac{\alpha _1\overline \alpha _2}{(X-iK)^2}   \right )\ .
\eee
On the other hand,
\bee
M&=&\vert  \alpha _1\vert ^2+\vert \alpha _2\vert ^2-2\ka _1\ka _2{\rm Re}\left ( \frac{\alpha _1\overline \alpha _2}{(X-iK)^2} \right ) \ ,   \\
C&=& \ka _1\vert \alpha _1\vert ^2+\ka _2\vert \alpha _2\vert ^2+2\ka _1\ka _2{\rm Im}\left (  \frac{\alpha _1\overline \alpha _2}{X-iK}\right )\ ,
\eee
hence 
\bee 2MC&=&2\ka _1\vert \alpha _1\vert ^4+2\ka _2\vert \alpha _2\vert ^4+4K\vert \alpha _1\vert ^2\vert \alpha _2\vert ^2-4(\ka _1\vert \alpha _1\vert ^2+\ka _2\vert \alpha _2\vert ^2)\ka _1\ka _2{\rm Re}\left ( \frac{\alpha _1\overline \alpha _2}{(X-iK)^2} \right )   \\
&+& 4(\vert  \alpha _1\vert ^2+\vert \alpha _2\vert ^2) \ka _1\ka _2{\rm Im}\left (  \frac{\alpha _1\overline \alpha _2}{X-iK}\right )-8\ka _1^2\ka _2^2
{\rm Re}\left ( \frac{\alpha _1\overline \alpha _2}{(X-iK)^2} \right ){\rm Im}\left (  \frac{\alpha _1\overline \alpha _2}{X-iK}\right )\ ,
\eee
and 
\bee 2MC-H&=&4K\vert \alpha _1\vert ^2\vert \alpha _2\vert ^2\left (1-\frac{2\ka _1\ka _2}{X^2+K^2}\right ) \\
&-&8\ka _1^2\ka _2^2
{\rm Re}\left ( \frac{\alpha _1\overline \alpha _2}{(X-iK)^2} \right ){\rm Im}\left (  \frac{\alpha _1\overline \alpha _2}{X-iK}\right )-4{\rm Re}\left (\alpha _1^2\overline \alpha _2^2\frac{i\ka _1^2\ka _2^2}{(X-iK)^3}\right )\ .
\eee
Now just observe that, for every complex numbers $a,b$,
$$-8{\rm Re}(a){\rm Im}(b) +4{\rm Im}(ab)=4{\rm Im}(a){\rm Re}(b)-4{\rm Im}(b){\rm Re}(a)=4{\rm Im}(a\overline b)\ .$$
Applying this identity to
$$a=\frac{\alpha _1\overline \alpha _2}{(X-iK)^2}\ ,\ b= \frac{\alpha _1\overline \alpha _2}{X-iK}\ ,$$
we infer
$$-8
{\rm Re}\left ( \frac{\alpha _1\overline \alpha _2}{(X-iK)^2} \right ){\rm Im}\left (  \frac{\alpha _1\overline \alpha _2}{X-iK}\right )-4{\rm Re}\left (\frac{i\alpha _1^2\overline \alpha _2^2}{(X-iK)^3}\right )=\frac{4K}{(X^2+K^2)^2}$$
and finally
$$2MC-H=4K\vert \alpha _1\vert ^2\vert \alpha _2\vert ^2\left (1-\frac{2\ka _1\ka _2}{X^2+K^2}+\frac{\ka _1^2\ka _2^2}{(X^2+K^2)^2}\right )=4KD\ .$$
\end{proof}
\subsection{The reduced variables}
Notice that
\begin{equation}\label{C5}
\dot x_1+\dot x_2=2-C\ .
\end{equation} 
Therefore, it is natural to introduce 
$$X:=x_1-x_2\ ,\ \nu :=\frac{\ka _1-\ka _2}2\ .$$
Setting
$$\zeta _1:=\frac{\alpha _1}{X-iK}\ ,\ \zeta _2:=\frac{\alpha _2}{X+iK}\ ,$$
the system reads
\bee
\dot X&=&(X^2+K^2)[(K-\nu )\vert \zeta _2\vert ^2-(K+\nu)\vert \zeta _1\vert ^2]\ ,\\
\dot \nu &=&-2(K^2-\nu ^2){\rm Re}[\zeta _1\overline \zeta _2(X-iK)]\ .
\eee
Furthermore, the last three conservation laws read
\bee
C&=&(X^2+K^2)[(K+\nu )\vert \zeta _1\vert ^2+(K-\nu)\vert \zeta _2\vert ^2]+2(K^2-\nu ^2){\rm Im}[\zeta _1\overline \zeta _2(X-iK)]\\
M&=&(X^2+K^2)(\vert \zeta _1\vert ^2+\vert \zeta _2\vert ^2)-2(K^2-\nu ^2){\rm Re}(\zeta _1\overline \zeta _2)\\
&=&(K^2-\nu ^2)\vert \zeta _1-\zeta _2\vert ^2+(X^2+\nu ^2)(\vert \zeta _1\vert ^2+\vert \zeta _2\vert ^2) \\
D&=&\vert \zeta _1\vert ^2\vert \zeta _2\vert ^2(X^2+\nu ^2 )^2
\eee
\subsection{The resonance condition}
Notice that 
\bee
M^2-4D&=& (K^2-\nu ^2)^2\vert \zeta _1-\zeta _2\vert ^4+2(K^2-\nu ^2)(X^2+\nu ^2)\vert \zeta _1-\zeta _2\vert ^2(\vert \zeta _1\vert ^2+\vert \zeta _2\vert ^2)\\
&+& (X^2+\nu ^2)^2(\vert \zeta _1\vert ^2-\vert \zeta _2\vert ^2)^2\ .
\eee
Therefore, this conservation law cancels if and only if
$$\zeta _1=\zeta _2=:\zeta \ .$$
In this case, the above three conservation laws degenerate as 
$$\vert \zeta \vert ^2 (X^2+\nu ^2)=\frac M2=\sqrt D\ ,\ C=KM\ .$$
Using the laws $M,C,H,K$ and the identity
$$2MC-H=4KD\ ,$$
 we observe that the condition $M^2=4D$ is therefore equivalent to the set of two conditions,
$$MC=H \quad{\rm and}\quad C=KM\ .$$
Indeed, on the one hand, $M^2=4D$ implies $C=KM$ as we have already observed, and therefore, 
$$4KD=2M^2K-H=8KD-H$$
so that $H=4KD=2MC-H$, hence $H=MC$. On the other hand, if $MC=H$ and $C=KM$, then
$MC=4KD$ and $C=KM$, hence $KM^2=4KD$, so $M^2=4D$. 
\vskip 0.5cm
Under the resonance condition, the system in the reduced variables can be written
\bee
\dot X&=&-M\, \nu \, \frac{X^2+K^2}{X^2+\nu ^2}\ ,\\
\dot \nu &=&-M\, X\,  \frac{K^2-\nu ^2}{X^2+\nu ^2}\ .
\eee
In particular,
$$\frac{d}{dt}(X\nu)=-M\, K^2\ .$$
This means that $X\nu $ cancels exactly once, so either $X$ cancels and $\nu $ keeps the same sign, or $\nu $ cancels and $X$ keeps the same sign. In both cases, $\vert X(t)\vert $ tends to infinity like $KM\vert t\vert $, and $\vert \nu \vert $ tends to $K$.
Furthermore, in this case, we have 
$$\vert \alpha _1\vert =\vert \alpha _2\vert ,$$
and the phase shift is given by
$$\frac{\alpha _1}{\alpha _2}={\rm e}^{i\Gamma}=\frac{X-iK}{X+iK}\ ,$$
so the phase shift cancels at infinity. More precisely,
$$i\dot \Gamma =\frac{\dot X}{X-iK}-\frac{\dot X}{X+iK}=-M\, \frac{\nu }{X^2+\nu ^2}\, [X+iK-(X-iK)]=\frac{-2iKM\, \nu}{X^2+\nu ^2}\ .$$
Since $\vert X(t)\vert $ tends to infinity like $KM\vert t\vert $, we conclude that $\vert \dot \Gamma (t)\vert $ cancels as fast as $t^{-2}$.

%%%%%%%%%%%%%%%%%%%%%%%%%%%%%%%
%%%%%%%%%%%%%%%%%%%%%%%%%%%%%%%

\section{Proof of the non degeneracy \fref{hessianinvert}}
\label{invertaj}

%%%%%%%%%%%%%%%%%%%%%%%%%%%%%%%%%%%%%%%%%%%%%%%%%%%%%%%%%

The non degeneracy \fref{hessianinvert} follows from an explicit computation on the limiting Szeg\H{o} profile $Q^+$. However, before proceeding with the limiting process,
we need more precise information on $i\rho_\b$ and $(1-\b)\pa_\b Q_\b$. 

By \eqref{irho} and Lemma \ref{lemmacalculrho}, we have
\[ \rho =-i Q_\b+\frac 12\pa_y Q_\b+o_{\b\to 1}(1), \quad \quad \Sigma=y\pa_y Q_\b +o_{\b\to 1}(1).\]
which together with Lemma \ref{lemmaalgebraSzeg\H{o}} ensures:
\bee
\det A_j & = & \det \left(\begin{array}{llll}
(\Lambda Q_{\beta_j},Q_{\beta_j}) & (\Lambda Q_{\beta_j},i\pa_{y_j}Q_{\beta_j}) &  (\Lambda Q_{\beta_j},i\Lambda Q_{\beta_j}) & (\Lambda Q_{\beta_j},\rho_j)\\
(i Q_{\beta_j},Q_{\beta_j}) & (i Q_{\beta_j},i\pa_{y_j}Q_{\beta_j}) &  (i Q_{\beta_j},i\Lambda Q_{\beta_j}) & (i Q_{\beta_j},\rho_j)\\
(\pa_{y_j}Q_{\beta_j}, Q_{\beta_j}) & (\pa_{y_j}Q_{\beta_j},i\pa_{y_j}Q_{\beta_j}) &  (\pa_{y_j}Q_{\beta_j},i\Lambda Q_{\beta_j}) & (\pa_{y_j}Q_{\beta_j},\rho_j)\\
(\Sigma_j,Q_{\beta_j}) & (\Sigma_j,i\pa_{y_j}Q_{\beta_j}) &  (\Sigma_j,i\Lambda Q_{\beta_j}) & (\Sigma_j,\rho_j)
\end{array}\right)\\
& \to &\left(\begin{array}{llll}
0 &-\pi &0 & 0\\
0 &0 &  0& -\pi\\
0 & 0&  \pi& 0\\
-\pi &0 &  0 & 0
\end{array}\right)=-\pi^4\ \ \mbox{as}\  \ \beta_j\uparrow 1
\eee
and \fref{hessianinvert} is proved.

%%%%%%%%%%%%%%%%%%%%%%%%%%%%%%%%%%%%%%%
%%%%%%%%%%%%%%%%%%%%%%%%%%%%%%%%%%%%%%%

\section{Commutator estimates}

%%%%%%%%%%%%%%%%%%%%%%%%%%%%%%%%%%%%%%%
%%%%%%%%%%%%%%%%%%%%%%%%%%%%%%%%%%%%%%%

This Appendix is devoted to the derivation of commutator estimates used all along Section \ref{sectionenergy}. All proofs are more or less standard but the involved norms and associated decay are critical for the proof of Proposition \ref{propenergy}, so we display all estimates in detail.\\
We let in this section $\chi$ denote a  bounded Lipschitz continuous function and let $$ \chi_R(x)=\chi\left(\frac {x}R\right), \ \ R\geq 1.$$ 

\begin{lemma}[$|D|^{\frac 12}$ commutator]
There holds the global bound
\be
\label{commestimateone}
\|[|D|^{\frac 12},\chi _R]g\|_{L^2}\lesssim \frac{\|\chi\|_{W^{1,\infty}}}{\sqrt{R}}\|g\|_{L^2},
\ee
and the weighted bound for $0<\alpha<1$:
\be
\label{commestimateoneweighted}
\|\frac{1}{\la z\ra^{\frac{1+\alpha}{2}}}[|D|^{\frac 12},\chi _R]g\|_{L^2}\lesssim  \frac{\|\chi\|_{W^{1,\infty}}}{\sqrt{R}}\|\frac{g}{\la z\ra ^{\frac{1+\alpha}{2}}}\|_{L^2}\ \ \mbox{with}\ \ z=\frac{x}{R}.
\ee
\end{lemma}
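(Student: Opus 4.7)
The plan is to realize the commutator $[|D|^{1/2},\chi_R]$ as an integral operator via the singular-integral representation
$$|D|^{1/2} f(x) = c\,\mathrm{p.v.}\int_{\R}\frac{f(x)-f(y)}{|x-y|^{3/2}}\,dy,$$
which, after taking the difference $|D|^{1/2}(\chi_R g)-\chi_R|D|^{1/2}g$, yields
$$[|D|^{1/2}, \chi_R]g(x) = c\int_{\R}\frac{\chi_R(x)-\chi_R(y)}{|x-y|^{3/2}}\,g(y)\,dy =: \int K(x,y)\,g(y)\,dy.$$
Since $\chi$ is Lipschitz, $|\chi_R(x)-\chi_R(y)|\lesssim \|\chi'\|_\infty|x-y|/R$ absorbs the singularity at $y=x$, so the integral is absolutely convergent and no principal value is needed. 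The resulting kernel obeys the pointwise bound
$$|K(x,y)|\leq C\,\|\chi\|_{W^{1,\infty}}\,\frac{\min(|x-y|/R,\,1)}{|x-y|^{3/2}}.$$
Both inequalities are then proved by Schur's lemma, applied to $K$ itself for \eqref{commestimateone} and to the conjugated kernel $\widetilde K(x,y):=(\la z_y\ra/\la z_x\ra)^{(1+\alpha)/2}K(x,y)$ for \eqref{commestimateoneweighted}, since $L^2$-boundedness of $\widetilde K$ is equivalent to the weighted inequality after the change of unknown $g\mapsto \la z\ra^{(1+\alpha)/2}g$.

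For \eqref{commestimateone}, I would verify $\sup_x \int |K(x,y)|\,dy \lesssim \|\chi\|_{W^{1,\infty}}/\sqrt R$; the symmetric bound $\sup_y\int|K(x,y)|\,dx$ is identical by symmetry of $|K|$. Splitting the $y$-integral at $|x-y|=R$, the near region contributes $(\|\chi'\|_\infty/R)\int_0^R r^{-1/2}\,dr \lesssim \|\chi'\|_\infty/\sqrt R$ and the far region contributes $2\|\chi\|_\infty\int_R^\infty r^{-3/2}\,dr\lesssim \|\chi\|_\infty/\sqrt R$, so Schur's test immediately gives \eqref{commestimateone}.

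For \eqref{commestimateoneweighted}, I would apply Schur with the nontrivial weights $\phi(y) = \la z_y\ra^{-(1+\alpha)/2}$ and $\psi(x) = \la z_x\ra^{-(1+\alpha)/2}$. The first condition $\int|\widetilde K(x,y)|\phi(y)\,dy \leq A\,\psi(x)$ collapses, after the $\la z_y\ra^{(1+\alpha)/2}$ factors cancel, to exactly the unweighted bound already established, giving $A\lesssim \|\chi\|_{W^{1,\infty}}/\sqrt R$. The second condition $\int|\widetilde K(x,y)|\psi(x)\,dx \leq B\,\phi(y)$ reduces, after the rescaling $u=(x-y)/R$ and the substitution $v=z_y+u$, to the uniform bound
$$\la z_y\ra^{1+\alpha}\int_\R\frac{\min(|v-z_y|,1)}{\la v\ra^{1+\alpha}\,|v-z_y|^{3/2}}\,dv \leq C,$$
which I would verify for $|z_y|$ large by splitting the $v$-integral according to whether $|v|\leq|z_y|/2$, $|v|\sim|z_y|$, or $|v|\geq 2|z_y|$, using in each region the appropriate combination of the decay of $\la v\ra^{-(1+\alpha)}$ (integrable on $\R$ exactly because $\alpha>0$) and $|v-z_y|^{-3/2}$.

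The main obstacle is this second Schur condition, and in particular its uniformity as $|z_y|\to\infty$, where the positive power $\la z_y\ra^{1+\alpha}$ outside the integral must be compensated by the combined decay inside. The naive attempt with trivial Schur weights $\phi=\psi=1$ fails, because the ratio $(\la z_y\ra/\la z_x\ra)^{(1+\alpha)/2}$ in $\widetilde K$ is unbounded when $z_x$ is small and $z_y$ is large, so the nontrivial choice of weights above is essential. Once the three-region decomposition of the $v$-integral is carried out, Schur's lemma delivers \eqref{commestimateoneweighted} with the claimed constant $\|\chi\|_{W^{1,\infty}}/\sqrt R$.
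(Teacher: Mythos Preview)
Your treatment of \eqref{commestimateone} is essentially the paper's: same kernel representation, same pointwise bound $|K(x,y)|\lesssim\|\chi\|_{W^{1,\infty}}\min(|x-y|/R,1)\,|x-y|^{-3/2}$, same near/far split at scale $R$; the paper phrases it via Young's inequality $L^1*L^2\subset L^2$, which coincides with your Schur computation.

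The weighted bound \eqref{commestimateoneweighted}, however, has a genuine gap. Your second Schur condition reduces, as you correctly compute, to the uniform estimate
\[
\la z_y\ra^{1+\alpha}\int_\R\frac{\min(|v-z_y|,1)}{\la v\ra^{1+\alpha}\,|v-z_y|^{3/2}}\,dv\le C,
\]
but this fails for $\alpha\in[\tfrac12,1)$. Indeed, on the region $|v|\le |z_y|/2$ one has $|v-z_y|\sim|z_y|\ge 1$, so the integrand there is $\sim|z_y|^{-3/2}\la v\ra^{-(1+\alpha)}$; since $\int_\R\la v\ra^{-(1+\alpha)}\,dv<\infty$, this region contributes $\la z_y\ra^{1+\alpha}\cdot|z_y|^{-3/2}\cdot C\sim|z_y|^{\alpha-1/2}$, which is unbounded as $|z_y|\to\infty$ whenever $\alpha\ge\tfrac12$. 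So Schur's test with the weights $\phi=\psi=\la z\ra^{-(1+\alpha)/2}$ proves the lemma only for $0<\alpha<\tfrac12$.

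The paper circumvents this by not forcing a pure Schur estimate on the far-off-diagonal piece. It splits $|x-y|>5R$ further into $|y|\le 2|x|$ and $|y|\ge 2|x|$; on the first piece $\la z_y\ra\lesssim\la z_x\ra$ and Young's inequality suffices, while on the second piece $|x-y|\gtrsim|y|\gtrsim R$, and one uses Cauchy--Schwarz in $y$ to obtain a \emph{pointwise} bound $|T^{off}_2g(x)|\lesssim(\|\chi\|_\infty/R)\,\|g/\la z\ra^{(1+\alpha)/2}\|_{L^2}$ (this is where $\alpha<1$ enters, via $\int_{|y|\gtrsim R}|y|^{-(2-\alpha)}dy<\infty$), and then measures this constant against the weight $\la z_x\ra^{-(1+\alpha)/2}\in L^2$ (this is where $\alpha>0$ enters). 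That last step is an $L^2\to L^\infty$ bound composed with the $L^2$-norm of the weight, not a Schur-type column estimate, and it is precisely what your second Schur condition cannot capture for $\alpha\ge\tfrac12$.
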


\begin{proof}

\noindent{\bf Step 1:} Kernel representation. First we provide a description of the operator $\vert D\vert ^{\frac 12}$ in the space variables. This operator is the convolution operator with the tempered distribution
$$k:=\mathcal F^{-1}(\vert \xi \vert ^{\frac 12})\ .$$
From the properties of the Fourier transform we know that $k$ is homogeneous of degree $-3/2$, and is even. As a consequence, it is characterized up to a multiplicative constant. For every function $\varphi $ in the Schwartz space, we therefore have
$$\langle k,\varphi \rangle =c\int _\R \frac{\varphi (x)-\varphi (0)}{\vert x\vert ^{\frac 32}}\, dx\ ,\ c:=(2\pi)^{-\frac 12}\frac{\int _\R \vert \xi \vert ^{\frac 12}e^{-\frac {\xi ^2}2}\, d\xi}{\int _\R \frac{e^{-\frac {x^2}2}-1}{\vert x\vert^{\frac 3 2}}\, dx }\ ,$$
and
$$k*\varphi (x)=c\int _\R \frac{\varphi (y)-\varphi (x)}{\vert x-y\vert ^{\frac 32}}\, dy\ .$$
Consequently, we can write
$$ [\vert D\vert ^{\frac 12}, \chi _R]\, g(x)=c\int _\R \frac{\chi _R(y)-\chi _R(x)}{\vert x-y\vert ^{\frac 32}}\, g(y)\, dy\ .$$

\noindent{\bf Step 2:} Proof of \eqref{commestimateone}. We split the kernel in two parts,
\bee
[\vert D\vert ^{\frac 12}, \chi _R]\, g(x)&=&c(T^{med}g(x)+T^{off}g(x))\ ,\\
T^{med}g(x)&:=&\int _{ \vert x-y\vert \le 5R} \frac{\chi _R(y)-\chi _R(x)}{\vert x-y\vert ^{\frac 32}}\, g(y)\, dy\ ,\\
T^{off}g(x)&:=&\int _{\vert x-y\vert >  5R } \frac{\chi _R(y)-\chi _R(x)}{\vert x-y\vert ^{\frac 32}}\, g(y)\, dy\ .
\eee
We have 
$$\frac{|\chi_R(x)-\chi_R(y)|}{|x-y|^{\frac 32}}\lesssim \frac{\|\chi'_R\|_{L^{\infty}}}{|x-y|^{\frac 12}}\lesssim\frac{1}{R|x-y|^{\frac 12}}$$ and hence,
 by Young's inequality,
$$\|T^{med}\|_{L^2}\lesssim \frac{\|\chi\|_{W^{1,\infty}}}R\|\frac{{\bf 1}_{|x|\leq 5 R}}{|x|^{\frac 12}}\star g\|_{L^2}\lesssim  \frac{\|\chi\|_{W^{1,\infty}}}R\|\frac{{\bf 1}_{|x|\leq 5R}}{|x|^{\frac12}}\|_{L^1}\|g\|_{L^2} \lesssim \frac{\|\chi\|_{W^{1,\infty}}}{\sqrt R}\|g\|_{L^2}.
$$
Similarly,
\bee 
\|T^{off}\|_{L^2}\lesssim \|\chi\|_{L^{\infty}}\|\frac{{\bf 1}_{|x|\geq 5R}}{|x|^{\frac 32}}\star g\|_{L^2}\lesssim \|\chi\|_{W^{1,\infty}}\|\frac{{\bf 1}_{|x|\geq 5R}}{|x|^{\frac 32}}\|_{L^1}\|g\|_{L^2}\lesssim \frac{\|\chi\|_{W^{1,\infty}}}{\sqrt R}\|g\|_{L^2}
\eee
and \eqref{commestimateone} is proved.\\

\noindent{\bf Step 3:} Proof of \eqref{commestimateoneweighted}. For $|x-y|\leq 5R$, we have $\la \frac yR\ra\lesssim \la \frac x R\ra$ and we infer
$$|\frac{1}{\la \frac xR\ra^{\frac{1+\alpha}{2}}}T^{med} g|\lesssim  \frac{\|\chi\|_{W^{1,\infty}}}R\frac{{\bf 1}_{|x|\leq 5R}}{|x|^{\frac 12}}\star \frac{|g|}{\la \frac xR \ra^{\frac{1+\alpha}{2}}}$$ from which, as above, from Young's inequality,
$$\|\frac{1}{\la \frac xR\ra^{\frac{1+\alpha}{2}}}T^{med} g\|_{L^2}\lesssim \frac{\|\chi\|_{W^{1,\infty}}}{\sqrt R}\|\frac{|g|}{\la \frac xR \ra^{\frac{1+\alpha}{2}}}\|_{L^2}.$$ For $|x-y|\geq 5R$, we distinguish
\bee
&&T^{off}_1=\int _{\vert x-y\vert > 5R, \ \ |y|\leq 2|x| } \frac{\chi _R(y)-\chi _R(x)}{\vert x-y\vert ^{\frac 32}}\, g(y)\, dy\\
&&T^{off}_2=\int _{\vert x-y\vert > 5R, \ \ |y|\geq 2|x| } \frac{\chi _R(y)-\chi _R(x)}{\vert x-y\vert ^{\frac 32}}\, g(y)\, dy\\
\eee 
For the first kernel, $\la \frac yR\ra\lesssim \la \frac xR\ra$ and thus 
$$|\frac{1}{\la \frac xR\ra^{\frac{1+\alpha}{2}}}T^{off}_1g|\lesssim  \|\chi\|_{L^{\infty}}\frac{{\bf 1}_{|x|\geq 5R}}{|x|^{\frac 32}}\star \frac{|g|}{\la \frac xR \ra^{\frac{1+\alpha}{2}}}$$ from which, as above, 
$$\|\frac{1}{\la \frac xR\ra^{\frac{1+\alpha}{2}}}T^{off}_1g\|_{L^2}\lesssim \|\chi\|_{L^{\infty}}\|\frac{{\bf 1}_{|x|\geq 5R}}{|x|^{\frac 32}}\|_{L^1}\|\frac{g}{\la \frac xR\ra^{\frac{1+\alpha}{2}}}\|_{L^2}\lesssim \frac{\|\chi\|_{W^{1,\infty}}}{\sqrt R}\|\frac{g}{\la \frac xR\ra^{\frac{1+\alpha}{2}}}\|_{L^2}.$$ 
For the second kernel, $|y|\geq 2|x|$ and $|x-y|\geq 5R$, we have $|y|\gtrsim R$ and $\vert x-y\vert \gtrsim \vert y\vert$. Therefore, from Cauchy--Schwarz' inequality,
\bee
|T^{off}_2g|&\lesssim& \|\chi\|_{L^{\infty}}\int_{|y|\gtrsim R}\frac{|g(y)|}{ \la \frac yR\ra^{\frac{1+\alpha}{2}}}\left(\frac{|y|}{R}\right)^{\frac{1+\alpha}{2}}\frac{dy}{|y|^{\frac 32}}\\
& \lesssim & \frac{\|\chi\|_{L^{\infty}}}{R^{\frac{1+\alpha}{2}}}\|\frac{g}{\la \frac xR\ra^{\frac{1+\alpha}{2}}}\|_{L^2}\left(\int_{|y|\gtrsim R}\frac{dy}{|y|^{2-\alpha}}\right)^{\frac 12}\\
& \lesssim & \frac{\|\chi\|_{L^{\infty}}}{R}\|\frac{g}{\la \frac xR\ra^{\frac{1+\alpha}{2}}}\|_{L^2}
\eee
where we used $\alpha<1$, from which 
$$\|\frac{T^{off}g}{\la \frac xR\ra^{\frac{1+\alpha}{2}}}\|_{L^2}\lesssim \frac{\|\chi\|_{L^{\infty}}}{R}\|\frac{g}{\la \frac xR\ra^{\frac{1+\alpha}{2}}}\|_{L^2}\|\frac{1}{\la \frac xR\ra^{\frac{1+\alpha}{2}}}\|_{L^2}\lesssim \frac{\|\chi\|_{W^{1,\infty}}}{\sqrt R}\|\frac{g}{\la \frac xR\ra^{\frac{1+\alpha}{2}}}\|_{L^2}$$
where we simply changed variables and used $\alpha >0$ in the last step. This concludes the proof of \eqref{commestimateoneweighted}.
\end{proof}

We shall also use the following slightly different version.

\begin{lemma}[Commutator estimate in $L^2$]
\label{commest}
For a general function $\chi$ such that $\pa _x\chi \in L^1$, there holds the following bounds.
\be
\label{commestimate}
\|[|D|^{\frac 12},\chi]g\|_{L^2}\lesssim \||\xi|^{\frac 12}\widehat{\chi}\|_{L^1}\| g\|_{L^2}\lesssim \left(\|\pa_x\chi\|_{L^1}\|\pa_{xx}\chi\|_{L^1}\right)^{\frac 12}\|g\|_{L^2},
\ee
\be
\label{commd}
\|[|D|,\chi]g\|_{L^2}+\|[\Pi^+|D|,\chi]\|_{L^2}\lesssim\left(\|\pa_x\chi\|_{L^1}\|\pa^3_x\chi\|_{L^1}\right)^{\frac 12} \|g\|_{L^2},
\ee
\bea
\label{commdbis}
&&\||D|^{\frac 12}[|D|^{\frac 12},\chi]g\|_{L^2}\\
\nonumber &\lesssim&\left(\|\pa_x\chi\|_{L^1}\|\pa_{xx}\chi\|_{L^1}\right)^{\frac 12}\||D|^{\frac 12}g\|_{L^2}+\left(\|\pa_x\chi\|_{L^1}\|\pa^3_x\chi\|_{L^1}\right)^{\frac 12} \|g\|_{L^2},
\eea
\end{lemma}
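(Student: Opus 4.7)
The plan is to reduce every commutator to the Fourier side and exploit the Lipschitz-type bounds
\[
\big|\,|\xi|^{\frac12}-|\eta|^{\frac12}\,\big|\le |\xi-\eta|^{\frac12}\ ,\quad
\big|\,|\xi|-|\eta|\,\big|\le |\xi-\eta|\ ,\quad
\big|\,m(\xi)-m(\eta)\,\big|\le |\xi-\eta|\text{ with }m(\xi)=\mathbf 1_{\xi>0}\xi\ ,
\]
the last of which represents the symbol of $\Pi^+|D|=\Pi^+D$. Taking for instance the first inequality, one writes
\[
\widehat{[\vert D\vert^{\frac12},\chi]g}(\xi)=\int\bigl(|\xi|^{\frac12}-|\eta|^{\frac12}\bigr)\widehat\chi(\xi-\eta)\hat g(\eta)\,d\eta\ ,
\]
which, combined with the pointwise bound and Young's inequality $L^1\star L^2\hookrightarrow L^2$, gives the first estimate in \eqref{commestimate}. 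The same argument applied to $|D|$ and to $\Pi^+|D|=m(D)$, using their respective $1$-Lipschitz symbols, yields $\|[|D|,\chi]g\|_{L^2}+\|[\Pi^+|D|,\chi]g\|_{L^2}\lesssim \||\xi|\widehat\chi\|_{L^1}\|g\|_{L^2}$.

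Next I would interpolate the Fourier-side norms. Using Hausdorff--Young, $\|\xi^k\widehat\chi\|_{L^\infty}\le \|\pa_x^k\chi\|_{L^1}$. Splitting the integral at a threshold $A>0$,
\[
\int|\xi|^{\frac12}|\widehat\chi(\xi)|\,d\xi\le \|\pa_x\chi\|_{L^1}\!\!\int_{|\xi|\le A}\!\!|\xi|^{-\frac12}d\xi\,+\,\|\pa_{xx}\chi\|_{L^1}\!\!\int_{|\xi|>A}\!\!|\xi|^{-\frac32}d\xi\lesssim A^{\frac12}\|\pa_x\chi\|_{L^1}+A^{-\frac12}\|\pa_{xx}\chi\|_{L^1}\ ,
\]
and optimizing $A=\|\pa_{xx}\chi\|_{L^1}/\|\pa_x\chi\|_{L^1}$ yields $\||\xi|^{\frac12}\widehat\chi\|_{L^1}\lesssim(\|\pa_x\chi\|_{L^1}\|\pa_{xx}\chi\|_{L^1})^{\frac12}$, which closes \eqref{commestimate}. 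The same splitting at the level $A=\|\pa_x^3\chi\|_{L^1}/\|\pa_x\chi\|_{L^1}$ gives $\||\xi|\widehat\chi\|_{L^1}\lesssim(\|\pa_x\chi\|_{L^1}\|\pa_x^3\chi\|_{L^1})^{\frac12}$ and hence \eqref{commd}.

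Finally, for \eqref{commdbis}, the key observation is the algebraic identity obtained by inserting $|D|=|D|^{\frac12}|D|^{\frac12}$:
\[
|D|^{\frac12}[|D|^{\frac12},\chi]g
=[|D|,\chi]g-[|D|^{\frac12},\chi]|D|^{\frac12}g\ ,
\]
which follows from $|D|^{\frac12}\chi|D|^{\frac12}g=\chi|D|g-[|D|^{\frac12},\chi]|D|^{\frac12}g+|D|([|D|^{\frac12},\chi])g$ rearranged. Applying \eqref{commd} to the first term and \eqref{commestimate} (with $g$ replaced by $|D|^{\frac12}g$) to the second then produces the stated estimate. The only non-routine point is the choice of the algebraic decomposition; once it is written the rest is a direct consequence of the bounds already derived. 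I expect this algebraic rewriting to be the main --- though mild --- obstacle, since naively iterating the symbol-difference bound would cost two derivatives on $\chi$ paired with $g\in L^2$, which is strictly worse than the hybrid estimate claimed.
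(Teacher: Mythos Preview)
Your proof is correct and follows essentially the same route as the paper: Fourier-side commutator formula, pointwise symbol-difference bounds, Young's inequality, then the split-at-$A$ optimization for $\||\xi|^{1/2}\widehat\chi\|_{L^1}$ and $\||\xi|\widehat\chi\|_{L^1}$. The only difference is in \eqref{commdbis}: the paper works directly on the Fourier side, using $|\xi|^{1/2}\le |\xi-\eta|^{1/2}+|\eta|^{1/2}$ to split the kernel into the two convolutions $|\xi|\,|\widehat\chi|\star|\widehat g|$ and $|\xi|^{1/2}|\widehat\chi|\star|\eta|^{1/2}|\widehat g|$, whereas you use the operator identity $|D|^{1/2}[|D|^{1/2},\chi]=[|D|,\chi]-[|D|^{1/2},\chi]|D|^{1/2}$. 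These are the same decomposition in different clothing (your identity is the operator-level statement of the paper's Fourier splitting), so both yield the result immediately from \eqref{commestimate} and \eqref{commd}. One remark: your parenthetical justification of the identity is garbled; the clean verification is simply $|D|^{1/2}[|D|^{1/2},\chi]g=|D|(\chi g)-|D|^{1/2}\chi|D|^{1/2}g$ and $[|D|,\chi]g-[|D|^{1/2},\chi]|D|^{1/2}g=|D|(\chi g)-|D|^{1/2}(\chi|D|^{1/2}g)$, which agree.
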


\begin{proof}{\bf Step 1:} Proof of \fref{commestimate}. Since $\pa _x\chi \in L^1$, $\hat \chi (\xi )$ is discontinuous only at $\xi =0$, with a mild singularity  justifying the  calculations below for every $g$ in the Schwartz space. We have
$$ \widehat{[|D|^{\frac 12},\chi] g}= \widehat{|D|^{\frac 12}(\chi g)}(\xi) -  \widehat{\chi(|D|^{\frac 12}g)}(\xi) = \int_{\R}(|\xi|^{\frac 12} - |\eta|^{\frac 12})\hat{\chi}(\xi-\eta)\hat{g}(\eta)\,d\eta.
$$
We use 
\be
\label{ineqxihalf}
\big| |\xi|^{\frac 12} - |\eta|^\frac12\big|\leq |\xi - \eta|^\frac 12 .
\ee
to estimate pointwise
\[
 \left| \widehat{[|D|^{\frac 12},\chi] g}(\xi)\right|\lesssim \int_{\R}|\xi - \eta|^\frac 12 |\hat{\chi}|(\xi-\eta)|\hat{g}|(\eta)\,d\eta=|\xi|^{\frac 12}|\hat{\chi}|\star |\hat{g}|.
\]
We conclude, from Young's inequality and the Plancherel formula,
$$
\|[|D|^{\frac 12},\chi]g\|_{L^2}\lesssim \|(|\xi|^{\frac 12}|\hat{\chi}|)\star |\hat{g}|\|_{L^2}\lesssim\||\xi|^{\frac 12}\hat{\chi}\|_{L^1}\|g\|_{L^2}.
$$
Finally, we estimate
\bea
\label{cnekonekoneoneo}
\nonumber \int |\xi|^{\frac 12}|\hat{\chi}|d\xi&\leq& \int_{|\xi|\leq A} \frac{\| \widehat{\pa_x\chi}\|_{L^\infty}}{|\xi|^{\frac 12}}d\xi+ \int_{|\xi|\geq A} \frac{ \|\widehat{\pa_{xx}\chi}\|_{L^\infty}}{|\xi|^{\frac 32}}d\xi\lesssim \sqrt{A}\|\pa_x\chi\|_{L^1}+\frac{\|\pa_{xx}\chi\|_{L^1}}{\sqrt{A}}\\
& \lesssim & (\|\pa_x\chi\|_{L^1}\|\pa_{xx}\chi\|_{L^1})^{\frac 12}.
\eea   
 by optimizing in $A$.\\
  
 \noindent{\bf Step 2:} Proof of \fref{commd}. We compute
 \bee
 | \widehat{[|D|,\chi] g}(\xi )|&=&\left| \widehat{|D|(\chi g)}(\xi) -  \widehat{\chi(|D|g)}(\xi)\right| = \left|\int_{\R}(|\xi| - |\eta|)\hat{\chi}(\xi-\eta)\hat{g}(\eta)\,d\eta\right|\\
 &\lesssim & \int_{\R}|\xi - \eta| |\hat{\chi}|(\xi-\eta)|\hat{g}|(\eta)\,d\eta=(|\xi||\hat{\chi}|)\star |\hat{g}|
\eee
and hence
\bee
\|[|D|,\chi]g\|_{L^2}\lesssim \|(|\xi||\hat{\chi}|)\star |\hat{g}|\|_{L^2}\lesssim \||\xi|\hat{\chi}\|_{L^1}\|g\|_{L^2}.
\eee
We now estimate 
\be
\label{cneocneononor}\int |\xi||\hat{\chi}|\lesssim \int_{|\xi|\leq A}\|\pa_x\chi\|_{L^{1}}d\xi+\int_{|\xi|\geq A}\frac{\|\pa^3_x\chi\|_{L^1}}{|\xi|^2}d\xi\lesssim \left(\|\pa_x\chi\|_{L^1}\|\pa^3_x\chi\|_{L^1}\right)^{\frac 12}
\ee
 and the first commutator estimate in \fref{commd} is proved. Similarly,
  \bee
 | \widehat{[\Pi^+|D|,\chi] g}|&=&\left|\int_{\R}(|\eta|{\bf 1}_{\eta>0}-|\xi|{\bf 1}_{\xi>0})\hat{\chi}(\xi-\eta)\hat{g}(\eta)\,d\eta\right|\\
 &\lesssim & \int_{\R}|\xi - \eta| |\hat{\chi}|(\xi-\eta)|\hat{g}|(\eta)\,d\eta=(|\xi||\hat{\chi}|)\star |\hat{g}|
\eee
and the conclusion follows as above.\\

\noindent{\bf Step 3:} Proof of \eqref{commdbis}. We compute, using \eqref{ineqxihalf},

\bee
||D|^{\frac 12}\widehat{[|D|^{\frac 12},\chi] g}(\xi)|&\lesssim & \int_{\R}|\xi|^{\frac 12}\left||\xi|^{\frac 12} - |\eta|^{\frac 12}\right|\left|\hat{\chi}(\xi-\eta)\hat{g}(\eta)\right|d\eta\\
& \lesssim &  \int_{\R}|\xi-\eta||\hat{\chi}(\xi-\eta)||\hat{g}(\eta)|d\eta+ \int_{\R}|\xi - \eta|^\frac 12 |\hat{\chi}|(\xi-\eta)|\eta|^{\frac 12}|\hat{g}|(\eta)\,d\eta\\
& \lesssim & |\xi\chi|\star |\hat{g}|+|\xi|^{\frac 12}|\hat{\chi}|\star |\eta^{\frac 12}\hat{g}|
\eee
and the conclusion follows as in the previous two steps.
\end{proof}

We similarly estimate $\Pi^{\pm}$ commutators.

\begin{lemma}[$\Pi^{\pm}$ commutator] Assume that the derivative $\chi'$ is supported in $[1,2]$. Then there holds
\be
\label{estcommlonebis}
\left\|D^k[\Pi^{\pm},\chi_R]g\right\|_{L^2}\lesssim \frac{\|\chi\|_{W^{k+1,\infty}}}{R^k}\|g\|_{L^2}, \ \ k=1,2,
\ee
and
\be
\label{estcommlonebisbis}
\left\|\la x\ra D^2[\Pi^{\pm},\chi_R]g\right\|_{L^2}\lesssim \frac{\|\chi\|_{W^{3,\infty}}}{R}\|g\|_{L^2}.
\ee
\end{lemma}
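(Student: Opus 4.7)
The plan is to exploit the Fourier-side cancellation in the commutator. Since $\Pi^\pm$ is the Fourier projector onto $\{\pm\xi>0\}$,
$$
\widehat{[\Pi^{\pm},\chi_R]g}(\xi)=\int_{\R}(\mathbf{1}_{\pm\xi>0}-\mathbf{1}_{\pm\eta>0})\,\widehat{\chi_R}(\xi-\eta)\widehat{g}(\eta)\,d\eta,
$$
and the crucial algebraic observation is that the integrand vanishes outside $\{\xi\eta<0\}$, where $|\xi-\eta|=|\xi|+|\eta|$; in particular $|\xi|^k\le|\xi-\eta|^k$. Consequently,
$$
|\xi|^k\bigl|\widehat{[\Pi^{\pm},\chi_R]g}(\xi)\bigr|\le\bigl(|\cdot|^k|\widehat{\chi_R}|\bigr)*|\widehat g|(\xi),
$$
and Young's convolution inequality together with Plancherel give
$$
\|D^k[\Pi^{\pm},\chi_R]g\|_{L^2}\lesssim\bigl\||\xi|^k\widehat{\chi_R}\bigr\|_{L^1}\|g\|_{L^2}.
$$

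Next, by the identity $\xi^k\widehat{\chi}=(-i)^k\widehat{\chi^{(k)}}$ and the scaling $\widehat{\chi_R}(\xi)=R\,\widehat{\chi}(R\xi)$,
$$
\bigl\||\xi|^k\widehat{\chi_R}\bigr\|_{L^1}=R^{-k}\bigl\|\widehat{\chi^{(k)}}\bigr\|_{L^1}.
$$
Since $\chi^{(k)}$ is compactly supported in $[1,2]$ for every $k\ge1$, the integral $\int|\widehat{\chi^{(k)}}|$ is estimated by splitting at level $A>0$: on $\{|\xi|\le A\}$, Cauchy--Schwarz and Plancherel yield $\int_{|\xi|\le A}|\widehat{\chi^{(k)}}|\lesssim A^{1/2}\|\chi^{(k)}\|_{L^2}$; on $\{|\xi|\ge A\}$, Cauchy--Schwarz with weight $|\xi|^{-1}$ followed by Plancherel yields $\int_{|\xi|\ge A}|\widehat{\chi^{(k)}}|\lesssim A^{-1/2}\|\chi^{(k+1)}\|_{L^2}$. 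Optimizing in $A$ and using the boundedness of the supports to pass from $L^2$ to $L^\infty$ produces
$$
\bigl\|\widehat{\chi^{(k)}}\bigr\|_{L^1}\lesssim\bigl(\|\chi^{(k)}\|_{L^2}\|\chi^{(k+1)}\|_{L^2}\bigr)^{1/2}\lesssim\|\chi\|_{W^{k+1,\infty}},
$$
which gives \eqref{estcommlonebis}.

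For the weighted estimate \eqref{estcommlonebisbis}, I would split $\|\langle x\rangle D^2[\Pi^{\pm},\chi_R]g\|_{L^2}\le\|D^2[\Pi^{\pm},\chi_R]g\|_{L^2}+\|xD^2[\Pi^{\pm},\chi_R]g\|_{L^2}$; the first piece is the case $k=2$ already handled. For the second, I convert multiplication by $x$ into $i\partial_\xi$ on the Fourier side. Differentiating the representation, the distributional contribution from $\partial_\xi\mathbf{1}_{\pm\xi>0}=\pm\delta(\xi)$ is annihilated by the $|\xi|^2$ prefactor, leaving one term involving $|\xi|$ and $\widehat{\chi_R}$ (controlled as above by $R^{-1}\|\chi\|_{W^{2,\infty}}$) and one term involving $|\xi|^2\partial_\xi\widehat{\chi_R}$. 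After rescaling, the latter amounts to $R^{-1}\|\widehat{(x\chi)''}\|_{L^1}$ via the identity $\xi^2\partial_\xi\widehat{\chi}=i\widehat{(x\chi)''}$. Since $(x\chi)''=2\chi'+x\chi''$ is compactly supported in $[1,2]$, the same splitting argument yields $\|\widehat{(x\chi)''}\|_{L^1}\lesssim\|\chi\|_{W^{3,\infty}}$, and \eqref{estcommlonebisbis} follows.

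The main obstacle is extracting the factor $R^{-k}$: the mere boundedness of $\Pi^{\pm}$ and multiplication by $\chi_R$ on $L^2$ would only yield $\|D^k[\Pi^{\pm},\chi_R]g\|_{L^2}\lesssim\|D^kg\|_{L^2}$, which is useless here. The gain is produced entirely by the Fourier-side cancellation $\xi\eta<0$ combined with $|\xi-\eta|=|\xi|+|\eta|$ on this set, which allows one to transfer every factor of $|\xi|^k$ from the argument onto the kernel $\widehat{\chi_R}$, where rescaling converts it into the desired decay in $R$. The weighted estimate requires one to verify in addition that the Dirac mass produced by differentiating the sign multiplier is indeed killed by the $|\xi|^2$ prefactor, which is the specific reason why the weighted bound is stated only for $k=2$.
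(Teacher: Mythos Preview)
Your proof is correct and takes a genuinely different route from the paper. The paper works entirely in physical space via the kernel representation
\[
[\Pi^{+},\chi_R]g(x)=c\int\frac{\chi_R(x)-\chi_R(y)}{x-y}\,g(y)\,dy,
\]
differentiates in $x$, and then splits each resulting integral into a near-diagonal piece $|x-y|<R$ (estimated by Taylor expansion and Young) and an off-diagonal piece $|x-y|>R$ (estimated using the compact support of $\chi'$ to control the various boundary terms). For the weighted bound \eqref{estcommlonebisbis}, the paper further splits into $|x|\le 10R$ (where the unweighted $k=2$ estimate suffices) and $|x|\ge 10R$ (where the support of $\chi'$ forces $|x-y|\gtrsim|x|$, so the kernel decays fast enough to absorb the weight). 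Your argument instead stays on the Fourier side and exploits the single observation that the symbol $\mathbf{1}_{\pm\xi>0}-\mathbf{1}_{\pm\eta>0}$ is supported on $\{\xi\eta<0\}$, where $|\xi|\le|\xi-\eta|$; this transfers all powers of $|\xi|$ onto $\widehat{\chi_R}$ and converts them into powers of $R^{-1}$ after rescaling. This is exactly the mechanism the paper uses for the \emph{preceding} lemma on $[|D|^{1/2},\chi]$ and $[|D|,\chi]$, so your approach unifies the two results. For the weighted estimate your treatment of the distributional term $\xi^2\partial_\xi\mathbf{1}_{\pm\xi>0}=0$ is clean and is indeed the reason the argument only works for $k\ge 2$. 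The paper's physical-space proof is more hands-on and avoids any distributional manipulation, at the cost of several case distinctions; your Fourier proof is shorter and immediately gives \eqref{estcommlonebis} for all $k\ge 1$.
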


\begin{proof} We recall the standard representation formula 
$$[\Pi^{+},\chi_R]g(x)=c\int\frac{\chi_R(x)-\chi_R(y)}{x-y}g(y)dy.$$

\noindent{\bf Step 1:} Case $k=1$. We take a derivative, $$\pa_x[\Pi^{+},\chi_R]g(x)=-c\int\frac{\chi_R(x)-\chi_R(y)-(x-y)\chi_R'(x)}{(x-y)^2}g(y)dy.$$ We now split the kernel as
\bee
\pa_x[\Pi^+,\chi_R]g(x)&=& -c(T^{med}_Rg(x)+T^{off}g(x))\ ,\\
T^{med}_Rg(x)&:=&\int _{\vert x-y\vert <R} \frac{\chi_R(x)-\chi_R(y)-(x-y)\chi_R'(x)}{(x-y)^2} g(y)\, dy\ ,\\
T^{off}_Rg(x)&:=&\int _{\vert x-y\vert >R }\frac{\chi_R(x)-\chi_R(y)-(x-y)\chi_R'(x)}{(x-y)^2} g(y)\, dy\ .
\eee
We estimate
\be
\label{euisfbeibei}
\left|\frac{\chi_R(x)-\chi_R(y)-(x-y)\chi_R'(x)}{(x-y)^2}\right|\lesssim \|\chi_R''\|_{L^{\infty}}\lesssim \frac{\|\chi\|_{W^{2,\infty}}}{R^2}.
\ee 
Hence,  by \eqref{euisfbeibei} and Young's inequality,
$$\|T^{med}_Rg\|_{L^2}\lesssim \frac{\|\chi\|_{W^{2,\infty}}}{R^2}\|{\bf 1}_{|x-y|<R}\star g\|_{L^2}\lesssim \frac 1{R^2}\|{\bf 1}_{|x-y|<R}\|_{L^1}\|g\|_{L^2}\lesssim \frac{\|\chi\|_{W^{2,\infty}}}{R}\|g\|_{L^2}.$$
Off the diagonal, we use the special structure of $\chi_R$. Firstly, we have
\bee
|T^{off}g|&\lesssim &\|\chi\|_{L^{\infty}}\int_{|x-y|>R}\frac{|g(y)|}{|x-y|^2}dy+\int_{|x-y|>R}\frac{|\chi_R'(x)-\chi_R'(y)\vert }{|x-y|}|g(y)|dy\\
&+& \int_{|x-y|>R}\frac{1}{|x-y|}|\chi_R'(y) g(y)|dy:=  I+II+III.
\eee
The first term is estimated by Young's inequality,
\bee
 \|I\|_{L^2}&\lesssim&\|\chi\|_{L^{\infty}}\|\frac{1}{|x|^2}{\bf 1}_{|x|>R}\star |g|\|_{L^2}\lesssim\|\chi\|_{L^{\infty}} \|\frac{1}{|x|^2}{\bf 1}_{|x|>R}\|_{L^1}\|g\|_{L^2}\\
 &\lesssim & \frac{\|\chi\|_{W^{2,\infty}}}{R}\|g\|_{L^2}.
 \eee 
 For the second term, we use Young's inequality and the fact that $\chi_R'(x)$ is supported in $R\leq |x|\leq 2 R$. We obtain
\bee
\|II\|_{L^2}&\lesssim&\|\frac{\chi_R'}{|x|}{\bf 1}_{|x|>R}\star g\|_{L^2}\lesssim \|\frac{\chi_R'}{|x|}{\bf 1}_{|x|>R}\|_{L^1}\|g\|_{L^2}\lesssim \frac{\|\chi'\|_{L^{\infty}}}{R}\|g\|_{L^2}\int_{\frac R\leq |x|\leq 2R}\frac{dx}{\la x\ra}\\
&\lesssim & \frac{\|\chi\|_{W^{2,\infty}}}{R}\|g\|_{L^2}.
\eee
The last term is treated with Young's and Cauchy Schwarz's inequalities,
\bee
\|III\|_{L^2}&\lesssim& \|\frac{1}{|x|}{\bf 1}_{|x|>R}\star (\chi_R'g)\|_{L^2}|\lesssim  \|\frac{1}{|x|}{\bf 1}_{|x|>R}\|_{L^2}\|\chi_R'g\|_{L^1}\lesssim \frac{\|\chi'\|_{L^{\infty}}}{\sqrt{R}}\frac{1}{R}\|g\|_{L^1(R\leq|x|\leq  2R)}\\
&\lesssim & \frac{\|\chi\|_{W^{2,\infty}}}{R}\|g\|_{L^2}.
\eee
The collection of above bounds yields \eqref{estcommlonebis}for $k=1$.\\

\noindent{\bf Step 2:} Case $k=2$. The proof is similar. We take two derivatives,
\bee
\pa^2_x[\Pi^{+},\chi_R]g(x)&=&2c\int\frac{\chi_R(x)-\chi_R(y)-(x-y)\chi_R'(x)+\frac12(x-y)^2\chi_R''(x)}{(x-y)^3}g(y)dy\\
& = & c(T^{med}_Rg(x)+T^{off}g(x)).
\eee
We estimate 
$$\left|\frac{\chi_R(x)-\chi_R(y)-(x-y)\chi_R'(x)+\frac12(x-y)^2\chi_R''(x)}{(x-y)^3}\right|\lesssim \|\chi_R'''\|_{L^{\infty}}\lesssim \frac{\|\chi\|_{W^{3,\infty}}}{R^3}$$
from which
$$\|T^{med}_Rg\|_{L^2}\lesssim \frac{\|\chi\|_{W^{3,\infty}}}{R^3}\|{\bf 1}_{|x-y|<R}\star g\|_{L^2}\lesssim \frac 1{R^3}\|{\bf 1}_{|x-y|<R}\|_{L^1}\|g\|_{L^2}\lesssim \frac{\|\chi\|_{W^{3,\infty}}}{R^2}\|g\|_{L^2}.$$
Off the diagonal, we split
\bee
|T^{off}g|&\lesssim &\|\chi\|_{L^{\infty}}\int_{|x-y|>R}\frac{|g(y)|}{|x-y|^3}dy\\
&+& \int_{|x-y|>R}\frac{|\chi_R'(x)-\chi_R'(y)|}{|x-y|^2}|g(y)|dy+ \int_{|x-y|>R}\frac{|\chi'_R(y)|}{|x-y|^2}|g(y)|dy\\
&+& \int_{|x-y|>R}\frac{|\chi''_R(x)-\chi''_R(y)|}{|x-y|}|g(y)|dy+\int_{|x-y|>R}\frac{1}{|x-y|}|\chi_R''(y) g(y)|dy\\
&:=&   I+II+III.
\eee
The first term is estimated by Young's inequality,
\bee
 \|I\|_{L^2}&\lesssim&\|\chi\|_{L^{\infty}}\|\frac{1}{|x|^3}{\bf 1}_{|x|>R}\star |g|\|_{L^2}\lesssim\|\chi\|_{L^{\infty}} \|\frac{1}{|x|^3}{\bf 1}_{|x|>R}\|_{L^1}\|g\|_{L^2}\\
 &\lesssim & \frac{\|\chi\|_{W^{3,\infty}}}{R^2}\|g\|_{L^2}.
 \eee 
 For the second term, we use Young's inequality and the fact that $\chi_R'(x)$ is supported in $R\leq |x|\leq 2 R$. We obtain
\bee
\|\frac{\chi_R'}{|x|^2}{\bf 1}_{|x|>R}\star g\|_{L^2}&\lesssim& \|\frac{\chi_R'}{|x|^2}{\bf 1}_{|x|>R}\|_{L^1}\|g\|_{L^2}\lesssim \frac{\|\chi'\|_{L^{\infty}}}{R}\|g\|_{L^2}\int_{R\leq |x|\leq 2R}\frac{dx}{\la x\ra^2}\\
&\lesssim & \frac{\|\chi\|_{W^{3,\infty}}}{R^2}\|g\|_{L^2}
\eee
and 
\bee
\|\frac{1}{|x|^2}{\bf 1}_{|x|>R}\star (\chi_R'g)\|_{L^2}|&\lesssim & \|\frac{1}{|x|^2}{\bf 1}_{|x|>R}\|_{L^2}\|\chi_R'g\|_{L^1}\lesssim \frac{\|\chi\|_{W^{3,\infty}}}{R^{\frac 32}}\frac{1}{R}\|g\|_{L^1(R\leq|x|\leq  2R)}\\
&\lesssim & \frac{\|\chi\|_{W^{3,\infty}}}{R^2}\|g\|_{L^2}
\eee
and hence $$II\lesssim \frac{\|\chi\|_{W^{3,\infty}}}{R^2}\|g\|_{L^2}.$$
For the last term, we have
\bee
\|\frac{\chi''_R}{|x|}{\bf 1}_{|x|>R}\star g\|_{L^2}&\lesssim& \|\frac{\chi''_R}{|x|}{\bf 1}_{|x|>R}\|_{L^1}\|g\|_{L^2}\lesssim \frac{\|\chi\|_{W^{3,\infty}}}{R^2}\|g\|_{L^2}\int_{R\leq |x|\leq 2R}\frac{dx}{\la x\ra}\\
&\lesssim & \frac{\|\chi\|_{W^{3,\infty}}}{R^2}\|g\|_{L^2}
\eee
and 
\bee
\|\frac{1}{|x|}{\bf 1}_{|x|>R}\star (\chi''_Rg)\|_{L^2}|&\lesssim & \|\frac{1}{|x|}{\bf 1}_{|x|>R}\|_{L^2}\|\chi''_Rg\|_{L^1}\lesssim \frac{\|\chi\|_{W^{3,\infty}}}{\sqrt{R}}\frac{1}{R^2}\|g\|_{L^1( R\leq|x|\leq  2R)}\\
&\lesssim & \frac{\|\chi\|_{W^{3,\infty}}}{R^2}\|g\|_{L^2}.
\eee
The collection of above bounds yields \eqref{estcommlonebis}  for $k=2$.\\

\noindent{\bf Step 3:} Proof of \eqref{estcommlonebisbis}. We revisit the estimates of step 2 in the presence of the additional $\la x\ra $ weight. For $|x|\leq 10 R$, we estimate directly from \eqref{estcommlonebis},
$$\left\|\la x\ra D^k[\Pi^{\pm},\chi_R]g\right\|_{L^2(|x|\leq 10R)}\lesssim \frac{\|\chi\|_{W^{2,\infty}}}{R}\|g\|_{L^2}.$$ We therefore assume $|x|\geq 10R$. Since $\chi '=0$  outside $[1,2]$, $|x-y|<R$ implies $\chi_R(x)-\chi_R(y)=0$ and $\chi'_R(x)=0$. For $|x-y|>R$, we have $|x-y|>|x|$ if $x,y$ do not have the same sign, and if $x,y$ have the same sign, necessarily $|y|\leq R$, for otherwise $\chi_R(x)-\chi_R(y)=0$ again. In both cases, $|x-y|\gtrsim |x|$, and hence
$$
\|T^{off}g\|_{L^{\infty}(|x|\geq 10R)}\lesssim \int_{|x-y|\gtrsim |x|}\frac{1}{|x-y|^3}|g(y)|dy\lesssim \|g\|_{L^2}\|\frac{{\bf 1}_{|z|\gtrsim |x|}}{|z|^3}\|_{L^2}\lesssim \frac{1}{|x|^{\frac 52}}\|g\|_{L^2},
$$
therefore $$\|\la x\ra T^{off}g\|_{L^2(|x|\geq 10R)}\lesssim \|g\|_{L^2}\|\frac{1}{\la x\ra^{\frac 32}}\|_{L^2(|x|\geq 10R)}\lesssim \frac{\|g\|_{L^2}}{R},$$ and \eqref{estcommlonebisbis} is proved.
\end{proof}

We will need a standard localization formula for the kinetic energy.

\begin{lemma}[Localization of the kinetic energy]
\label{kinenergy}
There holds for given functions $Z,f$,
\bea
\label{estgenerale}
\int \vert Z\vert^2||D|^{\frac 12}f|^2&=&\int   ||D|^{\frac 12}( Zf)|^2\\
\nonumber &+& O\left(\|[\Dhalf,Z]f\|_{L^2}\left[\|\Dhalf(Zf)\|_{L^2}+\|Z\Dhalf f\|_{L^2}\right]\right).
\eea
In particular,
for $\chi_R(y):=\chi (\frac{y}{R})$ with $\chi $ a smooth
function satisfying 
\begin{align*}
\chi(y)=
\begin{cases}
1, \text{ if } |y|<\frac 14\\
0, \text{ if } |y|>\frac 12,
\end{cases}
\end{align*}
we have
\bea
\label{commestimatebis}
\int \chi_R^2||D|^{\frac 12}f|^2=\int   ||D|^{\frac 12}(\chi_Rf)|^2+O\left(\frac{\|f\|_{L^2}^2+\|\Dhalf( \chi_Rf)\|_{L^2}^2}{\sqrt{R}}\right).
\eea
\end{lemma}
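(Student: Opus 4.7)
The plan is to derive \eqref{estgenerale} by a direct commutator expansion, then specialize to $Z=\chi_R$ using the commutator estimate \eqref{commestimateone} already established above.

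First I would write
\[
|D|^{\frac 12}(Zf) = Z\,|D|^{\frac 12}f + [|D|^{\frac 12},Z]f,
\]
take the square modulus pointwise, and integrate. Expanding
\[
\int \bigl| |D|^{\frac 12}(Zf)\bigr|^2 \,dx
= \int |Z|^2 \bigl||D|^{\frac 12}f\bigr|^2\,dx
+ 2\Re \int \overline{Z|D|^{\frac 12}f}\,[|D|^{\frac 12},Z]f\,dx
+ \int \bigl|[|D|^{\frac 12},Z]f\bigr|^2\,dx,
\]
so that
\[
\int |Z|^2 \bigl||D|^{\frac 12}f\bigr|^2\,dx = \int \bigl||D|^{\frac 12}(Zf)\bigr|^2\,dx + \mathcal{R},
\]
with the remainder controlled by Cauchy--Schwarz as
\[
|\mathcal R| \lesssim \|[|D|^{\frac 12},Z]f\|_{L^2}\,\bigl(\|Z|D|^{\frac 12}f\|_{L^2}+\|[|D|^{\frac 12},Z]f\|_{L^2}\bigr).
\]
Since $Z|D|^{\frac 12}f = |D|^{\frac 12}(Zf) - [|D|^{\frac 12},Z]f$, the triangle inequality yields
$\|Z|D|^{\frac 12}f\|_{L^2}+\|[|D|^{\frac 12},Z]f\|_{L^2} \lesssim \|\Dhalf(Zf)\|_{L^2}+\|Z\Dhalf f\|_{L^2}$, which produces the announced bound \eqref{estgenerale}.

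For the specialization \eqref{commestimatebis}, set $Z=\chi_R$. The commutator estimate \eqref{commestimateone} gives
\[
\|[|D|^{\frac 12},\chi_R]f\|_{L^2}\lesssim \frac{\|f\|_{L^2}}{\sqrt R},
\]
and, using once more $\chi_R|D|^{\frac 12}f=|D|^{\frac 12}(\chi_Rf)-[|D|^{\frac 12},\chi_R]f$, also
\[
\|\chi_R|D|^{\frac 12}f\|_{L^2}\lesssim \|\Dhalf(\chi_Rf)\|_{L^2}+\frac{\|f\|_{L^2}}{\sqrt R}.
\]
Plugging these into the general remainder estimate and absorbing the cross term by Young's inequality $ab\leq \tfrac12 a^2+\tfrac 12 b^2$ yields
\[
|\mathcal R|\lesssim \frac{\|f\|_{L^2}}{\sqrt R}\Bigl(\|\Dhalf(\chi_Rf)\|_{L^2}+\frac{\|f\|_{L^2}}{\sqrt R}\Bigr)\lesssim \frac{\|f\|_{L^2}^2+\|\Dhalf(\chi_Rf)\|_{L^2}^2}{\sqrt R},
\]
which is precisely \eqref{commestimatebis}.

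There is essentially no obstacle beyond bookkeeping: the whole argument is the pointwise expansion of $|A+B|^2$ combined with the already-proven commutator gain $R^{-1/2}$, and the role of the two terms $\|\Dhalf(Zf)\|_{L^2}+\|Z\Dhalf f\|_{L^2}$ on the right-hand side is simply to provide a symmetric, triangle-inequality-friendly upper bound for $\|Z|D|^{\frac 12}f\|_{L^2}+\|[|D|^{\frac 12},Z]f\|_{L^2}$ without any additional smallness assumption on $Z$.
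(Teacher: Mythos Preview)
Your proof is correct and follows essentially the same approach as the paper: both arguments write $\Dhalf(Zf)=Z\Dhalf f+[\Dhalf,Z]f$, expand the square, and bound the cross terms by Cauchy--Schwarz, then specialize to $Z=\chi_R$ using \eqref{commestimateone}. Your write-up is in fact slightly more detailed than the paper's (which simply says ``and \eqref{commestimatebis} follows'' after quoting the commutator bound), but the underlying computation is identical.
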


\begin{proof}  We expand and estimate
\bee
\int |\Dhalf (Zf)|^2&=&(\Dhalf(Zf),\Dhalf(Zf))=([\Dhalf,Z]f+Z\Dhalf f,\Dhalf(Zf))\\
& = & O\left(\|[\Dhalf,Z]f\|_{L^2}\|\Dhalf(Zf)\|_{L^2}\right)+(Z\Dhalf f,[\Dhalf, Z]f+Z\Dhalf f)\\
& = & \int Z^2|\Dhalf f|^2+O\left(\|[\Dhalf,Z]f\|_{L^2}\left[\|\Dhalf(Zf)\|_{L^2}+\|Z\Dhalf f\|_{L^2}\right]\right)
\eee
and \fref{estgenerale} follows. We then estimate from \fref{commestimateone},
$$\|[\Dhalf, \chi_R]\|_{L^2\to L^2}\lesssim \frac 1{\sqrt R}$$
and \fref{commestimatebis} follows.
\end{proof}

Finally, for establishing the coercivity of our energy functional, we need the following --- non sharp --- estimate.
\begin{lemma}
Let $\chi $ be a smooth
function satisfying 
\begin{align*}
\chi(y)=
\begin{cases}
1, \text{ if } |y|<\frac 14\\
0, \text{ if } |y|>\frac 12,
\end{cases}
\end{align*}
There holds:
\be
\label{estcomm}
\left\|\frac{(\chi_R u^+)^-}{\la y\ra}\right\|_{L^2}\lesssim \frac{1}{R^{\frac 13}}\|u^+\|_{L^2}.
\ee
\end{lemma}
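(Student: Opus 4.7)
The plan is to reduce the estimate to a commutator bound and then apply a standard one-dimensional Gagliardo--Nirenberg/Hölder argument. Writing $g := (\chi_R u^+)^-$, the crucial observation is that $\Pi^- u^+ = 0$, so
\[
g \;=\; \Pi^-(\chi_R u^+) \;=\; \Pi^-(\chi_R\,\Pi^+ u^+) \;=\; [\Pi^-,\chi_R]\,u^+.
\]
This identifies $g$ as a commutator applied to $u^+$, putting us in the setting of the commutator lemmas of Appendix~D.

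I would then record two bounds on $g$. The trivial one is $\|g\|_{L^2}\lesssim \|u^+\|_{L^2}$, from the $L^2$-boundedness of $\Pi^-$ and multiplication by $\chi_R$. The decisive gain comes from \eqref{estcommlonebis} with $k=1$, which yields
\[
\|\partial_y g\|_{L^2} \;=\; \|D[\Pi^-,\chi_R]u^+\|_{L^2} \;\lesssim\; R^{-1}\|u^+\|_{L^2}.
\]
Strictly speaking \eqref{estcommlonebis} is stated for a cutoff whose derivative is supported in $[1,2]$, but the proof is insensitive to this normalization: any $\chi$ with $\chi'$ compactly supported away from the origin produces the same $R^{-k}$ decay, only with a different numerical constant.

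With these two bounds in hand, I invoke the one-dimensional Gagliardo--Nirenberg inequality
\[
\|g\|_{L^p} \;\lesssim\; \|g\|_{L^2}^{\frac12+\frac1p}\,\|\partial_y g\|_{L^2}^{\frac12-\frac1p},\qquad 2\le p<\infty,
\]
at the exponent $p=6$, giving
\[
\|g\|_{L^6} \;\lesssim\; \|u^+\|_{L^2}^{2/3}\bigl(R^{-1}\|u^+\|_{L^2}\bigr)^{1/3} \;=\; R^{-1/3}\|u^+\|_{L^2}.
\]
Since $\la y\ra^{-1}\in L^3(\R)$, Hölder's inequality with $\tfrac12 = \tfrac16+\tfrac13$ then yields
\[
\Big\|\frac{g}{\la y\ra}\Big\|_{L^2} \;\le\; \|g\|_{L^6}\,\|\la y\ra^{-1}\|_{L^3} \;\lesssim\; R^{-1/3}\|u^+\|_{L^2},
\]
which is the claimed bound. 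The argument is essentially routine given the tools already in place; there is no serious obstacle, the only minor check being the adaptation of the commutator estimate \eqref{estcommlonebis} to the present cutoff, whose derivative lives on $\{|y|\in [1/4,1/2]\}$ rather than $[1,2]$.
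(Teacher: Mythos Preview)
Your proof is correct and takes a genuinely different route from the paper's.

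The paper argues by duality: it pairs $(\chi_R u^+)^-$ against a test function $v$, splits $v$ at frequency scale $\eta$, exploits $(u^+,v_2^-)=0$ to trade $\chi_R$ for $1-\chi_R$ on the high-frequency piece (gaining $R^{-1}$ from the spatial support of $1-\chi_R$, at the cost of $\eta^{-1}$ from differentiating the frequency cutoff), bounds the low-frequency piece trivially (gaining $\eta^{1/2}$ from the small Fourier support), and then optimizes $\eta=R^{-2/3}$ to land on $R^{-1/3}$.

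Your argument is more direct: you recognize $(\chi_R u^+)^-=[\Pi^-,\chi_R]u^+$, invoke the $H^1$ commutator bound \eqref{estcommlonebis} to get $\|\partial_y g\|_{L^2}\lesssim R^{-1}\|u^+\|_{L^2}$, and then interpolate via one-dimensional Gagliardo--Nirenberg at $p=6$ together with H\"older against $\langle y\rangle^{-1}\in L^3$. The $1/3$ exponent appears here as the GN weight on $\partial_y g$, which is structurally the same balancing as the paper's optimization in $\eta$. Your approach makes better use of the commutator machinery already in place and avoids the duality/frequency-decomposition layer; the paper's version is self-contained in that it does not appeal to \eqref{estcommlonebis}. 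Your remark that the support hypothesis on $\chi'$ in \eqref{estcommlonebis} is inessential is correct: the proof there only uses $\|\chi_R'\|_{L^\infty}\lesssim R^{-1}$, $\|\chi_R'\|_{L^2}\lesssim R^{-1/2}$, and $\|\chi_R''\|_{L^\infty}\lesssim R^{-2}$, all of which hold for your cutoff.
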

\begin{proof}
Using a standard duality argument, it suffices to show that
\be
\label{dual estcomm}
\left|((\chi_Ru^+)^-,v)\right|\lesssim \frac{1}{R^{\frac 13}}\|u^+\|_{L^2}\|\la y\ra v\|_{L^2}
\ee
for any $v\in L^2(\R)$ such that $\jb{y} v\in L^2(\R)$. Let $0<\eta<1$ and consider a cut off function 
$$\zeta_\eta(\xi)=\zeta\left(\frac{\xi}{\eta}\right), \ \ \zeta (\xi)=\left\{\begin{array}{ll} 1 \ \ \mbox{for}\ \ |\xi|\leq 1,\\ 0\ \ \mbox{for}\ \  |\xi|\geq 2\end{array}\right.$$ and let
 $$\hat{v}(\xi)=\zeta_\eta\hat{v}(\xi)+(1-\zeta_\eta)\hat{v}(\xi)=:\hat{v_1}(\xi)+\hat{v_2}(\xi).$$
For the high frequency part, we compute, using Plancherel's identity,
and the fact that $|y|\geq\frac{R}{4}$ on the support of $1-\chi_R$:
$$
|((\chi_R u^+)^-,v_2)|=|(\chi_R u^+ ,v_2^-)|=|((1-\chi_R)u^+,v_2^-)|\lesssim\frac{1}{R} \|u^+\|_{L^2}\|\la y\ra v_2^-\|_{L^2}
$$
and, by construction and Plancherel's identity,
 $$\|\la y\ra v_2^-\|_{L^2}^2\lesssim \int |\hat{v}^-_2|^2+|\pa_{\xi}\widehat{v_2^-}|^2\lesssim \frac{1}{\eta^2}\left[\int |\hat{v}|^2+|\pa_{\xi}\widehat{v}|^2\right]\lesssim \frac{\|\la y\ra v\|_{L^2}^2}{\eta^2}.$$ We estimate, for the low frequency part,
$$
|((\chi_R u^+)^-,v_1)|=\|u^+\|_{L^2}\|v_1\|_{L^2}\lesssim \|u^+\|_{L^2}\|\hat{v}_1\|_{L^2}
$$
and $$\|\hat{v_1}\|_{L^2}^2\lesssim \int_{|\xi|\leq 2\eta}|\hat{v}|^2\lesssim \eta\|\hat{v}\|_{L^{\infty}}^2\lesssim \eta\|v\|_{L^1}^2\lesssim \eta\|\la y\ra v\|^2_{L^2}.$$ 
The collection of above bounds and the choice $\eta=\frac{1}{R^{\frac 23}}$ yield
$$\left|((\chi_Ru^+)^-,v)\right|\lesssim \left[\frac{1}{\eta R}+\sqrt{\eta}\right]\|u^+\|_{L^2}\|\la y\ra v\|_{L^2}\lesssim \frac{1}{R^{\frac 13}}\|u^+\|_{L^2}\|\la y\ra v\|_{L^2},$$
which proves \fref{dual estcomm}.
\end{proof}

%%%%%%%%%%%%%%%%%%%%%%%%%%%%%%%%%%%%%%%
%%%%%%%%%%%%%%%%%%%%%%%%%%%%%%%%%%%%%%%

\section{Estimates on the cut-off functions}
\label{zppappcomm}
%%%%%%%%%%%%%%%%%%%%%%%%%%%%%%%%%%%%%%%
%%%%%%%%%%%%%%%%%%%%%%%%%%%%%%%%%%%%%%%

This Appendix is devoted to the derivation of various estimates related to the localization of mass and kinetic energy which are used throughout Section \ref{sectionenergy}. Recall \eqref{defPhione}, \eqref{defPhionebis}.\\

\noindent\underline{{\it $\zeta$ estimates}}. We recall the definition of the cut-off functions, see \eqref{defPhionebis}, \eqref{defPhionebisbis}. The function $\Psi_1$ is smooth enough, non increasing, with
$$\Psi_1(z_1)=\left|\begin{array}{lll} 1\ \ \mbox{for}\ \ z_1\leq \frac 14\\ (1-z_1)^{10}\ \ \mbox{for}\ \ \frac 12\leq z_1\leq 1\\ 0\ \ \mbox{for}\ \ z_1\geq 1.\end{array}\right..$$
Furthermore, $\Phi_1=\Psi_1+b(1-\Psi_1)$, and $$\phi_1(y_1)=\Phi_1\left (\frac{y_1}{R(1-b)}\right )\ ,\ \phi (x)=\phi _1\left (\frac{x-x_1}{\l_1(1-\b_1)}\right )\ .$$
Then, by construction, $b\pa_b\Phi_1=\Phi_1-\Psi_1\le \Phi_1$, and there holds the global control 
\be
\label{globalcancellation}
|(1-z_1)\pa_{z_1}\Phi_1|\lesssim \Phi_1.
\ee
Then, since, by \eqref{defzetabeta}, $\zeta=\beta_1+(1-\beta_1)(1-\phi )$, we have
\be
\label{estderivativezeta}
|\pa_x\zeta|\lesssim (1-\beta_1)|\pa_x\phi|\lesssim \frac 1{R}, \ \ |b\pa_{z_1}\Phi_1|\lesssim b\lesssim \Phi_1.
\ee
We estimate
\be
\label{potzniephione}
|\pa_{y_1}\phi_1|\lesssim \frac{1}{R},
\ee
and
$$\|\pa^2_{y_1}\phi_1\|_{L^1}\lesssim \frac 1R, \ \ \|\pa^3_{y_1}\phi_1\|_{L^1}\lesssim \frac{1}{R^2}$$ from which, using \eqref{commestimate},
\be
\label{ncieoneonveo}
\|[|D^{\frac 12},\pa_{y_1}\phi_1]\|_{L^2\rightarrow L^2}\lesssim \frac{1}{R^{\frac 32}}.
\ee
More generally,
\be
\label{estnorm[hiie}
\|\Phi_1\|_{W_{z_1}^{k,\infty}}\lesssim 1, \ \ k=2,3
\ee
and hence, from $\phi_1=\Phi_1\left(\frac{y_1}{R(1-b)}\right)$ and \eqref{estcommlonebis},
\bea
\label{commuttuatteressnitbis}
&&\|D^k[\Pi^{\pm},\phi_1]g\|_{L^2}\lesssim \frac{\|g\|_{L^2}}{R^k}, \ \ k=1,2\\
\label{commuttuatteressnitbisbis}
&&\|D[\Pi^{\pm},\pa_{y_1}\phi_1]g\|_{L^2}\lesssim \frac{1}{R^2}\|g\|_{L^2}
\eea
Next we compute
\be
\label{formulajoienonev}
 \pa_t\zeta+\pa_x\zeta= (\beta_1)_t\phi_1-(1-\beta_1)(\pa_t\phi_1+\pa_x\phi_1)=(1-\b_1)W\left (t,\frac{y_1}{R(1-b)}\right )\ ,\ 
 \ee
 with
 \bea
\label{formulajoienonevbis} 
&& W(t,z_1)=\frac{(\beta_1)_t}{1-\b_1}\Phi_1 - \frac{b_t}{b}(\Phi_1-\Psi_1)-\frac{1-(x_1)_t}{\l_1(1-\beta_1)(1-b)R}\pa_{z_1}\Phi_1\nonumber \\
&&-\left(-\frac{(\l_1)_t}{\l_1}+\frac{(\beta_1)_t}{1-\beta_1}+\frac{b_t}{1-b}-\frac{R_t}{R}\right)z_1\pa_{z_1}\Phi_1 \nonumber\\
&& =\frac{(\beta_1)_t}{1-\b_1}\Phi_1 - \frac{b_t}{b}(\Phi_1-\Psi_1)+\frac{\l_1R_tz_1-1}{\l_1R}\pa_{z_1}\Phi_1+\frac{b}{\l_1(1-b)R}\pa_{z_1}\Phi_1\nonumber \\
&&+\frac{(x_1)_t-\b_1}{\l_1(1-\beta_1)(1-b)R}\pa_{z_1}\Phi_1-\left(-\frac{(\l_1)_t}{\l_1}+\frac{(\beta_1)_t}{1-\beta_1}+\frac{b_t}{1-b}\right)z_1\pa_{z_1}\Phi_1\ .
\eea
We now use the bounds \eqref{boundessential}, \eqref{estmod}, \eqref{bootdynamicalproved} and $b\lesssim \phi_1$ to derive
$$\left|\frac{(\beta_1)_t}{1-\beta_1}\right|+\left|\frac{(\l_1)_t}{\l_1}\right|+\left|\frac{(x_1)_t-\beta_1}{1-\beta_1}\right|+|b_t|\lesssim \frac b{t}\lesssim \frac{\phi_1}{t}
$$ and hence, we obtain
\be 
\label{cbejbebeibie}
 |\pa_t\zeta+\pa_x\zeta|\lesssim   \frac{1-\beta_1}{t}\phi_1+(1-\beta_1)\left|1-\l_1R_tz_1\right|\frac{|\pa_{z_1}\Phi_1|}{R}\ . 
 \ee
Then we compute
\bee
R_t&=&\frac{(x_2)_t-(x_1)_t}{\l_1(1-\beta_1)}+R\left[-\frac{(\l_1)_t}{\l_1}+\frac{(\beta_1)_t}{1-\beta_1}\right]\\
& = & \frac{\beta_2-\beta_1}{\l_1(1-\beta_1)}+O(b)
\eee
and hence 
$$
1-\l_1R_tz_1= 1-\left[\frac{\beta_2-\beta_1}{1-\beta_1}+O(b)\right]z_1=1-z_1+O(bz_1).
$$
Injecting this into \eqref{cbejbebeibie} with \eqref{globalcancellation} and $b\lesssim \phi_1$, $R\sim t$,  finally yields the fundamental estimate,
\be
\label{relationzetat}
 \left|\pa_t\zeta+\pa_x\zeta\right| \lesssim \frac{1-\beta_1}{t}\phi_1\ .
\ee
Next we estimate the first three derivatives of $\sqrt{\phi_1}$ with respect to $y_1$. Since $\Phi_1=b+(1-b)\Psi_1$, with $\Psi_1$ non increasing, we have 
$$\Phi_1(z_1)\ge \frac{1}{2^{11}}\ ,\ z_1\le \frac12\ ,$$
hence $\pa_{z_1}^k\sqrt{\Phi_1}(z_1)$ are bounded for $k=1,2,3$ and $z_1\le \frac 12$. As for $\frac 12\le z_1\le 1$, 
$$\sqrt{\Phi_1}(z_1)=\left (b+(1-b)(1-z_1)^{10}\right )^{\frac 12}\ ,$$
hence again $\pa_{z_1}^k\sqrt{\Phi_1}(z_1)$ are bounded for $k=1,2,3$. Consequently,
\bee
\label{neionveobis}
\|\pa^k_{y_1}\sqrt{\phi_1}\|_{L^1}&\lesssim \int_{\frac{(1-b)R}{4}\leq |y_1|\leq (1-b)R}\frac{1}{R^k}dy_1\lesssim \frac{1}{R^{k-1}},\ \ k=1,2,3 \ ,
\eee
and thus, from \fref{commestimate}, \eqref{commdbis},
\bea
\label{coomestimate}
&&\|[|D|^{\frac 12},\sqrt{\phi_1}]f\|_{L^2}\lesssim \frac{1}{\sqrt{R}}\|f\|_{L^2}, \ \ \|[|D|,\sqrt{\phi_1}]f\|_{L^2}\lesssim \frac{1}{R}\|f\|_{L^2}\\
\label{coomestimatebis}&& \||D|^{\frac 12}[|D|^{\frac 12},\sqrt{\phi_1}]f\|_{L^2}\lesssim  \frac{1}{\sqrt{R}}\||D|^{\frac 12}f\|_{L^2}+\frac{1}{R}\|f\|_{L^2}.
\eea
According to \fref{defpsi1}, consider now $$\psi=\frac{\pa_t\zeta+\pa_x\zeta}{\sqrt{\phi}}, \ \ \psi(x)=\psi_1(y_1)$$ then from \eqref{relationzetat}:
\be
\label{poitpsi}
|\psi|\lesssim  \frac{(1-\beta_1)}{t}\sqrt{\phi}.
\ee
In order to estimate the first two derivatives of $\psi_1$ with respect to $y_1$, we use  \eqref{formulajoienonev}, \eqref{formulajoienonevbis}. We already noticed that the first three derivatives of $\sqrt{\Phi_1}(z_1)$ are bounded. By a similar argument, the first two derivatives of $\Psi_1/\sqrt{\Phi_1}$ are bounded. 
Consequently, using again $R\sim t$,
 $$\pa_{y_1}\psi_1=O\left(\frac{1-\beta_1}{t^2}{\bf 1}_{\frac{R(1-b)}{4}\leq y_1\leq R(1-b)}\right)\ ,\ \pa^2_{y_1}\psi_1=O\left(\frac{1-\beta_1}{t^3}{\bf 1}_{\frac{R(1-b)}{4}\leq y_1\leq R(1-b)}\right).$$
   Hence
$$\|\pa_{y_1}\psi_1\|_{L^1}\lesssim \frac{1-\beta_1}t, \ \ \|\pa^2_{y_1}\psi_1\|_{L^1}\lesssim \frac{1-\b_1}{t^2}.$$
We conclude, from \fref{commestimate}, that
\be
\label{estconveonovd}
\|[\Dhalf,\psi_1]\|_{L^2\to L^2}\lesssim \frac{1-\beta_1}{t^{\frac 32}}.
\ee
\noindent\underline{{\it $\theta$ estimates}}. Recall from \eqref{defPhionebisbis}, \eqref{deftheta}: $$\theta(t,x)=\theta(t,y_1)=\frac{1}{\l_1}\Psi_1(z_1)+\frac{1}{\l_2}(1-\Psi_1)(z_1).$$ Hence \be
\label{estnvienvoene}
|\pa_{y_1}\theta_1|\lesssim \frac{|\l_2-\l_1|}{R}\pmb{1}_{\frac{(1-b)R}{2}\leq y_1\leq (1-b)R}\ ,
\ee
and therefore
\be
\label{estlinfitythetea}
|\pa_x\theta|\lesssim \frac{|\l_1-\l_2|}{(1-\beta_1)R}\ .
\ee
Next $$[\Pi^{\pm},\theta_1]=[\Pi^{\pm},\frac{1}{\l_1}\Psi_1+\frac{1}{\l_2}(1-\Psi_1)]=\frac{\l_2-\l_1}{\l_1\l_2}[[\Pi^{\pm},\Psi_1]$$ and hence from \eqref{estcommlonebis}:
\be
\label{estcommlonebisdeclined}
\left\|\pa_{y_1}[\Pi^{\pm},\theta_1]g\right\|_{L^2}\lesssim \frac{\vert \l_2-\l_1\vert }{R}\|g\|_{L^2}\ .
\ee
We now estimate more carefully:
\bee
&&(\pa_t+\pa_x)\theta=-\frac{(\l_1)_t}{\l_1^2}\Psi_1-\frac{(\l_2)_t}{\l_2^2}(1-\Psi_1)\\
& + &\left (\frac{1}{\l_1}-\frac{1}{\l_2}\right )\left[ \frac{\beta_1-(x_1)_t}{(1-\beta_1)\l_1R(1-b)}+\frac{1}{\l_1R(1-b)}\right]\pa_{z_1}\Psi_1\\
&+&\left (\frac{1}{\l_1}-\frac{1}{\l_2}\right )\left[-\frac{(\l_1)_t}{\l_1} +\frac{(\beta_1)_t}{1-\beta_1}-\frac{R_t}{R}+\frac{b_t}{1-b}\right]z_1\pa_{z_1}\Psi_1
\eee
and hence 
\be
\label{estpatpxtheta}
(\pa_t+\pa_x)\theta=  O\left(\frac{1}{t}\right).
\ee

%%%%%%%%%%%%%%%%%%%%%%%%%%%%%%%%%%%%%%%
%%%%%%%%%%%%%%%%%%%%%%%%%%%%%%%%%%%%%%%

\section{Proof of Proposition \ref{coerclinearizedenergy}}
\label{appendixcoerc}
%%%%%%%%%%%%%%%%%%%%%%%%%%%%%%%%%%%%%%%
%%%%%%%%%%%%%%%%%%%%%%%%%%%%%%%%%%%%%%%

This Appendix is devoted to the proof of Proposition \ref{coerclinearizedenergy}. We recall the coercivity of the linearized Szeg\H{o} operator which we will use in the following form: there exists a universal constant $0<c_0<1$ such that for $u\in H^{\frac 12}_+$, 
\be
\label{coercSzeg\H{o}}
(\mathcal L_+u,u)\geq c_0\|u\|^2_{H^{\frac 12}_+}-\frac{1}{c_0}\left[(u,\pa_yQ^+)^2+(u,iQ^+)^2\right].
\ee

\begin{proof}[Proof of Proposition \ref{coerclinearizedenergy}]
We define the following functionals:
\begin{align*}
\mathcal G_1(\eps)&= \int ||D|^{\frac 12}\e_1^+|^2dy_1+\frac{1}{1-\beta_1}\int ||D|^{\frac 12}\e_1^-|^2dy_1+\l_1\|\e_1\|_{L^2}^2-(2|\Phi^{(1)}|^2\e_1+(\Phi^{(1)})^2\overline{\e_1},\e_1)\\
\matchal G_0(\e)&= \beta_1\int ||D|^{\frac 12}\e_1^+|^2dy_1+(1-\beta_1)\matchal G_1(\eps)\\
\end{align*}
where 
$$\Phi^{(1)}(y_1)=V_{1}(\matchal P, y_1)+\frac{1}{\mu^{\frac 12}}V_2 (\mathcal P,y_2)e^{i\Gamma}.$$ Then the full functional $\matchal G$ is exactly given by:
\bea
\label{formulafullg}
\mathcal G(\eps)&=&\frac{1}{2}\left[\frac{1}{\l_1}\mathcal G_0(\e,\e)-(\zeta D\e,\e)+((\theta-1)\e,\e)\right]\\
\nonumber & - & \frac 14\left[ \int(|\e+\Phi|^4-|\Phi|^4) -4(\e,\Phi|\Phi|^2)-2(2|\Phi|^2\e+\Phi^2\overline{\e},\e)\right]
\eea
The heart of the proof is the derivation of a suitable coercivity for $\matchal G_0$.\\

\noindent{\bf Step 1:} Splitting and coercivity for the first bubble. Let 
$\chi_\ell (y_1)=\chi^{(0)}(\frac{y_1}{R})$, where $\chi^{(0)}$ is a smooth cut off function
satisfying:
$$\chi^{(0)}(y_1)=\left\{\begin{array}{ll} 1\ \ \mbox{for}\ \ y_1\leq \frac{1}{10}\\ 0\ \ \mbox{for}\ \ y_1\geq \frac{1}{5}\end{array}\right..$$ 
We now split  the $L^2$ norm:
\bee
\int |\e_1|^2dy_1&=&\int |\e_1^+|dy_1+\int |\e_1^-|^2dy_1=\int |\chi_l\e_1^+|^2dy_1+\int(1-\chi^2_l)|\e_1^+|^2dy_1+\int |\e_1^-|^2dy_1\\
& = & \int |(\chi_l\e_1^+)^+|^2dy_1+\int |(\chi_l\e_1^+)^-|^2dy_1+\int(1-\chi^2_l)|\e_1^+|^2dy_1+\int |\e_1^-|^2dy_1.
\eee
We now split the kinetic energy according to \fref{commestimatebis}:
\bee
\int ||D|^{\frac 12}\e_1^+|^2dy_1&=& \int \chi^2_l ||D|^{\frac 12}\e_1^+|^2dy_1+\int (1-\chi^2_l) ||D|^{\frac 12}\e_1^+|^2dy_1\\
& = & \int \left|\left[|D|^{\frac 12}(\chi_l\e_1^+)\right]^+\right|^2dy_1+ \int \left|\left[|D|^{\frac 12}(\chi_l\e_1^+)\right]^-\right|^2dy_1\\
& + & \int (1-\chi^2_l) ||D|^{\frac 12}\e_1^+|^2dy_1\\
& + & O\left(\frac{\|\e_1\|^2_{L^2}+\||D|^{\frac 12}(\chi_l\e_1^+)\|_{L^2}^2}{\sqrt{R}}\right)
\eee
We now decompose the potential energy. We first estimate:
$$(2|\Phi^{(1)}|^2\e_1+(\Phi^{(1)})^2\overline{\e_1},\e_1)=(2|\Phi^{(1)}|^2\e^+_1+(\Phi^{(1)})^2\overline{\e^+_1},\e^+_1)+O\left(\|(\Phi^{(1)})^2\e_1^-\|_{L^2}\|\e_1\|_{L^2}\right).$$ 
We now estimate from $|V_j|\lesssim \frac1{\la y_j\ra}$ and Sobolev:
\bee
&& \int |V_1|^{4}|\e_1^-|^2dy_1\lesssim \int \frac{|\e_1^-|^2}{\la y_1\ra ^2}dy_1\lesssim \|\e_1^-\|_{L^4}^2\lesssim \|\e_1^-\|_{L^2}\|\e_1^-\|_{\dot{H}^{\frac 12}}\lesssim \|\e_1\|_{L^2}\|\e_1^-\|_{\dot{H}^{\frac 12}}\\
&& \int |V_2|^{4}|\e_1^-|^2dy_1\lesssim \int \frac{|\e_1^-|^2}{\jb{y_2}^4}dy_1\lesssim  \sqrt{b} \|\e_1\|_{L^2}\|\e_1^-\|_{\dot{H}^{\frac 12}}
\eee

We now develop the potential term:
\bee
\int |\Phi^{(1)}|^2|\e_1^+|^2dy_1&=&\int |\Phi^{(1)}|^2\left[\chi^2_l|\e_1^+|^2+(1-\chi^2_l)|\e_1^+|^2\right]dy_1\\
& = &  \int |Q_{\beta_1}|^2|\chi_l\e_1^+|^2dy_1+O\left(\frac{|\log\eta|^4\|\e_1\|_{L^2}^2}{R}\right)\\
& + &   \int |\Phi^{(1)}|^2(1-\chi^2_l)|\e_1^+|^2dy_1\\
& = &  \int |Q^+|^2|\chi_l\e_1^+|^2dy_1+O\left(\left[\frac{|\log\eta|^4}{R}
+(1-\beta_1)^{\frac 12}|\log(1-\b_1)|^{\frac 12}\right]\|\e_1\|_{L^2}^2\right)\\
& + &  \int |\Phi^{(1)}|^2(1-\chi^2_l)|\e_1^+|^2dy_1
\eee
by construction of $V_1$, the support properties of $\chi_l$ and the rough bound
 $$\|Q_{\beta_1}-Q^+\|_{L^{\infty}}\lesssim \|Q_{\b_1}-Q^+\|_{H^1}\lesssim (1-\beta_1)^{\frac 12}|\log(1-\b_1)|^{\frac 12}.$$ 
We now use \fref{estcomm} and $|Q^+|\lesssim \frac{1}{\la y_1\ra}$ which ensure
\be
\label{cnkneoneo}
\int \frac{|(\chi_l\e_1^+)^-|^2}{\la y_1\ra^2}dy_1\lesssim \frac{1}{R^{\frac 23}}\|\e_1\|_{L^2}^2
\ee 
to conclude:
\begin{align*}
\int |\Phi^{(1)}|^2|\e_1^+|^2dy_1& =  \int |Q^+|^2\left|\left[(\chi_l\e_1^+)\right]^+\right|^2dy_1
+O\left(\left[\frac{1}{R^{\frac 23}}+(1-\beta_1)^{\frac 12}|\log(1-\b_1)|^{\frac 12}\right]\|\e_1\|_{L^2}^2\right)\\
&+\int |\Phi^{(1)}|^2(1-\chi^2_l)|\e_1^+|^2dy_1.
\end{align*}
We argue similarly for the second potential term and obtain the first decomposition:
\bea
\label{fristformulagnot}
&&\mathcal G_1(\eps)=(\matchal L_+(\chi_l\e_1^+)^+,(\chi_l\e_1^+)^+)+(\l_1-1)\int |(\chi_\ell \e_1^+)^+|^2dy_1\\
\nonumber & + &  \frac{1}{1-\beta_1}\int |\Dhalf \e_1^-|^2dy_1+ \int ||D|^{\frac 12}(\chi_l\e_1^+)^-|^2dy_1+\l_1\int |(\chi_l\e^+_1)^-|^2dy_1+\l_1\int |\e_1^-|^2dy_1\\
\nonumber & + &  \int (1-\chi^2_l)|\Dhalf \e_1^+|^2dy_1+\l_1\int (1-\chi^2_l)|\e_1^+|^2dy_1\\
\nonumber & - & 2\int|\Phi^{(1)}|^2(1-\chi^2_l)|\e^+_1|^2-\Re\int(\Phi^{(1)})^2(1-\chi^2_l)\overline{(\e^+_1)^2}\\
\nonumber  & + & O\left(\left[\frac{1}{\sqrt{R}}+(1-\beta_1)^{\frac 12}|\log(1-\b_1)|^{\frac 12}\right]\|\e_1\|^2_{L^2}+\|\e_1\|^{\frac 32}_{L^2}\||D|^{\frac 12}\e^-_1\|^{\frac 12}_{L^2}+\frac{1}{\sqrt{R}}\|\Dhalf(\chi_l\e^+_1)\|^2_{L^2}\right).
\eea
From the choice of orthogonality conditions \fref{orthoe} we have:
\bee
0& = & (\e_1,Q_{\beta_1})^2=(\e_1^+,Q^+)^2+O((1-\beta_1)^{\frac 12}|\log(1-\b_1)|^{\frac 12}\|\e_1\|_{L^2}^2)\\
&=& (\chi_l\e^+_1,Q^+)^2+O\left(\left[(1-\beta_1)^{\frac 12}|\log(1-\b_1)|^{\frac 12}+\frac{1}{R}\right]\|\e_1\|_{L^2}^2\right) \\
&=&  ((\chi_l\e^+_1)^+,Q^+)^2+O\left(\left[(1-\beta_1)^{\frac 12}|\log(1-\b_1)|^{\frac 12}+\frac{1}{R}\right]\|\e_1\|_{L^2}^2\right),
\eee and similarly:
$$0=(\e_1,i\pa_{y_1}Q_{\beta_1})^2= ((\chi_l\e^+_1)^+,i\pa_{y_1}Q^+)^2+O\left(\left[
(1-\beta_1)^{\frac 12}|\log(1-\b_1)|^{\frac 12}+\frac{1}{R}\right]\|\e_1\|_{L^2}^2\right).$$ 

We now apply the coercivity estimate \fref{coercSzeg\H{o}} to $(\chi_l\e^+_1)^+$ and obtain from
 \fref{fristformulagnot} the control:
\bea
\label{controlgone}
&&\mathcal G_1(\eps)\geq c_0\left[\|\Dhalf (\chi_l\e^+_1)\|_{L^2}^2+\|\chi_l\e_1^+\|_{L^2}^2\right]+\frac{1}{(1-\beta_1)}\int |\Dhalf \e_1^-|^2\\
\nonumber & + &(\l_1-1)\int |(\chi_\ell\e_1^+)^+|^2dy_1 +\l_1\int |\e_1^-|^2+O\left(\left[
(1-\beta_1)^{\frac 12}|\log(1-\b_1)|^{\frac 12}+\frac{1}{\sqrt{R}}\right]\|\e_1\|_{L^2}^2\right)\\
\nonumber & + &  \int (1-\chi^2_l)|\Dhalf \e_1^+|^2+\l_1\int (1-\chi^2_l)|\e_1^+|^2\\
\nonumber & - & 2\int|\Phi^{(1)}|^2(1-\chi^2_l)|\e^+_1|^2-\Re\int(\Phi^{(1)})^2(1-\chi^2_l)\overline{(\e^+_1)^2}\\
\nonumber & + &O\Big(\|\e_1\|_{L^2}^{\frac 32}\||D|^{\frac 12}\eps_1^-\|_{L^2}^{\frac 12}
+\frac{1}{\sqrt{R}}\||D|^{1/2}(\chi_\ell\e_1^+)\|_{L^2}^2\Big)
\eea

\medskip

\noindent{\bf Step 2:} Coercivity for the second bubble. We now consider 
$\chi_R(y_2)=\chi^{(1)}(\frac{y_2}{R})$, where
$\chi^{(1)}$ is a smooth cut off function satisfying
$$\chi^{(1)}(y_2)=\left\{\begin{array}{ll}0 \ \ \mbox{for}\ \ y_2\leq -3\\
1\ \ \mbox{for}\ \ y_2\geq -2,
\end{array}\right.$$
and let 
\begin{align*}
\mathcal G_2(\eps):&=  b\int\chi^2_r |\Dhalf \e_1^+|^2dy_1+\l_1\int |\chi_r\e_1^+|^2- 2\int|\Phi^{(1)}|^2\chi_r^2|\e^+_1|^2dy_1\\
&-\Re\int(\Phi^{(1)})^2\chi_r^2\overline{(\e^+_1)^2}dy_1.
\end{align*}
$\mathcal G_2$ will be useful in finding a lower bound for $\mathcal G_1$.
We observe from the support property of $\chi_l,\chi_r$ and by construction of $V_j$ the bounds:
$$\left|2\int|\Phi^{(1)}|^2(1-\chi^2_l-\chi_r^2)|\e^+_1|^2dy_1-\Re\int(\Phi^{(1)})^2(1-\chi^2_l-\chi_r^2)\overline{(\e^+_1)^2}dy_1\right|\lesssim \frac{1}{R^2}\int |\e_1|^2$$
and therefore rewrite \fref{controlgone}:
\bea
\label{controlgonebispouet}
&&\mathcal G_1(\eps)\geq \mathcal G_2(\eps) + c_0\left[\|\Dhalf (\chi_l\e^+_1)\|_{L^2}^2+\|\chi_l\e_1^+\|_{L^2}^2\right]+\frac{1}{(1-\beta_1)}\int |\Dhalf \e_1^-|^2dy_1\\
\nonumber & + & (\l_1-1)\int |(\chi_\ell\e_1^+)^+|^2dy_1+
\l_1\int |\e_1^-|^2dy_1+  \int (1-\chi^2_l-b\chi_r^2)|\Dhalf \e_1^+|^2dy_1\\
\nonumber &+& \mathcal \l_1\int (1-\chi^2_l-\chi_r^2)|\e_1^+|^2dy_1\\
\nonumber &+&O\left(\left[(1-\beta_1)^{\frac 12}|\log(1-\b_1)|^{\frac 12}+\frac{1}{\sqrt{R}}\right]\|\e_1\|_{L^2}^2
+\|\e_1\|_{L^2}^{\frac 32}\||D|^{\frac 12}\e_1^-\|_{L^2}^2
+\frac{\||D|^{\frac 12}(\chi_\ell \e_1^+)\|_{L^2}^2}{\sqrt{R}}\right).
\eea
We renormalize to the $y_2$ variable using the formula 
$$\e_1(y_1)=\frac{e^{i\Gamma}}{\sqrt{\mu}}\e_2\left(\frac{y_1-R}{b\mu}\right)=\frac{e^{i\Gamma}}{\sqrt{\mu}}\e_2 (y_2)$$ 
and compute:
\begin{align*}
\mu\mathcal G_2(\eps)&= b\Big[\int\chi^2_r |\Dhalf \e_2^+|^2dy_2+\l_2\int |\chi_r\e_2^+|^2dy_2
- 2 \int|\Phi^{(2)}|^2\chi_r^2|\e^+_2|^2dy_2\\
&-\Re\int(\Phi^{(2)})^2\chi_r^2\overline{(\e^+_2)^2}dy_2\Big].
\end{align*}
where 
$$\Phi^{(2)}(y_2)=\mu^{\frac12}V_{1}(\matchal P, y_1)e^{-i\Gamma}+V_2 (\mathcal P,y_2).$$
We estimate using \fref{commestimatebis}:
\begin{align*}
\int(1-\chi^2_l) |\Dhalf \e_2^+|^2dy_2&=\int(1-\chi^2_l-\chi^2_r) |\Dhalf \e_2^+|^2dy_2+\int|\Dhalf(\chi_r \e_2^+)|^2dy_2\\
&+O\left(\frac{\|\e^+_2\|_{L^2}^2+\|\Dhalf(\chi_r \e_2^+)\|_{L^2}^2}{\sqrt{R}}\right)
\end{align*}
and estimate as for the first bubble the potential energy to obtain:
\bee
\mu\matchal G_2(\eps)&=&b\left[(\mathcal L_+(\chi_r\e_2^+)^+,(\chi_r\e_2^+)^+)+\int |\Dhalf(\chi_r\e_2^+)^-|^2dy_2+\l_2\int|(\chi_r\e_2^+)^-|^2dy_2\right]\\
& + & b(\l_2-1)\int |(\chi_r\e_2^+)^+|^2dy_2\\
& + & bO\left(\left[\frac{1}{\sqrt{R}}+(1-\beta_2)^{\frac 12}|\log(1-\b_2)|^{\frac 12}\right]\|\e_2\|_{L^2}^2
+\frac{\|\Dhalf(\chi_r \e_2^+)\|_{L^2}^2}{\sqrt{R}}\right).
\eee
We estimate using the orthogonality conditions \fref{orthoe}:
$$((\chi_r\e_2^+)^+,Q^+)^2+((\chi_r\e_2^+)^+,i\pa_yQ^+)^2\lesssim \left[(1-\beta_2)^{\frac 12}|\log(1-\b_2)|^{\frac 12}+\frac{1}{R}\right]\|\e_2\|_{L^2}^2$$
and hence conclude using the coercivity \fref{coercSzeg\H{o}}:
\bea
\label{cenionvenveo}
\nonumber \matchal G_2(\eps)&\geq &\frac{bc_0}{\mu}\left[\|\Dhalf (\chi_r\e^+_2)\|_{L^2}^2+\int\chi_r^2|\e^+_2|^2dy_2\right]+\frac{b(\l_2-1)}{\mu}\int |(\chi_r\e_2^+)^+|^2dy_2\\
& + & \frac{b}{\mu} O\left(\left[\frac{1}{\sqrt{R}}+(1-\beta_2)^{\frac 12}|\log(1-\b_2)|^{\frac 12}\right]\|\e_2\|_{L^2}^2+\frac{\|\Dhalf(\chi_r \e_2^+)\|_{L^2}^2}{\sqrt{R}}\right).
\eea

\noindent{\bf Step 3:} Coercivity of $\matchal G_0$. We sum \fref{controlgonebispouet} and \fref{cenionvenveo} and conclude:
\bee
\mathcal G_1(\eps)& \geq & c_0\left[\|\Dhalf (\chi_l\e^+_1)\|_{L^2}^2+\|\chi_l\e_1\|_{L^2}^2\right]+\frac{1}{(1-\beta_1)}\int |\Dhalf \e_1^-|^2dy_1\\
\nonumber & + & (\l_1-1)\int |(\chi_\ell\e_1^+)^+|^2dy_1+\l_1\int |\e_1^-|^2dy_1\\
\nonumber & + & \int (1-\chi^2_l-b\chi_r^2)|\Dhalf \e_1^+|^2dy_1+\int (1-\chi^2_l-\chi_r^2)|\e_1^+|^2dy_1\\
&+& O\left(\left[(1-\beta_1)^{\frac 12}|\log(1-\b_1)|^{\frac 12}+\frac{1}{\sqrt{R}}\right]\|\e_1\|_{L^2}^2
+\|\e_1\|_{L^2}^{\frac 32}\||D|^{\frac 12}\eps_1^-\|_{L^2}^{\frac 12}+\frac{\||D|^{\frac 12}(\chi_\ell\e_1^+)\|_{L^2}^2}{\sqrt{R}}\right)\\
& +& bc_0\left[\|\Dhalf (\chi_r\e^+_2)\|_{L^2}^2+\int\chi_r^2|\e^+_2|^2dy_2\right]
+b(\l_2-1)\int |(\chi_r\e_2^+)^+|^2dy_2\\
\nonumber & + &b O\left(\left[\frac{1}{\sqrt{R}}+(1-\beta_2)^{\frac 12}|\log(1-\b_2)|^{\frac 12}\right]\|\e_2\|_{L^2}^2+\frac{\|\Dhalf(\chi_r \e_2^+)\|_{L^2}^2}{\sqrt{R}}\right).
\eee
which after renormalization to the $y_1$ variable implies:
\bea
\label{ceocfinal}
\mathcal G_1(\eps)& \geq &  c_0\left[\int(\chi_l^2+b\chi_r^2)|\Dhalf \e^+_1|^2dy_1+\|\e_1\|_{L^2}^2\right]+\frac{1}{1-\beta_1}\int |\Dhalf \e_1^-|^2dy_1\\
\nonumber & + &   \int (1-\chi^2_l-b\chi_r^2)|\Dhalf \e_1^+|^2dy_1+{ \mathcal Err}(\e),
\eea
where
\begin{align}\label{error_H}
{\mathcal Err}(\e)
&=c_0\Big(\frac{1}{\mu}-1\Big)\|\chi_r\e_1^+\|_{L^2}^2
+c_0(\l_1-1)\|(\chi_\ell\e_1^+)^+\|_{L^2}^2
+\frac{\l_2-1}{\mu}\|(\chi_r\e_1^+)^+\|_{L^2}^2\notag\\
&+(\l_1-1)\|\e_1^-\|_{L^2}^2
+O\Big((1-\beta_1)^{\frac 12}|\log(1-\b_1)|^{\frac 12}+(1-\beta_2)^{\frac 12}|\log(1-\b_2)|^{\frac 12}+\frac{1}{\sqrt{R}})\|\e_1\|_{L^2}^2\notag\\
&+O\Bigg(\frac{\||D|^{\frac 12}(\chi_\ell\e_1^+)\|_{L^2}^2+b\||D|^{\frac 12}(\chi_r\e_1^+)\|_{L^2}^2}{\sqrt{R}}\Bigg)\notag\\
&+O\big(\|\e_1\|_{L^2}^{\frac 32}\|\Dhalf \e_1^-\|_{L^2}^{\frac 12}\big).
\end{align}

\noindent
Equivalently, this yields the lower bound:
\bee
\matchal G_0(\e)& = & \beta_1\int ||D|^{\frac 12}\e_1^+|^2dy_1+(1-\beta_1)\matchal G_1\\
& \geq & c_0(1-\beta_1)\|\e_1\|_{L^2}^2
+ \int \big[\beta_1+(1-\beta_1)(1-\phi_0+c_0\phi_0)\big]|\Dhalf \e_1^+|^2dy_1\\
\nonumber & + &\int |\Dhalf \e_1^-|^2dy_1+ (1-\b_1){\mathcal Err}(\e),
\eee
with 
\be
\label{defphizero}
\phi_0=\chi_l^2+b\chi_r^2.
\ee
We now observe from the support property of $\chi_r,\chi_l,\phi_1$ that $\phi_1\geq \phi_0$ and
since $c_0<1$ and $1-\b_1>0$, we have 
$$ \beta_1+(1-\beta_1)(1-\phi_0+c_0\phi_0)\geq \beta_1+(1-\beta_1)(1-\phi_1+c_0\phi_1).$$ 
We therefore have obtained the coercivity:
\bea
\label{coercvompleteone}
\matchal G_0(\e)&\geq & c_0(1-\beta_1)\int|\e_1|^2\\
\nonumber & + & \int \big[\beta_1+(1-\beta_1)(1-\phi_1+c_0\phi_1)\big]|\Dhalf \e_1^+|^2\\
\nonumber & + & \int |\Dhalf \e_1^-|^2 +(1-\b_1){\mathcal Err}(\e).
\eea

\noindent{\bf Step 4:} Control of the kinetic momentum and coercivity of $\matchal G$. We now consider the full functional given by \eqref{formulafullg}:
\bee
&&\mathcal G(\eps)=\frac{1}{2}\left[\frac{1}{\l_1}\mathcal G_0(\e,\e)-(\zeta D\e,\e)+((\theta-1)\e,\e)\right]+\matchal N(\e)\\
&&\mathcal N(\e)= \frac 14\left[ \int(|\e+\Phi|^4-|\Phi|^4) -4(\e,\Phi|\Phi|^2)-2(2|\Phi|^2\e+\Phi^2\overline{\e},\e)\right].
\eee
The cubic and higher order terms are easily estimated using the rough bound 
 $\|\e\|_{H^1}\ll 1$:
\bee
\mathcal N(\e)\lesssim \int |\e|^4 +C|\e^3||\Phi|dx &\lesssim & \|\e\|_{L^\infty} (\|\e\|_{L^\infty}+\|\Phi\|_{L^\infty})\|\e\|_{L^2}^2
\lesssim \|\e\|_{H^1}(\|\e\|_{H^1}+1)\|\e\|_{L^2}^2\\
&\leq & \frac{c_0}{10}\|\e\|_{L^2}^2=\frac{c_0}{10}(1-\b_1)\|\e_1\|_{L^2}^2.
\eee
The $L^2$ error is estimated from $|\mu|\ll 1$: $$|((\theta-1)\e,\e)|\lesssim |\mu|\|\e\|_{L^2}^2\leq\frac{c_0}{10}(1-\beta_1)\|\e_1\|_{L^2}^2.$$
We therefore conclude from \eqref{coercvompleteone}:
\bee
2\matchal G(\eps)&\geq&  \frac{c_0(1-\beta_1)}{\l_1}\left[\int |\e_1|^2dy_1+\int\phi_1|\Dhalf \e_1^+|^2dy_1\right]+ \frac{1-\b_1}{\l_1}{\mathcal Err}(\e)
\\
\nonumber & + & \frac{1}{\l_1}\left[\int \zeta_1|\Dhalf \e_1^+|^2+  \frac{1}{\l_1}\int |\Dhalf \e_1^-|^2dy_1-(\zeta_1D\e_1,\e_1)\right].
\eee
We now estimate the kinetic momentum term. We first compute from \fref{defzeta}:
\bee
&&\int \zeta_1|\Dhalf \e_1^+|^2-(\zeta_1D\e^+_1,\e^+_1)\\
& = & \int (1-(1-\beta_1)\phi_1)|\Dhalf \e_1^+|^2-((1-(1-\beta_1)\phi_1)D\e^+_1,\e^+_1)\\
& = & -(1-\beta_1)\left[\int \phi_1|\Dhalf \e_1^+|^2-(\phi_1D\e^+_1,\e^+_1)\right]
\eee
We then estimate using \fref{coomestimate} and \fref{estgenerale}:
\bee
&&(\phi_1D\e^+_1,\e^+_1) =  (\phi_1|D|\e^+_1,\e^+_1)=(\sqrt{\phi_1}|D|\e_1^+,\sqrt{\phi_1}\e_1^+)\\
&=& ([\sqrt{\phi_1},|D|]\e_1^++|D|(\sqrt{\phi_1}\e_1^+),\sqrt{\phi_1}\e_1^+)=  \int |\Dhalf \sqrt{\phi_1}\e_1^+|^2+O\left(\frac{1}{R}\|\e_1\|^2_{L^2}\right)\\
& = & \int \phi_1|\Dhalf \e_1^+|^2+O\left(\frac{1}{R}\|\e_1\|^2_{L^2}+\frac{1}{\sqrt{R}}\left[\|\e_1\|^2_{L^2}+\|\sqrt{\phi_1}\Dhalf \e_1^+\|^2_{L^2}\right]\right)
\eee
which yield thanks to the smallness of $\frac{1}{\sqrt{R}}$:
$$
\int \zeta_1|\Dhalf \e_1^+|^2dy_1-(\zeta_1D\e^+_1,\e^+_1)\geq-\frac{c_0(1-\beta_1)}{10}\Big[\int \phi_1|\Dhalf \e_1^+|^2dy_1+\int |\e_1|^2dy_1\Big].
$$
similarly using \eqref{commestimateone}:
\bee
-(\zeta_1D\e_1^-,\e_1^-)&=&(\beta_1+(1-\beta_1)\phi_1|D|\e_1^-,\e_1^-)\\
&=& \beta_1\||D|^{\frac 12}\e_1^-\|_{L^2}^2+(1-\beta_1)O\left(\||D|^{\frac 12}\e_1^-\|_{L^2}^2+\frac{\|\e_1\|_{L^2}^2}{R}\right)\\
& \geq & \frac 12\||D|^{\frac 12}\e_1^-\|_{L^2}^2-\frac{c_0}{10}(1-\beta_1)\|\e_1\|_{L^2}^2.
\eee
For the crossed terms, we estimate from \eqref{estcommlonebis}:
\bee
&&|(\zeta_1 D\e_1^-,\e_1^+)|+|(\zeta_1,D\e_1^+,\e_1^-)|=(1-\beta_1)|(\phi_1 D\e_1^-,\e_1^+)|+|(\phi_1,D\e_1^+,\e_1^-)|\\
& \lesssim & (1-\beta_1)\left[|(\e_1^-,D[\Pi^+,\phi_1]\e_1^+)|+|(\e_1^+,D[\Pi^-,\phi_1]\e_1^-)|\right]\lesssim \frac{1-\beta_1}{R^2}\|\e_1\|_{L^2}^2
\eee
The collection of above estimates yields the lower bound:
\be
\label{H_last_error}
\mathcal G(\e) \geq \frac{c_0(1-\beta_1)}{2\l_1}\left[\int |\e_1|^2+\int\phi_1|\Dhalf \e_1^+|^2\right]+\int |\Dhalf \e_1^-|^2+\frac{1-\b_1}{\l_1}{\mathcal Err}(\eps).
\ee
Finally, we need to treat the error ${\mathcal Err}(\e)$ defined in \eqref{error_H}.
Most of the terms can be bounded using the hypothesis
\[|\l_1-1|+|\l_2-1|+|\mu-1|+|1-\b_1|+|1-\b_2|+\frac{1}{R}\ll 1.\]
We turn to the last term in \eqref{error_H}
and by Young's inequality obtain that
\begin{align*}
C\|\e_1\|_{L^2}^{\frac 32}\||D|^{\frac 12}\e_1^-\|_{L^2}^{\frac 12}
&=C\Big(\sqrt{\frac{c_0}{3\l_1C}}\|\e_1\|_{L^2}\Big)^{\frac 32}\Big(\sqrt{\frac{3\l_1 C}{c_0}}\| |D|^{\frac 12}\e_1^-\|_{L^2}\Big)^{\frac 12}\\
&\leq \frac {c_0}{4\l_1}\|\e_1\|_{L^2}^2+\frac{C(3\l_1 C)^3}{4c_0^3}\||D|^{\frac 12}\e_1^-\|_{L^2}^2.
\end{align*}
Thus, the last term in $\frac{1-\b_1}{\l_1}{\mathcal Err}(\eps)$ has a lower bound:
\[-\frac{c_0(1-\b_1)}{4\l_1} \|\e_1\|_{L^2}^2-\frac{C(3\l_1 C)^3(1-\b_1)}{4c_0^3}\||D|^{\frac 12}\e_1^-\|_{L^2}^2,\]
Since $0<1-\b_1\ll 1$,
it can be absorbed by the main terms in \eqref{H_last_error} to obtain:
\bee
\mathcal G(\e) &\geq& \frac{c_0(1-\beta_1)}{5\l_1}\left[\int |\e_1|^2+\int\phi_1|\Dhalf \e_1^+|^2\right]+\frac{1}{\l_1}\int |\Dhalf \e_1^-|^2
\eee
which concludes the proof of Proposition \ref{coerclinearizedenergy}.
\end{proof}

\end{appendix}


\begin{thebibliography}{10}
%\bibitem{Bo1995bis} Bourgain, J., {\it On the Cauchy problem for periodic KdV-type equations}, Proceedings of the Conference in Honor of Jean-Pierre Kahane (Orsay, 1993), J. Fourier Anal. Appl. Special Issue, (1995), 17--86.

\bibitem{Bo1995} Bourgain, J., {\it Aspects of long time behaviour of solutions of nonlinear Hamiltonian evolution equations}, Geom. Funct. Anal. 5 (1995), no. 2, 105--140.

\bibitem{Bo1996} Bourgain, J., {\em On the growth in time of higher Sobolev norms of smooth solutions of Hamiltonian PDE}, Internat. Math. Res. Notices (1996),  277--304.

\bibitem{Bo1997} Bourgain, J., {\it On growth in time of Sobolev norms of smooth solutions of nonlinear Schr\"odinger equations in $\R^D$}, J. Anal. Math. 72 (1997), 299--310.

\bibitem{Bo2000} Bourgain, J., {\em Problems in Hamiltonian PDEs}, Geom. Funct. Anal. (2000), Special Volume, Part I, 32--56.

\bibitem{Bo2004} Bourgain, J. {\em Remarks on stability and diffusion in high-dimensional Hamiltonian systems and partial differential equations}, Ergodic Theory Dynam. Systems,  24 (2004), no. 5, 1331--1357.
 
\bibitem{CaMa2001} Cai, D.; Majda, A.; McLaughlin, D.; Tabak, E., {\em Dispersive wave turbulence in one dimension,} Phys. D 152/153 (2001), 551--572.
  
 \bibitem{CKSTT2010} Colliander,  J.; Keel, M.;  Staffilani, G.; Takaoka, H.; Tao, T., {\em Transfer of energy to high frequencies in the cubic defocusing nonlinear Schr\"odinger equation}, Invent. Math. 181 (2010), 39--113.
 
\bibitem{CKO} Colliander, J.; Kwon, S.; Oh, T., {\it A remark on normal forms and the ``upside-down'' I-method for periodic NLS: growth of higher Sobolev norms}, J. Anal. Math. 118 (2012), no. 1, 55--82. 
  
\bibitem{Dodson} Dodson, B., {\em Global well-posedness and scattering for the mass critical nonlinear Schr\"odinger equation with mass below the mass of the ground state,}   Adv. Math. 285 (2015), 1589--1618.
   
\bibitem{ElSc2007} Elgart, A.; Schlein, B., {\em Mean field dynamics of boson
  stars,} Comm. Pure Appl. Math. 60 (2007), no.~4, 500--545.
  
  \bibitem{FrLe2012} Frank, R.;  Lenzmann, E., {\em Uniqueness of ground states for
  fractional {L}aplacians in {${\bf R}$},}  Acta Math. 210 (2013),  261--318.
  
    
\bibitem{FrLe2007} Fr{\"o}hlich, J.; Lenzmann, E., {\em Blowup for nonlinear wave
  equations describing boson stars,} Comm. Pure Appl. Math. 60 (2007),
  no.~11, 1691--1705. 


\bibitem{GeGr2010} G{\'e}rard, P.;  Grellier, S., {\em The cubic {S}zeg{\H o} equation,} Ann. Sci. \'Ec. Norm. Sup\'er. (4) 43 (2010), no.~5,
  761--810. 
  
\bibitem{GeGrbis}G\'erard, P.; Grellier, S., {\em Invariant tori for the cubic  Szeg\H{o} equation}, Invent. Math. 187 (2012), 707--754.

  
  \bibitem{GeGr2012}  G\'erard, P.; Grellier, S., {\em Effective integrable dynamics for a certain nonlinear wave equation,} {\em Anal. PDE}, 5 (2012), 1139--1155.
  
  \bibitem{GeGr2015} G\'erard, P.; Grellier, S., {\em An explicit formula for the cubic Szeg\H{o} equation},  Trans. Amer. Math. Soc. 367 (2015), 2979-2995.
 
 \bibitem{GeGr2016} G\'erard, P.; Grellier, S.,  {\em  The cubic Szeg\H{o} equation and Hankel operators,} preprint, {\tt arXiv:1508.06814}, 2015.
  
\bibitem{Guardia2014} Guardia, M., {\it Growth of Sobolev norms in the cubic nonlinear Schr\"odinger equation with a convolution potential}, Comm. Math. Phys., 329 (2014), no. 1, 405-- 434.
  
  \bibitem{GK2015} Guardia, M.; Kaloshin, V., {\em Growth of Sobolev norms in the cubic defocusing nonlinear Schr\"odinger equation},  J. Eur. Math. Soc. (JEMS) 17 (2015),  71--149.
  
  \bibitem{GHP} Guardia, M.; Haus, E.; Procesi, M., {\em Growth of Sobolev norms for the analytic NLS on $\T^2$}, Advances in Math., 301 (2016), 615--692. 
 
\bibitem{H2014} Hani, Z., {\em Long-time strong instability and unbounded orbits for some periodic nonlinear Schr\"odinger equations}, Arch. Ration. Mech. Anal. 211 (2014), 929--964.

\bibitem{HPTV} Hani, Z.; Pausader, B.; Tzvetkov, N.; Visciglia, N., {\em Modified scattering for the cubic Schr\"odinger equations on product spaces and applications}, Forum Math., Pi, no. 3, 2015.

\bibitem{HP} Haus, E.; Procesi, M., {\em Growth of Sobolev norms for the quintic NLS on $\T ^2$},  
Anal. PDE 8 (2015), 883--922.

\bibitem{KMR} Kenig, C. E.; Martel, Y.; Robbiano, L.,  {\it Local well-posedness and blow-up in the energy space for a class of $L^2$ critical dispersion generalized Benjamin-Ono equations}, Ann. Inst. H. Poincar\'e Anal. Non Lin\'eaire 28 (2011), no. 6, 853--887.
  
  
\bibitem{KiLeSt2011} Kirkpatrick, K.; Lenzmann, E.; Staffilani, G., {\em On the continuum
  limit for discrete {NLS} with long-range interactions,} Comm. Math. Physics 317 (2013), 563--591.


\bibitem{KrLeRa} Krieger, J.; Lenzmann, E.; Rapha\"el, P., \emph{Nondispersive solutions to the $L^2$-critical half-wave equation},  Arch. Ration. Mech. Anal. 209 (2013), no. 1, 61--129.

\bibitem{KMRhartree} Krieger, J.; Martel, Y.; Rapha\"el, P., {\em Two solitons solution to the gravitational Hartree equation}, Comm. Pure Appl. Math. 62 (2009), no. 11, 1501--1550.

\bibitem{LT} Lindblad, H.; Tao, T., {\em Asymptotic decay for a one-dimensional nonlinear wave equation}, Anal. PDE 5 (2012), no. 2, 411--422. 

\bibitem{Kuksin} Kuksin, S.B., {\it Oscillations in space-periodic nonlinear Schr\"odinger equations}, Geom. Funct. Anal. 7
(1997), no. 2, 338--363.

\bibitem{MaMc1997} Majda, A.; McLaughlin, A.; Tabak, E., {\em A one-dimensional model
  for dispersive wave turbulence,}  J. Nonlinear Sci. \textbf{7} (1997), no.~1,
  9--44. 
  
  \bibitem{Martelmulti} Martel, Y., {\em Asymptotic $N$-soliton-like solutions of the subcritical and critical generalized Korteweg-de Vries equations,} Amer. J. Math. 127 (2005), no. 5, 1103--1140.
  
\bibitem{MMnls} Martel, Y.; Merle, F., {\em Multi solitary waves for nonlinear Schr\"odinger equations,} Ann. Inst. H. Poincar\'e Anal. Non Lin\'eaire 23 (2006), no. 6, 849--864.

\bibitem{MMannals2} Martel, Y.; Merle, F., {\it Stability of blow-up profile and lower bounds for blow-up rate for the critical generalized KdV equation}, Ann. of Math. (2) 155 (2002), no. 1, 235--280.

\bibitem{MMannals} Martel, Y.; Merle, F., {\it Description of two soliton collision for the quartic gKdV equation}, Ann. of Math. (2) 174 (2011), no. 2, 757--857.

\bibitem{MMinvent} Martel, Y.; Merle, F., {\it Inelastic interaction of nearly equal solitons for the quartic gKdV equation}, Invent. Math. 183 (2011), no. 3, 563--648.

\bibitem{MMwave} Martel, Y.; Merle, F., {\it Construction of multi-solitons for the energy-critical wave equation in dimension 5},  {\tt arXiv:1504.01595}.

\bibitem{MMR2} Martel, Y.; Merle, F.; Rapha\"el, P., {\em Blow up for the critical gKdV equation II: minimal mass blow up},  J. Eur. Math. Soc. (JEMS) 17 (2015), no. 8, 1855--1925.

\bibitem{MRmulti} Martel, Y.; Rapha\"el, P., {\it Strongly interacting blow up bubbles for the mass critical NLS}, {\tt arXiv:1512.00900}.

\bibitem{merlemulti} Merle, F., {\it Construction of solutions with exactly k blow-up points for the Schr\"odinger equation with critical nonlinearity}, Comm. Math. Phys. 129 (1990), no. 2, 223--240.

\bibitem{MRSring}  Merle, F.; Rapha\"el, P.; Szeftel, J., {\it On collapsing ring blow-up solutions to the mass supercritical nonlinear Schr\"odinger equation}, Duke Math. J. 163 (2014), no. 2, 369--431.

\bibitem{tetsu} Mizumachi, T.,  {\it Weak interaction between solitary waves of the generalized KdV equations}, SIAM J. Math. Anal. 35 (2003), no. 4, 1042--1080
  
\bibitem{Po} Pocovnicu, O., \emph{Traveling waves for the cubic Szeg\H{o} equation on the real line}, Anal.  PDE, 4-3 (2011), 379-404.

\bibitem{Po2011} Pocovnicu, O., \emph{Explicit formula for the solution of the Szeg\H{o}   equation on the real line and applications}, Discrete Contin. Dyn. Syst. A 31 (2011), no. 3, 607--649.

\bibitem{Po2013} Pocovnicu, O., \emph{First and second order approximations for a nonlinear wave equation,} J. Dynam. Differential Equations 25 (2013), no. 2, 305--333.

\bibitem{Po2} Pocovnicu, O., \emph {Soliton interaction with small Toeplitz potentials for the Szeg\H{o} equation on the real line},  Dyn. Partial Differ. Equ. 9 (2012), no. 1, 
1--27.  \emph{Erratum to "Soliton interaction with small Toeplitz potentials''},  Dyn. Partial Differ. Equ. 9 (2012), no. 2, 173--174.

\bibitem{Rszeftel}  Rapha\"el, P.; Szeftel, J., {\em Existence and uniqueness of minimal blow-up solutions to an inhomogeneous mass critical NLS}, J. Amer. Math. Soc. 24 (2011), no. 2, 471--546.

\bibitem{Soh1} Sohinger, V., {\it Bounds on the growth of high Sobolev norms of solutions to Nonlinear Schr\"odinger Equations on $\mathcal S^1$},  Differential Integral Equations 24 (2011), no. 7--8, 653--718.

\bibitem{Soh2} Sohinger, V., {\it Bounds on the growth of high Sobolev norms of solutions to Nonlinear Schr\"odinger Equations on $\R$},  Indiana Univ. Math. J. 60 (2011), no. 5, 1487--1516. 

\bibitem{St1997} Staffilani, G., {\em On the growth of high Sobolev norms of solutions for KdV and Schr\"odinger equations}, Duke Math. J., 86 (1997),  109-142.

\bibitem{Th2016} Thirouin, J., {\em On the growth of Sobolev norms of solutions of the fractional defocusing NLS on the circle}, {\tt arXiv.1506.04181}, to appear in Ann. Inst. H. Poincar\'e, Anal. Non Lin\'eaire.

\bibitem{Xu2014} Xu, H. {\em Large time blow up for a perturbation of the cubic Szeg\H{o} equation,} Anal. PDE, 7 (2014), 717--731.

\bibitem{ZGPD2001} Zakharov, V.; Guyenne, P.; Pushkarev, A.; Dias, F.,  {\em Wave turbulence in one-dimensional models},  Physica D 152--153 (2001) 573--619.


\end{thebibliography}
\end{document}